\title[Semiclassical measures for complex hyperbolic quotients]%
{Semiclassical measures for\\ complex hyperbolic quotients}
\author{Jayadev Athreya}
\email{jathreya@uw.edu}
\address{Department of Mathematics, University of Washington, Seattle, WA 98195}
\author{Semyon Dyatlov}
\email{dyatlov@math.mit.edu}
\address{Department of Mathematics, Massachusetts Institute of Technology, Cambridge, MA 02139}
\author{Nicholas Miller}
\email{nickmbmiller@ou.edu}
\address{Department of Mathematics, University of Oklahoma, Norman, OK, 73019}
\DeclareMathOperator{\Un}{U}
\DeclareMathOperator{\R}{\mathbb{R}}
\DeclareMathOperator{\Q}{\mathbb{Q}}
\DeclareMathOperator{\SU}{SU}
\DeclareMathOperator{\PU}{PU}
\def\su{\mathop{\mathfrak{su}}}
\begin{document}

\begin{abstract}
We study semiclassical measures for Laplacian eigenfunctions on compact complex hyperbolic quotients.
Geodesic flows on these quotients are a model case of hyperbolic dynamical systems
with different expansion/contraction rates in different directions.
We show that the support of any semiclassical measure is either equal to the entire cosphere bundle
or contains the cosphere bundle of a compact immersed totally geodesic complex submanifold.

The proof uses the one-dimensional fractal uncertainty principle of Bourgain--Dyatlov~\cite{fullgap}
along the fast expanding/contracting directions, in a way similar to the work
of Dyatlov--J\'ez\'equel~\cite{highcat} in the toy model of quantum cat maps,
together with a description of the closures of fast unstable/stable trajectories relying on Ratner theory. 
\end{abstract}

\maketitle

\addtocounter{section}{1}
\addcontentsline{toc}{section}{1. Introduction}

Let $(M,g)$ be a compact Riemannian manifold.
Consider a sequence of Laplacian eigenfunctions
\begin{equation}
  \label{e:laplace-eig}
u_j\in C^\infty(M),\quad
(-\Delta_g-\lambda_j^2)u_j=0,\quad
\lambda_j\to \infty,\quad
\|u_j\|_{L^2(M)}=1,
\end{equation}
where $\Delta_g$ is the Laplacian on $(M, g)$. Since the set of probability measures on a compact space is weak-* compact, we can, by passing to a subsequence, assume that the probability measures $|u_j|^2\,d\vol_g$ converge weak-* to some
measure $\tilde\mu$ as $j\to\infty$.
A quantum mechanical interpretation of this phenomenon is that $u_j$ are the pure states
of a free quantum particle on~$M$, and the limiting measure $\tilde \mu$ is a
macroscopic limit of the probability law of the position of the quantum particle
in the high energy r\'egime.

A major theme in quantum chaos is understanding which measures $\tilde\mu$ can arise as weak limits;
this includes the Quantum Ergodicity theorem~\cite{Shnirelman1,Zelditch-QE,CdV-QE} and
the Quantum Unique Ergodicity conjecture~\cite{Rudnick-Sarnak-QUE}. We will not discuss the full history of the field,
instead referring the reader to the reviews by Sarnak~\cite{Sarnak-QE-Review},
Zelditch~\cite{Zelditch-Notices}, and Dyatlov~\cite{Dyatlov-ICM,Dyatlov-Notices}.
The present paper is motivated by following conjecture;
see Theorems~\ref{t:meas-base}, \ref{t:meas}, \ref{t:esti} below for precise statements
of the results.
\begin{conj}
\label{c:full}
Let $(M,g)$ be a compact connected Riemannian manifold of negative sectional curvature.
Then each weak limit $\tilde\mu$ of a sequence of Laplacian eigenfunctions
satisfies $\supp\tilde\mu=M$. That is, for each nonempty open set $\Omega\subset M$ there exists
a constant $c_\Omega>0$ such that $\|u\|_{L^2(\Omega)}\geq c_\Omega\|u\|_{L^2(M)}$
for any Laplacian eigenfunction~$u$.
\end{conj}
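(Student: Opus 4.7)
By standard semiclassical arguments one extracts a subsequence and obtains a Radon probability measure $\mu$ on $T^*M$ such that
\[
\langle \mathrm{Op}_{h_j}(a) u_j, u_j\rangle \longrightarrow \int a\, d\mu
\]
for every $a\in C_c^\infty(T^*M)$, where $h_j = 1/\lambda_j$. Ellipticity of $-h_j^2\Delta_g-1$ off the unit cosphere bundle $S^*M$ forces $\supp\mu\subset S^*M$, while Egorov's theorem implies that $\mu$ is invariant under the geodesic flow $\varphi_t$. The pushforward of $\mu$ under $\pi:S^*M\to M$ equals $\tilde\mu$, so it suffices to show $\supp\mu = S^*M$.

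\textbf{Step 2: Setup by contradiction and use of hyperbolicity.}
Suppose toward a contradiction that there is a nonempty open $V\subset S^*M$ with $\mu(V)=0$. Since $(M,g)$ has negative sectional curvature, the geodesic flow is Anosov, with a continuous (generally Hölder) splitting into stable, unstable, and flow directions, and invariant foliations $W^s$, $W^u$. Flow-invariance of $\mu$ propagates the vanishing to $\bigcup_{t\in\R}\varphi_t(V)$, which is dense by topological transitivity, but density alone is insufficient; the plan is to upgrade ``missing an open set'' to a \emph{porosity} statement for projections of $\supp\mu$ onto stable and unstable leaves.

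\textbf{Step 3: Fractal uncertainty and escape of mass.}
Here I would follow the strategy of Dyatlov-Jin-Nonnenmacher, extended as in the present paper. Pick a self-adjoint semiclassical cutoff $A=\mathrm{Op}_{h_j}(a)$ microlocalized near $V$, so that $\langle A u_j, u_j\rangle\to 0$. Propagating $A$ along the flow for an Ehrenfest-type time $T\sim (1-\varepsilon)\log(1/h_j)/\Lambda$, where $\Lambda$ denotes the minimal unstable expansion rate, one shows that if $\supp\mu$ misses the open set $V$ then its projection onto each unstable and stable leaf is porous on all scales from $h_j$ up to a constant. The one-dimensional fractal uncertainty principle of \cite{fullgap} then supplies a quantitative norm gain $\|A\cdot (\text{propagated cutoffs})\|_{L^2\to L^2}\to 0$ strong enough to force $\mu(S^*M)<1$, contradicting that $\mu$ is a probability measure.

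\textbf{Step 4: The main obstacle.}
The fundamental difficulty is that in general dimension the stable and unstable foliations are multidimensional and only Hölder regular, and no sufficiently strong higher-dimensional FUP is presently known. The present paper circumvents this in the complex hyperbolic setting by two ingredients that are not available in general: the explicit two-speed structure of the geodesic flow lets one apply the existing one-dimensional FUP only along the \emph{fast} unstable/stable directions, and Ratner theory is invoked to control the closures of these fast horocyclic orbits in the slow directions, so that the slow directions contribute no concentration of $\mu$. Generalizing to arbitrary negatively curved $(M,g)$ would require both a genuinely higher-dimensional fractal uncertainty principle and a dynamical substitute for the homogeneous-space rigidity provided by Ratner's theorems; both remain central open problems, which is why Conjecture~\ref{c:full} is only established here in the complex hyperbolic case, via Theorems~\ref{t:meas-base}, \ref{t:meas}, and \ref{t:esti}.
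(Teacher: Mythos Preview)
The statement you were asked to prove is a \emph{conjecture}, and the paper does not prove it; indeed the paper explicitly lists the obstacles (higher-dimensional FUP, variable expansion rates, low regularity of foliations) and only establishes the special case of compact complex hyperbolic quotients via Theorems~\ref{t:meas-base}, \ref{t:meas}, and~\ref{t:esti}. So there is no ``paper's own proof'' to compare against.

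Your proposal is not a proof either, and to your credit you acknowledge this in Step~4. Steps~1--2 are correct preliminaries, but Step~3 is where the argument breaks: you invoke the one-dimensional FUP of \cite{fullgap} for projections onto stable and unstable leaves, yet for a general negatively curved manifold of dimension $\geq 3$ those leaves are higher-dimensional and the foliations are only H\"older, so neither the porosity reduction nor the uncertainty principle you cite is available. Your Step~4 correctly diagnoses exactly this gap and explains why the paper's workaround (fast one-dimensional directions plus Ratner theory) is specific to the complex hyperbolic setting. In short, your write-up is an accurate account of the state of the problem rather than a proof, which is the appropriate response given that Conjecture~\ref{c:full} remains open.
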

Here the assumption of negative sectional curvature implies that the geodesic flow
on $(M,g)$ is strongly chaotic in the sense that it has an unstable/stable decomposition.
Conjecture~\ref{c:full} is one version of the informal statement `if the geodesic flow on~$M$
is chaotic, then Laplacian eigenfunctions spread out in the high energy limit'
and it would also follow from the Quantum Unique Ergodicity conjecture. It has
applications to control theory and damped wave equation, see the remark after Theorem~\ref{t:meas}.
Note that a bound $\|u\|_{L^2(\Omega)}\geq c_\Omega(\lambda)\|u\|_{L^2(M)}$,
with $c_\Omega(\lambda)$ exponentially decaying with $\lambda$, is well-known
(see e.g.~\cite{Logunov-Malinnikova-Remez}) and it is sharp in the case of the round sphere.

Conjecture~\ref{c:full} was proved by Dyatlov--Jin~\cite{meassupp}
for compact hyperbolic surfaces $M=\Gamma\backslash\mathbb H^2$.
Dyatlov--Jin--Nonnenmacher~\cite{varfup} later proved it for any negatively curved surface.
These results only applied to surfaces because they needed the unstable/stable
spaces for the geodesic flow to be one-dimensional. Adapting the methods of~\cite{meassupp,varfup}
to higher dimensions would have to overcome several major obstacles:
\begin{enumerate}
\item
a key ingredient, the \emph{fractal uncertainty principle (FUP)}
due to Bourgain--Dyatlov~\cite{fullgap}, was only known for subsets of $\mathbb R$;
\item
the geodesic flow might expand/contract at different rates along different directions
in the unstable/stable space;
\item
the unstable/stable foliations only have H\"older regularity, as opposed to $C^{1+}$ regularity
in the case of surfaces (which was crucially used in~\cite{varfup}).  
\end{enumerate}

It is natural to first consider Conjecture~\ref{c:full} in the setting of \emph{locally symmetric spaces},
where obstacle~(3) is not present as the unstable/stable foliations are smooth,
and try to generalize the result of~\cite{meassupp}. In particular, one
can study higher dimensional real hyperbolic manifolds,
where the geodesic flow is conformal on the unstable/stable spaces and thus obstacle~(2)
does not appear. Obstacle~(1) has been overcome in a breakthrough paper
of Cohen~\cite{Cohen-FUP} on higher dimensional FUP and an analogue of Theorems~\ref{t:meas-base}--\ref{t:meas} below for real hyperbolic manifolds has been recently proved by Kim--Miller~\cite{Kim-Miller}.

The present paper studies a different class of locally symmetric spaces, namely \emph{complex hyperbolic quotients}.
The geodesic flow on those is not conformal: the unstable/stable space splits into the \emph{fast}
direction where the flow expands/contracts like $e^{\pm 2t}$, and the \emph{slow}
directions where the flow expands/contracts like $e^{\pm t}$~-- see~\S\ref{s:complex-stun} below.
In particular, obstacle~(2) is present. The results of~\cite{Cohen-FUP,Kim-Miller} do
not apply to this setting~-- the FUP of~\cite{Cohen-FUP} makes the assumptions of ball and line porosity which
are hard to verify for complex hyperbolic quotients because stable/unstable
balls are mapped by the geodesic flow to high-eccentricity ellipses instead
of balls, due to the presence of the fast and slow directions.

However, as first observed in~\cite{highcat} in the toy model of quantum cat maps,
one can take advantage of the different expansion rates, choosing the propagation times in the argument
carefully and applying FUP only in the fast unstable/stable directions.
Those are one-dimensional for complex hyperbolic quotients, thus one can still use
the original one-dimensional FUP of~\cite{fullgap}. See the work of Kim~\cite{Kim-cat}
for more recent results on semiclassical measures for quantum cat maps.

Compared to~\cite{highcat} and~\cite{meassupp}, the complex hyperbolic case comes with several additional difficulties:
\begin{itemize}
\item As in~\cite{highcat}, potential obstructions to Conjecture~\ref{c:full} are non-dense flow lines of the fast unstable/stable
bundles. In the setting of~\cite{highcat}, these were relatively easy to classify
and the closures were given by subtori. For complex hyperbolic manifolds,
we use the classification of unipotent orbit closures proved by Ratner (Theorem~\ref{t:Ratner}). However, additional arguments
(using invariance under the geodesic flow, which is not unipotent) are needed to show that
the only obstructions are complex totally geodesic submanifolds. See Theorem~\ref{theo:orbitclosure}.
\item In~\cite{highcat} one used a local symplectomorphism which `straightened out' stable and unstable
leaves simultaneously. No such symplectomorphism exists in the complex hyperbolic case.
Moreover, the slow unstable/stable subbundles are not Frobenius integrable, so one
cannot make sense of slow unstable/stable leaves, see~\S\ref{s:slow-stun}. The solution
is to use a symplectomorphism which `straightens out' the spaces of interest only at a single
point, see Lemma~\ref{l:straighten-out} and~\S\ref{s:proof-of-porosity}.
\item The argument in~\cite{highcat} used the Weyl quantization on $\mathbb R^{n}$
to quantize rough symbols associated to any linear Lagrangian foliation, see~\cite[\S2.1.4]{highcat}
and \S\ref{s:calculus-lagrangian}.
In the present setting the unstable/stable foliations are not linear
and we have to use the quantization originating in Dyatlov--Zahl~\cite{hgap}.
That quantization depends on the foliation chosen and we have to carefully
study the position/frequency localization of the resulting pseudodifferential operators when transformed by the `straightening out'
symplectomorphism discussed in the previous item; see~\S\ref{s:localization-conjugated}.
\end{itemize}
See also the beginning of~\S\ref{s:key-proof} for an outline of part of the argument.

\subsection{Setting and the first result}\label{sec:setting}

Let us now state the results of the paper.
Let $(M,g)$ be a $2n$-dimensional compact complex hyperbolic quotient, that is, a quotient of the complex hyperbolic space $\mathbb{CH}^{2n}$ by a co-compact subgroup $\Gamma$ of the isometry group of $\mathbb{CH}^{2n}$ with the metric $g$ descending from $\mathbb{CH}^{2n}$. Then $(M,g)$ is in particular a K\"ahler manifold, and
conversely, any compact connected K\"ahler manifold $M$ of constant \emph{holomorphic} sectional
curvature~$-1$ is isometric to a quotient of $\mathbb{CH}^{2n}$, see for example Goldman~\cite{Goldman-Book}.

Assume that $\Sigma\subset M$ is a positive dimensional compact immersed real submanifold
(that is, $\Sigma$ is a compact abstract manifold with an immersion into~$M$).
We say that $\Sigma$ is \emph{totally geodesic} if its second fundamental form is zero;
alternatively, any geodesic on~$M$ which starts tangent to~$\Sigma$ stays on~$\Sigma$ for all times.
We say that $\Sigma$ is a \emph{complex submanifold} of~$M$ if the tangent spaces
of $\Sigma$ are invariant under the almost complex structure on $M$.
Our first result is the following theorem, which says that the support of each limit measure associated to Laplacian eigenfunctions contains some totally geodesic complex submanifold.
\begin{theo}
\label{t:meas-base}
Let $M$ be a compact complex hyperbolic quotient, and suppose
$\tilde\mu$ is a weak-* limit of the probability measures $|u_j|^2\,d\vol_g$
where $u_j$ is a sequence of Laplacian eigenfunctions satisfying~\eqref{e:laplace-eig}. Then there exists a compact immersed totally geodesic complex submanifold $\Sigma\subset M$ such that $\Sigma\subset\supp\tilde \mu$. In particular, if $M$ has no proper compact immersed totally geodesic complex submanifolds then $\supp\tilde \mu=M$.
\end{theo}
Note that there are examples of compact complex hyperbolic quotients which do not have any proper compact immersed totally geodesic complex submanifolds and there are also
examples of quotients with finitely many or infinitely many such submanifolds. We refer to~\S\ref{sec:geodesicsubman} below for a more detailed discussion of known examples.

\subsection{A semiclassical result}

Theorem~\ref{t:meas-base} follows from a more general result on
\emph{semiclassical measures} of Laplacian eigenfunctions. To introduce these,
we use \emph{semiclassical quantization}
$$
a\in\CIc(T^*M)\ \mapsto\ 
\Op_h(a):L^2(M)\to L^2(M),
$$
see~\S\ref{s:review-semi} below. Here $T^*M$ is the cotangent bundle of~$M$,
which we often identify with the tangent bundle $TM$ using the metric $g$.
We remark that in the (non-compact) case $M=\mathbb R^{2n}$ one can take the standard quantization
(see~\eqref{e:Op-h-def} below):
\begin{equation}
  \label{e:Op-h-intro}
\Op_h(a)f(x)=(2\pi h)^{-2n}\int_{\mathbb R^{4n}}e^{{i\over h}\langle x-y,\xi\rangle}a(x,\xi)f(y)\,dyd\xi
\end{equation}
and a quantization for general manifolds is typically constructed using standard quantization and coordinate charts.

Let $u_j$ be a sequence of Laplacian eigenfunctions satisfying~\eqref{e:laplace-eig}.
We say $u_j$ \emph{converges semiclassically} to a measure $\mu$ on $T^*M$ if,
denoting $h_j:=\lambda_j^{-1}$,
\begin{equation}
  \label{e:semi-measure-conv}
\langle \Op_{h_j}(a) u_j,u_j\rangle_{L^2(M)}\to\int_{T^*M}a\,d\mu\quad\text{for all}\quad
a\in \CIc(T^*M).
\end{equation}
If we interpret $u_j$ as the wave function of a quantum particle, then the left-hand side
of~\eqref{e:semi-measure-conv} is the average value of the classical observable $a(x,\xi)$
for this particle, where $x$ denotes the position variables and $\xi\in T_x^*M$ denotes
the momentum variables. Mathematically, $\xi$ is the frequency variable; for example,
\eqref{e:Op-h-intro} shows that the quantization of a function $a(\xi)$ is a Fourier multiplier.
Thus $\mu$ captures the macroscopic concentration of $u_j$ simultaneously in position and frequency,
in the high energy limit $h_j\to 0$. 

A measure $\mu$ on $T^*M$ is called a \emph{semiclassical measure} if it is the semiclassical limit of some sequence of Laplacian eigenfunctions~\eqref{e:laplace-eig}.
Such measures always exist, in fact each sequence satisfying~\eqref{e:laplace-eig}
has a subsequence converging to some measure~-- see~\cite[Theorem~5.2]{Zworski-Book}. 

If $\mu$ is a semiclassical measure, then $\mu$ is a geodesic-flow invariant probability measure with support contained in the unit cosphere bundle $S^*M =\{(x,\xi)\in T^*M\colon |\xi|_g=1\}$, and the pushforward of $\mu$ under the projection $S^*M\to M$ is the weak-*
limit of the probability measures $|u_j|^2\,d\vol_g$. To make sense of geodesic flow-invariance, we identify the sphere bundle $SM$ with the cosphere bundle $S^*M$ using the metric $g$, and consider the geodesic flow
$$
\varphi^t:S^*M\to S^*M.
$$
If $\Sigma\subset M$ is a submanifold, then we embed $S^*\Sigma$ into $S^*M$
using the orthogonal projection with respect to the metric $g$.
Note that $\Sigma$ is a totally geodesic submanifold if and only if
$S^*\Sigma$ is invariant under the geodesic flow. The next statement is a stronger, semiclassical, version of Theorem~\ref{t:meas-base}. 
\begin{theo}
  \label{t:meas}
Assume that $M$ is a compact complex hyperbolic quotient and $\mu$
is a semiclassical measure for a sequence of Laplacian eigenfunctions on $M$.
Then there exists a compact immersed totally geodesic complex submanifold $\Sigma\subset M$
such that $S^*\Sigma\subset\supp\mu$. In particular, if $M$
has no proper compact immersed totally geodesic complex submanifolds then $\supp\mu=S^*M$.  
\end{theo}
\Remark Note that Theorem~\ref{t:meas} immediately implies Theorem~\ref{t:meas-base} by characterization of pushforwards of semiclassical measures above. Theorem~\ref{t:meas} follows from a semiclassical estimate on
eigenfunctions $u_j$, Theorem~\ref{t:esti}~-- see~\S\ref{s:reduction-control}.
Theorem~\ref{t:esti} can be used to show an observability estimate
for the Schr\"odinger equation (see~\cite{Jin-Control}) and the proof in the present
paper can be modified to show exponential energy decay
for the damped wave equation (similarly to~\cite{Jin-DWE,varfup}).

As we see from Theorem~\ref{t:meas}, the obstacles to full support
of semiclassical measures are sets of the form $S^*\Sigma$ where $\Sigma$
are certain proper submanifolds of~$M$.  We nevertheless make the following:
\begin{conj}
  \label{c:meas}
Assume that $M$ is a compact complex hyperbolic quotient and $\mu$
is a semiclassical measure for a sequence of Laplacian eigenfunctions on~$M$.
Then $\supp\mu=S^*M$.
\end{conj}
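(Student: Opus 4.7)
The natural strategy is to argue by contradiction from Theorem~\ref{t:meas}. Assume $\supp\mu\neq S^*M$; then the theorem supplies a proper compact immersed totally geodesic complex submanifold $\Sigma\subset M$ with $S^*\Sigma\subset\supp\mu$. The goal is to rule this scenario out, showing that the presence of $S^*\Sigma$ in $\supp\mu$ actually forces $\supp\mu$ to spread to all of $S^*M$. The base case $\dim_{\mathbb{C}} M=1$ is immediate from Theorem~\ref{t:meas}, since a complex hyperbolic quotient of complex dimension one has no positive-dimensional proper complex submanifolds.

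The key geometric input is that $\Sigma$ is \emph{complex} totally geodesic: $T\Sigma$ is invariant under both the almost complex structure $J$ and the geodesic flow, so the fast unstable and stable directions~-- generated along a geodesic by $\pm J\dot\gamma$~-- lie tangent to $S^*\Sigma$. The normal bundle $N(S^*\Sigma\subset S^*M)$ therefore decomposes entirely into transverse slow unstable and slow stable directions, each of real dimension $2(n-k)$ where $k=\dim_{\mathbb{C}}\Sigma$ and $n=\dim_{\mathbb{C}}M$. Under the assumption $\supp\mu\neq S^*M$, the complement of $\supp\mu$ is a nonempty open flow-invariant subset of $S^*M$, and the plan is to use flow-invariance together with the Ratner-theoretic orbit closure classification underlying Theorem~\ref{theo:orbitclosure} to propagate this missing set and conclude that $\supp\mu$ is genuinely porous in the slow normal directions to $S^*\Sigma$. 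One then applies the higher-dimensional fractal uncertainty principle of Cohen~\cite{Cohen-FUP} in those slow normal directions, combined with the one-dimensional FUP of Bourgain--Dyatlov~\cite{fullgap} along the fast tangential direction as in the present paper, to derive a contraction estimate incompatible with $S^*\Sigma\subset\supp\mu$.

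The principal obstacle, already flagged in~\S\ref{s:slow-stun}, is that the slow unstable and stable subbundles are \emph{not} Frobenius integrable, so there are no slow leaves on which to place genuine fractal sets. A proof will need either to replace leafwise porosity by a transverse porosity condition compatible with the flow-invariance of $\mu$, or to approximate the slow distribution by an integrable one over the slow Ehrenfest time $T\sim|\log h|$. Either route requires extending the pseudodifferential calculus of~\S\ref{s:calculus-lagrangian} and the straightening symplectomorphism of Lemma~\ref{l:straighten-out} so as to handle fast tangential and slow transverse directions simultaneously, and rebalancing propagation times so that the fast and slow FUP estimates compose coherently. Overcoming the non-integrability of the slow bundles is in my view the main new technical difficulty separating Conjecture~\ref{c:meas} from Theorem~\ref{t:meas}.
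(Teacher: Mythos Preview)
The statement you are addressing is Conjecture~\ref{c:meas}, which the paper explicitly presents as an \emph{open conjecture}; there is no proof in the paper to compare against. What you have written is not a proof but a research program outline, and you yourself acknowledge this by identifying unresolved obstacles at the end.

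A few comments on the substance of your outline. Your geometric observation that the fast directions $V^\pm$ are tangent to $S^*\Sigma$ while the normal bundle consists entirely of slow directions is correct and is indeed the reason the one-dimensional FUP argument of the present paper cannot by itself rule out concentration on $S^*\Sigma$. The paper gestures at the same mechanism you invoke~-- that $T^*\Sigma$ is a symplectic submanifold~-- in the paragraph immediately following the conjecture, but does not develop it. Your proposal to bring in Cohen's higher-dimensional FUP in the slow normal directions is plausible in spirit, but as you correctly flag, the slow subbundles $E^\pm$ are not Frobenius integrable (see~\S\ref{s:slow-stun} and~\eqref{e:N+comm}), so there is no slow foliation on which to define the relevant porous sets, and no obvious analogue of the quantization $\Op_h^L$ is available. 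This is a genuine gap, not a technicality: the entire machinery of~\S\ref{s:calculus-lagrangian} rests on having an integrable Lagrangian foliation. Your suggestion of approximating by an integrable distribution over the slow Ehrenfest time would require quantitative control on the failure of integrability, and it is unclear that the resulting error terms would be small enough to close the argument.

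In short: you have correctly located the difficulty but have not overcome it, and neither has the paper.
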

Conjecture~\ref{c:meas} is in contrast with the setting of quantum cat maps studied by Kelmer~\cite{Kelmer-cat}, who gave examples of semiclassical measures supported on Lagrangian subtori. However, in our setting the set $S^*\Sigma$ is the intersection
of $S^*M$ with the symplectic submanifold $T^*\Sigma$.
The recent paper~\cite{Kim-cat} uses the basic uncertainty principle to show that no semiclassical measure for a quantum cat map can be supported on a single symplectic subtorus.
On the other hand, the same paper gives examples of semiclassical measures supported
on the union of two transversal symplectic subtori. A step towards Conjecture~\ref{c:meas}
would be show that on a complex hyperbolic quotient, no semiclassical measure can be localized on a finite union of the
sets $S^*\Sigma$.

\subsection{Structure of the paper}

\begin{itemize}
\item \S\ref{s:ch-quotients} reviews various geometric and dynamical properties
of complex hyperbolic quotients and sets up the notation used;
\item \S\ref{s:orbits-total} gives a description of orbit closures
for fast unstable/stable vector fields together with the geodesic flow
in terms of totally geodesic complex submanifolds;
\item \S\ref{s:usual-proof} reduces Theorem~\ref{t:meas} to
the key estimate, Proposition~\ref{l:key-estimate};
\item \S\ref{s:key-proof} proves this key estimate using the Fractal Uncertainty Principle.
\end{itemize}


\bigskip\noindent\textbf{Acknowledgements.}
We are thankful to the anonymous referees for the comments to improve the paper.
Athreya was partially supported by National Science Foundation (NSF) grant DMS-2003528 and DMS-2404705; the Pacific Institute for the Mathematical Sciences; the Royalty Research Fund and the Victor Klee fund at the University of Washington, and the Chaire Jean Morlet program at the Centre International de Recontres Math\'ematiques (CIRM) Luminy. Dyatlov was supported by NSF CAREER grant DMS-1749858,
NSF grant DMS-2400090, and a Simons Fellowship. Miller was partially supported by NSF grants DMS-2005438/2300370 and DMS-2405264.
This project originated in Fall 2019 at the Mathematical Sciences Research Institute at the programs on \emph{Holomorphic Differentials in Mathematics and Physics} and \emph{Microlocal Analysis}.

\section{Complex hyperbolic quotients}
\label{s:ch-quotients}

\subsection{Complex hyperbolic space}

We start by reviewing the geometry of complex hyperbolic space $\mathbb{CH}^n$, using the projective
(also known as hyperboloid) model.
Let $n\geq 2$ and consider the complex Minkowski space $\mathbb C^{n,1}=\mathbb C^{n+1}$
with the sesquilinear product
$$
\langle z,w\rangle_{\mathbb C^{n,1}}=-z_0\overline{w_0}+\langle z',w'\rangle_{\mathbb C^n}.
$$
Here we write elements of $\mathbb C^{n,1}$ as $(z_0,z')$ where $z_0\in\mathbb C$ and $z'\in\mathbb C^n$, and let $\langle\bullet,\bullet\rangle_{\mathbb C^n}$ be the standard Hermitian inner product:
$$
\langle z',w'\rangle_{\mathbb C^n}:=\sum_{j=1}^n z_j\overline{w_j}\quad\text{where}\quad
z'=(z_1,\dots,z_n),\
w'=(w_1,\dots,w_n).
$$
Define the `sphere' in $\mathbb C^{n,1}$
$$
\mathbb C\mathbb S^{n,1}:=\{z\in\mathbb C^{n,1}\mid \langle z,z\rangle_{\mathbb C^{n,1}}=-1\}
$$
which is a real manifold of dimension $2n+1$.

The inner product $\Re\langle \bullet,\bullet\rangle_{\mathbb C^{n,1}}$ induces a Lorentzian metric
on $\mathbb{CS}^{n,1}$, and the group $\Un(1)=\{e^{i\theta}\mid \theta\in\mathbb R\}$ acts by isometries
on $\mathbb{CS}^{n,1}$ by $e^{i\theta}.z:=e^{i\theta}z$. We define the complex hyperbolic space as the quotient
$$
\mathbb{CH}^n:=\mathbb {CS}^{n,1}/\Un(1).
$$
The Lorentzian metric on $\mathbb{CS}^{n,1}$ induces a Riemannian metric on $\mathbb{CH}^n$,
which we call the \emph{complex hyperbolic metric}.
In fact, the latter metric (together with the complex structure inherited from $\mathbb C^{n,1}$)
makes $\mathbb{CH}^n$ into a K\"ahler manifold. We refer the reader to~\cite{Goldman-Book,Parker-Book} for an introduction
to the geometry of complex hyperbolic space.

Denote by $S\mathbb{CH}^n$ the unit sphere bundle of $\mathbb{CH}^n$. We can write it as a quotient
$$
\begin{aligned}
S\mathbb{CH}^n&=S\mathbb{CS}^{n,1}/\Un(1),\\
S\mathbb{CS}^{n,1}&=\{(z,v)\in \mathbb C^{n,1}\times\mathbb C^{n,1}\mid
\langle z,z\rangle_{\mathbb C^{n,1}}=-1,\
\langle z,v\rangle_{\mathbb C^{n,1}}=0,\
\langle v,v\rangle_{\mathbb C^{n,1}}=1\}
\end{aligned}
$$
where the group $\Un(1)$ acts on $S\mathbb{CS}^{n,1}$ by $e^{i\theta}.(z,v)=(e^{i\theta}z,e^{i\theta}v)$.
To simplify notation, we often denote points in $S\mathbb{CH}^n$ by $(z,v)$
with the implication that the operations studied are equivariant under the $\Un(1)$ action.

\subsubsection{Isometry group}
\label{s:isometry-group}

We next write $\mathbb{CH}^n$ and $S\mathbb{CH}^n$ as homogeneous spaces.
Let
$$
G:=\SU(n,1)
$$
be the Lie group of complex linear automorphisms of $\mathbb C^{n,1}$
which preserve the product $\langle\bullet,\bullet\rangle_{\mathbb C^{n,1}}$
and have determinant~1.
Denote by $e_0,e_1,\dots,e_n$ the canonical (complex) basis of $\mathbb C^{n,1}$. 

Each $A\in G$ defines
a map $z\in\mathbb{CS}^{n,1}\mapsto Az\in\mathbb{CS}^{n,1}$, giving rise
to a transitive left action of $G$ on $\mathbb{CH}^n$
which is isometric with respect to the complex hyperbolic metric.
The isotropy group
of $e_0\in \mathbb{CH}^n$ with respect to this action is the maximal compact subgroup
of $G$:
\begin{equation}
  \label{e:K-iso-def}
K=\left\{\left.\begin{pmatrix} (\det B)^{-1} & 0 \\
0 & B\end{pmatrix}
\,\right|\, B\in U(n)\right\}.
\end{equation}
The action of $G$ on $\mathbb{CH}^n$ lifts to a transitive action
 on $S\mathbb{CH}^n$ by the formula $A.(z,v)=(Az,Av)$ where $A\in G$
and $(z,v)\in S\mathbb{CH}^{n}$. 
The isotropy group of $(e_0,e_1)\in S\mathbb{CH}^n$ with respect to this action is given by
the following double cover of the unitary group $\Un(n-1)$:
\begin{equation}
  \label{e:R-iso-def}
R=\left\{ \left.\begin{pmatrix}
e^{i\theta} & 0 & 0 \\
0 & e^{i\theta} & 0 \\
0 & 0 & B
\end{pmatrix}
\,\right|\,
B\in \Un(n-1),\
\det B=e^{-2i\theta}
\right\}.
\end{equation}
Here $R$ is a double cover since there are two choices of $\theta$ for each $B$.
This gives the following representations of~$\mathbb{CH}^n$ and~$S\mathbb{CH}^n$ 
as homogeneous spaces
(mapping $A\in G$ to $\widetilde\pi_K(A):=Ae_0\in\mathbb{CH}^n$ and $\widetilde\pi_R(A):=(Ae_0,Ae_1)\in S\mathbb{CH}^n$):
\begin{equation}
  \label{e:CH-quotient}
\mathbb{CH}^n\simeq G/K,\qquad
S\mathbb{CH}^n\simeq G/R.
\end{equation}

\subsubsection{Lie algebra}\label{sec:commutation}

For $j,k\in \{0,\dots,n\}$, denote by $\mathbf E_{jk}$ the matrix with entry $(\mathbf E_{jk})_{jk}=1$
and all other entries equal to~0. We use the following basis of the Lie algebra
$\mathfrak g=\su(n,1)$ of $G$:
\begin{equation}
  \label{e:lie-basis}
\begin{gathered}
X:=\mathbf E_{01}+\mathbf E_{10},\quad
V^\pm:=i(\mathbf E_{00}\mp \mathbf E_{01}\pm \mathbf E_{10}-\mathbf E_{11}),\\
W^\pm_j:=\mathbf E_{0j}\pm \mathbf E_{1j}+\mathbf E_{j0}\mp \mathbf E_{j1},\quad
Z^\pm_j:=i(\mathbf E_{0j}\pm \mathbf E_{1j}-\mathbf E_{j0}\pm \mathbf E_{j1}),\\
R_{jk}:=\mathbf E_{jk}-\mathbf E_{kj},\quad
R_{jk}':=i(\mathbf E_{jk}+\mathbf E_{kj}-\delta_{jk}(\mathbf E_{00}+\mathbf E_{11})).
\end{gathered}
\end{equation}
Here $j,k\in \{2,\dots,n\}$; for $R_{jk}$ we have $j<k$ and for $R'_{jk}$ we have $j\leq k$.
As an example, when $n=2$ we have
$$
\begin{gathered}
X = \begin{pmatrix}
0 & 1 & 0 \\
1 & 0 & 0 \\
0 & 0 & 0
\end{pmatrix},\quad
V^\pm = \begin{pmatrix}
i & \mp i & 0 \\
\pm i & -i & 0 \\
0 & 0 & 0
\end{pmatrix},\\
W^\pm_2=\begin{pmatrix}
0 & 0 & 1 \\
0 & 0 & \pm 1 \\
1 & \mp 1 & 0
\end{pmatrix},\quad
Z^\pm_2=\begin{pmatrix}
0 & 0 & i \\
0 & 0 & \pm i \\
-i & \pm i & 0
\end{pmatrix},\quad
R'_{22}=\begin{pmatrix}
-i & 0 & 0 \\
0 & -i & 0 \\
0 & 0 & 2i
\end{pmatrix}.
\end{gathered}
$$
Note that the Lie algebra $\mathfrak r$ of $R$ is spanned by the
fields $R_{jk},R'_{jk}$.

Recall that for a Lie algebra $\mathfrak g$, and $Y \in \mathfrak g$, we write $$\ad(Y)(\cdot) = [Y, \cdot]$$ for the adjoint action of $Y$ on $\mathfrak g$. We have the following relations in our Lie algebra. First, $V^{\pm}$ are eigenvectors for $\ad(X)$ with eigenvalues $\pm 2$,  and $W_j^{\pm}$ and $Z_j^{\pm}$ are both eigenvectors for $\ad(X)$ with eigenvalues $\pm 1$ respectively. That is, 
\begin{equation}\label{e:comm-rel}
\ad(X)(V^{\pm}) = \pm 2 V^{\pm},\quad \ad(X)(W_j^{\pm}) = \pm  W_j^{\pm},\quad \ad(X)(Z_j^{\pm}) = \pm  Z_j^{\pm}.
\end{equation}
Moreover, $X$ and $V^{\pm}$ are in the kernel of $\ad(R_{jk})$ and $\ad(R'_{jk})$. That is,
\begin{equation}\label{e:comm-rel:2}
\ad(R_{jk})(X) =  \ad(R'_{jk})(X) = \ad(R_{jk})(V^{\pm}) = \ad(R'_{jk})(V^{\pm}) = 0.
 \end{equation} 

We identify elements of the Lie algebra $\mathfrak g$ with left-invariant vector fields
on the group $G$. The vector fields $X,V^\pm$ commute with the group $R$ from~\eqref{e:R-iso-def}
and thus descend to vector fields on the sphere bundle $S\mathbb{CH}^n$, which we denote by the
same letters.

The flow of~$X$,
\begin{equation}
  \label{e:Ham-flow-CH}
\varphi^t:=e^{tX}:S\mathbb{CH}^n\to S\mathbb{CH}^n
\end{equation}
is the geodesic flow for $(\mathbb{CH}^n,g)$.

\subsection{Unstable/stable spaces}

In this section we study the unstable/stable spaces
for the geodesic flow $\varphi^t$ on~$S\mathbb{CH}^n$.

\subsubsection{Construction of the spaces}
\label{s:complex-stun}

The unstable/stable decomposition for the flow $\varphi^t$
is the following $\varphi^t$-invariant decomposition of the tangent bundle
to $S\mathbb{CH}^n$:
\begin{equation}
  \label{e:stun-CH}
T(S\mathbb{CH}^n)=\mathbb RX\oplus E_u\oplus E_s,\quad
E_u=\mathbb RV^-\oplus E^-,\quad
E_s=\mathbb RV^+\oplus E^+.
\end{equation}
Here we call $E_u,E_s$ the \emph{unstable/stable subbundles} and
\begin{itemize}
\item $\mathbb RV^-$ the \emph{fast unstable} subbundle,
\item $\mathbb RV^+$ the \emph{fast stable} subbundle,
\item $E^-$ the \emph{slow unstable} subbundle, and
\item $E^+$ the \emph{slow stable} subbundle.
\end{itemize}
To define the slow unstable/stable subbundles, consider the
$2n-2$-dimensional subspaces
\begin{equation}
  \label{e:tilde-E-pm-def}
\widetilde E^\pm=\{A\in\mathfrak g\colon [X,A]=\pm A\}=
\Span\{W_j^\pm,Z_j^\pm\colon j=2,\dots,n\}.
\end{equation}
Since $X$ commutes with the group $R$, the spaces $\widetilde E^\pm$
are mapped to themselves by the adjoint representation of $R$.
This can also be seen as follows:
consider the real linear isomorphisms $\kappa_E^\pm:\mathbb C^{n-1}\to \widetilde E^\pm$ defined by
\begin{equation}
  \label{e:E-pm-identified}
\kappa_E^\pm(w_2,\dots,w_n)=\sum_{j=2}^n
(\Re w_j)W^\pm_j-(\Im w_j)Z^\pm_j.
\end{equation}
Then we have for all $r=\diag(e^{i\theta},e^{i\theta},B)\in R$ and $w\in \mathbb C^{n-1}$
\begin{equation}
  \label{e:rot-action-slow}
r\kappa_E^\pm(w) r^{-1}=\kappa_E^\pm(e^{-i\theta}B w).
\end{equation}
Consider the real inner product on $\widetilde E^\pm$ obtained
from the standard real inner product on $\mathbb C^{n-1}\simeq \mathbb R^{2n-2}$
using the map $\kappa_E^\pm$. From~\eqref{e:rot-action-slow} we see that
the adjoint action of $R$ on~$\widetilde E^\pm$ is isometric.

The subspaces $\widetilde E^\pm$ induce subbundles of the tangent space to the group $G$
via left-invariant vector fields; these subbundles come with a real inner product induced
by the one on $\widetilde E^\pm$ and the right action of the subgroup $R$ maps them
isometrically to themselves. Thus we can pass to the quotient
$S\mathbb{CH}^n$, obtaining the slow unstable/stable subbundles $E^\pm$
endowed with an inner product.

Fix a Riemannian metric on $S\mathbb{CH}^n$ by requiring that
$X,V^-,V^+,E^-,E^+$ be orthogonal to each other,
$X,V^-,V^+$ be unit length, and the metric on $E^\pm$ coincide with the one fixed above.
From~\eqref{e:comm-rel} we see that the decompositions~\eqref{e:stun-CH} are
preserved by the geodesic flow $\varphi^t$ and moreover we have the expansion/contraction
property for all $q\in S\mathbb{CH}^n$
\begin{equation}
  \label{e:stun-exp}
|d\varphi^t(q)w|=\begin{cases}
e^{\mp 2t}|w|,& w\in \mathbb RV^\pm(q);\\
e^{\mp t}|w|,& w\in E^\pm(q).
\end{cases}
\end{equation}
This justifies the terminology `fast/slow unstable/stable subbundle' since
the flow expands/contracts on $\mathbb RV^\pm$ twice as fast as on $E^\pm$.

For later use we compute here the action of elements of the lifted unstable/stable bundles on~$\mathbb C^{n,1}$:
for all $z\in\mathbb C^{n,1}$ and $w\in \mathbb C^{n-1}$ 
\begin{equation}
  \label{e:stun-matrix-action}
\begin{aligned}
V^\pm z&=-i\langle z,e_0\pm e_1\rangle_{\mathbb C^{n,1}}(e_0\pm e_1),\\
\kappa_E^\pm(w)z&=\langle z,(0,w)\rangle_{\mathbb C^{n,1}}(e_0\pm e_1)
-\langle z,e_0\pm e_1\rangle_{\mathbb C^{n,1}}(0,w).
\end{aligned}
\end{equation}
This implies the matrix product identities (true for any
$c\in\mathbb R$ and $w\in\mathbb C^{n-1}$)
\begin{equation}
  \label{e:N+product}
(cV^\pm+\kappa^\pm_E(w))^2=-i|w|^2V^\pm,\quad
(cV^\pm+\kappa^\pm_E(w))^3=0
\end{equation}
and the commutation identities (true for any $w,\widetilde w\in\mathbb C^{n-1}$)
\begin{equation}
  \label{e:N+comm}
[V^\pm,\widetilde E^\pm]=0,\quad
[\kappa^\pm_E(w),\kappa^\pm_E(\widetilde w)]=-2\Im\langle w,\widetilde w\rangle_{\mathbb C^{n-1}}V^\pm.
\end{equation}

\subsubsection{Extension to the cotangent bundle}
\label{s:extend-cotangent}

This paper uses semiclassical analysis (see~\S\ref{s:review-semi} below),
the phase space for which is given by the cotangent bundle $T^*\mathbb{CH}^n$.
We thus need to bring the unstable/stable decomposition defined above
to the cotangent bundle.
We identify $T^*\mathbb{CH}^n$ with $T\mathbb{CH}^n$
using the complex hyperbolic metric~$g$. Denote
$$
T^*\mathbb{CH}^n\setminus 0:=\{(z,\zeta)\in T^*\mathbb{CH}^n\mid \zeta\neq 0\}.
$$
We extend the spaces $E_u,E_s$ from $S^*\mathbb{CH}^n\simeq S\mathbb{CH}^n$ to $T^*\mathbb{CH}^n\setminus 0$
by making them positively homogeneous, i.e. equivariant under the dilation map $(z,\zeta)\to (z,\tau\zeta)$ for $\tau>0$. Same applies to the vector fields $V^\pm$
and the spaces $E^\pm$.
Similarly we extend homogeneously the vector field $X$ to $T^*\mathbb{CH}^n\setminus 0$, and the flow~\eqref{e:Ham-flow-CH} extends to the
homogeneous geodesic flow
\begin{equation}
  \label{e:Ham-flow-CH-2}
\varphi^t=e^{tX}:T^*\mathbb{CH}^n\setminus 0\to T^*\mathbb{CH}^n\setminus 0.
\end{equation}
Introduce also the vector field
$$
\zeta\cdot\partial_\zeta
$$
on $T^*\mathbb{CH}^n$, which is the generator of dilations in the fibers.
Note that our choice of the extensions of $X,V^\pm$ from $S^*\mathbb{CH}^n$ to $T^*\mathbb{CH}^n$
implies that these vector fields commute with $\zeta\cdot\partial_\zeta$.

\subsubsection{Integrability of the weak unstable/stable foliations}

We will use semiclassical calculi associated to the weak unstable/stable bundles
(see~\S\ref{s:calculus-lagrangian} below), defined as follows:
\begin{equation}
  \label{e:L-def}
L_u:=\mathbb RX\oplus E_u,\quad
L_s:=\mathbb RX\oplus E_s.
\end{equation}
For that we will need to show that the bundles $L_u,L_s$
are integrable (in the sense of Frobenius) and
Lagrangian with respect to the standard symplectic form $\omega$ on $T^*\mathbb{CH}^n$.
We start with integrability; it follows
from the Unstable/Stable Manifold Theorem (see e.g.~\cite[\S6.1]{Fisher-Hasselblatt} or~\cite{Dyatlov-hypdyn}), but here we give a direct proof
by computation:
\begin{lemm}
\label{l:E-u-integrable}
Assume that $Y_1,Y_2$ are vector fields on $T^*\mathbb{CH}^n\setminus 0$ tangent to $L_u$
(at every point). Then
the Lie bracket $[Y_1,Y_2]$ is also tangent to $L_u$. The same is true
with $L_u$ replaced by~$L_s$. 
\end{lemm}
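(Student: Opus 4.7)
The plan is to deduce integrability from a Lie-algebraic computation. The key observation is that the real subspace
\[
\mathfrak{l}_u := \mathbb{R}X \oplus \mathbb{R}V^- \oplus \widetilde{E}^- \subseteq \mathfrak{g}
\]
is a Lie subalgebra. Indeed, from the eigenvalue relations \eqref{e:comm-rel} we have $[X,V^-]=-2V^-$ and $\ad(X)\widetilde{E}^-\subseteq \widetilde{E}^-$, while \eqref{e:N+comm} yields $[V^-,\widetilde{E}^-]=0$ together with $[\widetilde{E}^-,\widetilde{E}^-]\subseteq\mathbb{R}V^-$. All brackets therefore remain in $\mathfrak{l}_u$, so it is closed under the bracket.

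To transfer this closure to $L_u$ as a distribution on $T^*\mathbb{CH}^n \setminus 0$, I would first work on $G$. The left-invariant distribution associated to $\mathfrak{l}_u$ is involutive by the Frobenius criterion, and adjoining the $R$-fiber direction gives the distribution associated to $\mathfrak{l}_u + \mathfrak{r}$, still a Lie subalgebra because $\mathfrak{r}$ is itself a subalgebra, centralizes $X$ and $V^\pm$ by \eqref{e:comm-rel:2}, and preserves $\widetilde E^\pm$ under the adjoint action by \eqref{e:rot-action-slow}. This enlarged distribution is the pullback of $L_u$ under the submersion $G\to G/R = S\mathbb{CH}^n$, so its involutivity forces that of $L_u$ on $S\mathbb{CH}^n$. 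Concretely, given vector fields $Y_1,Y_2$ tangent to $L_u$ near a point, one picks a local section $\sigma: U\to G$ of the principal $R$-bundle, expands each $Y_i$ as a smooth-coefficient combination of pushforwards of the left-invariant fields associated to elements of $\mathfrak{l}_u$, and applies Leibniz: the coefficient-derivative terms lie automatically in $L_u$, while the remaining terms lie in $L_u$ by the Lie algebra closure from Step~1.

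Extension to $T^*\mathbb{CH}^n \setminus 0$ is then handled by homogeneity: since $L_u$ is dilation-invariant by construction and projects compatibly to its restriction on $S^*\mathbb{CH}^n$ under the radial map $(z,\zeta)\mapsto(z,\zeta/|\zeta|_g)$, involutivity on the cosphere bundle lifts, as the radial direction contributes only dilation-equivariant correction terms already lying in $L_u$. The argument for $L_s$ is identical with $V^-,\widetilde E^-$ replaced by $V^+,\widetilde E^+$. The main subtlety I anticipate is that the individual left-invariant fields $W_j^-, Z_j^-$ fail to be $R$-invariant and so do not descend to globally defined vector fields on $S\mathbb{CH}^n$; only the subspace $\widetilde E^-$ descends, as a distribution. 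This forces the bracket computation to be phrased at the level of the subspace $\mathfrak{l}_u$ (or in a local trivialization of the $R$-bundle) rather than in a fixed global frame, but once this reformulation is made the proof reduces to the commutation relations already established in~\S\ref{sec:commutation}.
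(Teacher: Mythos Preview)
Your proposal is correct and follows essentially the same approach as the paper: both reduce to $S\mathbb{CH}^n$, lift to $G$, and use that $\mathfrak{l}_u=\mathbb{R}X\oplus\mathbb{R}V^-\oplus\widetilde E^-$ is a Lie subalgebra via the commutation relations~\eqref{e:comm-rel} and~\eqref{e:N+comm}. The only cosmetic difference is that the paper chooses lifts $\widetilde Y_j$ lying directly in the $C^\infty(G)$-span of $\mathfrak{l}_u$ (avoiding the need to adjoin $\mathfrak r$), whereas you work with the full pullback distribution $\mathfrak{l}_u+\mathfrak r$ and check separately that this is a subalgebra; both routes are valid and amount to the same computation.
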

\begin{proof}
We consider the case of $L_u$, with the case of $L_s$ handled similarly.
It suffices to show the same property for vector fields on $S\mathbb{CH}^n$.
Denote by $\widetilde\pi_R:G\to S\mathbb{CH}^n$ the projection map
induced by~\eqref{e:CH-quotient}. Let $\widetilde Y_1,\widetilde Y_2$
be vector fields on $G$ which are lifts of $Y_1,Y_2$ in the sense
that $d\widetilde\pi_R(g)\widetilde Y_j(g)=Y_j(\widetilde\pi_R(g))$
for all $g\in G$.
Then $[\widetilde Y_1,\widetilde Y_2]$ is a lift of $[Y_1,Y_2]$.

Recalling the definition of $L_u$, we see that $\widetilde Y_1,\widetilde Y_2$ can be chosen as linear combinations with coefficients in $C^\infty(G)$ of
the left-invariant vector fields in the subspace
$\mathfrak l^-:=\mathbb RX\oplus \mathbb RV^-\oplus \widetilde E^-\subset\mathfrak g$.
As follows from~\eqref{e:comm-rel} and~\eqref{e:N+comm}, $\mathfrak l^-$ is a Lie subalgebra of~$\mathfrak g$,
so $[\widetilde Y_1,\widetilde Y_2]$ is a linear combination
of elements of $\mathfrak l^-$ as well, which implies
that its projection $[Y_1,Y_2]$ is tangent to $L_u$ as needed.
\end{proof}

\subsubsection{Symplectic structure}

We next study the behavior of the standard symplectic form $\omega$ on~$T^*\mathbb{CH}^n$
with respect to the decomposition
\begin{equation}
  \label{e:decor}
T(T^*\mathbb{CH}^n\setminus 0)=\mathbb R(\zeta\cdot\partial_\zeta)\oplus \mathbb RX
\oplus E_u\oplus E_s
\end{equation}
where we recall from~\eqref{e:stun-CH} that $E_u=\mathbb RV^-\oplus E^-$
and $E_s=\mathbb RV^+\oplus E^+$.
\begin{lemm}
  \label{l:E-su-symplectic}
We have
\begin{equation}
  \label{e:E-su-symplectic}
\begin{aligned}
\omega(\mathbb R(\zeta\cdot\partial_\zeta)\oplus\mathbb RX,
E_u\oplus E_s)&=0,\\
\omega(E_u,E_u)&=0,\\
\omega(E_s,E_s)&=0,\\
\omega(V^\pm,E^\mp)&=0.
\end{aligned}
\end{equation}
\end{lemm}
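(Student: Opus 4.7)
The plan is to combine three ingredients to obtain every vanishing identity in~\eqref{e:E-su-symplectic} by the same exponential-decay argument: the $\varphi^t$-invariance of $\omega$, the rescaling formula~\eqref{e:stun-exp}, and the $SU(n,1)$-invariance of the chosen frame.

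First, I would record that $\varphi^t$ preserves $\omega$, since it is the Hamiltonian flow of a function of $|\zeta|_g$, and that $d\varphi^t$ fixes both $X$ (by definition) and the Euler field $\zeta\cdot\partial_\zeta$ (because $\varphi^t$ commutes with dilations in~$\zeta$). By~\eqref{e:stun-exp}, unit sections of $\mathbb RV^\pm$ and $E^\pm$ are rescaled under $d\varphi^t$ by $e^{\mp 2t}$ and $e^{\mp t}$ respectively. Thus every vector $w$ in one of the six factors appearing in the decomposition~\eqref{e:decor} has a well-defined rate $\lambda(w)\in\{0,\pm 1,\pm 2\}$.

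Second, I would use $G=SU(n,1)$-invariance to obtain a uniform bound on $\omega$ in the frame. The form $\omega$ is $G$-invariant, as are all six factors $\mathbb RX,\mathbb R(\zeta\cdot\partial_\zeta),\mathbb RV^\pm,E^\pm$, since they descend from left-invariant data on $G$. Since $G$ acts transitively on $S^*\mathbb{CH}^n$, the coefficients of $\omega$ in the frame $\{X,\zeta\cdot\partial_\zeta,V^\pm,W_j^\pm,Z_j^\pm\}$ are $G$-invariant functions, hence constant; in particular there is a uniform constant $C$ with $|\omega(w_1,w_2)|\leq C|w_1|\,|w_2|$ for frame vectors.

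Third, combining $\omega_q(w_1,w_2)=\omega_{\varphi^t(q)}(d\varphi^t w_1,d\varphi^t w_2)$ with the two previous steps yields
\[
|\omega_q(w_1,w_2)|\leq C\, e^{(\lambda(w_1)+\lambda(w_2))t}|w_1|_q|w_2|_q, \qquad t\in\mathbb R.
\]
A quick case check shows that in every pairing appearing in~\eqref{e:E-su-symplectic} the sum $\lambda(w_1)+\lambda(w_2)$ is nonzero, with sign determined by whether the arguments lie on the unstable or stable side: pairs inside $E_u\times E_u$ or the mixed pair $(V^-,E^+)$ give strictly positive sum, pairs inside $E_s\times E_s$ or $(V^+,E^-)$ give strictly negative sum, and the first line of~\eqref{e:E-su-symplectic} reduces to pairings with $\lambda=0$ against a strictly positive or strictly negative rate. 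Sending $t\to-\infty$ in the positive case and $t\to+\infty$ in the negative case drives the right-hand side to zero, forcing $\omega_q(w_1,w_2)=0$ pointwise on $S^*\mathbb{CH}^n$, and then on all of $T^*\mathbb{CH}^n\setminus 0$ by homogeneity.

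The delicate point — and the reason the $G$-invariance step is essential — is securing a uniform bound on $\omega$ in the frame along orbits of $\varphi^t$: since $\mathbb{CH}^n$ is noncompact, one cannot simply invoke compactness of $S^*\mathbb{CH}^n$, and a naive norm argument would be circular. Once that uniform bound is in hand, the entire lemma is just an exponential growth/decay incompatibility.
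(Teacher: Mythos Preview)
Your proof is correct and follows essentially the same approach as the paper: both use that $\varphi^t$ is a symplectomorphism together with the expansion/contraction rates~\eqref{e:stun-exp}, obtain a uniform bound on $\omega$ in the frame via the transitive $G$-action (the paper invokes this explicitly for the same reason you flag, namely noncompactness of $S^*\mathbb{CH}^n$), and then let $t\to\pm\infty$ to force each pairing to vanish. The paper writes out only the case $\omega(V^+,E^-)$ and declares the rest similar, whereas you organize the case check more systematically via the rate function~$\lambda$, but the underlying argument is identical.
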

\begin{proof}
This can be shown by direct computation, but we instead use
the expansion/contraction property of the spaces involved with respect
to the flow $\varphi^t$. We show the last statement in~\eqref{e:E-su-symplectic}
for the pairing of $V^+$ with $E^-$,
with the rest proved similarly. It suffices to show this statement
on $S^*\mathbb{CH}^n\simeq S\mathbb{CH}^n$. Take $q\in S^*\mathbb{CH}^n$
and $W\in E^-(q)$. The flow $\varphi^t$ is a symplectomorphism
(as it is the Hamiltonian flow of~$|\xi|_g$), thus we have for all~$t\in\mathbb R$
$$
\omega(V^+(q),W)=\omega(d\varphi^t(q)V^+(q),d\varphi^t(q)W).
$$
The metric on $S^*\mathbb{CH}^n$ introduced before~\eqref{e:stun-exp}
is invariant under the transitive left action of the isometry group $G$,
and so is the symplectic form~$\omega$. Therefore,
the action of $\omega$ on a pair of vectors can be estimated in terms
of the norms of these vectors.
It follows that there exists a constant $C$ such that for all $t$
\begin{equation}
  \label{e:E-su-symint}
|\omega(V^+(q),W)|\leq C|d\varphi^t(q)V^+(q)|\cdot |d\varphi^t(q)W|.
\end{equation}
By~\eqref{e:stun-exp}, the right-hand side of~\eqref{e:E-su-symint}
is equal to $Ce^{-t}|V^+(q)|\cdot |W|$. Taking $t\to \infty$,
we see that $\omega(V^+(q),W)=0$ as needed.
\end{proof}
From Lemma~\ref{l:E-su-symplectic} we immediately obtain
\begin{corr}
\label{l:l-su-lagr}
For each $q\in T^*\mathbb{CH}^n\setminus 0$,
the spaces $L_u(q)$ and $L_s(q)$ are Lagrangian, that is they have dimension~$2n$
and the symplectic form $\omega$ vanishes on them.
\end{corr}
Another consequence of Lemma~\ref{l:E-su-symplectic} is the existence
of special symplectic coordinates, used in~\S\ref{s:proof-of-porosity},\S\ref{s:end-proof} below, which
straighten out at one point the decomposition~\eqref{e:decor}:
\begin{lemm}
  \label{l:straighten-out}
Fix $q^0\in T^*\mathbb{CH}^n\setminus 0$.
Then there exists a neighborhood
$U_0$ of $q^0$ in $T^*\mathbb{CH}^n$ and a symplectomorphism onto its image
$\varkappa_0:U_0\to T^*\mathbb R^{2n}$, such that, denoting by
$(y_1,\dots,y_{2n})$ the coordinates on $\mathbb R^{2n}$ and by $(\eta_1,\dots,\eta_{2n})$ the corresponding coordinates on the fibers of~$T^*\mathbb R^{2n}$, we have
\begin{align}
  \label{e:straighten-out-1}
\varkappa_0(q^0)&=0,\\
  \label{e:straight-out-2}
d\varkappa_0(q^0)(V^+(q^0))&\in \mathbb R\partial_{y_1},\\
  \label{e:straight-out-3}
d\varkappa_0(q^0)(V^-(q^0))&\in \mathbb R\partial_{\eta_1},\\
  \label{e:straight-out-4}
d\varkappa_0(q^0)(E^+(q^0))&= \Span(\partial_{y_2},\dots,\partial_{y_{2n-1}}),\\
  \label{e:straight-out-5}
d\varkappa_0(q^0)(E^-(q^0))&= \Span(\partial_{\eta_2},\dots,\partial_{\eta_{2n-1}}),\\
  \label{e:straight-out-6}
d\varkappa_0(q^0)(X(q^0))&\in \mathbb R\partial_{y_{2n}},\\
  \label{e:straight-out-7}
d\varkappa_0(q^0)(\zeta\cdot\partial_\zeta(q^0))&\in \mathbb R\partial_{\eta_{2n}}.
\end{align}
\end{lemm}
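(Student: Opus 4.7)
The plan is to reduce the lemma to two ingredients: the block structure of the symplectic form $\omega$ at $q^0$ provided by Lemma~\ref{l:E-su-symplectic}, and a standard Darboux theorem with prescribed differential at a single point. Since the conclusions (\ref{e:straighten-out-1})--(\ref{e:straight-out-7}) are purely linear conditions at $q^0$, I would first solve the analogous problem on the level of tangent spaces and then extend to a local symplectomorphism via Moser's trick.

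First I would check that at $q^0$ the tangent space decomposes as a symplectically orthogonal direct sum of three symplectic subspaces,
$$
T_{q^0}(T^*\mathbb{CH}^n) \;=\; \bigl(\mathbb{R}X \oplus \mathbb{R}(\zeta\cdot\partial_\zeta)\bigr) \;\oplus\; \bigl(\mathbb{R}V^+ \oplus \mathbb{R}V^-\bigr) \;\oplus\; \bigl(E^+\oplus E^-\bigr),
$$
each summand consisting of two complementary Lagrangian subspaces. Lemma~\ref{l:E-su-symplectic} directly gives $\omega$-orthogonality of the first block with the other two. For the second and third blocks, $\omega(\mathbb{R}V^\pm, E^+\oplus E^-)$ vanishes because $\omega(V^\pm, E^\mp)=0$ by the lemma and $\omega(V^\pm, E^\pm)\subset\omega(E_s,E_s)\cup\omega(E_u,E_u)=0$. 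Since $\omega$ is nondegenerate on $T_{q^0}(T^*\mathbb{CH}^n)$, it must be nondegenerate on each of the three blocks, and the isotropy statements from Lemma~\ref{l:E-su-symplectic} force $\omega(X(q^0),(\zeta\cdot\partial_\zeta)(q^0))\neq 0$, $\omega(V^+(q^0),V^-(q^0))\neq 0$, and the pairing $(v,w)\mapsto\omega(v,w)$ identifies $E^-(q^0)$ with the dual of $E^+(q^0)$.

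Using this, I would construct a linear symplectic isomorphism $L\colon T_{q^0}(T^*\mathbb{CH}^n)\to T_0(T^*\mathbb{R}^{2n})$ realizing (\ref{e:straight-out-2})--(\ref{e:straight-out-7}): rescale $X(q^0)$ and $(\zeta\cdot\partial_\zeta)(q^0)$ so that they are sent to $\partial_{y_{2n}}$ and $\pm\partial_{\eta_{2n}}$ (the sign chosen to match the sign of their $\omega$-pairing with the standard one on $T^*\mathbb{R}^{2n}$), rescale $V^\pm(q^0)$ to be sent to $\partial_{y_1}$ and $\pm\partial_{\eta_1}$ analogously, and finally pick a basis $e_2^+,\dots,e_{2n-1}^+$ of $E^+(q^0)$ together with the $\omega$-dual basis $e_2^-,\dots,e_{2n-1}^-$ of $E^-(q^0)$ and send them to $\partial_{y_2},\dots,\partial_{y_{2n-1}}$ and $\partial_{\eta_2},\dots,\partial_{\eta_{2n-1}}$ respectively. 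By the block decomposition and the choice of dual bases, $L$ intertwines $\omega|_{q^0}$ with the standard symplectic form $\omega_{\mathrm{std}}$ on $T^*\mathbb{R}^{2n}$.

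Finally I would promote $L$ to a local symplectomorphism. Choose any diffeomorphism $\Phi_0$ from a neighborhood of $q^0$ to a neighborhood of $0$ in $T^*\mathbb{R}^{2n}$ with $\Phi_0(q^0)=0$ and $d\Phi_0(q^0)=L$. Then $\Phi_0^*\omega_{\mathrm{std}}$ and $\omega$ are two symplectic forms that coincide at $q^0$; by Moser's homotopy argument applied to the interpolation $\omega_t = (1-t)\omega + t\Phi_0^*\omega_{\mathrm{std}}$, there is a local diffeomorphism $\Psi$ fixing $q^0$ with $d\Psi(q^0)=\mathrm{id}$ and $\Psi^*(\Phi_0^*\omega_{\mathrm{std}})=\omega$. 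The composition $\varkappa_0:=\Phi_0\circ\Psi$ is then a symplectomorphism with $\varkappa_0(q^0)=0$ and $d\varkappa_0(q^0)=L$, yielding all of (\ref{e:straighten-out-1})--(\ref{e:straight-out-7}). The main obstacle is the linear step: exactly verifying that Lemma~\ref{l:E-su-symplectic} delivers the full three-block decomposition with the correct pairings between $V^\pm$ and between $E^\pm$; once that is in hand, the Darboux extension is routine.
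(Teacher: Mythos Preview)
Your proposal is correct and follows essentially the same approach as the paper: use Lemma~\ref{l:E-su-symplectic} to build a symplectic basis of $T_{q^0}(T^*\mathbb{CH}^n)$ adapted to the decomposition $\mathbb R V^+\oplus E^+\oplus \mathbb R X\oplus \mathbb R V^-\oplus E^-\oplus \mathbb R(\zeta\cdot\partial_\zeta)$, then invoke Darboux to produce a local symplectomorphism with that prescribed differential. The paper states the last step more tersely (``take a symplectomorphism $\varkappa_0$ such that $d\varkappa_0(q^0)$ maps $\mathbf e_j\mapsto\partial_{y_j}$, $\mathbf f_k\mapsto\partial_{\eta_k}$''), whereas you spell out the Moser argument; your three-block presentation of the linear step is a slight repackaging of the paper's direct construction of $\mathbf e_j,\mathbf f_k$, but the content is the same.
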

\begin{proof}
Put $\mathbf e_1:=V^+(q^0),\mathbf e_{2n}:=X(q^0)$, and let
$\mathbf e_2,\dots,\mathbf e_{2n-1}$ be a basis of $E^+(q^0)$.
By Lemma~\ref{l:E-su-symplectic}, the symplectic complement
of $V^+$ is given by $\Span(\zeta\cdot\partial_\zeta,X,V^+)
\oplus E^+\oplus E^-$, which has trivial intersection with $V^-$. Therefore,
there exists $\mathbf f_1\in \mathbb R V^-(q^0)$ such that $\omega(\mathbf f_1,\mathbf e_1)=1$.
The symplectic complement of $E^+$ is given by $\Span(\zeta\cdot\partial_\zeta,X,V^+,V^-) \oplus E^+$, thus the symplectic form $\omega$ is nondegenerate
when restricted to $E^+\times E^-$. It follows that there exists
a basis $\mathbf f_2,\dots,\mathbf f_{2n-1}$ of $E^-(q_0)$ such that
$\omega(\mathbf f_j,\mathbf e_k)=\delta_{jk}$.
Finally, the symplectic complement of $\mathbb RX$ is given by
$\Span(X,V^+,V^-)\oplus E^+\oplus E^-$, thus there exists $\mathbf f_{2n}\in\mathbb R(\zeta\cdot\partial_\zeta)(q^0)$ such that $\omega(\mathbf f_{2n},\mathbf e_{2n})=1$.

It follows from the construction above and Lemma~\ref{l:E-su-symplectic} that
$\mathbf e_1,\dots,\mathbf e_{2n},\mathbf f_1,\dots,\mathbf f_{2n}$
forms a symplectic basis of~$T_{q^0}(T^*\mathbb{CH}^n)$ with respect to $\omega$. 
It remains to take
a symplectomorphism $\varkappa_0$ such that $d\varkappa_0(q^0)$
maps $\mathbf e_j$ to $\partial_{y_j}$ and $\mathbf f_k$ to $\partial_{\eta_k}$.
\end{proof}
Define the following complements of the fast unstable/stable 
spaces $\mathbb RV^\pm$:
\begin{equation}
  \label{e:V-perp-def}
V_\perp^\pm:=\mathbb R(\zeta\cdot\partial_\zeta)\oplus\mathbb RX\oplus \mathbb RV^\mp
\oplus E^+\oplus E^-.
\end{equation}
Then Lemma~\ref{l:straighten-out} implies that
\begin{align}
  \label{e:straighten-out-2}
d\varkappa_0(q^0)V_\perp^+(q^0)&=\ker dy_1,\\
  \label{e:straighten-out-3}
d\varkappa_0(q^0)V_\perp^-(q^0)&=\ker d\eta_1.
\end{align}

\subsection{Complex hyperbolic quotients}
\label{s:quotients-def}

Assume now that $M$ is a compact complex hyperbolic quotient, that is a compact
Riemannian manifold of the form
$$
M=\Gamma\backslash \mathbb{CH}^n
$$
where $\Gamma\subset G$ is a co-compact discrete subgroup acting freely
and the metric on~$M$ is descended from the complex hyperbolic metric on $\mathbb{CH}^n$.
For a discussion of known constructions of such $\Gamma$, see~\S\ref{sec:geodesicsubman}.
Using~\eqref{e:CH-quotient} we can write $M$ and its sphere bundle $SM$ as double quotients
of the group~$G$:
\begin{equation}
  \label{e:M-quotient}
M\simeq \Gamma\backslash G / K,\qquad
SM\simeq \Gamma\backslash G / R\simeq \Gamma\backslash S\mathbb{CH}^n.
\end{equation}
We have the following commutative diagram of quotient maps:
\begin{equation}\label{eqn:commtriangle}
\begin{gathered}
\xymatrix{
& & \mathbb{CH}^n \ar[ddd]^{\pi_M} \\
G \ar[r]_{\widetilde\pi_R} \ar[urr]^{\widetilde\pi_K} \ar[d]_{\pi_\Gamma} & S\mathbb{CH}^n \ar[ur]_{\widetilde\pi_S} \ar[d]^{\pi_{SM}} & \\
\Gamma\backslash G \ar[r]^{\pi_R} \ar[drr]_{\pi_K} & SM \ar[dr]^{\pi_S} &  \\
& & M
}
\end{gathered}
\end{equation}
The vector fields $X,V^\pm$ and the spaces $E^\pm$ defined in~\S\ref{s:complex-stun}
are invariant under the left action of $G$ on $S\mathbb{CH}^n$ and thus descend to~$SM$ via
the projection $\pi_{SM}$. In particular,
the unstable/stable decomposition~\eqref{e:stun-CH} and the expansion/contraction property~\eqref{e:stun-exp}
still hold on~$SM$.

\subsubsection{Slow unstable/stable rectangles}
\label{s:slow-stun}

We finally state a result about the propagation of `rectangles' which have
size $\alpha\ll 1$ in the direction of the space $V_\perp^\pm$ defined in~\eqref{e:V-perp-def} and size $\alpha^2$ in the transversal direction of $V^\pm$. This statement is used in the proof of Lemma~\ref{l:Omega-approx} below. This is an important step in the proof of the porosity property
needed to apply the Fractal Uncertainty Principle: this is where we use
that the expansion rate along the slow unstable/stable directions is less than
along the fast directions.

We remark that the subbundles $V_\perp^\pm\subset T(T^*\mathbb{CH}^n\setminus 0)$
are not Frobenius integrable, as can be seen by following
the proof of Lemma~\ref{l:E-u-integrable} and using~\eqref{e:N+comm}:
the Lie bracket of two vector fields tangent to $E^\pm$ can be a nonzero multiple
of $V^\pm$. Nevertheless, the rectangles used below are canonically defined up to multiplying $\alpha$ by a constant.
\begin{lemm}
  \label{l:rectangle-propagate}
Assume that $q^0\in T^*M\setminus 0$, $U_0$ is an open set containing $q^0$, and
$\varkappa_0:U_0\to T^*\mathbb R^{2n}$ is a diffeomorphism onto its image satisfying
the properties~\eqref{e:straighten-out-1} and~\eqref{e:straighten-out-2}--\eqref{e:straighten-out-3}.
Take small $\alpha>0$ and two numbers $y_1^0,\eta_1^0\in [-\alpha,\alpha]$, and define the slow unstable/stable rectangles (which are subsets of~$T^*M\setminus 0$)
\begin{equation}
  \label{e:rect-def}
\begin{aligned}
\mathcal R^-_{q_0,\eta_1^0,\alpha}&\,:=\varkappa_0^{-1}\big(\{(y,\eta)\colon |y|+|\eta|\leq \alpha,\ |\eta_1-\eta_1^0|\leq\alpha^2\}\big),\\
\mathcal R^+_{q_0,y_1^0,\alpha}&\,:=\varkappa_0^{-1}\big(\{(y,\eta)\colon |y|+|\eta|\leq\alpha,\
|y_1-y_1^0|\leq \alpha^2\}\big).
\end{aligned}
\end{equation}
Then there exists a constant $C$ independent of $\alpha,y_1^0,\eta_1^0$ such that, denoting by $\diam$ the diameter of a subset of $T^*M$, we have for all $t\geq 0$
\begin{align}
  \label{e:rect-prop-1}
\diam \varphi^t(\mathcal R^-_{q_0,\eta_1^0,\alpha})&\leq C\alpha e^t,\\
  \label{e:rect-prop-2}
\diam \varphi^{-t}(\mathcal R^+_{q_0,y_1^0,\alpha})&\leq C\alpha e^t.
\end{align}
\end{lemm}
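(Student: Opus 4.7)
The plan is to prove~\eqref{e:rect-prop-1}; the symmetric estimate~\eqref{e:rect-prop-2} follows by the time-reversed argument with the roles of $+/-$ interchanged. The structural input underlying everything is that the splitting $T(T^*M\setminus 0) = \mathbb RV^- \oplus V_\perp^-$ is $\varphi^t$-invariant (each summand in~\eqref{e:V-perp-def} is individually flow-invariant by the discussion around~\eqref{e:stun-exp}) and that, under forward propagation, $d\varphi^t$ expands vectors in $\mathbb RV^-$ by exactly $e^{2t}$, whereas the operator norm of $d\varphi^t\big|_{V_\perp^-}$ is bounded by $Ce^t$, the maximum being attained on the slow unstable summand $E^-$ (the remaining summands $\mathbb R V^+, E^+, \mathbb RX, \mathbb R(\zeta\cdot\partial_\zeta)$ being contracted or fixed under forward flow).

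Writing $F := \varphi^t \circ \varkappa_0^{-1}$, I would bound $|F(p_2) - F(p_1)|$ for any $p_1, p_2 \in \varkappa_0(\mathcal R^-_{q_0,\eta_1^0,\alpha})$ by integrating $dF$ along the Euclidean segment between them. The displacement splits as $p_2 - p_1 = a\partial_{\eta_1} + w$ with $|a| \le 2\alpha^2$ and $w \in \ker d\eta_1$ of norm at most $2\alpha$. By~\eqref{e:straight-out-3} and~\eqref{e:straighten-out-3}, at $p=0$ the vector $\partial_{\eta_1}$ pulls back into $\mathbb RV^-(q^0)$ and $\ker d\eta_1$ pulls back onto $V_\perp^-(q^0)$. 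At a nearby point $p$ with $|p|\le C\alpha$, smoothness of $\varkappa_0$ and of the invariant subbundles gives decompositions in which $d\varkappa_0^{-1}(p)\partial_{\eta_1}$ leaks into $V_\perp^-(q)$ by an amount $O(\alpha)$, and $d\varkappa_0^{-1}(p)w$ leaks into $\mathbb RV^-(q)$ by an amount $O(\alpha|w|)$. Applying $d\varphi^t(q)$ and the expansion rates above, the leakage of $w$ into $\mathbb RV^-$ contributes $O(\alpha|w|)\cdot e^{2t} = O(\alpha^2 e^{2t})$ to $|dF(p)w|$, while the dominant $V_\perp^-$-part of $w$ contributes $O(|w|)\cdot e^t = O(\alpha e^t)$; the $a\partial_{\eta_1}$ component contributes at most $O(|a|)\cdot e^{2t} = O(\alpha^2 e^{2t})$. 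Integrating along the segment yields
\[ |F(p_2) - F(p_1)| \le C(\alpha^2 e^{2t} + \alpha e^t) = C\alpha e^t(1 + \alpha e^t). \]

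A case split closes the argument. For $t$ with $\alpha e^t \le 1$ the bound is $\le 2C\alpha e^t$. For $\alpha e^t > 1$ the image $\varphi^t(\mathcal R^-_{q_0,\eta_1^0,\alpha})$ lies in a fixed compact region of $T^*M$ (since $\varphi^t$ preserves $|\xi|_g$ while projecting to the compact manifold $M$), so its diameter is bounded by a universal constant $D$, which in turn is dominated by $C\alpha e^t$ once $C \ge D$. The main technical obstacle is quantifying the $O(\alpha)$ misalignment between the coordinate decomposition and the invariant subbundles $\mathbb RV^-, V_\perp^-$ away from the base point $q^0$; the thinness $\alpha^2$ of the rectangle in the fast unstable coordinate is exactly what prevents the $e^{2t}$ expansion from dominating, leaving only the benign growth rate $\alpha e^t$.
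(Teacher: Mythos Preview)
Your proposal is correct and follows essentially the same approach as the paper's proof: both integrate the differential of $\varphi^t\circ\varkappa_0^{-1}$ along the straight segment joining two points of $\varkappa_0(\mathcal R^-)$, decompose the displacement into its $\partial_{\eta_1}$ component (size $\le 2\alpha^2$) and its $\ker d\eta_1$ component (size $\le 2\alpha$), use the $O(\alpha)$ misalignment between the coordinate splitting and the invariant splitting $\mathbb RV^-\oplus V_\perp^-$ away from the base point, and arrive at the bound $C\alpha e^t + C\alpha^2 e^{2t}$, which is then absorbed into $C\alpha e^t$ using that the image stays in a fixed compact subset of $T^*M$. Your justification for this last compactness step (preservation of $|\xi|_g$ by the flow together with compactness of $M$) is in fact slightly more explicit than the paper's.
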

\begin{proof}
1. We show~\eqref{e:rect-prop-1}, with~\eqref{e:rect-prop-2} proved similarly.
Take arbitrary $q$ such that $\varkappa_0(q)\in \{|y|+|\eta|\leq\alpha\}$.
We will estimate the images of the coordinate vector fields
by the map $d\varphi^t(q)d\varkappa_0(q)^{-1}:\mathbb R^{4n}\to T_{\varphi^t(q)}(T^*M)$. We first have
\begin{equation}
  \label{e:rect-int-1}
|d\varphi^t(q)d\varkappa_0(q)^{-1}\partial_{\eta_1}|\leq Ce^{2t}. 
\end{equation}
This follows from the general bound $\|d\varphi^t(q)\|\leq Ce^{2t}$,
which in turn follows from~\eqref{e:stun-exp}
and the fact that $d\varphi^t$ preserves the vector fields
$\zeta\cdot\partial_\zeta$ and~$X$.

We next have
\begin{equation}
  \label{e:rect-int-2}
W\in \{\partial_{y_1},\dots,\partial_{y_{2n}},\partial_{\eta_2},\dots,\partial_{\eta_{2n}}\}
\ \Rightarrow\
|d\varphi^t(q)d\varkappa_0(q)^{-1}W|\leq C\alpha e^{2t}+Ce^t.
\end{equation}
Indeed, since $d\varkappa_0(q^0)^{-1}W\in V_\perp^-(q^0)$
and $d(q,q^0)\leq C\alpha$, we can write
$$
d\varkappa_0(q)^{-1}W=cV^-(q)+W_\perp\quad\text{where }
W_\perp\in V_\perp^-(q),\
|c|\leq C\alpha.
$$
Using~\eqref{e:stun-exp} again, we see that
$$
|d\varphi^t(q)V^-(q)|\leq Ce^{2t},\quad
|d\varphi^t(q)W_\perp|\leq Ce^t,
$$
which gives~\eqref{e:rect-int-2}.

\noindent 2. Take arbitrary $q^1,q^2\in \mathcal R^-_{q^0,\eta_1^0,\alpha}$. Define the path
$q(s)\in T^* M$, $0\leq s\leq 1$, by the formula
$$
\varkappa_0(q(s))=(1-s)\varkappa_0(q^1)+s\varkappa_0(q^2).
$$
Then
\begin{equation}
  \label{e:rect-int-3}
\begin{aligned}
d(\varphi^t(q^1),\varphi^t(q^2))&=d\big(\varphi^t(q(0)),\varphi^t(q(1))\big)
\leq \max_{0\leq s\leq 1}|\partial_s\varphi^t(q(s))|\\
&= \max_{0\leq s\leq 1}
\big|d\varphi^t(q(s))d\varkappa_0(q(s))^{-1} (\varkappa_0(q^2)-\varkappa_0(q^1))\big|.
\end{aligned}
\end{equation}
From the definition of $\mathcal R^-_{q^0,\eta_1^0,\alpha}$ we see that
$$
\varkappa_0(q^2)-\varkappa_0(q^1)=\sum_{j=1}^{2n}(a_j \partial_{y_j}+b_j\partial_{\eta_j})\quad\text{with }|a_j|,|b_j|\leq 2\alpha,\ |b_1|\leq 2\alpha^2.
$$
We can now estimate the right-hand side of~\eqref{e:rect-int-3}
using~\eqref{e:rect-int-1}
and~\eqref{e:rect-int-2},
which gives
$$
\diam\varphi^t(\mathcal R^-_{q_0,\eta_1^0,\alpha})\leq C\alpha e^t+C\alpha^2e^{2t}.
$$
Since the diameter on the left-hand side is also bounded above by a fixed constant independent of $\alpha,\eta_1^0$
(as $S^*M$ is compact), we obtain~\eqref{e:rect-prop-1}.
\end{proof}

\section{Classifying orbit closures in $SM$}
\label{s:orbits-total}

In this section we assume that $M$ is a compact complex hyperbolic quotient
and study the closure of the orbit of a point on $SM$ under the fast unstable/stable flow
$e^{sV^\pm}$ together with the geodesic flow $e^{tX}$.
Using Ratner's theorem, we
show that each such orbit closure is algebraic and coincides with the unit sphere bundle of a
compact immersed totally geodesic complex submanifold on $M$; this is the content of Theorem~\ref{theo:orbitclosure}
stated in~\S\ref{s:orbit-closure-stmt} and proved in the rest of this section. In \S\ref{sec:geodesicsubman}, we discuss examples of complex hyperbolic manifolds which have differing behaviors with respect to their complex totally geodesic submanifolds.
Before embarking upon this, we give a preliminary section,
on orbits of vector fields.

\subsection{Orbits and segments}
\label{s:orbits}

Let $\mathcal M$ be a compact manifold and $V\in C^\infty(\mathcal M;T\mathcal M)$ be a
nonvanishing vector field. Let $e^{tV}:\mathcal M\to\mathcal M$ be the flow of~$V$. We first
make a few definitions:
\begin{itemize}
\item for $T\geq 0$, a \emph{$V$\!-segment} of length $T$ is a set of the form
$\{e^{tV}(q)\mid 0\leq t\leq T\}$ where $q\in \mathcal M$;
\item a \emph{$V$\!-orbit} is a set of the form
$\{e^{tV}(q)\mid t\in\mathbb R\}$ where $q\in \mathcal M$;
\item a set $\mathcal U\subset \mathcal M$ is called \emph{$V$\!-dense} if it
intersects every $V$\!-orbit.
\end{itemize}
Note that if $\mathcal U$ is open, then it is $V$\!-dense if and only if it intersects the closure of every $V$\!-orbit in $\mathcal M$.

The next lemma establishes basic properties of $V$\!-dense sets:
\begin{lemm}
\label{l:dense-basic}
Assume that $\mathcal U$ is a $V$\!-dense open set. Then:
\begin{enumerate}
\item
there exists a $V$\!-dense compact set $K\subset \mathcal U$;
\item
there exists $T>0$ such that each $V$\!-segment of length $T$ intersects $\mathcal U$.
\end{enumerate}
\end{lemm}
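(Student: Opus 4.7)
The plan is to prove both parts by combining standard compactness arguments on $\mathcal{M}$ with openness of $\mathcal{U}$. Neither part requires more than continuity of the flow and the $\omega$-limit set trick; the whole lemma is essentially a qualitative compactness statement.

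For part (1), I would argue as follows. Since $\mathcal{U}$ meets every $V$-orbit, for each $q\in\mathcal{M}$ there is some $t(q)\in\mathbb{R}$ with $e^{t(q)V}(q)\in\mathcal{U}$. By continuity of the flow and openness of $\mathcal{U}$, I can choose an open neighborhood $W_q$ of $q$ whose closure is mapped into $\mathcal{U}$ by $e^{t(q)V}$. The family $\{W_q\}_{q\in\mathcal{M}}$ covers $\mathcal{M}$; by compactness, finitely many $W_{q_1},\dots,W_{q_N}$ suffice. Define
\[
K:=\bigcup_{i=1}^{N}e^{t(q_i)V}\big(\overline{W_{q_i}}\big)\subset\mathcal{U}.
\]
This $K$ is compact as a finite union of compacta, and it is $V$-dense because any $q\in\mathcal{M}$ lies in some $W_{q_i}$, hence $e^{t(q_i)V}(q)\in K$.

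For part (2), the key observation is that $V$-density (meeting every two-sided orbit) actually forces every \emph{forward} orbit to meet $\mathcal{U}$: if the forward orbit of some $q$ avoided $\mathcal{U}$, then the $\omega$-limit set $\omega(q)$, which is nonempty by compactness of $\mathcal{M}$, $V$-invariant, and contained in the closed set $\mathcal{M}\setminus\mathcal{U}$, would contain an entire $V$-orbit disjoint from $\mathcal{U}$, contradicting the hypothesis. With this in hand, consider the open sets
\[
\mathcal{V}_T:=\bigcup_{t\in[0,T]}e^{-tV}(\mathcal{U})=\{q\in\mathcal{M}:\{e^{tV}(q):0\leq t\leq T\}\cap\mathcal{U}\neq\emptyset\}.
\]
They are nested increasing in $T$, each is open (union of open sets), and by the $\omega$-limit observation their union is $\mathcal{M}$. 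Compactness of $\mathcal{M}$ then yields $T>0$ with $\mathcal{V}_T=\mathcal{M}$, which is exactly the claim.

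The only even mildly subtle point is the passage from the two-sided $V$-density of $\mathcal{U}$ to the forward statement needed for (2); I would handle this with the $\omega$-limit argument above. Everything else is routine compactness plus continuity of the flow, so I do not anticipate any real obstruction.
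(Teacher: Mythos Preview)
Your proof is correct. Both parts follow valid routes that differ from the paper's, so a brief comparison is worthwhile.

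For part~(1), the paper exhausts $\mathcal U$ by a nested sequence of open sets $\mathcal U_j$ with $\overline{\mathcal U_j}\subset\mathcal U$, considers the open flow-saturations $\widehat{\mathcal U}_j=\bigcup_{t\in\mathbb R}e^{tV}(\mathcal U_j)$, and uses compactness of $\mathcal M$ to find $j$ with $\widehat{\mathcal U}_j=\mathcal M$; then $K:=\overline{\mathcal U_j}$ works. Your approach instead pushes finitely many compact neighborhoods into $\mathcal U$ along the flow. Both are equally short; yours has a slightly more constructive flavor.

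For part~(2), the paper's argument is a touch simpler: it sets $\widetilde{\mathcal U}_T:=\bigcup_{|t|\le T/2}e^{tV}(\mathcal U)$, notes that $V$-density gives $\mathcal M=\bigcup_{T\ge 0}\widetilde{\mathcal U}_T$, extracts $T$ with $\widetilde{\mathcal U}_T=\mathcal M$ by compactness, and then for any segment of length~$T$ applies this to its midpoint. No passage from two-sided to one-sided density is needed. Your $\omega$-limit argument is correct and in fact proves the stronger statement that every \emph{forward} semi-orbit meets $\mathcal U$, but this extra strength is not used elsewhere in the paper, so the midpoint trick is all that is required.
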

\begin{proof}
The set $\mathcal U$ is $V$\!-dense if and only if
\begin{equation}
  \label{e:dense-basic}
\mathcal M=\bigcup_{t\in\mathbb R}e^{tV}(\mathcal U).
\end{equation}
Take a nested sequence of open sets $\mathcal U_1\subset \mathcal U_2\subset\dots$ such that
$$
\overline{\mathcal U_j}\subset \mathcal U,\quad
\mathcal U=\bigcup_{j\geq 1} \mathcal U_j.
$$
Since $\mathcal U$ is $V$\!-dense, we have
$$
\mathcal M=\bigcup_{j\geq 1}\widehat{\mathcal U}_j\quad\text{where}\quad
\widehat{\mathcal U}_j:=\bigcup_{t\in\mathbb R}e^{tV}(\mathcal U_j).
$$
Since $\widehat{\mathcal U}_j$ is a nested sequence of open sets and $\mathcal M$ is compact,
there exists $j$ such that $\widehat{\mathcal U}_j=\mathcal M$. Putting $K:=\overline{\mathcal U_j}$, we obtain property~(1).

To show property~(2), we rewrite~\eqref{e:dense-basic} as
$$
\mathcal M=\bigcup_{T\geq 0} \widetilde{\mathcal U}_T\quad\text{where}\quad
\widetilde{\mathcal U}_T:=\bigcup_{|t|\leq T/2}e^{tV}(\mathcal U).
$$
Since $\widetilde{\mathcal U}_T$ is a nested family of open sets and $\mathcal M$ is compact, there exists
$T$ such that $\mathcal M=\widetilde{\mathcal U}_T$. Then each $V$\!-segment of length $T$ intersects $\mathcal U$.
\end{proof}
We also give an analog of~\cite[Lemma~3.5]{highcat}, using partitions of unity.
\begin{lemm}
\label{l:partition-construction}
Let $\mathcal U\subset\mathcal M$ be a $V$\!-dense open set. Then there exist
$\chi_1,\chi_2\in C^\infty(\mathcal M)$ such that
\begin{equation}
\chi_1,\chi_2\geq 0,\quad
\chi_1+\chi_2=1,\quad
\supp \chi_1\subset \mathcal U,
\end{equation}
and the complements $\mathcal M\setminus \supp \chi_1$, $\mathcal M\setminus\supp \chi_2$
are both $V$\!-dense.
\end{lemm}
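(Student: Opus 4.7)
The plan is to construct $\chi_1$ via smooth Urysohn applied to a carefully refined open $V$-dense subset of $\mathcal U$; the key point is to arrange that this refined set has $V$-dense complement, so that no full orbit is trapped inside $\supp\chi_1$.

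First, I apply Lemma~\ref{l:dense-basic}(1) to $\mathcal U$ to obtain a compact $V$-dense set $K\subset\mathcal U$; iterating this result together with standard topological shrinking yields a nested chain of open $V$-dense sets $K\subset\mathcal U_2\subset\overline{\mathcal U_2}\subset\mathcal U_1\subset\overline{\mathcal U_1}\subset\mathcal U$. I then refine $\mathcal U_1$ to an open $V$-dense set $\mathcal V$ containing $K$ whose closure contains no complete $V$-orbit (equivalently, $\mathcal M\setminus\overline{\mathcal V}$ is also $V$-dense). To build $\mathcal V$, I cover $K$ by finitely many flow boxes $F_j=\{e^{sV}(p)\mid |s|<\tau_j,\,p\in B_j\}\subset\mathcal U_1$ of small flow-time $\tau_j>0$ and small transversal $B_j$, and set $\mathcal V$ to be the union of the central slabs $F_j':=\{e^{sV}(p)\mid |s|<\tau_j/3,\,p\in B_j\}$, chosen so that $K\subset\bigcup_j F_j'$. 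An orbit meeting some $\overline{F_j'}$ must exit its boundary in flow-time at most $2\tau_j/3$ and then lies in the gap region $F_j\setminus\overline{F_j'}\subset F_j$; by choosing the flow boxes carefully enough that these gap regions avoid all slabs $\overline{F_k'}$, one ensures every such orbit passes through $\mathcal M\setminus\overline{\mathcal V}$.

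With $\mathcal V$ in hand, smooth Urysohn produces $\chi_1\in C^\infty(\mathcal M;[0,1])$ equal to $1$ on an open neighborhood of $K$ inside $\mathcal V$, with $\supp\chi_1\subset\mathcal V\subset\mathcal U$. Setting $\chi_2:=1-\chi_1$, the three required properties then follow: $\supp\chi_1\subset\mathcal U$ by construction; $\mathcal M\setminus\supp\chi_1$ contains $\mathcal M\setminus\overline{\mathcal V}$, which is $V$-dense by the construction of $\mathcal V$; and $\mathcal M\setminus\supp\chi_2$ contains the open neighborhood of $K$ on which $\chi_1\equiv 1$, hence is $V$-dense.

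The main obstacle will be the refinement step producing flow boxes whose gap regions avoid all slabs. This requires a careful compactness and covering argument: starting from an initial finite open cover of $K$ by flow boxes in $\mathcal U_1$, one shrinks the transversals $B_j$ and flow-times $\tau_j$ iteratively, exploiting the local product structure of the flow and compactness of $K$ to maintain the slab-avoidance condition while preserving the covering of $K$ by the central slabs.
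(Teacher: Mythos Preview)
Your strategy and the final Urysohn step are sound, but the flow-box refinement you flag as the ``main obstacle'' is a genuine gap that the proposed iterative shrinking cannot close. The problem is that Lemma~\ref{l:dense-basic}(1) gives you no control over the shape of $K$ along orbits: $K$ may well contain an entire periodic orbit $\gamma\subset\mathcal U$, and then any open $\mathcal V\supset K$ has $\gamma\subset\overline{\mathcal V}$, so $\mathcal M\setminus\overline{\mathcal V}$ misses $\gamma$ and cannot be $V$-dense. Even short of a full periodic orbit, if $K$ contains a long orbit segment then any cover of that segment by open central slabs must overlap along the orbit, and the forward gap of one box necessarily meets the slab of the next; shrinking $\tau_j$ only multiplies the overlaps, and shrinking the transversals $B_j$ does nothing along a single orbit. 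So the slab-avoidance condition you need is in general unattainable for the $K$ you start with.

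The paper avoids this by reversing the order of construction. It first builds a Poincar\'e section $D$---a finite union of compact codimension-$1$ disks transverse to $V$ and $V$-dense (easily assembled from flow-box charts). Since each orbit meets $D$ in a discrete set of times while it meets the open set $\mathcal U$ in a nonempty open set of times, $\mathcal U\setminus D$ remains open and $V$-dense. Only then does one apply Lemma~\ref{l:dense-basic}(1) to obtain a compact $V$-dense $K\subset\mathcal U\setminus D$, and take a partition of unity subordinate to the cover $\{\mathcal U\setminus D,\ \mathcal M\setminus K\}$. Now $\mathcal M\setminus\supp\chi_1\supset D$ and $\mathcal M\setminus\supp\chi_2\supset K$ are both $V$-dense. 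The Poincar\'e section supplies exactly what your gap regions were meant to supply, but it is built once and independently of $K$, so no compatibility condition among flow boxes is needed.
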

\begin{proof}
Let $D\subset \mathcal M$ be a Poincar\'e section for $V$, that is a finite union of compact
embedded disks of codimension 1 which are transverse to $V$ and such that
$D$ is $V$\!-dense. To construct $D$, one can for example take a covering of
$\mathcal M$ by finitely many coordinate charts in each of which $V=\partial_{x_1}$.

The set $\mathcal U\setminus D$ is $V$\!-dense: indeed, for each $q\in\mathcal M$ the set
$\{t\in\mathbb R\mid e^{tV}(q)\in \mathcal U\}$ is open and nonempty,
while the set $\{t\in\mathbb R\mid e^{tV}(q)\in D\}$ is discrete since
$V$ is transverse to $D$. Since $\mathcal U\setminus D$ is also open, by Lemma~\ref{l:dense-basic}(1)
there exists a compact $V$\!-dense set $K\subset \mathcal U\setminus D$.

The sets $\mathcal U\setminus D$, $\mathcal M\setminus K$
form an open cover of $\mathcal M$. Using a partition of unity, we construct
$\chi_1,\chi_2\in C^\infty(\mathcal M)$ such that $\chi_1,\chi_2\geq 0$, $\chi_1+\chi_2=1$, and
$$
\supp \chi_1\subset \mathcal U\setminus D,\quad
\supp \chi_2\subset \mathcal M\setminus K.
$$
The complements $\mathcal M\setminus \supp \chi_1$, $\mathcal M\setminus \supp \chi_2$ contain the sets $D,K$ and thus are $V$\!-dense.
\end{proof}
\Remark We can instead consider a finite collection $V_1,\dots,V_q$ of nonvanishing vector fields
on $\mathcal M$. Lemma~\ref{l:partition-construction}
still holds if we replace the property of being $V$\!-dense by the property of being $V_\ell$-dense for all~$\ell=1,\dots,q$.
The only adjustment needed is in the construction of~$D$, which can still be done since
a collection of codimension 1 embedded disks in generic directions centered at a sufficiently large finite
set of points will be $V_\ell$-dense and transverse to $V_\ell$ for all $\ell$. This is the version
of Lemma~\ref{l:partition-construction} that we use in the proof of Lemma~\ref{l:our-partition} below.

\subsection{Statement of the orbit closure result}
\label{s:orbit-closure-stmt}

Let $M$ be a compact complex hyperbolic quotient (see~\S\ref{s:quotients-def}).
Recall the vector fields $X,V^+,V^-$ on the sphere bundle~$SM$ (see~\S\ref{s:complex-stun}),
generating the geodesic flow, the fast stable horocyclic flow, and
the fast unstable horocyclic flow respectively. The main result of this section is
\begin{theo}\label{theo:orbitclosure}
Let $(z_0,v_0)\in SM$. Then there exists a compact immersed totally geodesic complex submanifold $\Sigma\subset M$
such that $(z_0,v_0)\in S\Sigma$ and the closure of the orbit $\{e^{tX}e^{sV^+}(z_0,v_0)\mid t,s\in\mathbb R\}$
in~$SM$ is equal to $S\Sigma$. The same holds when $V^+$ is replaced by $V^-$.
\end{theo}
\Remark If $\Sigma\subset M$ is a compact immersed totally geodesic complex submanifold,
then the vector fields $X,V^+,V^-$ are tangent to $S\Sigma$ (see~\S\ref{s:tot-geod-sub} below).
Therefore, any compact immersed totally geodesic complex submanifold whose unit sphere bundle contains $(z_0,v_0)$ also contains the closure of $\{e^{tX}e^{sV^\pm}(z_0,v_0)\mid t,s\in\mathbb R\}$.
Consequently, the manifold~$\Sigma$ in Theorem~\ref{theo:orbitclosure} is characterized as the \emph{minimal} compact immersed totally geodesic complex submanifold of $M$
such that $(z_0,v_0)\in S\Sigma$.
Note that we allow for the possibility that $\Sigma=M$.

In this paper (specifically in~\S\ref{s:reduction-control} below) we will use the following corollary of Theorem~\ref{theo:orbitclosure}:
\begin{corr}
\label{c:orbitclosure}
Assume that $\mathcal U\subset SM$ is an open set invariant under the geodesic flow $\varphi^t=e^{tX}$. Then either
$\mathcal U$ is both $V^+$\!-dense and $V^-$\!-dense (in the sense of~\S\ref{s:orbits}), or
there exists a compact immersed totally geodesic complex submanifold $\Sigma\subset M$ such that
$\mathcal U\cap S\Sigma=\emptyset$.
\end{corr}
\begin{proof}
Assume for example that $\mathcal U$ is not $V^+$\!-dense (the case when $\mathcal U$
is not $V^-$\!-dense is handled in the same way). Then there exists $(z_0,v_0)\in SM$
such that $\mathcal U$ does not intersect the orbit $\{e^{sV^+}(z_0,v_0)\mid s\in\mathbb R\}$.
Since $\mathcal U$ is $e^{tX}$-invariant, we see that it does not intersect
the set $\{e^{tX}e^{sV^+}(z_0,v_0)\mid t,s\in\mathbb R\}$ and, as $\mathcal U$ is open,
it does not intersect the closure of this set in~$SM$. By Theorem~\ref{theo:orbitclosure}
we see that there exists a compact immersed totally geodesic complex submanifold $\Sigma\subset M$
such that $\mathcal U\cap S\Sigma=\emptyset$.
\end{proof}

\subsection{Orbit closures in \texorpdfstring{$\Gamma\backslash G$}{\unichar{"0393} \unichar{"005C} G}}
\label{s:orbits-frame}

In this section, we reduce Theorem~\ref{theo:orbitclosure} to a statement
about orbit closures on the quotient $\Gamma\backslash G$,
where $M=\Gamma\backslash \mathbb{CH}^n$ as in~\S\ref{s:quotients-def}
and $G=\SU(n,1)$ as in~\S\ref{s:isometry-group}. Note that $\Gamma\backslash G$
is a quotient of a Lie group by a lattice; this is the setting of Ratner theory, which will be crucially used in our proofs.

\subsubsection{Subgroups of $G$}
\label{s:subgroups-of-G}

We first introduce some subgroups of $G$ used throughout the rest of this section.
Let $U^\pm,A\subset G$ be the one-parameter subgroups
generated by the elements $V^\pm,X\in \mathfrak g$ defined in~\eqref{e:lie-basis} so that
\begin{equation}\label{eqn:unipotent}
\begin{aligned}
U^\pm&=\left\{\begin{pmatrix}
1+is &\mp is& 0    \\
\pm is & 1-is  & 0    \\
0 & 0  &  I_{n-1}   \\
\end{pmatrix}\colon s\in\R\right\},\\
A&=\left\{\begin{pmatrix}
\cosh t & \sinh t & 0 \\
\sinh t & \cosh t & 0 \\
0 & 0 & I_{n-1} 
\end{pmatrix}\colon t\in\mathbb R\right\}.
\end{aligned}
\end{equation}
Then $U^\pm$ and $A$ commute with the group $R$ defined in~\eqref{e:R-iso-def}. The right actions of $U^\pm$ and $A$ on $S\mathbb{CH}^n$ and~$SM$ define the flows of the vector fields $V^\pm$ and $X$ descended to these quotients. We note that $U^\pm$ are \emph{unipotent subgroups},
more precisely $(I-B)^2=0$ for all $B\in U^\pm$. Moreover, as follows from the commutation relations~\eqref{e:comm-rel}, $A$ normalizes $U^\pm$ and thus $AU^\pm$ are subgroups of~$G$.

We now introduce the \emph{standard subgroups} of $G$. For each $1\le k\le n$, let $W_k$ denote an isomorphic copy of $\SU(k,1)$ embedded in $G=\SU(n,1)$ in the upper left corner, so that
\begin{equation}\label{eqn:complexW}
W_k=\left\{\left.\begin{pmatrix}
B&0\\
0&I_{n-k}\end{pmatrix}\,\right|\,B\in\SU(k,1)\right\}.
\end{equation}
Note that $W_k^{\R}:=W_k\cap\mathop{\mathrm{GL}}_{n+1}(\R)$ is isomorphic to a copy of $\SO(k,1)$ embedded in the upper left corner.
Let $W$ be a subgroup of $G$, then we call $W$ \emph{standard} if $W$ is either equal to $W^{\R}_k$ for some $2\le k\le n$ or equal to $W_k$ for some $1\le k\le n$.
In the latter case, we call $W$ a \emph{complex standard subgroup} of $G$.
Note that the subgroups $U^\pm,A$ defined above all lie inside $W_1\simeq \SU(1,1)$.

The normalizer of the complex standard subgroup $W_k$ in $G$ is given by
\begin{equation}
  \label{e:W-k-normalizer}
N_G(W_k)=\left\{
\left.\begin{pmatrix} B & 0 \\ 0 & C \end{pmatrix}\,\right|\,
B\in \Un(k,1),\quad
C\in \Un(n-k),\quad
\det B\det C=1
\right\}.
\end{equation}
Note that $N_G(W_k)=W_kC_G(W_k)$ where the centralizer of $W_k$ in $G$ is given by
$$
C_G(W_k)=\left\{
\left.\begin{pmatrix} e^{i\theta}I_{k+1} & 0 \\ 0 & C \end{pmatrix}\,\right|\,
C\in \Un(n-k),\quad
e^{i(k+1)\theta}\det C=1
\right\}.
$$

\subsubsection{Totally geodesic submanifolds}
\label{s:tot-geod-sub}

Any totally geodesic subspace of $\mathbb{CH}^n$ of real dimension at least $2$ is either isometric to real hyperbolic space $\mathbb{H}^k$ for $2\le k\le n$ or complex hyperbolic space $\mathbb{CH}^k$ for $1\le k\le n$, see~\cite[\S\S 3.1.11]{Goldman-Book}.
Identifying $\mathbb{CH}^n$ with $G/K$, we now recall the dictionary between these geodesic planes and certain orbits of the form $\widetilde\pi_K(g_0W)$ where $g_0\in G$, $W\subset G$ is a standard subgroup, and
$\widetilde\pi_K:G\to \mathbb{CH}^n$ is the projection map from~\eqref{eqn:commtriangle}. 
For more details, see the discussion in \cite[\S 2]{BFMS}
and~\cite[Lemma 8.2(1)]{BFMS}.

Given a standard subgroup $W\subset G$ and any $g_0\in G$, the coset projection $\widetilde\pi_K(g_0W)\subset \mathbb{CH}^n$ is either a totally geodesic copy of real hyperbolic space $\mathbb{H}^k$ when $W=W^{\R}_k$ or a totally geodesic copy of complex hyperbolic space $\mathbb{CH}^k$ when $W=W_k$.
Conversely, any totally geodesic copy of one of these planes is of the form $\widetilde\pi_K(g_0W)$ for some standard subgroup $W$.
Note that we allow for the case that $W=W_n=G$, when $\widetilde\pi_K(g_0W)=\mathbb{CH}^n$.

Let $M=\Gamma\backslash\mathbb{CH}^n$ be a compact complex hyperbolic quotient as in~\S\ref{s:quotients-def}
and the maps $\pi_K,\pi_R$ be the projections from~\eqref{eqn:commtriangle}. 
If $\Sigma\subset M$ is a connected compact immersed totally geodesic submanifold of real dimension at least $2$, then
$$
\Sigma=\pi_K(x_0W),
$$
for some $x_0\in \Gamma\backslash G$ and some standard subgroup $W$.
Moreover, $\Sigma$ is a complex submanifold if and only if $W=W_k$ for some $1\le k\le n$ and otherwise $W=W_k^{\R}$.

Given a standard subgroup $W\subset G$ and $g_0\in G$,  the inclusion $\iota:\widetilde\pi_K(g_0W)\hookrightarrow \mathbb{CH}^n$ induces an embedding of tangent bundles $d\iota: T(\widetilde\pi_K(g_0W))\hookrightarrow T\mathbb{CH}^n$.
Since this embedding preserves the norm of vectors, $d\iota$ induces an embedding of unit tangent bundles $d\iota^1:S(\widetilde\pi_K(g_0W))\to S\mathbb{CH}^n$.
The image of this embedding is
\begin{equation}
\label{eqn:diota}
S(\widetilde\pi_K(g_0W))=\widetilde\pi_R(g_0W)\ \subset\ S\mathbb{CH}^n.
\end{equation}
These maps are natural with respect to the covering projections $\pi_M,\pi_{SM}$.
In particular, if $\Sigma=\pi_K(x_0W)$ is a compact immersed totally geodesic submanifold of~$M$ then we have an immersion
\begin{equation}\label{eqn:sphereinclusion}
S\Sigma=\pi_R(x_0W)\ \subset\ SM,
\end{equation}
induced from the inclusion $\Sigma\subset M$.

As a consequence of~\eqref{eqn:sphereinclusion}, we see that
the vector fields $X,V^\pm$ on $SM$ are tangent to $S\Sigma$, since
they lie in the Lie algebra
of the groups $W_k$ for all $k\geq 1$.

\subsubsection{Results on $\Gamma\backslash G$ and proof of Theorem~\ref{theo:orbitclosure}}

We now state two propositions regarding orbit closures on~$\Gamma\backslash G$, whose proofs are given in~\S\ref{sec:uniporbit}--\ref{s:AU-orbits} below.
The first one gives a description of orbit closures of the standard group $W_1\simeq \SU(1,1)$
introduced in~\eqref{eqn:complexW}:
\begin{prop}
  \label{l:W-1-orbits}
Let $x_0\in \Gamma\backslash G$. Then the orbit closure
$\overline{x_0W_1}$ in $\Gamma\backslash G$ is given by
\begin{equation}
  \label{e:W-1-orbits}
\overline{x_0W_1}=x_0H
\end{equation}
for some closed connected reductive subgroup $H\subset G$ containing $W_1$ and such that for some $1\leq k\leq n$
and $r_H\in R$ we have
\begin{equation}
  \label{e:H-squeezed}
W_k\ \subset\ r_H Hr_H^{-1}\ \subset\ N_G(W_k).
\end{equation}
\end{prop}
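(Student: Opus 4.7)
The plan is to apply Ratner's orbit closure theorem to produce the subgroup $H$ satisfying~\eqref{e:W-1-orbits}, and then to classify it via the representation theory of $W_1\cong\SU(1,1)$ acting on $\mathfrak{g}=\mathfrak{su}(n,1)$ to derive~\eqref{e:H-squeezed}.

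Since $W_1$ is generated by the unipotent one-parameter subgroups $U^\pm$ in~\eqref{eqn:unipotent}, Ratner's theorem (Theorem~\ref{t:Ratner}) applied to the action of $W_1$ on $\Gamma\backslash G$ yields a closed connected subgroup $H\subset G$ containing $W_1$ with $\overline{x_0W_1}=x_0H$ and such that $x_0H$ carries a finite $H$-invariant measure; this gives~\eqref{e:W-1-orbits}. To see that $H$ is reductive, I would argue that its unipotent radical $U_H$ is trivial: the Lie algebra of $U_H$ is $\ad(W_1)$-invariant and in particular $\ad(X)$-invariant, so if nonzero it would contain a vector in some $\ad(X)$-eigenspace of nonzero eigenvalue (see~\eqref{e:comm-rel}), generating a one-parameter unipotent subgroup of $H$ that is exponentially distorted by $A\subset W_1\subset H$. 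This is incompatible with the existence of a lattice in $H$ by the standard Mahler non-divergence argument.

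For~\eqref{e:H-squeezed}, I decompose $\mathfrak{g}=\mathfrak{su}(n,1)$ as an $\ad(W_1)$-module using the block structure coming from the splitting $\mathbb{C}^{n+1}=\mathbb{C}^2\oplus\mathbb{C}^{n-1}$ with $\mathbb{C}^2=\mathrm{span}(e_0,e_1)$. The diagonal blocks contribute $\mathfrak{u}(1,1)\supset\mathfrak{w}_1$ and $\mathfrak{u}(n-1)$ (with trivial $W_1$-action), while the off-diagonal block $Q$ is a $W_1$-module isomorphic to $n-1$ copies of the defining representation of $\SU(1,1)$ on $\mathbb{C}^2$, with complex multiplicity space $\mathbb{C}^{n-1}$. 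The projection of $\mathfrak{h}\cap Q$ to this multiplicity space defines a complex subspace $U\subset\mathbb{C}^{n-1}$ of some dimension $k-1$; set $k:=1+\dim_\mathbb{C}U$. Since $R$ from~\eqref{e:R-iso-def} acts on $\mathbb{C}^{n-1}$ via a double cover of $\Un(n-1)$ by~\eqref{e:rot-action-slow}, transitively on complex subspaces of each dimension, I choose $r_H\in R$ so that $\mathrm{Ad}(r_H)$ sends $U$ to $\mathrm{span}(e_2,\ldots,e_k)\subset\mathbb{C}^{n-1}$. The bracket relations~\eqref{e:comm-rel}, \eqref{e:comm-rel:2}, \eqref{e:N+comm} together with the analogous cross-brackets between $\widetilde E^+$ and $\widetilde E^-$ then show that $r_H\mathfrak{h}r_H^{-1}$ contains the Lie algebra $\mathfrak{w}_k$ of $W_k$ (generated by $\mathfrak{w}_1$ together with the standardized $\mathfrak{h}\cap Q$) and preserves the subspace $\mathrm{span}(e_0,\ldots,e_k)\subset\mathbb{C}^{n+1}$, so it is contained in the Lie algebra of $N_G(W_k)$ described in~\eqref{e:W-k-normalizer}. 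Exponentiating yields~\eqref{e:H-squeezed}.

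The principal technical obstacle I anticipate is the classification step above: as a real $\SU(1,1)$-representation, the defining module $\mathbb{C}^2$ decomposes into two distinct two-dimensional real irreducible summands (coming from the isomorphism $\SU(1,1)\cong\SL_2(\mathbb{R})$), so a priori $\mathfrak{h}\cap Q$ could correspond to a real (rather than complex) subspace of the multiplicity space $\mathbb{C}^{n-1}$. The key point is that the bracket $[Q,Q]\subset\mathfrak{u}(1,1)\oplus\mathfrak{u}(n-1)$ picks up \emph{Hermitian} inner product data on $\mathbb{C}^{n-1}$ via~\eqref{e:N+comm} and the mixed brackets $[\widetilde E^+,\widetilde E^-]$, forcing any bracket-closed $W_1$-submodule to correspond to a genuinely complex subspace. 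Verifying this carefully, together with computing that the $\mathfrak{u}(n-1)$-component of $\mathfrak{h}$ forced by bracket-closure fits precisely between the $\mathfrak{u}(k-1)$ sitting inside $\mathfrak{w}_k$ and the full $\mathfrak{u}(k-1)\oplus\mathfrak{u}(n-k)$-component of $\mathrm{Lie}(N_G(W_k))$, is where the bulk of the work lies.
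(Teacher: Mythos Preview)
Your approach is correct in outline but takes a genuinely different route from the paper. The paper's proof is much shorter: after invoking Ratner's theorem, it observes that $H^\dagger$ is a connected almost simple subgroup of $\SU(n,1)$ containing $U^+$, and then appeals to the external classification result \cite[Proposition~2.4]{BFMS} to conclude that $H^\dagger$ is conjugate to some standard subgroup $W_k$ (necessarily a complex one, since $U^+\subset W_1$ is not contained in any $W_k^{\mathbb R}$). The alignment by an element $r_H\in R$ is then extracted from \cite[Lemma~2.7(4)]{BFMS}, whose short proof the paper recaps: given $b\in G$ with $bH^\dagger b^{-1}=W_k$, one uses transitivity of $W_k$ on its copies of $W_1$ to arrange $bW_1b^{-1}=W_1$, whence $b\in N_G(W_1)=RW_1$. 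The inclusion~\eqref{e:H-squeezed} then follows from $H^\dagger\subset H\subset N_G(H^\dagger)$. Reductivity is handled by reference to the argument already given in Lemma~\ref{lemma:unipotentorbit}, which ultimately uses that the cocompact lattice $\Gamma$ contains no nontrivial unipotent elements; your invocation of ``Mahler non-divergence'' is not quite the right mechanism here, though the underlying obstruction is the same.

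Your direct decomposition of $\mathfrak g$ as a $W_1$-module, with the multiplicity space identified as $\mathbb C^{n-1}$ and $R$ acting transitively on its complex Grassmannians, is a perfectly legitimate and more self-contained alternative that avoids the dependence on \cite{BFMS}. You have correctly isolated the crux: ruling out that $\mathfrak h\cap Q$ corresponds to a merely real subspace. This is exactly what distinguishes the complex standard subgroups $W_k$ from the real ones $W_k^{\mathbb R}$, and the fact that $W_1\subset H$ (rather than only $W_1^{\mathbb R}$) is what forces it. Concretely, the element $T=\tfrac12(V^++V^-)=i(\mathbf E_{00}-\mathbf E_{11})\in\mathfrak w_1\subset\mathfrak h$ acts on $Q$ by a map that, under the identifications $\kappa_E^\pm$ of~\eqref{e:E-pm-identified}, implements multiplication by $i$ on the $\mathbb C^{n-1}$ factor; this is the computation that makes your bracket-closure argument go through. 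What you gain is a proof internal to the paper's Lie-algebraic setup; what the paper gains is brevity by outsourcing the subgroup classification.
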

The second proposition states that the orbit closures for the groups $AU^+,AU^-\subset W_1$
coincide with the whole $W_1$-orbit closure
(in particular, the $AU^\pm$-closure is invariant under $U^\mp$):
\begin{prop}
  \label{l:AU-orbits-same}
Let $x_0\in \Gamma\backslash G$. Then we have the equality of closures in $\Gamma\backslash G$
\begin{equation}
  \label{e:AU-orbits-same}
\overline{x_0AU^+}=\overline{x_0AU^-}=\overline{x_0W_1}.
\end{equation} 
\end{prop}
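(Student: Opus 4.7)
My plan is to prove Proposition~\ref{l:AU-orbits-same} by combining Proposition~\ref{l:W-1-orbits} with Ratner's theorem for unipotent flows and a Bruhat-style contraction argument. The inclusion $\overline{x_0 AU^\pm}\subseteq \overline{x_0 W_1}$ is immediate since $AU^\pm \subset W_1$, so everything hinges on the reverse inclusion. By Proposition~\ref{l:W-1-orbits}, $\overline{x_0 W_1}=x_0 H$ for a closed reductive $H\supset W_1$. Since $x_0H$ is closed in the compact quotient $\Gamma\backslash G$, the stabilizer $\Gamma_H = \widetilde x_0^{-1}\Gamma\widetilde x_0 \cap H$ is a lattice in $H$ and we may identify $x_0H \simeq \Gamma_H\backslash H$. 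The problem reduces to showing that inside $\Gamma_H\backslash H$, where the $W_1$-orbit of the basepoint is dense by construction, the $AU^+$-orbit (and, symmetrically, the $AU^-$-orbit) of the basepoint is also dense.

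Since $U^+$ is unipotent, Ratner's topological theorem gives $\overline{x_0 U^+} = x_0 L$ for some closed subgroup $L\subset H$ containing $U^+$, with $x_0 L$ supporting a finite $L$-invariant measure. Because $\overline{x_0 AU^+}$ is $A$-invariant, it must contain $\overline{\bigcup_{a\in A} x_0 L\cdot a}$, and using that $A$ normalizes $U^+$ one has $x_0 L\cdot a = (x_0 a)(a^{-1}La)$ with $a^{-1}La\supseteq U^+$. The Mozes--Shah compactness theorem applied to this $A$-indexed family of homogeneous orbits then produces a closed subgroup $L'\supseteq L$ (and all of its $A$-conjugates) such that the closure contains a homogeneous piece $x_0 L'$.

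To conclude $L'=H$, I would run the following Bruhat decomposition argument, which exploits the density of $x_0 W_1$ in $x_0 H$. Given any open $V\subset x_0H$, pick $g_n\in W_1$ with $x_0 g_n\to y\in V$, and write $g_n=u_n^+ a_n u_n^-$ in the big Bruhat cell of $W_1=\SU(1,1)$. Since $u_n^+ a_n = a_n(a_n^{-1}u_n^+ a_n)\in AU^+$, the point $z_n := x_0 u_n^+ a_n$ lies in $x_0AU^+$, and $z_n u_n^- = x_0 g_n\to y$. Using the commutation relation $a_t u^- a_{-t}= u^-_{se^{-2t}}$ derived from~\eqref{e:comm-rel}, one further translates by a large $a_T$ to contract $u_n^-$ toward the identity while staying in the $AU^+$-orbit: replacing $(a_n,u_n^-)$ by $(a_n a_T, a_T^{-1}u_n^- a_T)$ makes the stable piece arbitrarily small, so the modified sequence in $x_0AU^+$ accumulates at $y$. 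The case of $AU^-$ is symmetric, exchanging the roles of $U^+$ and $U^-$ in the Bruhat decomposition.

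The main obstacle is controlling the Bruhat contraction simultaneously with the approximation of $y$: applying $a_T$ to contract the $U^-$-component also moves the approximating point, and one must verify that the resulting sequence still accumulates at $y$ rather than drifting off. This is where I would use either Mozes--Shah to pass from $L$ to $L'=H$ in an abstract way, or a recurrence/non-divergence estimate combined with the cocompactness of $\Gamma_H$ (which here is automatic since $\Gamma\backslash G$ is compact, so $x_0 H$ has no cusps for $A$ to escape into). Either way, the combination of $A$-invariance, Ratner's classification of $U^+$-orbit closures, and the density of $x_0 W_1$ forces $L'=H$, giving the desired equalities~\eqref{e:AU-orbits-same}.
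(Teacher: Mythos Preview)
Your proposal has a genuine gap that you yourself identify but do not close. The Bruhat contraction step does not work as written: replacing $(a_n,u_n^-)$ by $(a_n a_T, a_T^{-1}u_n^- a_T)$ changes the product to $g_n a_T$, so the approximating sequence now targets $y a_T$ rather than $y$, and there is no mechanism offered to bring it back. The fallback appeal to Mozes--Shah is also incomplete: you correctly observe that $\overline{(x_0 a)U^+}=(x_0 a)(a^{-1}La)=x_0 L a$ for every $a\in A$ (so the $U^+$-orbit closures along the $A$-orbit of $x_0$ are all translates of the same $x_0 L$), but nothing in your argument explains why a limit of these translates should produce a strictly larger group $L'$, let alone $L'=H$. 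Recurrence and cocompactness prevent escape to infinity but do not by themselves force the orbit closure to grow.

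The paper's argument is structurally different and avoids this difficulty. Instead of Bruhat, it uses the \emph{Iwasawa} decomposition $W_1=AU^+(K\cap W_1)$ with $K\cap W_1$ compact, which immediately gives $x_0H=\overline{x_0W_1}=\overline{x_0AU^+}\,(K\cap W_1)$. Thus every point of $x_0H$ is a $K\cap W_1$-translate of some $y\in\overline{x_0AU^+}$. The problem then reduces to finding a single $y\in\overline{x_0AU^+}$ with $\overline{yU^+}=x_0H$, which the paper handles by a singular-set analysis in the spirit of Dani--Margulis: the set of ``bad'' points (those whose $U^+$-orbit closure is a proper homogeneous subspace) is shown to be contained in a countable union of sets of the form $\pi_\Gamma(g_0H\cap X(J,U^+))$, each of which remains nowhere dense even after $W_1$-saturation. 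Baire category then yields a $W_1$-orbit in $x_0H$ consisting entirely of regular points, and any point on that orbit lying in $\overline{x_0AU^+}$ (such a point exists by the Iwasawa saturation) finishes the argument. The key ingredient you are missing is this nowhere-density statement for the $W_1$-saturated singular set; without it, neither your Bruhat nor your Mozes--Shah sketch can force the orbit closure up to $x_0H$.
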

Using the above two propositions, we give
\begin{proof}[Proof of Theorem~\ref{theo:orbitclosure}]
1. We give the proof in the case of~$V^+$; the case of $V^-$ is handled similarly.
We use the notation from~\eqref{eqn:commtriangle}.

Fix some $x_0\in \Gamma\backslash G$ such that $\pi_R(x_0)=(z_0,v_0)$. Since $\Gamma\backslash G$ is compact (as $M$ is compact),
the closure of the orbit of $(z_0,v_0)$ under $X,V^+$ in $SM$ is equal to the image
under $\pi_R$ of the closure of the $AU^+$-orbit of $x_0$ in~$\Gamma\backslash G$:
\begin{equation}
  \label{e:proveclos-1}
\overline{\{e^{tX}e^{sV^+}(z_0,v_0)\mid t,s\in\mathbb R\}}
=\overline{\pi_R(x_0AU^+)}
=\pi_R(\overline{x_0AU^+}).
\end{equation}
By Propositions~\ref{l:W-1-orbits} and~\ref{l:AU-orbits-same} this set is equal to
$$
\pi_R(\overline{x_0AU^+})=
\pi_R(\overline{x_0W_1})
=\pi_R(x_0H)
$$
for some closed subgroup $H\subset G$ such that $x_0H\subset\Gamma\backslash G$ is closed and
there exist some $1\leq k\leq n$ and $r_H\in R$ for which $W_k\subset r_H Hr_H^{-1}\subset N_G(W_k)$. 
We then have
\begin{equation}
  \label{e:closures-geom-1}
x_0 r_H^{-1}W_k r_H\ \subset\ x_0H\ \subset\ 
x_0 r_H^{-1}N_G(W_k)r_H.
\end{equation}
By~\eqref{e:W-k-normalizer} we have $N_G(W_k)\subset W_kR$, therefore
the images under $\pi_R$ of the first and the last sets in~\eqref{e:closures-geom-1}
are equal to each other. It follows that
\begin{equation}
  \label{e:closures-geom-2}
\pi_R(x_0H)=\pi_R(x_0 r_H^{-1}W_k).
\end{equation}

\noindent 2. Define
$$
\Sigma:=\pi_K(x_0 H)=\pi_K(x_0r_H^{-1}W_k).
$$
Then $\Sigma$ is a compact immersed totally geodesic complex submanifold of $M$
as explained in~\S\ref{s:tot-geod-sub}. From this and Equations~\eqref{e:closures-geom-2} and~\eqref{eqn:sphereinclusion}, one readily concludes that the closure~\eqref{e:proveclos-1} is equal to $S\Sigma$ as needed.
\end{proof}

\subsection{Unipotent orbit closures and proof of Proposition~\ref{l:W-1-orbits}}
\label{sec:uniporbit}

In this section, we review preliminaries from Lie theory and Ratner theory and
apply these to prove Proposition~\ref{l:W-1-orbits}. We also give a description of the $U^\pm$-orbits in Lemma~\ref{lemma:unipotentorbit} below. Using this description and an argument involving
Zariski density, we show in Lemma~\ref{lemma:smallunipotent} that if the closure $\overline{x_0 W_1}$ is as small as possible, that is, if it projects to a complex totally geodesic submanifold of complex dimension $1$ in $M$,  then the orbit closures $\overline{x_0 U^\pm}$ are equal to that of $\overline{x_0W_1}$.
This special case is the simplest setting for Proposition \ref{l:AU-orbits-same}, in that one does not need the additional $A$-action to obtain the required result.
The case where the orbit closure is bigger will be handled in \S\ref{s:AU-orbits}, where the $A$-invariance needs to be invoked.


\subsubsection{Preliminaries}

We first review some concepts from Lie theory:
\begin{itemize}
\item If $G'$ is a Lie group, then a discrete subgroup $\Gamma'\subset G'$
is called a \emph{lattice} in~$G'$ if there exists a probability measure on the quotient
$\Gamma'\backslash G'$ which is invariant under right multiplication by elements
of $G'$. If $\Gamma'\backslash G'$ is compact, then $\Gamma'$ is called a \emph{uniform lattice}.
We are studying a compact hyperbolic quotient $M=\Gamma\backslash\mathbb{CH}^n$,
thus $\Gamma$ is a uniform lattice in $G=\SU(n,1)$.
\item For a subgroup $J\subset G=\SU(n,1)$, we use the notation $J^\dagger$ to denote the subgroup of $J$ generated by unipotent elements.
Note that $J^\dagger$ is connected and
\begin{equation}
  \label{e:J-dagger-inclusions}
J^\dagger\ \subset\ J\ \subset\ N_G(J^\dagger).
\end{equation}
For our choice of $G$, $J^\dagger$ is either unipotent or a non-compact, almost simple subgroup of~$G$.
In the latter case, $J^\dagger$ will always be conjugate to a standard subgroup $W$ of~$G$ 
as defined in~\S\ref{s:subgroups-of-G}, see~\cite[Proposition 2.4]{BFMS}.
\item We have Iwasawa decompositions $G=KAN^\pm$ where $K$ is as in \eqref{e:K-iso-def}, $A$ is defined in~\eqref{eqn:unipotent}, and $N^\pm$ is the unique maximal unipotent subgroup containing~$U^\pm$. In fact, $N^\pm$ is the connected Lie group with the Lie algebra
$$
\mathfrak n^\pm:=\mathbb R V^\pm\oplus \widetilde E^\pm,
$$
where $V^\pm$ is defined in~\eqref{e:lie-basis} and $\widetilde E^\pm$ is defined in~\eqref{e:tilde-E-pm-def}. Note that $U^\pm$ is central in~$N^\pm$ by~\eqref{e:N+comm}.
\item We use $P^\pm$ to denote the unique proper parabolic subgroup of $G$ containing~$U^\pm$.
In particular, $P^\pm=N_G(U^\pm)=N_G(N^\pm)$ and $N^\pm$ is the unipotent radical of~$P^\pm$.
In terms of the action of $G$ on $\mathbb C^{n,1}$ we have by~\eqref{e:stun-matrix-action}
$$
P^\pm=\{B\in G\mid B(e_0\pm e_1)\in \mathbb C(e_0\pm e_1)\}.
$$
The Lie algebra of~$P^\pm$ is given by
$$
\mathfrak p^\pm:=\mathfrak n^\pm\oplus\mathbb RX\oplus \mathfrak r,
$$
where $\mathfrak r$ is the Lie algebra of~$R$.
\end{itemize}
We also have the following technical lemma.
\begin{lemm}
\label{l:U+unique}
Assume that $g\in G$ and $gU^+g^{-1}\subset N^+$. Then
$gU^+g^{-1}=U^+$. 
\end{lemm}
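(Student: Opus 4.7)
The plan is to exploit the matrix identities \eqref{e:N+product} to pin down $U^+$ as the unique one-parameter subgroup of $N^+$ whose generator squares to zero in $\mathfrak{gl}(\mathbb C^{n,1})$.

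First, since $N^+$ is a simply connected nilpotent Lie group with Lie algebra $\mathfrak n^+=\mathbb R V^+\oplus\widetilde E^+$, the image $gU^+g^{-1}\subset N^+$ is a one-parameter subgroup of $N^+$, generated by the element
\[
Y:=gV^+g^{-1}\in\mathfrak n^+.
\]
Using the decomposition of $\mathfrak n^+$ together with the identification \eqref{e:E-pm-identified}, I write
\[
Y=cV^++\kappa_E^+(w),\qquad c\in\mathbb R,\ w\in\mathbb C^{n-1}.
\]
Note that $Y\neq 0$ since $g$ is invertible.

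Next, I apply \eqref{e:N+product} in two ways. Taking $c=1$ and $w=0$ there gives $(V^+)^2=0$ as a matrix in $\mathfrak{gl}(\mathbb C^{n,1})$; since conjugation by $g$ is an algebra automorphism, this yields
\[
Y^2=g(V^+)^2 g^{-1}=0.
\]
On the other hand, \eqref{e:N+product} applied directly to $Y=cV^++\kappa_E^+(w)$ gives
\[
Y^2=-i|w|^2 V^+.
\]
Comparing the two expressions forces $|w|^2=0$, hence $w=0$ and $Y=cV^+$ with $c\neq 0$. Therefore
\[
gU^+g^{-1}=\{e^{sY}\mid s\in\mathbb R\}=\{e^{scV^+}\mid s\in\mathbb R\}=U^+,
\]
which completes the proof.

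I do not anticipate any serious obstacle: the only ingredients are the decomposition $\mathfrak n^+=\mathbb R V^+\oplus\widetilde E^+$ and the algebraic identities \eqref{e:N+product}. The conceptual content is simply that $V^+$ is the (essentially) unique element of $\mathfrak n^+$ whose matrix square vanishes, which distinguishes $U^+$ canonically inside $N^+$.
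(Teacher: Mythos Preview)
Your proof is correct and is precisely the Lie-algebra version of the alternative argument the paper itself sketches: characterize $U^+$ inside $N^+$ via the vanishing of the square (of $V^+$, or equivalently of $(I-B)^2$ at the group level), and observe that conjugation preserves this characterization.

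The paper's \emph{primary} proof is different and more structural: it uses that $N^+$ is maximal unipotent in the rank-one group $G$, so distinct maximal unipotent subgroups intersect trivially; from $gU^+g^{-1}\subset N^+$ one concludes $gN^+g^{-1}=N^+$, whence $g\in P^+=N_G(N^+)$, and then $U^+\trianglelefteq P^+$ finishes. That argument is more portable to other rank-one settings but invokes an outside fact, whereas your computation with \eqref{e:N+product} is entirely self-contained and already available from the paper's earlier material.
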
 
\begin{proof}
Recall that $N^+$ is a maximal unipotent subgroup of $G$ and, since $G$ has rank~$1$, any other distinct maximal unipotent subgroup of $G$ intersects $N^+$ in the identity \cite[Lemma 12.15]{Rag}.
Therefore $gN^+g^{-1}\cap N^+=\{e\}$ or $gN^+g^{-1}=N^+$.
Since $gU^+ g^{-1}\subset N^+$ it follows that $gN^+g^{-1}=N^+$ and thus $g\in P^+=N_G(N^+)$.
However $U^+$ is normal in $P^+$ and so we conclude that $gU^+g^{-1}=U^+$ as required.

As an alternative proof, using~\eqref{e:N+product} one can characterize $U^+$
in terms of matrix powers as
$$
U^+=\{B\in N^+\mid (I-B)^2=0\},
$$
and $gU^+g^{-1}$ satisfies the same characterization.
\end{proof}

\subsubsection{Ratner theory}
\label{s:Ratner}

We will make heavy use of Ratner's Orbit Closure Theorem, which describes the closures of unipotent orbits on homogeneous spaces, tailored to our setting, via the following statement.
As in~\eqref{eqn:commtriangle}, denote by $\pi_\Gamma:G\to\Gamma\backslash G$ the projection map.
\begin{theo}\cite[Theorem~A, Corollary~A]{Ratner}
\label{t:Ratner}
Fix $g_0\in G$, let $x_0=\pi_\Gamma(g_0)$, and let $D$ be a subgroup of $G$ generated by unipotent elements.
Then there exists a closed subgroup $J\subset G$ containing $D$ such that the orbit closure $\overline{x_0 D}$ in $\Gamma\backslash G$ is equal to $x_0J$ and $D$ acts ergodically on $x_0J$.
Moreover, $g_0Jg_0^{-1}\cap\Gamma$ is a Zariski dense lattice in $g_0Jg_0^{-1}$.
\end{theo}
\noindent Note that the final statement in Theorem~\ref{t:Ratner} is not listed in \cite{Ratner} but can readily be deduced from ergodicity of the action, such as in~\cite[Corollary 2.13]{Shah}.
We also point out that when $D$ is connected, which will always be the case for us, the $J$ that appears in Theorem~\ref{t:Ratner} is connected as well.

\subsubsection{Closures of $U^\pm$-orbits}

The following lemma classifies $U^\pm$-orbit closures in $\Gamma\backslash G$.
It is stated for $U^+$ but a similar statement holds for $U^-$ as well.
However, the resulting groups $L$ for $U^+$ and $U^-$-orbits may be different.
Moreover, the presence of the element~$u\in N^+$ in~\eqref{e:L-contained}
means that we cannot use Lemma~\ref{lemma:unipotentorbit} in the proof of Theorem~\ref{theo:orbitclosure} directly and we cannot show that the closures
of $U^\pm$-orbits project to totally geodesic submanifolds.
This explains the need for the additional $A$ action in Theorem~\ref{theo:orbitclosure}. 
\begin{lemm}\label{lemma:unipotentorbit}
Let $x_0\in\Gamma\backslash G$.
Then the orbit closure $\overline{x_0U^+}$ in $\Gamma\backslash G$ is equal to $x_0L$ for some closed connected subgroup $L\subset G$ such that $U^+\subset L$.
Moreover $L$ is reductive, $L^\dagger$ is conjugate to a complex standard subgroup $W_\ell$ for some $\ell$, and there exists $u\in N^+$ for which $\overline{x_0U^+}$ is $uAu^{-1}$-invariant, that is, $uAu^{-1}\subset L$.
To be more precise, $L^\dagger=urW_\ell (ur)^{-1}$ for some $r\in R$ and therefore
\begin{equation}
\label{e:L-contained}
W_\ell\ \subset\ (ur)^{-1}Lur\ \subset\ N_G(W_\ell).
\end{equation}
\end{lemm}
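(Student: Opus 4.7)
The plan is to apply Ratner's Orbit Closure Theorem~\ref{t:Ratner} to $D=U^+$, then exploit the rank-one structure of $G=\SU(n,1)$ together with the defining nilpotency $(I-B)^2=0$ for $B\in U^+$ from~\eqref{e:N+product}.

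First, pick a lift $g_0\in G$ of $x_0$ and apply Theorem~\ref{t:Ratner}: this yields a closed connected $L\subset G$ with $U^+\subset L$, $\overline{x_0U^+}=x_0L$, and $\Gamma\cap g_0Lg_0^{-1}$ Zariski dense in $g_0Lg_0^{-1}$. The subgroup $L^\dagger$ is connected and contains $U^+$; by the discussion in~\S\ref{s:Ratner} (citing~\cite[Prop.~2.4]{BFMS}) it is either unipotent or conjugate to a standard subgroup. I would rule out the purely unipotent case by observing that $L^\dagger$ unipotent would force $L\subset N_G(L^\dagger)$ to lie inside a conjugate of $P^+$, and then invoke Borel density together with cocompactness of $\Gamma$ in $G$ (so that no horospherical orbit closes up in $\Gamma\backslash G$). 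The real standard subgroups $W_k^{\R}\simeq\SO(k,1)$ are ruled out because their unipotent one-parameter subgroups do not satisfy $(I-B)^2=0$, while this relation is preserved under conjugation in~$G$. Hence $L^\dagger=cW_\ell c^{-1}$ for some complex standard $W_\ell$ and some $c\in G$.

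Next I refine the conjugating element. Since $U^+\subset cW_\ell c^{-1}$, the subgroup $c^{-1}U^+c\subset W_\ell$ still satisfies $(I-B)^2=0$, and inside $W_\ell\simeq\SU(\ell,1)$ all such one-parameter unipotent subgroups are $W_\ell$-conjugate, so after right multiplication of $c$ by an element of $W_\ell$ I may assume $c\in N_G(U^+)=P^+$. The Langlands decomposition $P^+=N^+\rtimes(AR)$ lets me write $c=uar$ with $u\in N^+$, $a\in A$, $r\in R$; multiplying $c$ on the right by $a^{-1}\in A\subset W_\ell$ does not alter the coset $cW_\ell$, and using that $a$ commutes with $r$ by~\eqref{e:comm-rel:2} I compute $ca^{-1}=uar a^{-1}=ur$. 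Hence $L^\dagger=(ur)W_\ell(ur)^{-1}$, and~\eqref{e:J-dagger-inclusions} gives $W_\ell\subset(ur)^{-1}L(ur)\subset N_G(W_\ell)$. The quotient $N_G(W_\ell)/W_\ell$ is compact by~\eqref{e:W-k-normalizer}, so $L$ is a compact extension of the semisimple $L^\dagger$ and hence reductive. Finally, $A\subset W_1\subset W_\ell$ together with the centralizing relation from~\eqref{e:comm-rel:2} yields $uAu^{-1}=(ur)A(ur)^{-1}\subset L^\dagger\subset L$, as required.

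The main obstacle will be ruling out a purely unipotent $L^\dagger$, which I expect to be the only place where cocompactness of $\Gamma\backslash G$ really enters in a nontrivial way (via Borel density combined with the absence of closed horospherical orbits in the rank-one setting). The remaining arguments are essentially Iwasawa-style bookkeeping inside $P^+$ together with the observation that $R$ centralizes $A$.
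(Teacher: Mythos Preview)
Your proposal is correct and follows the same architecture as the paper: apply Ratner to get $L$, rule out the unipotent case via cocompactness, identify $L^\dagger$ as a conjugate of a complex standard subgroup, and then use the Langlands decomposition of $P^+$ to refine the conjugator to the form $ur$. The differences are in the details. First, the paper establishes reductivity of $L$ up front by invoking Shah's dichotomy and then observing that if $L$ had a finite-index unipotent subgroup $L'$, then $g_0L'g_0^{-1}\cap\Gamma$ would be a lattice in a unipotent group, hence infinite, forcing $\Gamma$ to contain nontrivial unipotents---impossible for a cocompact lattice. This is more direct than the Borel-density/horospherical route you sketch, and you should use it. Second, to show $W=W_\ell$ is complex the paper does not argue via nilpotency degree; instead it arranges $bU^+b^{-1}\subset W\cap N^+$ and invokes Lemma~\ref{l:U+unique} to conclude $bU^+b^{-1}=U^+$, whence $U^+\subset W$ forces $W$ complex. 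Your $(I-B)^2=0$ argument (ruling out $W_k^{\R}$ because its unipotents have nilpotency degree $3$, and then using that rank-one nilpotents in $\mathfrak{su}(\ell,1)$ form a single $W_\ell$-orbit) is a valid alternative, essentially the content of the \emph{second} proof of Lemma~\ref{l:U+unique}. The remaining Iwasawa bookkeeping is the same, with the factors of $P^+=RAN^+$ written in a different order; your deduction of reductivity of $L$ at the end from $W_\ell\subset(ur)^{-1}L(ur)\subset N_G(W_\ell)$ also works.
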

\begin{proof}
The first statement is simply an application of Ratner's Theorem (Theorem~\ref{t:Ratner}) so it suffices to exhibit the others.
Fix $g_0\in G$ such that $x_0=\pi_\Gamma(g_0)$.

\noindent 1. We first claim that $L$ is reductive.
Indeed in \cite[Proposition 3.1]{Shah}, Shah shows that $L$ must either be unipotent or reductive with compact center under the additional assumption that $G$ is center free.
In our setting, where $G$ has center, it is straightforward to deduce from this that $L$ either has a finite index unipotent subgroup or is reductive with compact center in the following way.
By projecting to the adjoint group $\overline{G}=\PU(n,1)$, the argument in \cite[Corollaries 1.3, 1.4]{Shah} shows that either $L$ is reductive or $L=CU$, where $U$ is unipotent and $C$ is contained in the center of $G$.
In the latter case, $L$ contains a finite index subgroup, say $L'$, which is unipotent.
As this is a finite index subgroup, $g_0L' g_0^{-1}\cap\Gamma$ is also a lattice in $g_0L' g_0^{-1}$.
However this implies that $(g_0L' g_0^{-1}\cap\Gamma)\backslash g_0L' g_0^{-1}$ is compact \cite[Theorem 2.1]{Rag}and, in particular, $g_0L' g_0^{-1}\cap\Gamma$ is infinite.
This would force $\Gamma$ to contain a non-trivial unipotent element.
However $\Gamma\backslash G$ is compact, and hence $\Gamma$ cannot contain any nontrivial unipotent elements (see e.g.~\cite[Lemma~1]{Kazdan-Margulis}), a contradiction.

\noindent 2. To see the second claim, note that $L$ is reductive and contains the non-compact group $U^+$, therefore it must be of real rank $1$.
Since $U^+$ is unipotent, it also must be the case that $U^+\subset L^\dagger$.
As $L$ is reductive, $L^\dagger$ is a connected almost simple subgroup of $G$ and therefore is conjugate to a standard subgroup $W$; that is,
\begin{equation}
  \label{e:L-dagger-form}
L^\dagger=b^{-1}Wb\quad\text{for some }b\in G.
\end{equation}
As proper parabolic subgroups of $W$ are minimal parabolics, $W$ acts transitively on them by conjugation.
Therefore we may assume that $b$ is such that $bU^+b^{-1}\subset W\cap P^+$, as the latter is a proper parabolic subgroup of $W$.
Since $W$ is real rank $1$, all of the unipotent elements of $W\cap P^+$ are contained in its unipotent radical $W\cap N^+$, therefore it moreover follows that $b$ is such that $b U^+b^{-1}\subset W\cap N^+$.
By Lemma~\ref{l:U+unique}, we have
\begin{equation}
  \label{e:B-U+comm}
bU^+b^{-1}=U^+,
\end{equation}
and thus $U^+\subset W$, from which it follows that $W=W_\ell$ for some complex standard subgroup $W_\ell$ and some $\ell\in\{1,\dots,n\}$.

\noindent 3. Continuing to the final claim, by~\eqref{e:B-U+comm}
we have $b\in P^+=N_G(U^+)$.
Since $P^+$ has Langlands decomposition $P^+=RAN^+$,\footnote{In the literature, typically one writes the Langlands decomposition using the letter $M$ instead of~$R$, however we want to avoid the notational conflict with $M$ as our manifold.}
and since $RA=AR$, we may write
$$
bur=a\quad\text{for some }
a\in A,\ r\in R,\ u\in N^+.
$$
Since $a$ and $r$ commute with $A$, we have
$$
uAu^{-1}=b^{-1}Ab\ \subset\ b^{-1}W_\ell b=L^\dagger\ \subset\ L.
$$
Moreover, since $a\in W_\ell$, we have from~\eqref{e:L-dagger-form}
$$
L^\dagger=ur W_\ell (ur)^{-1}.
$$
Now the containment~\eqref{e:L-contained} follows from~\eqref{e:J-dagger-inclusions}.
\end{proof}

\subsubsection{Closures of $W_1$-orbits and proof of Proposition~\ref{l:W-1-orbits}}

We now give the proof of Proposition~\ref{l:W-1-orbits} on the closures of $W_1$-orbits
in $\Gamma\backslash G$.
\begin{proof}[Proof of Proposition~\ref{l:W-1-orbits}]
As $W_1$ is generated by unipotents, by Ratner's Theorem (Theorem~\ref{t:Ratner}) we have $\overline{x_0W_1}=x_0H$ for some closed subgroup $H\subset G$ containing~$W_1$.
Similar to the proof of Lemma~\ref{lemma:unipotentorbit}, since $U^+\subset W_1\subset H$ it follows that $H^\dagger$ is conjugate to a complex standard subgroup $W_k$. We now show that this conjugation can be achieved by an element of $R$.

Let $b\in G$ be such that $W_k= bH^\dagger b^{-1}$, then $bW_1b^{-1}\subset W_k$.
Since all copies of $W_1$ contained in $W_k$ are conjugate in $W_k$, there exists some $w\in W_k$ for which $wbW_1(wb)^{-1}=W_1$. See for instance \cite[Proposition 2.4]{BFMS}, applied when $G=W_k$.
It follows that $wb\in N_G(W_1)=RW_1$.
Therefore there exists $r_H\in R$ and $w'\in W_1\subset H^\dagger$ such that $wb=r_Hw'$.
Hence
$$r_HH^\dagger r_H^{-1}=r_Hw'H^\dagger (r_Hw')^{-1}=wbH^\dagger (wb)^{-1}=wW_k w^{-1}=W_k,$$
as required.
\end{proof}

Note that this argument appears in \cite[Lemma 2.7(4)]{BFMS}. There it is only claimed that $r_H\in K$, however the proof gives the stronger results that $r_H\in R$.


\subsubsection{More on orbit closures}

We now give two lemmas which show that if two orbit closures have the same almost simple component, then they are equal.
We briefly remark that Lemmas \ref{lemma:zariskiclosure} and \ref{lemma:orbitclosureequality} hold for any simple real rank $1$ Lie group, however they fail in higher rank.
\begin{lemm}\label{lemma:zariskiclosure}
Suppose that $J_1$, $J_2$ are connected non-compact reductive subgroups of~$G$ for which $\pi_\Gamma(J_1)$, $\pi_\Gamma(J_2)$ are closed subsets of $\Gamma\backslash G$ and $J_1\cap\Gamma$, $J_2\cap\Gamma$ are Zariski dense in $J_1$, $J_2$ (respectively).
Then
$$
N_G(J_1^\dagger)=N_G(J_2^\dagger)\quad\Longrightarrow\quad
J_1=J_2.
$$
In particular, any closed subset $\pi_\Gamma(J)$ of $\Gamma\backslash G$ for which $J\cap\Gamma$ is Zariski dense in $J$ is uniquely determined by $N_G(J^\dagger)$. 
\end{lemm}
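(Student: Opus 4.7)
First set $N := N_G(J_1^\dagger) = N_G(J_2^\dagger)$; by the inclusions \eqref{e:J-dagger-inclusions} both $J_1, J_2 \subset N$. The plan is to first show $J_1^\dagger = J_2^\dagger$, then apply Ratner's theorem to this common subgroup to force $J_1 = J_2$.

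For the first claim, in our setting each $J_i^\dagger$ is either unipotent or a connected non-compact almost simple subgroup conjugate to a standard subgroup. In the almost simple case (the one relevant to the application in Proposition~\ref{l:W-1-orbits}) one has $N = J_i^\dagger \cdot C_G(J_i^\dagger)$ with $C_G(J_i^\dagger)$ compact. Any unipotent $n = hc \in N$ with commuting $h \in J_i^\dagger$ unipotent and $c \in C_G(J_i^\dagger)$ has the eigenvalues of $c$; since a semisimple element of a compact group with all eigenvalues equal to~$1$ is trivial, $c = e$ and $n \in J_i^\dagger$. Thus the subgroup $N^\dagger$ of $N$ generated by unipotent elements equals $J_i^\dagger$, giving $J_1^\dagger = N^\dagger = J_2^\dagger$, which I denote by $H$ (the unipotent case is handled analogously, recovering $J_i^\dagger$ as the center of the unipotent radical of~$N$).

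Next, I would pick any $g_0 \in H$ and set $x_0 := \pi_\Gamma(g_0) \in \pi_\Gamma(J_1) \cap \pi_\Gamma(J_2)$. Applying Ratner's Orbit Closure Theorem (Theorem~\ref{t:Ratner}) to the right $H$-action at $x_0$ produces a unique closed connected subgroup $J' \subset G$ with $H \subset J'$ and $\overline{x_0 H} = x_0 J'$. Since $\pi_\Gamma(J_i) = x_0 J_i$ is a closed $H$-invariant set containing $x_0 H$, it contains $x_0 J'$; a short local argument using discreteness of $\Gamma$ and connectedness of $J'$ converts this into the group-theoretic inclusion $J' \subset J_i$ for both~$i$.

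For the reverse inclusion, consider the right-stabilizer $S_i := \{j \in J_i : (x_0 J') \cdot j = x_0 J'\}$, a closed subgroup of $J_i$ containing $J'$. For each $j \in \Lambda_i := g_0^{-1}(\Gamma \cap J_i) g_0$, the element $g_0 j g_0^{-1}$ lies in $\Gamma$, so right-multiplication by $j$ fixes $x_0$; combined with normality of $H$ in $J_i$ this yields $(x_0 H) \cdot j = x_0 H$, and taking closures gives $j \in S_i$. Now $J_i/H$ is a closed subgroup of the compact group $N/H \cong C_G(H)/Z(H)$, hence compact, while $\Lambda_i$ is Zariski dense in $J_i$ (being a $g_0$-conjugate of $\Gamma \cap J_i$); therefore its image in the compact Lie group $J_i/H$ is topologically dense. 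As $S_i/H$ is a closed subgroup of $J_i/H$ containing this dense image, $S_i = J_i$. This forces $x_0 J' = \pi_\Gamma(J_i)$, and a final near-identity argument gives $J' = J_i$, so $J_1 = J' = J_2$. The main subtlety is bridging the Zariski density of the discrete lattice $\Lambda_i$ to the topological density needed for the stabilizer argument, which is resolved by passing to the compact quotient $J_i/H$.
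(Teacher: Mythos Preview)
Your argument is essentially correct but takes a substantially longer and heavier route than the paper's.

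The paper's proof is a short Zariski-closure argument that never invokes Ratner's theorem: one observes that $J_i^\dagger$ is cocompact in $N:=N_G(J_i^\dagger)$, so $\pi_\Gamma(N)$ is closed and $\Gamma\cap N$ is a uniform lattice in $N$; then $\Gamma\cap J_i$ has finite index in $\Gamma\cap N$ (both being uniform lattices in $N$), whence $J_i=\overline{\Gamma\cap J_i}^Z$ has finite index in $\overline{\Gamma\cap N}^Z$. Being connected, $J_i$ must be the identity component of $\overline{\Gamma\cap N}^Z$, and this group depends only on $N$. Since $N$ is the same for $i=1,2$, the conclusion $J_1=J_2$ is immediate.

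Your approach instead first shows $J_1^\dagger=J_2^\dagger$ and then appeals to Ratner's theorem plus a stabilizer argument. A few remarks: your phrasing ``$h\in J_i^\dagger$ unipotent'' in the decomposition $n=hc$ is not what $N=J_i^\dagger\cdot C_G(J_i^\dagger)$ gives you---the clean way to see $N^\dagger=J_i^\dagger$ is simply that $N/J_i^\dagger$ is compact and hence contains no nontrivial unipotents. More importantly, the step you flag as the main subtlety---that Zariski density of the image of $\Lambda_i$ in the compact group $J_i/H$ forces topological density---does require the nontrivial fact that closed subgroups of compact Lie groups are Zariski closed, which you do not state. Your route does work once these points are filled in, but the paper's characterization of $J_i$ as the identity component of $\overline{\Gamma\cap N}^Z$ reaches the conclusion far more economically and without the Ratner machinery.
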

\begin{proof}
Since $J_i^\dagger$ is cocompact in $N_G(J_i^\dagger)$, it follows that $\pi_\Gamma(N_G(J_i^\dagger))$ is closed and therefore $ N_G(J_i^\dagger)\cap \Gamma$ is a lattice in $N_G(J_i^\dagger)$.
Moreover $ J_i\cap\Gamma$ is finite index in $ N_G(J_i^\dagger)\cap\Gamma$ and hence $J_i=\overline{ J_i\cap\Gamma}$ is finite index in $\overline{ N_G(J_i^\dagger)\cap\Gamma}$, where this closure is with respect to the Zariski topology.
Since $J_i$ is a connected and non-compact subgroup of $N_G(J_i^\dagger)$, it coincides with the identity component of $\overline{ N_G(J_i^\dagger)\cap\Gamma}$, and since $N_G(J_1^\dagger)=N_G(J_2^\dagger)$, we conclude from this description of $J_i$ that $J_1=J_2$.
\end{proof}
%
\begin{lemm}\label{lemma:orbitclosureequality}
Fix $g_0\in G$, let $x_0=\pi_\Gamma(g_0)$, and suppose that $J_1$, $J_2$  are connected non-compact reductive subgroups of $G$ for which $x_0J_1$, $x_0J_2$ are closed subsets of $\Gamma\backslash G$ and $g_0J_1g_0^{-1}\cap\Gamma$, $g_0J_2g_0^{-1}\cap\Gamma$ are Zariski dense in $g_0J_1g_0^{-1}$, $g_0J_2g_0^{-1}$ (respectively).
Then
$$
N_G(J^\dagger_1)=N_G(J^\dagger_2)\quad\Longrightarrow\quad
J_1=J_2.
$$
\end{lemm}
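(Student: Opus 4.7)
The natural approach is to reduce Lemma~\ref{lemma:orbitclosureequality} to the already-established Lemma~\ref{lemma:zariskiclosure} by conjugating the problem via~$g_0$ so that the orbits become orbits through the identity coset.

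Concretely, the plan is to set $J_i' := g_0 J_i g_0^{-1}$ for $i=1,2$ and verify that the $J_i'$ satisfy the hypotheses of Lemma~\ref{lemma:zariskiclosure}. Since conjugation is an isomorphism, each $J_i'$ is connected, non-compact, and reductive. Closedness of $\pi_\Gamma(J_i')$ in $\Gamma\backslash G$ follows from closedness of $x_0 J_i$: indeed $\Gamma g_0 J_i$ is closed in $G$, and right multiplication by $g_0^{-1}$ is a homeomorphism of $G$, so $\Gamma g_0 J_i g_0^{-1} = \Gamma J_i'$ is closed in $G$, hence $\pi_\Gamma(J_i')$ is closed in $\Gamma\backslash G$. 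The Zariski density of $J_i'\cap\Gamma = g_0 J_i g_0^{-1}\cap\Gamma$ in $J_i'$ is exactly one of the given hypotheses.

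Next I would transport the normalizer hypothesis. Since $J_i^\dagger$ is characterized intrinsically as the subgroup generated by unipotent elements, conjugation gives $(J_i')^\dagger = g_0 J_i^\dagger g_0^{-1}$, and hence
\begin{equation*}
N_G\bigl((J_i')^\dagger\bigr) = g_0\, N_G(J_i^\dagger)\, g_0^{-1}.
\end{equation*}
So the assumption $N_G(J_1^\dagger) = N_G(J_2^\dagger)$ immediately yields $N_G((J_1')^\dagger) = N_G((J_2')^\dagger)$. Applying Lemma~\ref{lemma:zariskiclosure} to the pair $J_1', J_2'$ gives $J_1' = J_2'$, and conjugating back by $g_0^{-1}$ delivers $J_1 = J_2$, as required.

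There is essentially no hard step here; this lemma is a purely formal corollary of Lemma~\ref{lemma:zariskiclosure}. The only subtlety worth checking carefully is that the closedness assumption transports correctly under conjugation (one must pass through the closedness of $\Gamma g_0 J_i$ in $G$, not work directly in $\Gamma\backslash G$, since $g_0$ does not descend to a self-map of the quotient). Once that point is handled, the argument is a two-line application of the preceding lemma.
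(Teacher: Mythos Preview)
Your proof is correct and follows essentially the same approach as the paper: conjugate by $g_0$, set $J_i'=g_0 J_i g_0^{-1}$, verify that the $J_i'$ satisfy the hypotheses of Lemma~\ref{lemma:zariskiclosure} (including the closedness of $\pi_\Gamma(J_i')$, which the paper phrases as $x_0 J_i = \pi_\Gamma(J_i') g_0$ being closed hence $\pi_\Gamma(J_i')$ closed), and conclude $J_1'=J_2'$, hence $J_1=J_2$. The only difference is that you lift to $G$ to transport closedness while the paper works directly in $\Gamma\backslash G$ via the right $g_0$-translation; both are equivalent and correct.
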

\begin{proof}
Writing $J'_1=g_0J_1g_0^{-1}$ and $J'_2=g_0J_2g_0^{-1}$, we conclude that $N_G((J'_1)^\dagger)=N_G((J'_2)^\dagger)$ and that $J'_1\cap\Gamma$, $J_2'\cap\Gamma$ are Zariski dense in $J'_1$, $J'_2$ (respectively).
Moreover, note that
$$
x_0J_i=\pi_\Gamma(g_0 J_i)=\pi_\Gamma(J'_i)g_0,
$$
and therefore $\pi_\Gamma(J'_i) g_0$ and hence $\pi_\Gamma( J'_i)$ are closed subsets of $\Gamma\backslash G$ for each $i\in\{1,2\}$.
Applying Lemma~\ref{lemma:zariskiclosure} to the latter, we find that $J'_1=J'_2$ and hence $J_1=J_2$.
\end{proof}
We point out to the reader that the previous lemma will apply to subgroups in the class $\mathcal{H}_{g_0H}$ defined in~\S\ref{s:singular-set} below.

As a consequence of Lemmas~\ref{l:W-1-orbits}, \ref{lemma:unipotentorbit}, and~\ref{lemma:orbitclosureequality}, we will now show that if the orbit closure of $\overline{x_0W_1}$ is as small as possible, then the orbit closures of $\overline{x_0 U^\pm}$ and $\overline{x_0W_1}$ coincide.
\begin{lemm}\label{lemma:smallunipotent}
Fix $x_0\in\Gamma\backslash G$ and write $\overline{x_0W_1}=x_0H$ with $W_k\subset r_HHr_H^{-1}\subset N_G(W_k)$ as in Proposition~\ref{l:W-1-orbits}.
If $k=1$, then $\overline{x_0U^\pm}=\overline{x_0W_1}$.
\end{lemm}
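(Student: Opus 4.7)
The plan is to compare the closures $\overline{x_0U^+}$ and $\overline{x_0W_1}$ by showing that the associated algebraic hulls (the groups $L$ and $H$ produced by Ratner's theorem) have the same almost simple component $J^\dagger$, and then to invoke Lemma~\ref{lemma:orbitclosureequality} to upgrade that equality of components to equality of the full groups.

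First I would apply Lemma~\ref{lemma:unipotentorbit} to write $\overline{x_0U^+} = x_0 L$ for some closed connected non-compact reductive $L \subset G$ containing $U^+$, with $L^\dagger$ conjugate in $G$ to some complex standard subgroup $W_\ell$, $\ell \ge 1$. Since $U^+ \subset W_1$, the inclusion $\overline{x_0U^+}\subset\overline{x_0W_1}$ gives $x_0 L \subset x_0 H$, whence $L\subset H$ and $L^\dagger\subset H^\dagger$.

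Next comes the dimension count, which is the only step with any real content. The hypothesis $k=1$ forces $H^\dagger$ to be conjugate to $W_1$, so $\dim H^\dagger = \dim \SU(1,1) = 3$, whereas $\dim L^\dagger = \dim \SU(\ell,1) = (\ell+1)^2 - 1 \geq 8$ for every $\ell\ge 2$. The inclusion $L^\dagger\subset H^\dagger$ therefore rules out $\ell \geq 2$, so $\ell=1$ and $\dim L^\dagger=\dim H^\dagger$. Since $L^\dagger$ and $H^\dagger$ are both closed and connected with equal dimensions and $L^\dagger\subset H^\dagger$, we conclude $L^\dagger = H^\dagger$, and in particular $N_G(L^\dagger) = N_G(H^\dagger)$.

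Finally, both $L$ and $H$ arise from Ratner's theorem (applied to the unipotent-generated groups $U^+$ and $W_1$ respectively), so the closedness of $x_0L$, $x_0 H$ and the Zariski density of $g_0L g_0^{-1}\cap\Gamma$, $g_0H g_0^{-1}\cap\Gamma$ are built in; both are connected, non-compact, and reductive. Lemma~\ref{lemma:orbitclosureequality} therefore applies and yields $L=H$, so $\overline{x_0U^+} = x_0 L = x_0 H = \overline{x_0W_1}$. The case of $U^-$ is identical, replacing $N^+$ by $N^-$ throughout. The main obstacle is conceptually mild and is precisely the dimension comparison: it is where the hypothesis $k=1$ is essential, since for larger $k$ the inclusion $L^\dagger\subset H^\dagger$ would admit strictly smaller $\ell$, and this is exactly the complication that Proposition~\ref{l:AU-orbits-same} handles by adjoining the $A$-action.
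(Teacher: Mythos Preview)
Your proof is correct and follows essentially the same route as the paper: apply Lemma~\ref{lemma:unipotentorbit} to get $\overline{x_0U^+}=x_0L$, use the inclusion $L^\dagger\subset H^\dagger$ together with the dimension constraint forced by $k=1$ to conclude $L^\dagger=H^\dagger$, and then invoke Lemma~\ref{lemma:orbitclosureequality}. The only cosmetic difference is that the paper observes directly that $H^\dagger=W_1$ (since $r_H\in R$ centralizes $W_1$), whereas you phrase it as ``$H^\dagger$ is conjugate to $W_1$'' and appeal to equal dimensions plus the inclusion; both reach the same conclusion.
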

\begin{proof}
Fix $g_0\in G$ such that $\pi_\Gamma(g_0)=x_0$. We consider the case of $U^+$, with
$U^-$ handled similarly. Since $k=1$ and $r_H\in R$ centralizes $W_1$,
we have $W_1\subset H\subset N_G(W_1)$ and thus $H^\dagger=W_1$.
Let $\overline{x_0U^+}=x_0L$ for $L$ as in Lemma \ref{lemma:unipotentorbit}.
Then, as in that lemma, $L^\dagger=urW_\ell (ur)^{-1}$ for some $r\in R$, some $u\in N^+$, and some complex standard subgroup $W_\ell$.
As $\overline{x_0U^+}\subset \overline{x_0 W_1}$ it follows that $L\subset H$ and therefore $L^\dagger\subset H^\dagger$.
Hence $urW_\ell (ur)^{-1}\subset W_1$ and by dimension considerations it follows that $\ell=1$ and $ur\in N_G(W_1)$.
In particular, we obtain the equalities $L^\dagger=H^\dagger=W_1$.
Therefore $L$ and $H$ fit all of the hypotheses of Lemma~\ref{lemma:orbitclosureequality}
(with Zariski density following from Ratner's Theorem~\ref{t:Ratner}).
Consequently $L=H$ and we conclude that indeed $\overline{x_0U}=\overline{x_0W_1}$.
\end{proof}

\subsection{\texorpdfstring{$AU^\pm$}{AU\unichar{"00B1}}-orbit closures and proof of Proposition~\ref{l:AU-orbits-same}}
\label{s:AU-orbits}

In this subsection, we show that the closures of $AU^\pm$-orbits coincide with the closures
of $W_1$-orbits, proving Proposition~\ref{l:AU-orbits-same}. We focus
on the case of $AU^+$, with the case of $AU^-$ handled in the same way.
Note that Ratner's Theorem (Theorem~\ref{t:Ratner}) does not apply to the group
$AU^+$ since it is not generated by unipotents (we have $(AU^+)^\dagger=U^+$).
Our proof uses the fact that $W_1/AU^+$ is compact to show
that for any $x_0\in \Gamma\backslash G$, the orbit closure
$\overline{x_0AU^+}$ contains a point $y$ such that
$\overline{yU^+}=\overline{x_0W_1}$.

\subsubsection{The singular set}
\label{s:singular-set}

Fix $x_0=\pi_\Gamma(g_0)\in \Gamma\backslash G$. By Proposition~\ref{l:W-1-orbits}
we have $\overline{x_0 W_1}=x_0H$ where $H^\dagger=r_H^{-1}W_k r_H$ for
some $1\leq k\leq n$ and $r_H\in R$. 
For any $y\in x_0H$,
we have $yU^+\subset x_0H$ (as $U^+\subset W_1\subset H$) and thus $\overline{yU^+}\subset x_0H$ as well. We say that $y$ is a \emph{regular} point if the closure
$\overline{yU^+}$ is equal to the whole $x_0H$ and a \emph{singular} point otherwise.

The aim of this section is to obtain a description of the set of singular points,
with obstructions to equidistribution of the orbit $yU^+$ in $x_0H$
given by certain intermediate subgroups~-- see~\eqref{e:singular-described} below. Our discussion is inspired by and follows closely~\cite[\S 5]{LeeOh} and mimics the proof of~\cite[Proposition 2.3]{DM}.

Following~\cite{DM}, for a subgroup $J\subset G$ define the set
\begin{equation}
  \label{e:X-set-def}
X(J,U^+):=\{g\in G\mid gU^+g^{-1}\subset J\}.
\end{equation}

Let us take $y=x_0h=\pi_\Gamma(g_0h)$ for some $h\in H$. By Lemma~\ref{lemma:unipotentorbit} (with
$x_0$ replaced by $y$), we have
$\overline{yU^+}=yL$ for some closed connected reductive subgroup $L\subset G$ containing $U^+$ such that $L^\dagger$ is conjugate to~$W_\ell$ for some $1\leq \ell\leq n$.
Define
\begin{equation}
  \label{e:JJ-def}
J:=g_0hL(g_0h)^{-1}.
\end{equation}
Then $\pi_\Gamma(J)=yL(g_0h)^{-1}$ is a closed subset of $\Gamma\backslash G$.
Moreover, since $U^+\subset L$, we see that $g_0h\in X(J,U^+)$.

We now study the relation between the groups $H$ and~$L$.
Since $yL=\overline{yU^+}\subset x_0H=yH$, we have $L\subset H$
and thus $L^\dagger\subset H^\dagger$, which by dimensional considerations
implies that $\ell\leq k$. (Recall that
$L^\dagger$ is conjugate to~$W_\ell$ and $H^\dagger$
is conjugate to~$W_k$.) Moreover, if $\ell=k$ then $L^\dagger=H^\dagger$.
By Theorem~\ref{t:Ratner} we know that $g_0Hg_0^{-1}\cap\Gamma$ is Zariski
dense in $g_0Hg_0^{-1}$ and $J\cap \Gamma$ is Zariski dense in~$J$.
Note also that $(g_0h)H(g_0h)^{-1}=g_0Hg_0^{-1}$. Thus Lemma~\ref{lemma:orbitclosureequality}
(with $g_0$ replaced by $g_0h$) applied to the groups $L,H$ implies that
$$
\ell=k\quad\Longrightarrow\quad
L^\dagger=H^\dagger\quad\Longrightarrow\quad L=H\quad\Longrightarrow\quad \overline{yU^+}=x_0H.
$$
That is, if $\ell=k$ then $y$ is a regular point; equivalently, if $y$ is a singular point,
then $\ell<k$.

To extract a description of the set of singular points from the above discussion,
define $\mathcal H_{g_0H}$ to be the set of $J$ such that:
\begin{enumerate}
\item $J\subset G$ is a closed connected reductive subgroup;
\item $J$ contains a conjugate of~$U^+$;
\item $\pi_\Gamma(J)$ is a closed subset of $\Gamma\backslash G$;
\item $J\cap \Gamma$ is Zariski dense in~$J$;
\item $g_0^{-1}Jg_0\subset H$;
\item $J^\dagger$ is conjugate to $W_\ell$ for some $1\leq \ell<k$ (where $H^\dagger$ is conjugate to $W_k$).
\end{enumerate}
The set $\mathcal H_{g_0H}$ is countable by~\cite[Theorem 2]{Ratner2}, see also~\cite[Proposition~2.1]{DM} (for this we only need the properties (1)--(4) above). Now, define the set
\begin{equation}
  \label{e:singular-defined}
\mathcal S_{g_0H}=g_0H\cap\bigcup_{J\in \mathcal H_{g_0H}}X(J,U^+).
\end{equation}
Then the above discussion shows that the set of singular points is contained in $\pi_\Gamma(\mathcal S_{g_0H})$:
\begin{equation}
  \label{e:singular-described}
y\in x_0H\setminus \pi_\Gamma(\mathcal S_{g_0H})\quad\Longrightarrow\quad
\overline{yU^+}=x_0H.
\end{equation}
Indeed, if $y=x_0h=\pi_\Gamma(g_0h)$ for some $h\in H$ and $\overline{yU^+}\neq x_0H$, then the group
$J$ defined in~\eqref{e:JJ-def} lies in $\mathcal H_{g_0H}$
and we have $g_0h\in X(J,U^+)$, thus $g_0h\in \mathcal S_{g_0H}$.

\subsubsection{Nowhere density of singular sets}

We now show that for any $J\in \mathcal H_{g_0H}$, the set $g_0H\cap X(J,U^+)$
is nowhere dense in~$g_0H$. Recalling~\eqref{e:singular-described} where
the set $\mathcal H_{g_0H}$ is countable, we see from here that the set of singular points $y\in x_0H$ is a countable union of nowhere dense sets in $x_0H$ and thus (by the Baire category theorem) there exists a regular point in $x_0H$. Alternatively one could use the
concept of Lebesgue measure zero sets instead of nowhere dense sets.

It fact, we show a stronger statement that
the $W_1$-saturation of $g_0H\cap X(J,U^+)$ is nowhere dense in~$g_0H$,
which is needed in the proof of Lemma~\ref{cor:nowheredensesaturation} below.
Our proof follows the strategy of Lee--Oh \cite[\S 5]{LeeOh}, where similar arguments are given in a different, albeit related, context and with different proofs.

Before continuing to the argument, we make a few remarks. 
First, note that one can straightforwardly compute that 
\begin{equation}
\label{e:funfact-1}
bX(J,U^+)=X(bJb^{-1},U^+),
\end{equation}
for any $b\in G$.
Second, if $b\in P^+=N_G(U^+)$ then one can see that
\begin{equation}
\label{e:funfact-2}
X(J,U^+)b=X(J,U^+).
\end{equation}
In particular, the latter applies to any element of $R$.
Finally, we have the relationship that
\begin{equation}
\label{e:funfact-3}
X(J,U^+)=X(J^\dagger,U^+).
\end{equation}
Indeed, by definition if $gU^+g^{-1}\subset J$ then it must be the case that $gU^+g^{-1}\subset J^\dagger$ since the latter is the subgroup generated by unipotent elements in $J$.

The main result of this section is
\begin{lemm}\label{lemma:nowheredensity}
Let  $H\subset G$ be a subgroup such that $H^\dagger=r_H^{-1}W_kr_H$ for some $1\leq k\leq n$ and $r_H\in R$. Let also $g_0\in G$ and $J\subset g_0Hg_0^{-1}$ be a subgroup such that $J^\dagger$ is conjugate to $W_\ell$ for some $1\le \ell<k$. Then $\left(g_0H\cap X(J,U^+)\right)W_1$ is nowhere dense in~$g_0H$.
\end{lemm}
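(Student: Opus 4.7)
The plan is to reduce, through a sequence of conjugations and the identification~\eqref{e:funfact-3}, to the clean geometric question of whether chains can be generically arranged to avoid a totally geodesic boundary sphere in~$\partial\mathbb{CH}^n$.

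First I would normalize the setup. Using~\eqref{e:funfact-3} replace $J$ by $J^\dagger$. Left-translating by $g_0^{-1}$ and applying~\eqref{e:funfact-1} turns the set into $(H\cap X(g_0^{-1}J g_0,U^+))W_1$ inside $H$, so WLOG $g_0=e$ and $J=J^\dagger\subset H$. Since $N_G(W_k)/W_k$ is compact by~\eqref{e:W-k-normalizer} and thus contains no nontrivial unipotent element, the connected unipotent-generated group $J^\dagger$ must land in $H^\dagger$. Conjugating by $r_H$ — which, being in $R$, centralizes $U^+\subset W_1$ and $W_1$, and so preserves all relevant objects — reduces to $H^\dagger = W_k$, $W_k\subset H\subset N_G(W_k)$, and $J\subset W_k$ with $J$ $G$-conjugate to $W_\ell$. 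By~\cite[Proposition~2.4]{BFMS} applied inside $W_k$, a further conjugation by an element of $W_k\subset H$ (an inner conjugation of $H$, hence preserving the set we want to bound) allows us to assume $J=W_\ell$ in the standard upper-left embedding.

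Next I would show $X(W_\ell,U^+)=W_\ell P^+$. The inclusion $\supseteq$ is immediate from $P^+=N_G(U^+)$ and $U^+\subset W_\ell$. For $\subseteq$: if $gU^+g^{-1}\subset W_\ell$, then $gU^+g^{-1}$ is a $1$-parameter unipotent subgroup of $W_\ell$ that is $G$-conjugate to $U^+$. From~\eqref{e:N+product} one reads off that $V^+$ has Jordan partition $(2,1^{n-1})$ on $\mathbb C^{n,1}$, whereas a ``horocyclic'' generator $W^+_j$ satisfies $(W^+_j)^2\ne 0$ and $(W^+_j)^3=0$, hence has Jordan partition $(3,1^{n-2})$; the two Jordan types are not $\mathrm{GL}$-conjugate, so the only $1$-parameter unipotent subgroups of $W_\ell$ that are $G$-conjugate to $U^+$ are the $W_\ell$-conjugates of $U^+$ itself. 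Consequently $gU^+g^{-1}=wU^+w^{-1}$ for some $w\in W_\ell$, giving $g\in W_\ell\cdot N_G(U^+)=W_\ell P^+$.

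Next I would translate the problem to the boundary $\partial\mathbb{CH}^n\simeq G/P^+$ via $\pi^+\colon g\mapsto g\xi^+$ with $\xi^+=[e_0+e_1]$. Then $\pi^+(W_\ell P^+)=\partial\mathbb{CH}^\ell$ is the boundary of the standard totally geodesic $\mathbb{CH}^\ell$, while $W_1\xi^+=\partial\mathbb{CH}^1$ is the standard chain. A short unwinding gives
\[
(H\cap X(W_\ell,U^+))W_1\;=\;\{h\in H\colon h(\partial\mathbb{CH}^1)\cap\partial\mathbb{CH}^\ell\ne\emptyset\}.
\]
Because $H\subset N_G(W_k)$ preserves $\partial\mathbb{CH}^k$ setwise and the chain $\partial\mathbb{CH}^1$ already lies inside $\partial\mathbb{CH}^k$, the whole question happens inside~$\partial\mathbb{CH}^k$. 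I would then exhibit a single $h\in H$ with $h(\partial\mathbb{CH}^1)\cap\partial\mathbb{CH}^\ell=\emptyset$; since the ``bad'' set is a real algebraic subset of the connected real algebraic group $H$, its properness will automatically upgrade to nowhere density. In linear-algebra terms a chain in $\partial\mathbb{CH}^k$ is the projectivized null cone of a signature-$(1,1)$ complex subspace $V\subset\mathbb C^{k,1}$, and $\partial\mathbb{CH}^\ell$ is the null cone of a signature-$(\ell,1)$ subspace $W\subset\mathbb C^{k,1}$; the two meet iff $V\cap W$ contains an isotropic vector. For $\ell\le k-2$ one takes $V$ with $V\cap W=0$, since $\dim V+\dim W=\ell+3\le k+1$; for $\ell=k-1$ the intersection $V\cap W$ is forced to be $1$-dimensional, but the null lines in $V$ form only a real circle in $\mathbb{CP}^1=\mathbb P(V)$, so a generic such complex line is non-isotropic. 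Picking $h\in W_k\subset H$ that moves the standard $\mathbb C^{1,1}$ to such a generic $V$ completes the argument.

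The main obstacle is the identification $X(W_\ell,U^+)=W_\ell P^+$: one must rule out that some $1$-parameter unipotent subgroup of $W_\ell$ is $G$-conjugate to $U^+$ without being $W_\ell$-conjugate to it. This is precisely what the Jordan-partition computation above takes care of; the rest is linear algebra on $\mathbb C^{k,1}$ plus the general principle that a proper real-algebraic subvariety of a connected real algebraic group is nowhere dense.
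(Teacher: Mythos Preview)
Your reduction to the case $g_0=e$, $H^\dagger=W_k$, $J=W_\ell$ is correct and parallels the paper's Step~1 (one small slip: the move that replaces $J$ by $W_\ell$ is a \emph{left translation} by an element of $W_k\subset H$, not an inner conjugation, since conjugation would also move $W_1$; but left translation is exactly what is needed and preserves nowhere density). Your identification $X(W_\ell,U^+)=W_\ell P^+$ is correct and is just a repackaging of the paper's formula~\eqref{e:workred-1}; the Jordan--type argument is a valid alternative to the direct computation. The translation of $(H\cap X(W_\ell,U^+))W_1$ into the chain--incidence condition $\{h\in H: h(\partial\mathbb{CH}^1)\cap\partial\mathbb{CH}^\ell\neq\emptyset\}$ is also correct, and your construction of a single $h$ outside this set works in both cases $\ell\le k-2$ and $\ell=k-1$.

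The genuine gap is in the last step. You assert that the bad set is a ``real algebraic subset'' of $H$, so that properness automatically gives nowhere density. But the bad set is defined by an existential quantifier over $\theta\in S^1$, i.e.\ as the projection to $H$ of an algebraic set in $H\times S^1$; over $\mathbb R$ such projections are only semi-algebraic in general, and a proper closed semi-algebraic subset can have nonempty interior. In this particular case the bad set \emph{is} real Zariski-closed: writing $a=(h_{j0})_{j>\ell}$, $b=(h_{j1})_{j>\ell}\in\mathbb C^{k-\ell}$, one checks that the existence of $\lambda\in S^1$ with $a+\lambda b=0$ is equivalent to the simultaneous vanishing of all $2\times 2$ minors $a_jb_{j'}-a_{j'}b_j$ together with the single real equation $\sum_j|a_j|^2=\sum_j|b_j|^2$; these are polynomial equations on $W_k$. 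With this in hand, irreducibility of $W_k$ finishes the argument. So your route can be completed, but the algebraicity claim is not free and should be justified. The paper bypasses this issue entirely: instead of invoking Zariski closedness it parametrizes the bad set as $\bigcup_{\theta\in S^1}Y_\theta$ where each $Y_\theta$ is a smooth codimension-$2(k-\ell)$ submanifold of $W_k$ varying smoothly in $\theta$, and concludes by a straightforward dimension count that the union has positive codimension. Your approach is more conceptual once the algebraicity is verified; the paper's is more self-contained.
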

\begin{proof}
1. To simplify the situation, we will first argue that we can reduce to the case that $g_0=I$,
$H=W_k$, and $J=W_\ell$.
We will then argue the nowhere density in that specific case.

First of all, by~\eqref{e:funfact-1} we see that
$(g_0H\cap X(J,U^+))W_1$ is nowhere dense in $g_0H$ if and only if $(H\cap X(g_0^{-1}Jg_0,U^+))W_1$
is nowhere dense in $H$. Thus (replacing $J$ by $g_0^{-1}Jg_0$) we reduce to the case when $g_0=I$, $J\subset H$, and we need to show that $(H\cap X(J,U^+))W_1$ is nowhere dense in $H$.

Next, let $H'=r_HHr_H^{-1}$ and $J'=r_H J r_H^{-1}$.
Then $\left(H\cap X(J,U^+)\right)W_1$ is nowhere dense in $H$ if and only if $r_H\left(H\cap X(J,U^+)\right)W_1r_H^{-1}$ is nowhere dense in $r_HHr_H^{-1}=H'$.
Since $r_H\in R\subset C_G(W_1)$, we see that~\eqref{e:funfact-1}--\eqref{e:funfact-2} imply that
\begin{align*}
r_H\left(H\cap X(J,U^+)\right)W_1r_H^{-1}&=\left(r_HHr_H^{-1}\cap r_HX(J,U^+)r_H^{-1}\right)W_1,\\
&=\left(H'\cap X(r_H J r_H^{-1},U^+)\right)W_1,\\
&=\left(H'\cap X(J',U^+)\right)W_1.
\end{align*}
We therefore conclude that the nowhere density of $\left(H\cap X(J,U^+)\right)W_1$ in $H$ is equivalent to that of $\left(H'\cap X(J',U^+)\right)W_1$ in $H'$.

Since $H'^\dagger=W_k$, by~\eqref{e:J-dagger-inclusions} and~\eqref{e:W-k-normalizer}
we have $H'=W_kC_k$ for some subgroup $C_k\subset C_G(W_k)$.
Additionally, since $J'^\dagger$ is a conjugate of $W_\ell$ lying in $H'^\dagger=W_k$ and since $W_\ell$ also lies in $W_k$, it follows that there exists $w\in W_k$ for which $W_\ell=wJ'^\dagger w^{-1}$. See for instance \cite[Proposition 2.4]{BFMS} applied when $G=W_k$.

Using~\eqref{e:funfact-3} and~\eqref{e:funfact-1}, we compute that 
\begin{align*}
H'\cap X(J',U^+)&=H'\cap X(J'^\dagger,U^+),\\
&=H'\cap X(w^{-1}W_\ell w,U^+),\\
&=H'\cap w^{-1}X(W_\ell ,U^+),\\
&=w^{-1}\left(H'\cap X(W_\ell ,U^+)\right),\\
&=w^{-1}\left(W_k\cap X(W_\ell,U^+)\right)C_k,
\end{align*}
where the final line follows from the inclusion $C_G(W_k)\subset R$ and from~\eqref{e:funfact-2}.
Recall that $w\in W_k\subset H'$ and therefore the $W_1$-saturation $w^{-1}\left(W_k\cap X(W_\ell,U^+)\right)C_kW_1$ is a nowhere dense subset of $H'$ if and only if $\left(W_k \cap X(W_\ell  ,U^+)\right)C_kW_1$ is a nowhere dense subset of $H'$.
Since the latter set is $C_k$-saturated and $C_k$ commutes with $W_1$, it is therefore equivalent to see that
\begin{equation}
  \label{e:dense-reduced}
\left(W_k \cap X(W_\ell  ,U^+)\right)W_1\quad\text{is a nowhere dense subset of }W_k.
\end{equation}

\noindent 2. We next describe the left-hand side of~\eqref{e:dense-reduced}
in terms of the action of $G=\SU(n,1)$ on $\mathbb C^{n,1}$. 
Let $B\in G$, then by~\eqref{e:X-set-def} we have
$B\in X(W_\ell,U^+)$ if and only if $\Ad_BV^+$ lies in the Lie algebra of $W_\ell$
where $V^+$ is a generator of the Lie algebra of~$U^+$.
Using~\eqref{e:stun-matrix-action} we see that
$$
(\Ad_BV^+)z=-i\langle z,B(e_0+e_1)\rangle_{\mathbb C^{n,1}}(B(e_0+e_1))\quad\text{for all }
z\in\mathbb C^{n,1}.
$$
Recalling~\eqref{eqn:complexW}, we then have
\begin{equation}
  \label{e:workred-1}
X(W_\ell,U^+)=\big\{B\in G\mid B(e_0+e_1)\in \mathbb C^{\ell,1}\oplus \{0\}\big\}.
\end{equation}
Next, take arbitrary $D\in (W_k\cap X(W_\ell,U^+))W_1$.
Then $D\in W_k$ and there exists $C\in W_1$ such that $DC\in X(W_\ell,U^+)$.
Since $\langle e_0+e_1,e_0+e_1\rangle_{\mathbb C^{n,1}}=0$, we see that
$C(e_0+e_1)$ has the form $\lambda (e_0+e^{i\theta}e_1)$ for some $\lambda\in\mathbb C\setminus\{0\}$
and $\theta\in\mathbb S^1=\mathbb R/\mathbb Z$. We now see from~\eqref{e:workred-1} that
\begin{equation}
  \label{e:workred-2}
\begin{gathered}
(W_k\cap X(W_\ell,U^+))W_1\subset
\bigcup_{\theta\in\mathbb S^1} Y_\theta,\\\text{where}\quad
Y_\theta:=\big\{D\in W_k\mid D(e_0+e^{i\theta}e_1)\in \mathbb C^{\ell,1}\oplus\{0\}\big\}.
\end{gathered}
\end{equation}

\noindent 3.
For any $D\in W_k$ and $\theta\in\mathbb S^1$, the vector $v:=D(e_0+e^{i\theta}e_1)$ lies in $\mathbb C^{k,1}\oplus\{0\}$
and satisfies $\langle v,v\rangle_{\mathbb C^{n,1}}=0$.
Thus we may write $v=\lambda' (1,\beta_\theta(D),0)$ for some
$\lambda'\in\mathbb C\setminus\{0\}$ and $\beta_\theta(D)\in \mathbb S^{2k-1}\subset\mathbb C^k$
depending smoothly on $\theta$ and~$D$. For each $\theta$, the map
$\beta_\theta:W_k\to\mathbb S^{2k-1}$ is a submersion.
Moreover, we have
$$
Y_\theta=\big\{D\in W_k\mid \beta_\theta(D)\in \mathbb S^{2k-1}\cap (\mathbb C^{\ell}\oplus\{0\})\big\},\quad
\mathbb S^{2k-1}\cap (\mathbb C^{\ell}\oplus \{0\})\simeq \mathbb S^{2\ell-1}.
$$
It follows that each $Y_\theta$ is a codimension $2(k-\ell)$ embedded submanifold of~$W_k$,
depending smoothly on~$\theta$. Thus the union $\bigcup_{\theta\in\mathbb S^1}Y_\theta$
has codimension at least $2(k-\ell)-1>0$ in $W_k$ and therefore is a nowhere dense subset of~$W_k$,
finishing the proof of~\eqref{e:dense-reduced}.
\end{proof}

\subsubsection{End of the proof of Proposition~\ref{l:AU-orbits-same}}

As a corollary of Lemma~\ref{lemma:nowheredensity} we show that the $W_1$-saturation of the set $\pi_\Gamma(\mathcal{S}_{g_0H})$ featured in~\eqref{e:singular-described} is proper in $x_0H$,
that is, there exists a $W_1$-orbit in $x_0H$ consisting entirely of regular points.
\begin{lemm}\label{cor:nowheredensesaturation}
Fix $x_0=\pi_\Gamma(g_0)\in \Gamma\backslash G$ and write $\overline{x_0W_1}=x_0H$ as in Proposition~\ref{l:W-1-orbits}. Then $\pi_\Gamma(\mathcal S_{g_0H})W_1$ is a proper subset of $x_0H$,
that is
there exists $y_0\in x_0H$ such that $y_0W_1\cap \pi_\Gamma(\mathcal S_{g_0H})=\emptyset$.
\end{lemm}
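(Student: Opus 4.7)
The strategy will be a Baire category argument carried out upstairs in $g_0H$ rather than in the quotient $x_0H$. Set $\Gamma':=\Gamma\cap g_0Hg_0^{-1}$, so that $\pi_\Gamma\colon g_0H\to x_0H$ is a covering map with deck group $\Gamma'$. For any $S\subset g_0H$ one has the standard identity $\pi_\Gamma^{-1}(\pi_\Gamma(S))\cap g_0H=\Gamma'\cdot S$, because $\gamma g_0H=g_0H$ holds precisely when $\gamma\in\Gamma'$. Applied to $S=\mathcal S_{g_0H}\cdot W_1$, this shows
\[
\pi_\Gamma(\mathcal S_{g_0H})W_1 = x_0H
\quad\Longleftrightarrow\quad
\Gamma'\cdot\mathcal S_{g_0H}\cdot W_1=g_0H.
\]
Thus, to produce the desired $y_0$, it suffices to exhibit a point of $g_0H$ not in $\Gamma'\cdot\mathcal S_{g_0H}\cdot W_1$: any preimage (under $\pi_\Gamma$) of such a point gives a $y_0$ whose $W_1$-orbit misses $\pi_\Gamma(\mathcal S_{g_0H})$.

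Now I would expand this set as a countable union, using the countability of both $\Gamma'$ and of $\mathcal H_{g_0H}$ (the latter being recorded after the definition~\eqref{e:singular-defined}):
\[
\Gamma'\cdot\mathcal S_{g_0H}\cdot W_1
= \bigcup_{\gamma\in\Gamma',\,J\in\mathcal H_{g_0H}}
\gamma\bigl(g_0H\cap X(J,U^+)\bigr)W_1.
\]
Using $\gamma g_0H=g_0H$ together with the equivariance~\eqref{e:funfact-1}, each summand equals $\bigl(g_0H\cap X(\gamma J\gamma^{-1},U^+)\bigr)W_1$. I would then check that $\gamma J\gamma^{-1}$ satisfies the hypotheses of Lemma~\ref{lemma:nowheredensity}: conditions (1)--(4) and (6) in the definition of $\mathcal H_{g_0H}$ are immediate from those for $J$, while condition (5) follows from the computation $g_0^{-1}(\gamma J\gamma^{-1})g_0=(g_0^{-1}\gamma g_0)(g_0^{-1}Jg_0)(g_0^{-1}\gamma^{-1}g_0)\subset H$, where we use $g_0^{-1}\gamma g_0\in H$ since $\gamma\in\Gamma'$.

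Therefore Lemma~\ref{lemma:nowheredensity} applies and each term $\bigl(g_0H\cap X(\gamma J\gamma^{-1},U^+)\bigr)W_1$ is nowhere dense in $g_0H$. Since $g_0H$ is a (closed) submanifold of $G$, it is a locally compact Hausdorff space and hence a Baire space; a countable union of nowhere dense subsets cannot equal the whole space. Consequently $\Gamma'\cdot\mathcal S_{g_0H}\cdot W_1\subsetneq g_0H$, and picking $y_0\in x_0H$ to be the $\pi_\Gamma$-image of any point in the complement gives $y_0W_1\cap \pi_\Gamma(\mathcal S_{g_0H})=\emptyset$, as desired. The only non-routine step is the verification that $\gamma J\gamma^{-1}$ still lies in the family where Lemma~\ref{lemma:nowheredensity} applies; everything else is a clean Baire argument.
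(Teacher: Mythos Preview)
Your argument is correct and is essentially the paper's proof carried out upstairs in $g_0H$ rather than in $x_0H$: both use Lemma~\ref{lemma:nowheredensity}, the countability of $\mathcal H_{g_0H}$ and of $\Gamma$ (or $\Gamma'$), and Baire category. One small simplification: you do not need to re-verify that $\gamma J\gamma^{-1}$ satisfies the hypotheses of Lemma~\ref{lemma:nowheredensity}, since left multiplication by $\gamma\in\Gamma'$ is a homeomorphism of $g_0H$ and therefore already sends the nowhere dense set $(g_0H\cap X(J,U^+))W_1$ to a nowhere dense set.
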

\Remark Note that in the special case when $H^\dagger$ is conjugate to $W_1$
(that is, $k=1$ in the notation of Proposition~\ref{l:W-1-orbits}), the set of singular points in $x_0H$
is empty by Lemma~\ref{lemma:smallunipotent}. Note that in this case the set $\mathcal S_{g_0H}$
is empty as well, since the set $\mathcal H_{g_0H}$ is empty (as the inequalities
$1\leq \ell<k$ cannot hold for $k=1$).
\begin{proof}
By~\eqref{e:singular-defined} we have
$$
\pi_\Gamma(\mathcal S_{g_0H})W_1=\bigcup_{J\in \mathcal H_{g_0H}} \pi_\Gamma\big((g_0H\cap X(J,U^+))W_1\big).
$$
By Lemma~\ref{lemma:nowheredensity}, recalling Proposition~\ref{l:W-1-orbits} and items~(5)--(6) in the definition
of $\mathcal H_{g_0H}$ in~\S\ref{s:singular-set}, we see that each set $(g_0H\cap X(J,U^+))W_1$ is nowhere dense in~$g_0H$.
Since both $\mathcal H_{g_0H}$ and $\Gamma$ are countable, $\pi_\Gamma(\mathcal S_{g_0H})W_1$ is contained in a countable union
of nowhere dense subsets of $x_0H$, which by the Baire category theorem implies
that cannot be equal to the whole~$x_0H$.
\end{proof}
We are finally ready to finish the proof of Proposition~\ref{l:AU-orbits-same}
and with it of Theorem~\ref{theo:orbitclosure}:
\begin{proof}[Proof of Proposition~\ref{l:AU-orbits-same}]
As before, we consider the case of $AU^+$, with $AU^-$ handled similarly.
We write $\overline{x_0W_1}=x_0H$ as in Proposition~\ref{l:W-1-orbits}.
Take $g_0\in G$ such that $\pi_\Gamma(g_0)=x_0$. By Lemma~\ref{cor:nowheredensesaturation} there exists
\begin{equation}
  \label{e:AUtor-1}
y_0\in x_0H,\quad
y_0W_1\cap \pi_\Gamma(\mathcal S_{g_0H})=\emptyset.
\end{equation}
Note that an Iwasawa decomposition of $W_1$ is given by $W_1=A U^+(K\cap W_1)$.
In particular, $W_1/AU^+=K\cap W_1$ is compact (more precisely, it is a circle) and thus
\begin{equation}\label{eqn:auorbitsaturate}
x_0H=\overline{x_0W_1}=\overline{x_0AU^+}(K\cap W_1).
\end{equation}
Therefore, we can write $y_0=yw$ for some $y\in\overline{x_0AU^+}$
and $w\in K\cap W_1$. By~\eqref{e:AUtor-1} we then see that
$y\notin\pi_\Gamma(\mathcal S_{g_0H})$. Therefore, by~\eqref{e:singular-described}
the closure $\overline{yU^+}$ is equal to the entire $x_0H$.
Thus
$$
\overline{yU^+}\subset \overline{x_0AU^+}\subset \overline{x_0W_1}=xH=\overline{yU^+}
$$
which shows that $\overline{x_0AU^+}=\overline{x_0W_1}$ as needed.
\end{proof}

\subsection{Known examples of complex hyperbolic manifolds and their geodesic submanifolds}
\label{sec:geodesicsubman}

In this subsection, we discuss known examples of closed complex hyperbolic manifolds $M$ in arbitrary dimensions and the behavior of their geodesic submanifolds.
In particular, we give examples in all dimensions of $M$ for which $M$ contains a proper complex geodesic submanifold $\Sigma$ and examples in infinitely many dimensions for which $M$ contains no proper geodesic complex submanifold.
In the latter case Theorem \ref{theo:orbitclosure} shows that every $AU^\pm$-orbit closure equidistributes in $SM$, which we will state formally in Corollary \ref{cor:orbitclosure}.

At present, the only known constructions of finite volume complex hyperbolic manifolds in $\mathbb{CH}^n$ in all dimensions are via arithmetic constructions.
Indeed, since the non-arithmetic constructions of Deligne--Mostow \cite{DeMo} it remains a major open problem whether finite-volume non-arithmetic complex hyperbolic manifolds exist in complex dimension at least $4$, see \cite[Problem 9]{Margulis} or \cite[Conjecture 10.8]{Kapovich}.
For non-arithmetic complex hyperbolic manifolds, all known constructions contain finitely many (and at least one) complex totally geodesic submanifold of complex codimension $1$. 
Indeed, at present all known examples are commensurable with reflection groups and hence contain at least one such submanifold (see the argument in \cite[Theorem 1.3]{Stover} for instance).
That there are then finitely many is the main theorem of \cite{BFMS}.

For complex hyperbolic manifolds, arithmetic manifolds always arise as certain unitary groups of Hermitian elements in central simple algebras.
Such constructions are heavily number theoretic in nature and so rather than describing how to produce such manifolds, we refer the interested reader to \cite[\S9]{BFMS} or \cite[\S2]{Stover2} for a more detailed exposition (see also \cite{Meyer} or \cite{BenSurvey}). 
Importantly, there are two constructions of arithmetic manifolds that have radically different behavior with respect to geodesic submanifolds:  
\begin{itemize}
\item For every $n\ge 2$, there exist closed complex hyperbolic manifolds $M$ of complex dimension $n$ such that for each $1\le k\le n-1$, $M$ contains infinitely many totally geodesic complex submanifolds of complex dimension $k$. $M$ also contains infinitely many totally geodesic real submanifolds in all possible real dimensions.
\item For every $n\ge 2$ such that $n+1$ is prime, there exist closed complex hyperbolic manifolds with no proper totally geodesic complex submanifolds of any dimension.
\end{itemize}
See \cite[Example 9.1]{BFMS} for examples of the former and \cite[Example 9.2]{BFMS} for examples of the latter.
In particular, the latter manifolds allow us to produce the following immediate corollary of Theorem \ref{theo:orbitclosure}.

\begin{corr}\label{cor:orbitclosure}
If $n+1$ is prime, then there exists a closed arithmetic complex hyperbolic manifold $M$ for which $M$ has no proper geodesic complex submanifolds.
In particular, any orbit closure of the $AU^+$-action or $AU^-$-action on $SM$ is all of $SM$.
\end{corr}

\Remark For the reader well versed in arithmetic constructions, the example in \cite[Example 9.1]{BFMS} is actually not closed. However, as is well known to experts, one can easily modify it to get a closed example.
Specifically, one has to require that the requisite Hermitian form is a signature $(n,1)$ form defined over a CM field which is not an imaginary quadratic extension of $\Q$ and such that all of its non-trivial Galois conjugates have signature $(n+1,0)$.
Note also that \cite[Example 9.1]{BFMS} shows how to produce at least one geodesic submanifold but, as explained in the introduction of that paper, for arithmetic manifolds the existence of one geodesic submanifold implies infinitely many.

\section{From decay for long words to Theorem~\ref{t:meas}}
\label{s:usual-proof}

In this section we prove Theorem~\ref{t:meas} modulo the key estimate,
Proposition~\ref{l:key-estimate} below.

\subsection{Semiclassical analysis}
\label{s:review-semi}

We first give a brief review of semiclassical analysis, sending the reader
to~\cite[\S14.2.2]{Zworski-Book}, \cite[\S E.1.5]{DZ-Book}, and~\cite[\S2.1]{hgap} for details.

Let $M$ be a manifold and denote by $T^*M$ its cotangent bundle.
We write elements of $T^*M$ as $(x,\xi)$ where $x\in M$, $\xi\in T_x^*M$.
Denote by $|\xi|$ the norm of $\xi$ with respect
to some Riemannian metric, and denote $\langle\xi\rangle:=\sqrt{1+|\xi|^2}$.
We use the Kohn--Nirenberg symbol class $S_{1,0}^m(T^*M)$ of order $m\in\mathbb R$,
consisting of functions $a\in C^\infty(T^*M)$ such that
for any compact set $K\subset M$ and multiindices $\alpha,\beta$ we have
$|\partial^\alpha_x\partial^\beta_\xi a(x,\xi)|\leq C_{\alpha\beta K}\langle\xi\rangle^{m-|\beta|}$
for some constant $C_{\alpha\beta K}$ and all $x\in K$, $\xi\in T_x^*M$.

We use a semiclassical quantization procedure, mapping each $a\in S_{1,0}^m(T^*M)$
to a family of operators
$$
\Op_h(a)=a(x,hD_x):\CIc(M)\to C^\infty(M),\quad
\mathcal E'(M)\to\mathcal D'(M).
$$
Here $D_x:=-i\partial_x$ denotes the differentiation operator and $0<h<1$ is called
the semiclassical parameter; we are interested in the limit $h\to 0$. The symbol $a$
can depend on~$h$ but for now we require that its $S_{1,0}^m$-seminorms be bounded
uniformly in~$h$. The quantization procedure depends on choices of local charts on~$M$
but a different choice of those produces the same class of operators and
symbols in different quantizations differ by $\mathcal O(h)_{S_{1,0}^{m-1}}$.

We will mostly work with symbols which are compactly supported. Denote by $S^{\comp}_h(T^*M)$
the set of $h$-dependent functions in $\CIc(T^*M)$ which are bounded with all derivatives uniformly in~$h$ and whose support is contained in some $h$-independent compact subset of~$T^*M$. We also introduce here the residual classes $\mathcal O(h^\infty)_{L^2\to L^2}$ consisting of $h$-dependent operators on~$L^2(M)$ whose operator norm is bounded by $\mathcal O(h^N)$ for each $N$,
and $\mathcal O(h^\infty)_{\Psi^{-\infty}}$, consisting of $h$-dependent smoothing
operators whose Schwartz kernels have every $C^\infty(M\times M)$-seminorm bounded
by $\mathcal O(h^N)$ for every~$N$.

We now state some standard properties of semiclassical quantization. To avoid technical details, we focus on the case when $M$ is compact and all the symbols are in $S^{\comp}_h(T^*M)$. First of all, if $a\in S^{\comp}_h(T^*M)$ then the operator $\Op_h(a)$ is bounded
on~$L^2(M)$ uniformly in~$h$.
Next, we have the general composition formula
$$
\Op_h(a)\Op_h(b)=\Op_h(a\# b)+\mathcal O(h^\infty)_{L^2\to L^2}
$$
where the symbol $a\#b\in S^{\comp}_h(T^*M)$ has an asymptotic expansion in the powers of~$h$ featuring the derivatives of~$a,b$. Consequences of this formula include:
\begin{itemize}
\item the Product Rule
\begin{equation}
  \label{e:semi-product}
\Op_h(a)\Op_h(b)=\Op_h(ab)+\mathcal O(h)_{L^2\to L^2},
\end{equation}
\item the Commutator Rule (where $\{\bullet,\bullet\}$ denotes the Poisson bracket on~$T^*M$)
\begin{equation}
  \label{e:semi-commutator}
[\Op_h(a),\Op_h(b)]=-ih\Op_h(\{a,b\})+\mathcal O(h^2)_{L^2\to L^2},
\end{equation}
\item and the Nonintersecting Support Property:
\begin{equation}
  \label{e:semi-nonint-supp}
\supp a\cap\supp b=\emptyset\quad\Longrightarrow\quad
\Op_h(a)\Op_h(b)=\mathcal O(h^\infty)_{L^2\to L^2}.
\end{equation}
\end{itemize}
We also have the Adjoint Rule
\begin{equation}
  \label{e:semi-adjoint}
\Op_h(a)^*=\Op_h(\overline a)+\mathcal O(h)_{L^2\to L^2}.
\end{equation}
Denote by $\Psi^{\comp}_h(M)$ the class of compactly supported operators
of the form $\Op_h(a)+\mathcal O(h^\infty)_{\Psi^{-\infty}}$ with $a\in S^{\comp}_h(T^*M)$ and by $\Psi^m_h(M)$ the class of operators
$\Op_h(a)+\mathcal O(h^\infty)_{\Psi^{-\infty}}$ with $a\in S^m_{1,0}(T^*M)$.
Note that in~\cite{meassupp} we used the
more restrictive polyhomogeneous symbol classes, which have an asymptotic expansion
in powers of~$h$ and~$\xi$, however the difference between the two classes
will not matter in this paper.

For $A\in \Psi^{\comp}_h(M)$, denote its semiclassical wavefront set by
\begin{equation}
\label{e:WF-A}
\WFh(A)\ \subset\ T^*M. 
\end{equation}
One definition of $\WFh(A)$ is as follows: a point $(x,\xi)$ does not lie in $\WFh(A)$
if and only if we can write $A=\Op_h(a)+\mathcal O(h^\infty)_{\Psi^{-\infty}}$
for some symbol $a$ which vanishes on an $h$-independent neighborhood
of $(x,\xi)$. We have $\WFh(A)=\emptyset$
if and only if $A=\mathcal O(h^\infty)_{\Psi^{-\infty}}$ and $\WFh(AB)\subset\WFh(A)\cap \WFh(B)$
for $A,B\in\Psi^{\comp}_h(M)$.

We will occasionally use the more general classes (which are in between the class $S^{\comp}_h$ and the classes introduced in~\S\ref{s:calculus-lagrangian}),
\begin{equation}
  \label{e:S-rho}
S^{\comp}_\rho(T^*M)\quad\text{where }\rho\in [0,\tfrac12),
\end{equation}
consisting of $h$-dependent functions $a(x,\xi;h)\in\CIc(T^*M)$ with support contained
in some $h$-independent compact subset and satisfying the derivative bounds
for all multiindices $\alpha$
$$
\sup|\partial^\alpha a|\leq C_\alpha h^{-\rho|\alpha|}.
$$
Note that $S^{\comp}_h$ is the special case $\rho=0$. Operators with symbols
in $S^{\comp}_\rho$ satisfy analogs of properties~\eqref{e:semi-product}--\eqref{e:semi-adjoint}
with weaker remainders depending on~$\rho$, see e.g.~\cite[Theorem~4.18]{Zworski-Book}.

\subsection{Long time propagation}
\label{s:longtime-prop}

Similarly to~\cite{meassupp} (and \cite{highcat}, which used a different version of this calculus) our argument relies on an anisotropic
semiclassical calculus originating in~\cite{hgap}. We use the version described in~\cite[Appendix~A]{meassupp}.

\subsubsection{Calculus associated to a Lagrangian foliation}
\label{s:calculus-lagrangian}

Let $(M,g)$ be a compact complex hyperbolic quotient. 
Let $L\in \{L_u,L_s\}$ where the weak unstable/stable foliations $L_u,L_s\subset T(T^*M\setminus 0)$ are defined in~\eqref{e:L-def}
and~\S\ref{s:quotients-def}.
As shown in Lemma~\ref{l:E-u-integrable} and Corollary~\ref{l:l-su-lagr}, $L$ is a Lagrangian foliation in the sense of~\cite[\S A.1]{meassupp},
namely each fiber of $L$ is a Lagrangian subspace of $T(T^*M\setminus 0)$ and the foliation $L$ is integrable in the sense of Frobenius.

Fix two parameters
\begin{equation}
  \label{e:rho-properties}
0\leq\rho <1,\quad
0\leq \rho'\leq \tfrac12 \rho,\quad
\rho+\rho'<1.
\end{equation}
As in~\cite[\S A.1]{meassupp}, we say that an $h$-dependent family of smooth functions $a(x,\xi;h)$ on~$T^*M$ lies in the symbol class
$$
S^{\comp}_{L,\rho,\rho'}(T^*M\setminus 0)
$$
if
$a$ is supported in an $h$-independent compact subset of $T^*M\setminus 0$ and satisfies the derivative bounds
\begin{equation}
  \label{e:symb-def}
\sup_{x,\xi}|Y_1\dots Y_m Q_1 \dots Q_k a(x,\xi;h)|\leq Ch^{-\rho k-\rho'm},\quad
0<h\leq 1
\end{equation}
for all vector fields $Y_1,\dots,Y_m,Q_1,\dots,Q_k$ on $T^*M\setminus 0$ such that
$Y_1,\dots,Y_m$ are tangent to~$L$; here the constant $C$ depends on the choice of the vector fields
but not on~$h$. Roughly speaking, the estimates \eqref{e:symb-def} mean that $a$ grows by at most $h^{-\rho'}$ when
differentiated along~$L$ and by at most $h^{-\rho}$ when differentiated in other directions.

We now use the quantization procedure for symbols in the class $S^{\comp}_{L,\rho,\rho'}$ constructed in~\cite[\S A.4]{meassupp},
which maps each symbol $a$ to an $h$-dependent family of smoothing operators on $M$:
\begin{equation}
  \label{e:Op-L}
a\in S^{\comp}_{L,\rho,\rho'}(T^*M\setminus 0)\ \mapsto\ \Op^L_h(a):\mathcal D'(M)\to C^\infty(M).  
\end{equation}
Such operators satisfy the properties of semiclassical quantization described in~\cite[\S A.4]{meassupp}, in particular
their operator norms on $L^2$ are bounded uniformly in~$h$ and we have the following versions
of the Product Rule, Nonintersecting Support Property, and Adjoint Rule from~\S\ref{s:review-semi}:
for all $a,b\in S^{\comp}_{L,\rho,\rho'}(T^*M\setminus 0)$
\begin{align}
  \label{e:extra-product}
\Op_h^L(a)\Op_h^L(b)&=\Op_h^L(ab)+\mathcal O(h^{1-\rho-\rho'})_{L^2\to L^2},\\
  \label{e:extra-nonint-supp}
\supp a\cap\supp b=\emptyset\quad&\Longrightarrow\quad \Op_h^L(a)\Op_h^L(b)=\mathcal O(h^\infty)_{L^2\to L^2},\\
  \label{e:extra-adjoint}
\Op_h^L(a)^*&=\Op_h^L(\overline a)+\mathcal O(h^{1-\rho-\rho'})_{L^2\to L^2}.
\end{align}

Note that for $\rho=\rho'=0$ the symbol class $S^{\comp}_{L,0,0}(T^*M\setminus 0)$
is independent of~$L$ and is the same as the class $S^{\comp}(T^*M\setminus 0)$
of symbols which are in~$C^\infty(T^*M\setminus 0)$ uniformly in~$h$. If
$a\in S^{\comp}(T^*M\setminus 0)$, then the special quantization $\Op_h^L(a)$
is equivalent to the usual quantization $\Op_h(a)$ used in~\S\ref{s:review-semi} above, in particular
$$
\Op_h^L(a)=\Op_h(a)+\mathcal O(h)_{L^2\to L^2}.
$$
More generally, if $0\leq \rho'<\frac13$ then the symbol class $S^{\comp}_{\rho'}(T^*M\setminus 0)$
defined in~\eqref{e:S-rho} (where we require the support to be in an $h$-independent
compact subset of $T^*M\setminus 0$) is contained in the class $S^{\comp}_{L,2\rho',\rho'}(T^*M\setminus 0)$ and we have for all $a\in S^{\comp}_{\rho'}(T^*M\setminus 0)$
\begin{equation}
  \label{e:quant-convert-S-rho}
\Op_h^L(a)=\Op_h(a)+\mathcal O(h^{1-2\rho'})_{L^2\to L^2}.
\end{equation}

\subsubsection{Propagation of classical observables}

Symbols in the classes $S^{\comp}_{L,\rho,\rho'}$ appear in our argument as the results of propagating $h$-independent symbols along the geodesic flow for times logarithmic in~$h$. Here the geodesic flow
\begin{equation}
  \label{e:geod-flow}
\varphi^t=e^{tX}:T^*M\setminus 0\to T^*M\setminus 0
\end{equation}
is the projection of the flow~\eqref{e:Ham-flow-CH-2} under the map $T^*\mathbb{CH}^n\to T^* M$, and it is the Hamiltonian flow of the symbol
\begin{equation}
  \label{e:p-def}
p\in C^\infty(T^*M\setminus 0),\quad
p(x,\xi)=|\xi|_{g(x)}.
\end{equation}
\begin{lemm}
\label{l:propag-classical}
Fix $0\leq\rho<1$ and an $h$-independent function $a\in \CIc(T^*M\setminus 0)$.
Assume that $0\leq t\leq \frac 12\rho\log(1/h)$. Then we have
\begin{align}
\label{e:propag-1}
a\circ\varphi^t&\in S^{\comp}_{L_s,\rho,0}(T^*M\setminus 0),
\\
\label{e:propag-2}
a\circ\varphi^{-t}&\in S^{\comp}_{L_u,\rho,0}(T^*M\setminus 0)
\end{align}
with $S^{\comp}_{\bullet,\rho,0}$-seminorms bounded uniformly in $t,h$.
\end{lemm}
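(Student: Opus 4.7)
The plan is to verify the two defining properties of $S^{\comp}_{L_s,\rho,0}(T^*M\setminus 0)$ for $a\circ\varphi^t$: compact support in $T^*M\setminus 0$ uniformly in $t$, and the derivative bounds~\eqref{e:symb-def} with $\rho'=0$ uniform in $h$ and in $t\in[0,\tfrac12\rho\log(1/h)]$. The companion statement~\eqref{e:propag-2} then follows by reversing time, which swaps the roles of $L_s$ and $L_u$.

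Support is immediate: since $\varphi^t$ preserves $p(x,\xi)=|\xi|_{g(x)}$ and commutes with dilations in the cotangent fibers, $\varphi^{-t}(\supp a)$ remains inside the fixed compact annulus $\{c_1\le|\xi|_g\le c_2\}$, which is compact by compactness of~$M$.

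For the derivative estimate I would work in local coordinates $(y,\eta)\in\mathbb R^{2n}\times\mathbb R^{2n}$ adapted to $L_s$, so that $L_s$ is the linear span of $\partial_{y_1},\dots,\partial_{y_{2n}}$; such coordinates exist smoothly because $L_s$ is integrable by Lemma~\ref{l:E-u-integrable} and arises as a smooth left-invariant distribution on $G$ descended to~$SM$. Since $L_s$ is $\varphi^t$-invariant, the leaves $\{\eta=\mathrm{const}\}$ are preserved and $\varphi^t$ takes the form
\begin{equation*}
\varphi^t(y,\eta)=\big(\widetilde y(y,\eta,t),\,\widetilde\eta(\eta,t)\big),
\end{equation*}
with $\widetilde\eta$ independent of~$y$. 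The expansion/contraction estimates~\eqref{e:stun-exp} yield $\|\partial_y\widetilde y\|\le C$ and $\|\partial_\eta\widetilde\eta\|\le Ce^{2t}$ for $t\ge 0$. Crucially, the restriction of $\varphi^t$ to each leaf is conjugate (via the exponential map on $G$) to the linear map $\mathrm{Ad}(e^{-tX})$ acting on the Lie subalgebra $\mathbb RX\oplus\mathbb RV^+\oplus E^+$, whose eigenvalues $1,\,e^{-2t},\,e^{-t}$ are uniformly bounded for $t\ge 0$ by~\eqref{e:comm-rel}; this upgrades to the uniform bound $\|\partial_y^\alpha\widetilde y\|\le C_\alpha$ for every multiindex $\alpha$, independent of~$t$. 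Applying the Fa\`a di Bruno formula then yields
\begin{equation*}
|\partial_y^\alpha\partial_\eta^\beta(a\circ\varphi^t)|\le C_{\alpha\beta}\|a\|_{C^{|\alpha|+|\beta|}}\,e^{2|\beta|t},
\end{equation*}
and for $t\le\tfrac12\rho\log(1/h)$ one has $e^{2|\beta|t}\le h^{-\rho|\beta|}$, which is exactly~\eqref{e:symb-def} with $k=|\beta|$, $m=|\alpha|$, $\rho'=0$.

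The main obstacle is the uniform-in-$t$ $C^\infty$ control of $\widetilde y(\cdot,\eta,t)$ in the $y$-variables, which is what upgrades the $L_s$-tangent loss to $\rho'=0$. This step would fail for a general Anosov flow (whose weak-stable foliation has only H\"older regularity transverse to the leaves); in our setting it is supplied by the homogeneous structure of the complex hyperbolic quotient and the commutation relations~\eqref{e:comm-rel}. Once this is in hand, the remainder of the proof is routine chain-rule bookkeeping.
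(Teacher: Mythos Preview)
Your strategy is sound in spirit but the Fa\`a di Bruno step has a real gap. You establish uniform-in-$t$ bounds only on the pure leafwise derivatives $\partial_y^\alpha\widetilde y$, via the conjugacy of the leafwise flow to $\mathrm{Ad}(e^{-tX})$. But to get $|\partial_y^\alpha\partial_\eta^\beta(a\circ\varphi^t)|\le C_{\alpha\beta}e^{2|\beta|t}$ from the chain rule you also need control on the \emph{mixed} derivatives $\partial_y^\alpha\partial_\eta^\beta\widetilde y$ for $|\beta|\ge 1$, and you never address these. In fact, since $X\in L_s$ the flow preserves each leaf and hence $\widetilde\eta(y,\eta,t)=\eta$ identically; your bound $\|\partial_\eta\widetilde\eta\|\le Ce^{2t}$ is vacuous, and the entire transverse growth lives in $\partial_\eta\widetilde y$, which you did not even bound at first order. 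Your $\mathrm{Ad}(e^{-tX})$ argument is leafwise and says nothing about how the linear conjugacy varies with the leaf parameter~$\eta$.

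The paper's proof sidesteps all of this by lifting to $\Gamma\backslash G$ and differentiating along left-invariant vector fields $\widetilde Y\in\{V^+,W_j^+,Z_j^+\}$, $\widetilde Q\in\{V^-,W_j^-,Z_j^-\}$. Because these are eigenvectors of $\ad X$, one has the exact identity
\[
\widetilde Y_1\cdots\widetilde Y_m\widetilde Q_1\cdots\widetilde Q_k\big((\pi_R^*a)\circ e^{tX}\big)
= e^{(-2m_1-m_2+2k_1+k_2)t}\,(\widetilde Y_1\cdots\widetilde Y_m\widetilde Q_1\cdots\widetilde Q_k\,\pi_R^*a)\circ e^{tX},
\]
with no higher-order chain-rule terms whatsoever, giving the bound $e^{2kt}\le h^{-\rho k}$ immediately. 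The point is that left-invariant vector fields are pushed forward by right translation to left-invariant vector fields (up to an explicit scalar), so iteration is trivial; coordinate vector fields in a straightening chart do not enjoy this, which is exactly where your argument stalls. If you want to salvage the coordinate approach you would need to prove $\|\partial_y^\alpha\partial_\eta^\beta\widetilde y\|\le C_{\alpha\beta}e^{2|\beta|t}$, and the cleanest way to do that is essentially to pass to the group frame anyway.
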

\begin{proof}
We show~\eqref{e:propag-1}, with~\eqref{e:propag-2} proved similarly.
We argue similarly to the proof of~\cite[Lemma~4.2]{hgap}.
As in that proof, we see that it suffices to show the bound
\begin{equation}
  \label{e:propagb-1}
\sup_{S^*M} |Y_1\dots Y_m Q_1\dots Q_k (a\circ \varphi^t)|\leq Ch^{-\rho k}
\end{equation}
for all vector fields $Y_1,\dots,Y_m,Q_1,\dots,Q_k$ on~$S^*M$ such that
$Y_1,\dots,Y_m$ are tangent to~$E_s$ and $Q_1,\dots,Q_k$ are tangent to~$E_u$.
Here the constant $C$ depends on $a$ and the choice of vector fields but not on $t$ or~$h$.

Using the projection $\pi_R:\Gamma\backslash G\to SM\simeq S^*M$ from~\eqref{eqn:commtriangle},
we lift the function $a|_{S^*M}$ to $\Gamma\backslash G$. Recalling the construction of the
spaces $E_u,E_s$ in~\S\ref{s:complex-stun}, we see that the bound~\eqref{e:propagb-1} reduces to
\begin{equation}
  \label{e:propagb-2}
\begin{gathered}
\sup_{\Gamma\backslash G} |\widetilde Y_1\dots\widetilde Y_m\widetilde Q_1\dots\widetilde Q_k((\pi_R^*a)\circ e^{tX})|
\leq Ch^{-\rho k}\qquad\text{for all}\\
\widetilde Y_1,\dots,\widetilde Y_m\in \{V^+,W_2^+,\dots,W_n^+,Z_2^+,\dots,Z_n^+\},\\
\widetilde Q_1,\dots,\widetilde Q_k\in \{V^-,W_2^-,\dots,W_n^-,Z_2^-,\dots,Z_n^-\}.
\end{gathered}
\end{equation}
We write $m=m_1+m_2$ where $m_1$ is the number of vector fields $\widetilde Y_1,\dots,\widetilde Y_m$ equal to~$V^+$
and similarly write $k=k_1+k_2$. By the commutation relations~\eqref{e:comm-rel} we see that
the left-hand side of~\eqref{e:propagb-2} is equal to
$$
e^{(-2m_1-m_2+2k_1+k_2)t}\sup_{\Gamma\backslash G} |\widetilde Y_1\dots\widetilde Y_m\widetilde Q_1\dots\widetilde Q_k(\pi_R^*a)|.
$$
Now, since $0\leq t\leq \tfrac12 \rho\log(1/h)$ and $k_1+k_2=k$, we see that
$e^{(-2m_1-m_2+2k_1+k_2)t}\leq e^{2kt}\leq h^{-\rho k}$. This gives the estimate~\eqref{e:propagb-2} and finishes the proof.
\end{proof}

\subsubsection{Propagation of quantum observables}

We next discuss a version of long time Egorov's Theorem corresponding to Lemma~\ref{l:propag-classical}.
Following~\cite[\S2.2]{meassupp}, we fix a cutoff function
$$
\psi_P\in\CIc((0,\infty);\mathbb R),\quad \psi_P(\lambda)=\sqrt{\lambda}\quad\text{for }
\tfrac1{16}\leq \lambda\leq 16
$$
and define the bounded self-adjoint operator on $L^2(M)$
$$
P:=\psi_P(-h^2\Delta_g)
$$
and the corresponding unitary group
\begin{equation}
  \label{e:U-t-def}
U(t):=\exp\Big(-{itP\over h}\Big):L^2(M)\to L^2(M).
\end{equation}
For a bounded operator $A:L^2(M)\to L^2(M)$, define its conjugation by the unitary group
\begin{equation}
  \label{e:A-t-def}
A(t):=U(-t)AU(t).
\end{equation}
Then our version of Egorov's Theorem is given by
\begin{lemm}
  \label{l:egorov-long}
Fix $0\leq\rho<1$ and an $h$-independent function $a\in \CIc(T^*M)$ such that $\supp a\subset \{\tfrac14<|\xi|_g<4\}$.
Put $A:=\Op_h(a)$.
Then we have for all $t\in [0,\tfrac12\rho\log(1/h)]$
\begin{align}
  \label{e:egorov-long-1}
A(t)&=\Op_h^{L_s}(a\circ\varphi^t)+\mathcal O\big(h^{1-\rho}\log(1/h)\big)_{L^2\to L^2},\\
  \label{e:egorov-long-2}
A(-t)&=\Op_h^{L_u}(a\circ\varphi^{-t})+\mathcal O\big(h^{1-\rho}\log(1/h)\big)_{L^2\to L^2}
\end{align}
where the constants in $\mathcal O(\bullet)$ are independent of $t$ and $h$.
\end{lemm}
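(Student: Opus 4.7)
The plan is a long-time Egorov argument via Duhamel, modelled on the proof of the analogous statement in~\cite[\S2.2]{meassupp}. I will describe the proof of~\eqref{e:egorov-long-1}; the bound~\eqref{e:egorov-long-2} follows the same strategy with $t$ replaced by $-t$ and $L_s$ by $L_u$. Set $B(t):=\Op_h^{L_s}(a\circ\varphi^t)$ and consider the conjugated family
$$
H(t) := U(t)\,B(t)\,U(-t), \qquad 0\le t\le \tfrac12\rho\log(1/h).
$$
Using that $P$ commutes with $U(\pm s)$ and that $\partial_t(a\circ\varphi^t)=X(a\circ\varphi^t)=\{p,a\circ\varphi^t\}$ with $p(x,\xi)=|\xi|_g$, differentiation gives
$$
\frac{dH}{dt}(t) = U(t)\Bigl(\Op_h^{L_s}(\{p,a\circ\varphi^t\}) - \tfrac{i}{h}[P,\Op_h^{L_s}(a\circ\varphi^t)]\Bigr)U(-t).
$$
Assuming momentarily that the symbol in parentheses is $\mathcal O(h^{1-\rho})_{L^2\to L^2}$, unitarity of $U(\pm t)$ and integration over $[0,t]$ give $H(t)-H(0)=\mathcal O(|t|h^{1-\rho})$; combined with $H(0)=\Op_h^{L_s}(a)=A+\mathcal O(h)$ (which holds because $a$ is $h$-independent) and the bound $|t|\le\tfrac12\rho\log(1/h)$, this yields $H(t) = A + \mathcal O(h^{1-\rho}\log(1/h))$. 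Conjugating back by $U(-t)$ on the left and $U(t)$ on the right produces $B(t) = A(t) + \mathcal O(h^{1-\rho}\log(1/h))$, which is precisely~\eqref{e:egorov-long-1}.

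The remaining and central task is the commutator estimate
\begin{equation*}
[P,\Op_h^{L_s}(b)] = -ih\,\Op_h^{L_s}(\{p,b\}) + \mathcal O(h^{2-\rho})_{L^2\to L^2}
\qquad\text{for every } b\in S^{\comp}_{L_s,\rho,0}(T^*M\setminus 0),
\end{equation*}
applied to $b=a\circ\varphi^t$. By Lemma~\ref{l:propag-classical}, $a\circ\varphi^t$ lies in $S^{\comp}_{L_s,\rho,0}$ with seminorms uniform in $t\in[0,\tfrac12\rho\log(1/h)]$; moreover, since $\varphi^t$ preserves $|\xi|_g$, the support of $a\circ\varphi^t$ remains in $\{\tfrac14\le|\xi|_g\le 4\}$, the region where the full symbol of $P=\psi_P(-h^2\Delta_g)$ coincides with $p$. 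Consequently the commutator estimate reduces, after introducing a microlocal cutoff near this annulus, to a mixed composition formula between a standard pseudodifferential operator and an anisotropic operator from the calculus of~\cite[\S A.4]{meassupp}.

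The main obstacle is justifying that the remainder in this commutator is $\mathcal O(h^{2-\rho})$ rather than the generic $\mathcal O(h^{2-2\rho})$ one would get from brute use of the $S^{\comp}_{L,\rho,0}$ bounds. The improvement is exactly the structural feature that motivated the anisotropic calculus: the Hamiltonian vector field $H_p=X$ is tangent to $L_s$ (since $L_s=\mathbb RX\oplus E_s$), so each Poisson bracket with $p$ is a single derivative along the foliation, which by~\eqref{e:symb-def} costs no factor of $h^{-\rho}$. Thus the $k$-th term in the asymptotic expansion of $[P,\Op_h^{L_s}(b)]$ involves iterated applications of $H_p$ to $b$ and is of size $h^{k+1}\cdot h^{-(k-1)\rho}$ at worst, and the leading remainder after the principal term is $\mathcal O(h^{2-\rho})$ as needed. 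Once this is verified, the Duhamel argument above goes through verbatim; the rest is routine symbol-expansion bookkeeping.
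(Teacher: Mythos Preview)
Your approach is correct and is precisely the Duhamel argument of~\cite[Lemma~A.8]{meassupp} that the paper invokes: differentiate $U(t)\Op_h^{L_s}(a\circ\varphi^t)U(-t)$, reduce to the commutator bound $[P,\Op_h^{L_s}(b)]=-ih\Op_h^{L_s}(\{p,b\})+\mathcal O(h^{2-\rho})$ for $b\in S^{\comp}_{L_s,\rho,0}$ (which is~\cite[Lemma~A.7]{meassupp} and uses exactly that $H_p=X$ is tangent to $L_s$), and integrate using the uniform symbol bounds of Lemma~\ref{l:propag-classical}. One minor inaccuracy: the higher terms in the commutator expansion are not literally iterated powers of $H_p$ acting on $b$, but your claimed remainder size $\mathcal O(h^{2-\rho})$ is correct once the full bookkeeping (done in local straightening charts for $L_s$, where $p$ becomes a fiber variable) is carried out.
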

The proof of Lemma~\ref{l:egorov-long} is identical to that of~\cite[Lemma~A.8]{meassupp}
using Lemma~\ref{l:propag-classical} for bounds on the symbols $a\circ\varphi^{\pm t}$.

\subsection{Reduction to a control estimate}
\label{s:reduction-control}

We next reduce Theorem~\ref{t:meas} to a more general control estimate.
As before, we identify
the cotangent bundle $T^*M$ with the tangent bundle $TM$ via the metric,
which in particular identifies the cosphere bundle $S^*M$ with the sphere bundle $SM$.

Recall the fast unstable/stable vector fields $V^\pm$ on $S^*M$ introduced in~\S\ref{s:complex-stun}
and the notion of $V^\pm$-density from~\S\ref{s:orbits}. Our control estimate is given by
\begin{theo}
  \label{t:esti}
Let $(M,g)$ be a compact complex hyperbolic quotient.
Assume that $a\in S^0_{1,0}(T^*M)$ is $h$-independent and the set $\{a\neq 0\}\cap S^*M$
is both $V^+$\!-dense and $V^-$\!-dense in $S^*M$.
Then there exist constants $C,h_0>0$ depending only on $M,a$ such that
for all $u\in H^2(M)$ and all $h\in (0,h_0]$
\begin{equation}
  \label{e:esti}
\|u\|_{L^2(M)}\leq C\|\Op_h(a)u\|_{L^2(M)}+{C\log(1/h)\over h}\|(-h^2\Delta_g-I)u\|_{L^2(M)}.
\end{equation}
\end{theo}
Before giving the proof of Theorem~\ref{t:esti}, we show that together with the results
on orbit closures in~\S\ref{s:orbits-total} it implies Theorem~\ref{t:meas}:
\begin{proof}[Proof of Theorem~\ref{t:meas}]
We argue by contradiction. Assume that $\mu$ is a semiclassical measure and
$\supp \mu$ does not contain $S^*\Sigma$ for any compact immersed totally geodesic complex submanifold
$\Sigma\subset S^*M$. The complement $\mathscr U:=S^*M\setminus \supp\mu$ is an open subset of~$S^*M$ invariant under the geodesic flow $\varphi^t$, since $\mu$ is $\varphi^t$-invariant. By Corollary~\ref{c:orbitclosure} the set $\mathscr U$ is both $V^+$\!-dense and $V^-$\!-dense. By Lemma~\ref{l:dense-basic}, there exists a compact set $\mathscr K\subset\mathscr U$ which is both $V^+$\!-dense and $V^-$\!-dense. Fix a cutoff function
$$
a\in\CIc(T^*M),\quad
\supp a\cap S^*M\subset \mathscr U,\quad
\mathscr K\subset \{a\neq 0\}.
$$
Since $\mu$ is a semiclassical measure, there exists a sequence
of eigenfunctions $u_j$ satisfying~\eqref{e:laplace-eig} and converging to $\mu$
in the sense of~\eqref{e:semi-measure-conv}. (Here as before,
we have $h_j:=\lambda_j^{-1}\to 0$.) By the Product Rule~\eqref{e:semi-product}
and the Adjoint Rule~\eqref{e:semi-adjoint} we have
\begin{equation}
  \label{e:meas-int-1}
\begin{aligned}
\|\Op_{h_j}(a)u_j\|_{L^2}^2&=\langle\Op_{h_j}(a)^*\Op_{h_j}(a)u_j,u_j\rangle_{L^2}\\
&=\langle\Op_{h_j}(|a|^2)u_j,u_j\rangle_{L^2}+\mathcal O(h_j)\to\int_{T^*M} |a|^2\,d\mu=0.
\end{aligned}
\end{equation}
Here the last equality follows from the fact that $\mu$ is supported on $S^*M\setminus\mathscr U$
and thus $\supp a\cap\supp \mu=\emptyset$.

Applying Theorem~\ref{t:esti} with $u:=u_j$, $h:=h_j$ and using that $(-h_j^2\Delta_g-I)u_j=0$
by~\eqref{e:laplace-eig}, we get for $j$ large enough
$$
1=\|u_j\|_{L^2}\leq C\|\Op_{h_j}(a)u_j\|_{L^2}.
$$
This gives a contradiction with~\eqref{e:meas-int-1} and finishes the proof.
\end{proof}

\subsection{Partitions and words}
\label{s:partitions}

In~\S\S\ref{s:partitions}--\ref{s:esti-proof} we give the proof of Theorem~\ref{t:esti}, modulo the key estimate (Proposition~\ref{l:key-estimate}).
We largely follow~\cite[\S\S3--4]{meassupp}. For an expository presentation of this part of the argument, see~\cite[\S2]{Dyatlov-JEDP}.

\subsubsection{Microlocal partition of unity}

Let $a\in S^0_{1,0}(T^*M)$ be the symbol given in Theorem~\ref{t:esti}. 
Similarly to~\cite[\S3.1]{meassupp}, we construct a microlocal partition of unity:
\begin{lemm}
\label{l:our-partition}
There exists a decomposition
\begin{equation}
  \label{e:our-partition}
I=A_0+A_1+A_2,\quad
A_0\in\Psi_h^0(M),\quad
A_1,A_2\in\Psi_h^{\comp}(M)
\end{equation}
such that:
\begin{enumerate}
\item $A_0$ is microlocalized away from the cosphere bundle $S^*M$ and is a function
of the Laplacian,
more precisely $A_0=\psi_0(-h^2\Delta_g)$ for some function $\psi_0\in C^\infty(\mathbb R;[0,1])$
satisfying
\begin{equation}
  \label{e:psi-0}
\supp \psi_0\cap [\tfrac 14,4]=\emptyset,\quad
\supp(1-\psi_0)\subset (\tfrac 1{16},16);
\end{equation}
\item there exist $h$-independent functions $a_1,a_2\in \CIc(T^*M;[0,1])$ (called the
principal symbols of $A_1,A_2$) such that for $\ell=1,2$
\begin{align}
\label{e:a-ell-principal}
A_\ell&=\Op_h(a_\ell)+\mathcal O(h)_{\Psi^{\comp}_h},\\
\label{e:a-ell-supp}
\supp a_\ell\ &\subset\ \mathcal V_\ell\cap\{\tfrac 14<|\xi|_g<4\}
\end{align}
for some closed conic subsets $\mathcal V_\ell\subset T^*M\setminus 0$
such that $S^*M\setminus \mathcal V_\ell$ are both $V^+$\!-dense and $V^-$\!-dense;
\item $a_1$ is controlled by the symbol $a$ on the cosphere bundle, more precisely
\begin{equation}
  \label{e:a-1-supp}
\supp a_1\cap S^*M\ \subset\ \{a\neq 0\}.
\end{equation}
\end{enumerate}
\end{lemm}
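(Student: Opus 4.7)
The plan is to build $A_0$ by functional calculus on the Laplacian to handle the region away from $S^*M$, and then use the partition of unity Lemma~\ref{l:partition-construction} on $S^*M$ itself to split the remaining mass on $\{\tfrac14<|\xi|_g<4\}$ into two pieces with the desired $V^\pm$-density properties.

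First I would set $A_0:=\psi_0(-h^2\Delta_g)$ using any smooth $\psi_0\in C^\infty(\mathbb{R};[0,1])$ satisfying~\eqref{e:psi-0}. By standard functional calculus for semiclassical pseudodifferential operators, $A_0\in\Psi^0_h(M)$ with principal symbol $\psi_0(|\xi|_g^2)$, so $\WFh(A_0)\cap S^*M=\emptyset$ as required. Consequently the difference $I-A_0$ is a semiclassical pseudodifferential operator with principal symbol $b(x,\xi):=1-\psi_0(|\xi|_g^2)$, which is supported in $\{\tfrac14<|\xi|_g<4\}$ and equals~$1$ on a neighborhood of the cosphere bundle $S^*M$.

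Next, I would apply the remark following Lemma~\ref{l:partition-construction} to the compact manifold $\mathcal{M}=S^*M$ with the two vector fields $V^+,V^-$, taking the open set to be $\mathcal{U}:=\{a\neq 0\}\cap S^*M$. By hypothesis $\mathcal{U}$ is both $V^+$\!-dense and $V^-$\!-dense, so the remark furnishes $\chi_1,\chi_2\in C^\infty(S^*M;[0,1])$ with $\chi_1+\chi_2=1$, $\supp\chi_1\subset\mathcal{U}$, and such that both complements $S^*M\setminus\supp\chi_\ell$ are $V^+$\!-dense and $V^-$\!-dense. I would then extend $\chi_1,\chi_2$ to $T^*M\setminus 0$ as functions homogeneous of degree zero in~$\xi$ and define
\begin{equation*}
a_\ell(x,\xi):=\chi_\ell\!\bigl(x,\xi/|\xi|_g\bigr)\,b(x,\xi),\qquad \ell=1,2,
\end{equation*}
so that $a_1,a_2\in\CIc(T^*M;[0,1])$ with $a_1+a_2=b$, and $\supp a_\ell$ is contained in $\mathcal{V}_\ell\cap\{\tfrac14<|\xi|_g<4\}$ where $\mathcal{V}_\ell$ is the closed cone generated by $\supp\chi_\ell\subset S^*M$. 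Since $b\equiv 1$ on a neighborhood of $S^*M$, we have $\supp a_1\cap S^*M=\supp\chi_1\subset\{a\neq 0\}$, giving~\eqref{e:a-1-supp}, and the density properties of $S^*M\setminus\mathcal{V}_\ell=S^*M\setminus\supp\chi_\ell$ are inherited from~$\chi_\ell$.

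Finally I would set $A_1:=\Op_h(a_1)\in\Psi^{\comp}_h(M)$ and define $A_2:=I-A_0-A_1$. Since the principal symbol of $I-A_0$ is $b=a_1+a_2$, the operator $A_2$ has principal symbol $a_2$ and differs from $\Op_h(a_2)$ by an $\mathcal{O}(h)_{\Psi^{\comp}_h}$ term; in particular $A_2\in\Psi^{\comp}_h(M)$ because $I-A_0$ is compactly microlocalized in $\{\tfrac14<|\xi|_g<4\}$. This gives~\eqref{e:our-partition} and~\eqref{e:a-ell-principal}--\eqref{e:a-ell-supp}. The only subtle point is the appeal to the multi-field version of Lemma~\ref{l:partition-construction}, but once the Poincaré section is chosen to be simultaneously transverse to $V^+$ and $V^-$ (which is possible by a generic choice of codimension-one disks), the construction goes through verbatim.
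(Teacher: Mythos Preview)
Your proof is correct and follows essentially the same approach as the paper: both use functional calculus to define $A_0$, invoke the multi-field version of Lemma~\ref{l:partition-construction} on $S^*M$ to produce $\chi_1,\chi_2$, extend these homogeneously, and multiply by the cutoff $1-\psi_0(|\xi|_g^2)$ to obtain the principal symbols $a_\ell$. The only cosmetic difference is that the paper works with the full symbol $b^\flat$ of $I-A_0$ and absorbs the $\mathcal{O}(h^\infty)$ remainder explicitly into $A_1$, whereas you define $A_2:=I-A_0-A_1$ and verify its principal symbol afterward; both arrangements yield the required properties.
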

\begin{proof}
Define the set $\mathcal U:=\{a\neq 0\}\cap S^*M$. By the assumption in Theorem~\ref{t:esti},
$\mathcal U$ is both $V^+$\!-dense and $V^-$\!-dense.
Applying Lemma~\ref{l:partition-construction} with $\mathcal M=S^*M$
and $V=V^\pm$
(see the remark after this lemma regarding the condition of being simultaneously
$V^+$-dense and $V^-$-dense), we construct a partition of unity
$$
\chi_1,\chi_2\in C^\infty(S^*M;[0,1]),\quad
\chi_1+\chi_2=1,\quad
\supp \chi_1\subset \{a\neq 0\}
$$
such that for $\ell=1,2$ the complements $S^*M\setminus \supp\chi_\ell$ are both $V^+$\!-dense
and $V^-$\!-dense.

Next, fix a function $\psi_0$ satisfying~\eqref{e:psi-0} and define $A_0:=\psi_0(-h^2\Delta_g)$.
By the functional calculus of pseudodifferential operators (see~\cite[Theorem~14.9]{Zworski-Book}
or~\cite[\S8]{Dimassi-Sjostrand}), we have
$$
I-A_0=\Op_h(b^\flat)+R,\quad
R=\mathcal O(h^\infty)_{\Psi^{\comp}_h}
$$
where $b^\flat\in S^{\comp}_h(T^*M)$ is an $h$-dependent symbol satisfying
$$
b^\flat=1-\psi_0(|\xi|_g^2)+\mathcal O(h)_{S^{\comp}_h},\quad
\supp b^\flat\subset \{\tfrac 14<|\xi|_g<4\}.
$$
Now, we extend $\chi_\ell$ to homogeneous functions of order~0 on $T^*M\setminus 0$ and define
$$
a_\ell^\flat:=\chi_\ell b^\flat,\quad
A_1:=\Op_h(a_1^\flat)+R,\quad
A_2:=\Op_h(a_2^\flat).
$$
Then~\eqref{e:our-partition} and~\eqref{e:a-ell-principal} hold with the functions
$a_\ell:=\chi_\ell (1-\psi_0(|\xi|^2))$ and the sets $\mathcal V_\ell:=\supp \chi_\ell$, which satisfy~\eqref{e:a-ell-supp}
and~\eqref{e:a-1-supp}.
\end{proof}

\subsubsection{Refined microlocal partition}

Still following~\cite[\S3.1]{meassupp}, we now dynamically refine the microlocal partition~\eqref{e:our-partition}. We only consider the partition elements
$A_1,A_2$, with $A_0$ handled by~\eqref{e:A0-rid} below.
This may look similar to the
refined microlocal partition introduced by Anantharaman~\cite{Anantharaman-Entropy}.
However, in~\cite{Anantharaman-Entropy} the supports of the symbols $a_1,a_2,\dots$ were
small enough so that each element of the refined partition was microlocalized
on a single unstable/stable rectangle; in the present paper the elements of the refined partition
are instead microlocalized on fractal sets.

For each $n\in\mathbb N_0$, consider the set of words of length~$n$
$$
\mathcal W(n)=\{1,2\}^n=\{\mathbf w=w_0\dots w_{n-1}\mid w_0,\dots,w_{n-1}\in \{1,2\}\}.
$$
For each word $\mathbf w=w_0\dots w_{n-1}$, using the notation~\eqref{e:A-t-def} we define the operator
\begin{equation}
  \label{e:A-w-def}
A_{\mathbf w}:=A_{w_{n-1}}(n-1)A_{w_{n-2}}(n-2)\cdots A_{w_1}(1)A_{w_0}(0).
\end{equation}
We will work with words of length $n\sim\log(1/h)$, for which the operators
$A_{\mathbf w}$ are bounded uniformly on~$L^2$:
\begin{lemm}
\label{l:A-w-bdd}
Assume that $n\leq C_0\log(1/h)$. Then there exists a constant $C$ depending on~$C_0$
but not on~$n,h$ such that for all $\mathbf w\in\mathcal W(n)$
we have $\|A_{\mathbf w}\|_{L^2\to L^2}\leq C$.
\end{lemm}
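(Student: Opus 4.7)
The plan is to reduce the bound to a sharp operator-norm estimate on the individual $A_\ell$ and then simply multiply the bounds. First, since $U(t)$ is a unitary group on $L^2(M)$, conjugation preserves operator norms, so
\[
\|A_{w_j}(j)\|_{L^2\to L^2} = \|U(-j)A_{w_j}U(j)\|_{L^2\to L^2} = \|A_{w_j}\|_{L^2\to L^2}.
\]
A trivial estimate on the $n$-fold composition~\eqref{e:A-w-def} then gives
\[
\|A_{\mathbf w}\|_{L^2\to L^2}\ \leq\ \prod_{j=0}^{n-1}\|A_{w_j}\|_{L^2\to L^2},
\]
so the task reduces to establishing the \emph{sharp} bound $\|A_\ell\|_{L^2\to L^2}\leq 1+Ch$ for $\ell\in\{1,2\}$ with $C$ independent of $h$.

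This is the main (if small) obstacle: the naive Calder\'on--Vaillancourt estimate only gives $\|A_\ell\|\leq C$ for some constant $C>1$, and raising this to the $n$-th power with $n\sim\log(1/h)$ would blow up polynomially in $1/h$. To do better, I would use sharp G\aa{}rding together with the fact, coming from Lemma~\ref{l:our-partition}, that $A_\ell = \Op_h(a_\ell) + \mathcal O(h)_{\Psi_h^{\comp}}$ with principal symbol $a_\ell$ real-valued and $0\leq a_\ell\leq 1$. By the Adjoint Rule~\eqref{e:semi-adjoint} and Product Rule~\eqref{e:semi-product},
\[
A_\ell^*A_\ell = \Op_h(a_\ell^2) + \mathcal O(h)_{L^2\to L^2}.
\]
Since $1-a_\ell^2\geq 0$ lies in $S^0_{1,0}(T^*M)$, sharp G\aa{}rding yields $\Op_h(a_\ell^2)\leq I + \mathcal O(h)$; hence $A_\ell^*A_\ell\leq I + \mathcal O(h)$, and therefore $\|A_\ell\|_{L^2\to L^2} = \|A_\ell^*A_\ell\|^{1/2}\leq 1+Ch$.

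Plugging this into the product bound under the hypothesis $n\leq C_0\log(1/h)$,
\[
\|A_{\mathbf w}\|_{L^2\to L^2}\ \leq\ (1+Ch)^n\ \leq\ \exp(Cnh)\ \leq\ \exp\bigl(CC_0\,h\log(1/h)\bigr),
\]
and the right-hand side tends to $1$ as $h\to 0$. It is therefore bounded by a constant depending only on $C_0$ (and on the fixed data $a$, $M$) uniformly in $\mathbf w$ and in $h\in(0,h_0]$ for a suitable $h_0>0$. An equivalent, more symmetric route would be to induct on $n$ the bound $\|\sum_{|\mathbf w|=n} A_{\mathbf w}A_{\mathbf w}^*\|\leq 1+o(1)$, using that $a_1^2+a_2^2\leq a_1+a_2\leq 1$ together with sharp G\aa{}rding for the one-step estimate, and then deduce the individual bound from the positivity of $A_{\mathbf w}A_{\mathbf w}^*$; the direct product argument above is however cleaner for the present purpose.
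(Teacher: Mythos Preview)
Your proof is correct and follows essentially the same approach as the paper: use unitarity of $U(t)$ to reduce to a product bound, establish the sharp estimate $\|A_\ell\|_{L^2\to L^2}\leq 1+C_1h$, and then bound $(1+C_1h)^n$ for $n\leq C_0\log(1/h)$. The only cosmetic difference is that the paper cites a standard norm bound for pseudodifferential operators (Lemma~A.5 in~\cite{meassupp}) to get $\|A_\ell\|\leq 1+C_1h$ directly from $|a_\ell|\leq 1$, whereas you derive it via the Product/Adjoint Rules and sharp G\aa{}rding---both routes are standard and equivalent here.
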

\begin{proof}
Using~\eqref{e:a-ell-principal}, the fact that $|a_\ell|\leq 1$, and
a standard bound on the norm of a pseudodifferential operator (see e.g.~\cite[Lemma~A.5]{meassupp}
with $\rho=\rho'=0$),
we see that there exists an $h$-independent constant $C_1$ such that
$$
\|A_\ell\|_{L^2\to L^2}\leq 1+C_1h\quad\text{for }\ell=1,2.
$$
It remains to recall the definition~\eqref{e:A-w-def} and use that the operator
$U(t)$ is unitary on~$L^2$ to get $\|A_{\mathbf w}\|_{L^2\to L^2}\leq (1+C_1h)^n\leq C$.
(We see from here that the argument in fact works until $n\leq C_0h^{-1}$ but
in this paper we only need logarithmically large times.)
\end{proof}
We also define linear combinations of operators $A_{\mathbf w}$.
If $c:\mathcal W(n)\to\mathbb C$ is a function, then we put
\begin{equation}
\label{e:A-c-def}
A_c:=\sum_{\mathbf w\in\mathcal W(n)}c(\mathbf w)A_{\mathbf w}.
\end{equation}
A special case is when $c$ is an indicator function:
for  a set $\mathcal E\subset \mathcal W(n)$ we define
\begin{equation}
\label{e:A-E-def}
A_{\mathcal E}:=\sum_{\mathbf w\in\mathcal E}A_{\mathbf w}.
\end{equation}

\subsubsection{Quantum/classical correspondence for the refined partition}

Using the functions $a_1,a_2$ featured in~\eqref{e:a-ell-principal}, we define
the symbols formally corresponding to $A_{\mathbf w},A_c,A_{\mathcal E}$:
\begin{equation}
  \label{e:a-w-def}
a_{\mathbf w}:=\prod_{j=0}^{n-1} (a_{w_j}\circ\varphi^j),\quad
a_c:=\sum_{\mathbf w\in \mathcal W(n)}c(\mathbf w)a_{\mathbf w},\quad
a_{\mathcal E}:=\sum_{\mathbf w\in\mathcal E}a_{\mathbf w}.
\end{equation}
We now establish a `quantum/classical correspondence' between the operators $A_{\mathbf w},A_c$
and the corresponding symbols. For fixed $n$ (bounded independently of~$h$),
combining the basic, bounded time, Egorov's Theorem (see e.g.~\cite[(2.15)]{meassupp})
with the Product Rule~\eqref{e:semi-product} we get
\begin{equation}
  \label{e:qc-basic}
A_{\mathbf w}=\Op_h(a_{\mathbf w})+\mathcal O(h)_{L^2\to L^2},\quad
A_c=\Op_h(a_c)+\mathcal O(h)_{L^2\to L^2}.
\end{equation}
However, in the argument we need to take $n$ which grows logarithmically with $h$.

We first give quantum/classical correspondence for the individual operators $A_{\mathbf w}$
when the length $n$ of the word $\mathbf w$ is less than $\tfrac 12\log(1/h)$,
which corresponds to the \emph{Ehrenfest time}\footnote{In general the Ehrenfest time is defined as the time at which the classical/quantum correspondence breaks down and it may depend on the quantization used. For the
more common quantizations using the classes $S^{\comp}_\rho$
defined in~\eqref{e:S-rho} with $\rho<\frac12$, the Ehrenfest time in the present setting would
be $\tfrac14\log(1/h)$. However, our choice of the quantization
$\Op_h^{L_s}$ allows us to prove classical/quantum correspondence
until time $\tfrac12\log(1/h)$.}:
the time at which the differential $d\varphi^t$ of the geodesic flow has norm~$h^{-1}$.
The $\varepsilon$ losses below are caused by the fact that $a_{\mathbf w}$
is the product of $\sim \log(1/h)$ many symbols, so each its derivative
is the sum of $\sim \log(1/h)$ many terms.
\begin{lemm}
\label{l:qc-Aw}
Fix $0\leq \rho<1$. Then for any $n\leq \frac12\rho\log(1/h)$, $\mathbf w\in\mathcal W(n)$, and 
small~$\varepsilon>0$, we have
\begin{align}
  \label{e:qc-aw}
a_{\mathbf w}&\in S^{\comp}_{L_s,\rho+\varepsilon,\varepsilon}(T^*M\setminus 0),\\
  \label{e:qc-Aw}
A_{\mathbf w}&=\Op_h^{L_s}(a_{\mathbf w})+\mathcal O(h^{1-\rho-\varepsilon})_{L^2\to L^2}.
\end{align}
The implied constants do not depend on $n,\mathbf w,h$.
\end{lemm}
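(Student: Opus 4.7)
The plan is to prove the two statements separately, with the symbol bound coming first since it feeds into the operator estimate.

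For the symbol estimate~\eqref{e:qc-aw}, I would use the Leibniz rule applied to the product $a_{\mathbf w} = \prod_{j=0}^{n-1}(a_{w_j}\circ\varphi^j)$. Each factor lies in $S^{\comp}_{L_s,\rho,0}$ with seminorms uniform in~$j,h$ by Lemma~\ref{l:propag-classical}. When one applies $m$ vector fields tangent to~$L_s$ and $k$ arbitrary vector fields, Leibniz distributes the derivatives among the $n$ factors; there are at most $n^{m+k}$ terms, and in each term a factor that receives $k_j$ arbitrary derivatives contributes at most $Ch^{-\rho k_j}$. Since $\sum k_j = k$, one gets the bound $C n^{m+k}h^{-\rho k}$. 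Using $n\leq\tfrac12\rho\log(1/h)$, the combinatorial factor satisfies $n^{m+k}\leq C_\varepsilon h^{-\varepsilon(m+k)}$, which yields $|\cdots a_{\mathbf w}|\leq C h^{-(\rho+\varepsilon)k-\varepsilon m}$, matching the class $S^{\comp}_{L_s,\rho+\varepsilon,\varepsilon}$. The support condition is immediate since $\varphi^t$ preserves $|\xi|_g$ and each $a_\ell$ is supported in $\{\tfrac14<|\xi|_g<4\}$.

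For the operator statement~\eqref{e:qc-Aw}, I would proceed by induction on~$n$. The base case $n=1$ follows from~\eqref{e:a-ell-principal} together with~\eqref{e:quant-convert-S-rho} to switch from $\Op_h$ to $\Op_h^{L_s}$. For the inductive step, write $A_{\mathbf w}=A_{w_{n-1}}(n-1)\,A_{\mathbf w'}$ where $\mathbf w'=w_0\cdots w_{n-2}$. Apply the long-time Egorov Lemma~\ref{l:egorov-long} to $A_{w_{n-1}}(n-1)$ and the inductive hypothesis to $A_{\mathbf w'}$, then compose. The two factors on the right lie respectively in $S^{\comp}_{L_s,\rho,0}\subset S^{\comp}_{L_s,\rho+\varepsilon,\varepsilon}$ and in $S^{\comp}_{L_s,\rho+\varepsilon,\varepsilon}$, with $(\rho+\varepsilon)+\varepsilon<1$ for small $\varepsilon$, so the product rule~\eqref{e:extra-product} combines them into $\Op_h^{L_s}(a_{\mathbf w})$ modulo $\mathcal O(h^{1-\rho-2\varepsilon})_{L^2\to L^2}$. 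The cross-terms (remainder times $L^2$-bounded operator) are controlled using Lemma~\ref{l:A-w-bdd} and the uniform $L^2$ boundedness of symbols in~$S^{\comp}_{L_s,\rho+\varepsilon,\varepsilon}$.

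The main technical point to watch is the bookkeeping of remainders along the induction. Each inductive step introduces an error of size $h^{1-\rho}\log(1/h)$ from Egorov plus $h^{1-\rho-2\varepsilon}$ from the product rule, multiplied by factors of uniformly bounded operator norm. Summing over the $n\leq\tfrac12\rho\log(1/h)$ steps gives a total error $n\cdot h^{1-\rho-2\varepsilon}\lesssim \log(1/h)\cdot h^{1-\rho-2\varepsilon}$, which is absorbed into $h^{1-\rho-\varepsilon'}$ for any $\varepsilon'>2\varepsilon$; since $\varepsilon$ is arbitrary, relabelling gives the stated bound. The potential obstacle is precisely this accumulation: one must ensure that the non-smoothing ``$\varepsilon$-loss'' from the logarithm stays strictly smaller than the $1-\rho$ gain at every iteration, which is exactly why the hypothesis $n\leq\tfrac12\rho\log(1/h)$ is the sharp Ehrenfest threshold for this argument.
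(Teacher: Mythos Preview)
Your approach is the same as the paper's (which defers to \cite[Lemma~3.2]{meassupp}): Leibniz for the symbol class, and an inductive/telescoping argument combining Egorov (Lemma~\ref{l:egorov-long}) with the product rule~\eqref{e:extra-product} for the operator statement. The symbol argument is fine.

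There is one point in the operator argument that needs more care. In your inductive step you obtain a recursion of the form
\[
R_n \;=\; E_2 \;+\; B_{n-1}\,R_{n-1} \;+\; E_1\,A_{\mathbf w'},\qquad B_{n-1}:=\Op_h^{L_s}(a_{w_{n-1}}\circ\varphi^{n-1}),
\]
and you control the cross term $B_{n-1}R_{n-1}$ using only ``uniform $L^2$ boundedness'', i.e.\ $\|B_{n-1}\|\leq C$. That yields $\|R_n\|\leq C\|R_{n-1}\|+Ch^{1-\rho-2\varepsilon}$, and after $n\sim\log(1/h)$ iterations the multiplicative factor becomes $C^n\sim h^{-c}$, destroying the estimate. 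What the argument actually needs is the \emph{sharp} norm bound from the calculus (see e.g.\ \cite[Lemma~A.5]{meassupp}): since $|a_{w_{n-1}}\circ\varphi^{n-1}|\leq 1$ and this symbol lies in $S^{\comp}_{L_s,\rho,0}$ uniformly, one has $\|B_{n-1}\|\leq 1+Ch^{(1-\rho)/2}$, so products of up to $n$ such factors are bounded by $(1+Ch^{(1-\rho)/2})^n\leq 2$ for small~$h$. With this refinement the errors do accumulate linearly as you claimed, and your conclusion $\mathcal O(nh^{1-\rho-2\varepsilon})=\mathcal O(h^{1-\rho-\varepsilon'})$ follows after relabelling~$\varepsilon$.
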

\begin{proof}
This is deduced from Lemmas~\ref{l:propag-classical} and~\ref{l:egorov-long}
in the same way as~\cite[Lemma~3.2]{meassupp}.
\end{proof}
Next, we make the stronger assumption that $n$ is less than $\tfrac 16\log(1/h)$
and give quantum/classical correspondence for the linear combinations $A_c$
(and thus $A_{\mathcal E}$ as a special case).
Note that with more effort, one might be able to prove a version
of Lemma~\ref{l:qc-Ac} for all $\rho<1$, however this is not needed in our application;
see in particular~\S\ref{s:log-times}.
\begin{lemm}
  \label{l:qc-Ac}
Fix $0.01\leq \rho<\tfrac 13$. Then for any $n\leq\frac12\rho\log(1/h)$ and $c:\mathcal W(n)\to\mathbb C$
such that $\max|c|\leq 1$, we have
\begin{align}
  \label{e:qc-ac}
a_c&\in S^{\comp}_{L_s,2\rho,\rho}(T^*M\setminus 0),\\
  \label{e:qc-Ac}
A_c&=\Op_h^{L_s}(a_c)+\mathcal O(h^{1-2\rho})_{L^2\to L^2}.
\end{align}
The implied constants do not depend on $n,\mathbf w,h$.
\end{lemm}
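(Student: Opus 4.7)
The plan is to establish the symbol bound~\eqref{e:qc-ac} and the operator bound~\eqref{e:qc-Ac} separately, using Lemma~\ref{l:propag-classical} and Lemma~\ref{l:qc-Aw} respectively, with careful control of the cardinality $2^n$ of~$\mathcal W(n)$. The hypothesis $\rho<\tfrac13$ is precisely what makes the $2^n$ factor absorbable at the operator level.

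For the symbol estimate, I would apply the Leibniz rule to each $a_{\mathbf w}=\prod_{j=0}^{n-1}(a_{w_j}\circ\varphi^j)$, distributing $m$ prescribed $L_s$-tangent vector fields $Y_1,\dots,Y_m$ and $k$ arbitrary vector fields $Q_1,\dots,Q_k$ among the $n$ factors. After swapping the order of summation and using $|c(\mathbf w)|\leq 1$ to pass to absolute values, the sum over $\mathbf w\in\mathcal W(n)$ factorizes pointwise as $\prod_j\sum_{w_j\in\{1,2\}}|\partial^{\beta_j}(a_{w_j}\circ\varphi^j)|$ for each distribution $(\beta_1,\dots,\beta_n)$ of the $m+k$ derivatives among the $n$ factors. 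The key point is that for factors with $\beta_j=0$, the identity $\sum_{w_j}a_{w_j}=1-\psi_0(|\xi|^2)\leq 1$ coming from Lemma~\ref{l:our-partition} collapses that factor to a quantity bounded by~$1$, so only the at most $m+k$ differentiated factors contribute, each bounded by $Ch^{-\rho k_j}$ via Lemma~\ref{l:propag-classical} (where $k_j$ counts the number of $Q$'s falling on factor~$j$). This yields the pointwise bound $|Y_1\cdots Y_m Q_1\cdots Q_k a_c|\leq Cn^{m+k}h^{-\rho k}$, and since $n\leq\tfrac12\rho\log(1/h)$, the logarithmic prefactor $n^{m+k}$ is absorbed into $h^{-\rho(m+k)}$ using $\rho\geq 0.01$, delivering the required $S^{\comp}_{L_s,2\rho,\rho}$ bound.

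For the operator estimate, I would apply Lemma~\ref{l:qc-Aw} with a small auxiliary parameter $\varepsilon>0$ to each word $\mathbf w$, obtaining $A_{\mathbf w}=\Op_h^{L_s}(a_{\mathbf w})+R_{\mathbf w}$ with $\|R_{\mathbf w}\|_{L^2\to L^2}\leq Ch^{1-\rho-\varepsilon}$ uniformly in $\mathbf w$. Linearity, the triangle inequality, and the estimate $2^n\leq h^{-(\rho\log 2)/2}$ coming from $n\leq\tfrac12\rho\log(1/h)$ then give $A_c=\Op_h^{L_s}(a_c)+\mathcal O(h^{1-\rho-\varepsilon-(\rho\log 2)/2})_{L^2\to L^2}$. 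Since $\tfrac12\log 2<1$, one can choose $\varepsilon>0$ small enough that $\varepsilon+\tfrac12\rho\log 2\leq\rho$, which converts this remainder into the claimed $\mathcal O(h^{1-2\rho})$.

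The main obstacle is precisely the factor of $2^n$ on the operator side: it is why the hypothesis $\rho<\tfrac13$ appears here (as opposed to $\rho<1$ in Lemma~\ref{l:qc-Aw}) and why the anisotropic class coarsens from $S^{\comp}_{L_s,\rho+\varepsilon,\varepsilon}$ per word to $S^{\comp}_{L_s,2\rho,\rho}$ for the linear combination. At the symbol level this loss is entirely avoided because $|c(\mathbf w)|\leq 1$ together with $a_1+a_2\leq 1$ permits the sum over~$\mathbf w$ to factorize, turning a sum of $2^n$ products of bounded quantities into a single bounded product; the same trick is unavailable at the operator level because $A_1+A_2$ is only approximately the identity microlocally and the quantizations of different $a_{\mathbf w}$ do not commute, so one is forced to lose $2^n$ in the triangle inequality and rely on the smallness of $\rho$ to recover.
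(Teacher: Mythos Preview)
Your proof is correct, and for the operator bound~\eqref{e:qc-Ac} it is essentially identical to the paper's: sum the per-word remainders from Lemma~\ref{l:qc-Aw} and absorb the factor $2^n\leq h^{-(\rho\log 2)/2}$ using $\tfrac12\log 2<1$.

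For the symbol bound~\eqref{e:qc-ac}, however, you take a genuinely different route. The paper does \emph{not} expand by Leibniz or exploit the factorization via $a_1+a_2\leq 1$ at the derivative level. Instead it simply applies the triangle inequality to $a_c=\sum_{\mathbf w}c(\mathbf w)a_{\mathbf w}$, bounds each $|Y_1\cdots Y_mQ_1\cdots Q_k a_{\mathbf w}|$ by $Ch^{-\rho k-0.001}$ using the per-word membership $a_{\mathbf w}\in S^{\comp}_{L_s,\rho+\varepsilon,\varepsilon}$ from Lemma~\ref{l:qc-Aw}, and multiplies by the number of words $2^n\leq h^{-\rho+0.001}$; this gives $Ch^{-\rho k-\rho}\leq Ch^{-2\rho k-\rho m}$ whenever $m+k\geq 1$, while the case $m+k=0$ is handled separately by $|a_c|\leq \sum_{\mathbf w}a_{\mathbf w}=\prod_j(a_1+a_2)\circ\varphi^j\leq 1$. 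Your Leibniz/factorization argument is more work but yields the sharper intermediate bound $Cn^{m+k}h^{-\rho k}$, losing only a logarithmic factor instead of $h^{-\rho}$; this refinement is not needed here since both bounds land in $S^{\comp}_{L_s,2\rho,\rho}$, but your approach would matter if one wanted to place $a_c$ in a finer class such as $S^{\comp}_{L_s,\rho+\varepsilon,\varepsilon}$.
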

\begin{proof}
We follow the proof of~\cite[Lemma~4.4]{meassupp} (which considered
the special case $\rho=\tfrac 14$). To show~\eqref{e:qc-ac} we first note
that $\sup |a_c|\leq 1$. It remains to estimate the derivatives of~$a_c$:
more precisely, we need to show that for $m+k>0$ and all vector fields
$Y_1,\dots,Y_m,Q_1\dots,Q_k$ on~$T^*M\setminus 0$ such that $Y_1,\dots,Y_m$
are tangent to~$L_s$, we have
\begin{equation}
  \label{e:qc-Ac-int}
\sup |Y_1\dots Y_mQ_1\dots Q_k a_c|\leq Ch^{-2\rho k-\rho m}.
\end{equation}
Using the triangle inequality, we see that the left-hand side of~\eqref{e:qc-Ac-int}
is bounded by
$$
\sum_{\mathbf w\in\mathcal W(n)} \sup|Y_1\dots Y_m Q_1\dots Q_k a_{\mathbf w}|.
$$
By~\eqref{e:qc-aw} with $\varepsilon:=0.001/(m+k)$, each summand is bounded by $Ch^{-\rho k-0.001}$
where $C$ is independent of~$\mathbf w$.
The number of summands is equal to
$2^n\leq h^{-\rho+0.001}$. Together these two statements give~\eqref{e:qc-Ac-int},
finishing the proof of~\eqref{e:qc-ac}.
A similar argument using the triangle inequality and~\eqref{e:qc-Aw} gives~\eqref{e:qc-Ac}.
\end{proof}
As an application of Lemma~\ref{l:qc-Ac}, we give the following inequality
used in the proof of~\eqref{e:A-Z-estimate} below:
\begin{lemm}
  \label{l:special-Garding}
Fix $0.01\leq \rho<\tfrac 13$. Then for any $n\leq \frac12\rho \log(1/h)$ and
functions $c,d:\mathcal W(n)\to\mathbb C$ such that
$$
|c(\mathbf w)|\leq d(\mathbf w)\leq 1\quad\text{for all }\mathbf w\in\mathcal W(n)
$$
and all $u\in L^2(M)$ we have
\begin{equation}
  \label{e:special-Garding}
\|A_cu\|_{L^2}\leq \|A_du\|_{L^2}+Ch^{1-3\rho\over 2}\|u\|_{L^2}
\end{equation}
where the constant $C$ is independent of $c,d,n$.
\end{lemm}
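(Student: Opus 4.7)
The plan is to reduce the estimate to a sharp Gårding-type inequality in the anisotropic calculus $\Op_h^{L_s}$. The starting observation is the pointwise bound $|a_c| \leq a_d$. Indeed, at every point $(x,\xi)$ each factor $a_{w_j}\circ\varphi^j$ is a non-negative real number (since $a_1,a_2\in[0,1]$), so the products $a_{\mathbf w}(x,\xi)$ are non-negative, and hence by the triangle inequality and the hypothesis $|c(\mathbf w)|\leq d(\mathbf w)$,
\begin{equation*}
|a_c(x,\xi)|=\Bigl|\sum_{\mathbf w\in\mathcal W(n)} c(\mathbf w)\,a_{\mathbf w}(x,\xi)\Bigr|\leq \sum_{\mathbf w\in\mathcal W(n)} d(\mathbf w)\,a_{\mathbf w}(x,\xi)=a_d(x,\xi).
\end{equation*}
In particular $a_d^2-|a_c|^2\geq 0$.

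Next I would square-reduce the inequality. Since $\sqrt{A^2+B^2}\leq A+B$ for $A,B\geq 0$, it suffices to prove
\begin{equation*}
\|A_c u\|_{L^2}^2\leq \|A_d u\|_{L^2}^2+C h^{1-3\rho}\|u\|_{L^2}^2,
\end{equation*}
which in turn is equivalent to the lower bound
\begin{equation*}
\langle (A_d^*A_d-A_c^*A_c)u,u\rangle_{L^2}\geq -Ch^{1-3\rho}\|u\|_{L^2}^2.
\end{equation*}

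To express $A_c^*A_c$ and $A_d^*A_d$ in the Lagrangian calculus, I apply Lemma~\ref{l:qc-Ac} to write $A_c=\Op_h^{L_s}(a_c)+\mathcal O(h^{1-2\rho})_{L^2\to L^2}$ and the analogous statement for $A_d$ (both are bounded on $L^2$ uniformly in $h$ by the quantization bound applied to the symbols $a_c,a_d$, which are bounded by $1$). Combining with the Adjoint Rule~\eqref{e:extra-adjoint} and the Product Rule~\eqref{e:extra-product} for the $L_s$-calculus (with parameters $(2\rho,\rho)$, so the remainder is $\mathcal O(h^{1-3\rho})$) yields
\begin{equation*}
A_c^*A_c=\Op_h^{L_s}(|a_c|^2)+\mathcal O(h^{1-3\rho})_{L^2\to L^2},\quad A_d^*A_d=\Op_h^{L_s}(a_d^2)+\mathcal O(h^{1-3\rho})_{L^2\to L^2}.
\end{equation*}
Both symbols $|a_c|^2$ and $a_d^2$ lie in $S^{\comp}_{L_s,2\rho,\rho}(T^*M\setminus 0)$ uniformly in $n,c,d$ since this class is an algebra and Lemma~\ref{l:qc-Ac} gives the required bounds on $a_c,a_d$.

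Finally, the non-negative symbol $b:=a_d^2-|a_c|^2\in S^{\comp}_{L_s,2\rho,\rho}(T^*M\setminus 0)$ is amenable to a sharp Gårding inequality in the Lagrangian calculus, which is a standard consequence of the construction in~\cite[Appendix~A]{meassupp} and yields
\begin{equation*}
\langle \Op_h^{L_s}(b)u,u\rangle_{L^2}\geq -Ch^{1-3\rho}\|u\|_{L^2}^2.
\end{equation*}
Combining the three displays above gives the desired lower bound, and taking square roots finishes the proof. The only subtle point is verifying the sharp Gårding estimate with the correct exponent $h^{1-3\rho}$ (matching the loss from one application of Product/Adjoint Rules in this calculus); this is the main ingredient to cite from the appendix of~\cite{meassupp}, with everything else reducing to the pointwise comparison $|a_c|\leq a_d$ and the symbolic composition rules already in hand.
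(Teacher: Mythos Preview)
Your proof is correct and follows essentially the same approach as the paper: reduce to the pointwise inequality $|a_c|^2\leq a_d^2$, pass from $A_c,A_d$ to their $\Op_h^{L_s}$-quantizations via Lemma~\ref{l:qc-Ac}, use the Product/Adjoint Rules in the $S^{\comp}_{L_s,2\rho,\rho}$ calculus, and apply the sharp G\r arding inequality \cite[Lemma~A.4]{meassupp}. The only cosmetic difference is that the paper first replaces $A_c,A_d$ by $\Op_h^{L_s}(a_c),\Op_h^{L_s}(a_d)$ and then forms the difference $B$, whereas you expand $A_c^*A_c$ and $A_d^*A_d$ directly; the computations are equivalent.
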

\begin{proof}
We follow the proof of~\cite[Lemma~4.5]{meassupp}. By~\eqref{e:qc-Ac} we may replace $A_c$, $A_d$
by $\Op_h^{L_s}(a_c)$, $\Op_h^{L_s}(a_d)$. Define the operator
$$
B:=\Op_h^{L_s}(a_d)^*\Op_h^{L_s}(a_d)-\Op_h^{L_s}(a_c)^*\Op_h^{L_s}(a_c).
$$
By~\eqref{e:qc-ac} and the Product and Adjoint Rules~\eqref{e:extra-product}, \eqref{e:extra-adjoint} for the $S^{\comp}_{L_s,2\rho,\rho}$-calculus we have
$$
B=\Op_h^{L_s}(a_d^2-|a_c|^2)+\mathcal O(h^{1-3\rho})_{L^2\to L^2}.
$$
Since $|a_c|^2\leq a_d^2$,
by the sharp G\r arding inequality for the $S^{\comp}_{L_s,2\rho,\rho}$-calculus~\cite[Lemma~A.4]{meassupp} we then have for all $u\in L^2(M)$
$$
\langle Bu,u\rangle_{L^2}\geq -Ch^{1-3\rho}\|u\|_{L^2}^2
$$
which gives $\|\Op_h^{L_s}(a_c)u\|_{L^2}^2\leq \|\Op_h^{L_s}(a_d)u\|_{L^2}^2+Ch^{1-3\rho}\|u\|_{L^2}^2$. It remains to take the square roots to arrive to~\eqref{e:special-Garding}.
\end{proof}

\subsection{Controlled and uncontrolled words and the proof of Theorem~\ref{t:esti}}
\label{s:esti-proof}

In this section we finish the proof of Theorem~\ref{t:esti}, modulo the key estimate (Proposition~\ref{l:key-estimate}). This part of the argument is similar
to~\cite{meassupp} and we refer to that paper for most of the details.

\subsubsection{Logarithmic propagation times}
\label{s:log-times}

We first fix the propagation times used in the argument. Our choice differs
from~\cite[\S3.2]{meassupp}, instead it is similar to the times fixed in~\cite[\S3.1.1]{highcat}
(in the special case $\log|\lambda_+|=2$, $\log\gamma=1$, $\rho=\tfrac23(1-\varepsilon_0)$, $\rho'=\tfrac13(1-\tfrac12\varepsilon_0)$, $J=2$,  and with $\mathbf N:=h^{-1}$),
taking advantage of the presence of fast and slow unstable/stable directions.

Let $\varepsilon_0>0$ be small. An examination of the arguments below shows that we can take any $\varepsilon_0\in (0,\tfrac14)$ (which is most crucially used to ensure that $\rho>\frac12$ in Proposition~\ref{l:fup-specific} below) so for example we could fix $\varepsilon_0:=\frac18$.
However we choose to not fix $\varepsilon_0$ to make the exponents below easier to understand.
Define
\begin{equation}
  \label{e:propagation-times}
N_0:=\Big\lceil {1-\varepsilon_0\over 6}\log(1/h)\Big\rceil,\quad
N_1:=2N_0\approx {1-\varepsilon_0\over 3}\log(1/h).
\end{equation}
We call $N_0$ the \emph{short propagation time} and $2N_1$ the \emph{long propagation time}. What matters for the argument
is the value of $N_1$ (as explained at the beginning of~\S\ref{s:key-proof} below) and the fact that $N_0\approx N_1/J$ for some sufficiently large integer $J$;
in our version of the argument we can already take $J=2$, and our choice of $N_0$ is most prominently used in the fact that Lemmas~\ref{l:qc-Ac} and~\ref{l:special-Garding} above apply with $n=N_0$.

\subsubsection{Statement of the key estimate}

We now formulate the key estimate needed in the proof of Theorem~\ref{t:esti}. Its statement
is similar to~\cite[Proposition~3.5]{meassupp} but its proof, given in~\S\ref{s:key-proof} below,
is a key difference between the present paper and~\cite{meassupp} (though both rely on the same fractal uncertainty principle of~\cite{fullgap}).
\begin{prop}
\label{l:key-estimate}
Assume that $0<\varepsilon_0<\frac14$.
Let $N_1$ be fixed in~\eqref{e:propagation-times}. Then there exist constants $\beta>0,C$
such that for all $\mathbf w\in\mathcal W(2N_1)$ we have
\begin{equation}
  \label{e:key-estimate}
\|A_{\mathbf w}\|_{L^2\to L^2}\leq Ch^\beta.
\end{equation}
\end{prop}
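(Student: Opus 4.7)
The plan is to adapt the strategy of Dyatlov--Jin~\cite{meassupp} and Dyatlov--J\'ez\'equel~\cite{highcat}, reducing the bound to the one-dimensional Fractal Uncertainty Principle of Bourgain--Dyatlov~\cite{fullgap}, applied along the one-dimensional fast unstable/stable directions $V^{\pm}$. I would first factor the word into two halves of length $N_1$: writing $\mathbf w^- := w_0\cdots w_{N_1-1}$ and $\mathbf w^+ := w_{N_1}\cdots w_{2N_1-1}$, and defining $A_{\mathbf w^\pm}$ as the corresponding length-$N_1$ products with time indices re-centered to $[0, N_1-1]$, the identity $A_{\mathbf w} = U(-N_1)\, A_{\mathbf w^+}\, U(N_1)\, A_{\mathbf w^-}$ together with unitarity of $U(N_1)$ yields
\begin{equation*}
\|A_{\mathbf w}\|_{L^2\to L^2} = \|A_{\mathbf w^+}\, U(N_1)\, A_{\mathbf w^-}\|_{L^2\to L^2}.
\end{equation*}
Choosing $\rho := \tfrac{2}{3}(1-\varepsilon_0)<1$ so that $N_1 \leq \tfrac{1}{2}\rho\log(1/h)$, the single-word quantum/classical correspondence of Lemma~\ref{l:qc-Aw} gives
\begin{equation*}
A_{\mathbf w^\pm} = \Op_h^{L_s}(a_{\mathbf w^\pm}) + \mathcal O(h^{1-\rho-\varepsilon}),\qquad a_{\mathbf w^\pm}\in S^{\comp}_{L_s,\rho+\varepsilon,\varepsilon}(T^*M\setminus 0),
\end{equation*}
whose supports are tubes around weak-stable leaves of width $h^{2(1-\varepsilon_0)/3}$ in the fast unstable direction $V^-$ and $h^{(1-\varepsilon_0)/3}$ in the slow unstable direction $E^-$.

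Next, I would localize microlocally near a point $q^0 \in \supp a_{\mathbf w^-} \cap \varphi^{-N_1}(\supp a_{\mathbf w^+})$ (outside this intersection the operator has negligible output by wavefront-set calculus and Egorov). Applying the symplectic straightening map $\varkappa_0$ of Lemma~\ref{l:straighten-out} at $q^0$, together with the associated Fourier integral conjugation (and a similar adjustment composed with $\varphi^{N_1}$ for the future half so that it becomes microlocalized transverse to $L_u$ at the origin), reduces the problem to a model on $T^*\mathbb R^{2n}$ in which $V^-(q^0)=\partial_{\eta_1}$ and $V^+(q^0)=\partial_{y_1}$. After further microlocal partitioning in the remaining $4n-2$ variables, which contribute only bounded factors since the propagation is benign there, $\|A_{\mathbf w^+}\, U(N_1)\, A_{\mathbf w^-}\|$ is reduced to the norm of a one-dimensional composition of a position multiplier supported on a set $\Omega^+_h\subset\mathbb R$ (on the $y_1$ axis) and a Fourier multiplier supported on a set $\Omega^-_h\subset\mathbb R$ (on the $\eta_1$ axis). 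Tracking this reduction requires carefully following how the Dyatlov--Zahl-type quantization of~\cite{hgap} localizes in position and frequency after conjugation by $\varkappa_0$, in the spirit of~\S2.1.4 of~\cite{highcat}.

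The heart of the argument---and the main obstacle---is showing that the sets $\Omega^{\pm}_h\subset\mathbb R$ are uniformly $\nu$-porous on all dyadic scales from $h^\rho = h^{2(1-\varepsilon_0)/3}$ up to a fixed constant, with $\nu$ independent of $\mathbf w$ and $h$. Since $\supp a_{\mathbf w^-} \subset \bigcap_{j=0}^{N_1-1}\varphi^{-j}(\supp a_{w_j})$, the porosity of $\Omega^-_h$ follows as: by Lemma~\ref{l:our-partition} the complement $S^*M\setminus\mathcal V_{w_j}$ is $V^-$-dense, so by Lemma~\ref{l:dense-basic}(2) every $V^-$-segment of some uniform length $T_0$ meets this complement, and pulling back by $\varphi^{-j}$ produces holes in $\supp a_{\mathbf w^-}$ along $V^-$ at scale $\sim e^{-2j}T_0$ near $q^0$. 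The subtle point is that $\varkappa_0$ rectifies~\eqref{e:decor} only at $q^0$, and the complements $V^\pm_\perp$ of the fast subbundles are not Frobenius integrable; Lemma~\ref{l:rectangle-propagate} is designed precisely to handle this, guaranteeing that a hole of width $\alpha$ in $V^-$ appears, after straightening, as a hole in the $\eta_1$-projection up to $\mathcal O(\alpha^2)$-errors transverse to $\partial_{\eta_1}$, which are negligible at scale $\alpha$. Letting $j$ range over $0,\dots, N_1-1$ covers all scales down to $h^\rho$ with uniform porosity, and a symmetric argument with $V^+$ handles $\Omega^+_h$. The one-dimensional Fractal Uncertainty Principle of~\cite{fullgap}---which applies since $\rho>\tfrac 12$ when $\varepsilon_0<\tfrac14$---then bounds the reduced composition by $h^\beta$ for some $\beta>0$ depending only on $\nu$, and absorbing remainders yields~\eqref{e:key-estimate}.
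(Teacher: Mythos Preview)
Your overall architecture matches the paper's: split the word into two halves, establish one-dimensional porosity along $V^\pm$ via the $V^\pm$-density of $S^*M\setminus\mathcal V_\ell$ and Lemma~\ref{l:dense-basic}(2), and invoke the FUP of~\cite{fullgap} (which requires $\rho>\tfrac12$, hence $\varepsilon_0<\tfrac14$). Two concrete ingredients are, however, missing or misformulated. The first is the factorization. You place both halves in the $S^{\comp}_{L_s}$ calculus with $U(N_1)$ sandwiched in between, and propose to absorb it via an FIO ``composed with $\varphi^{N_1}$''. That cannot be done uniformly: any FIO incorporating $U(N_1)$ quantizes a map whose differential has norm $\sim e^{2N_1}\sim h^{-\rho}$, so no bounded calculus applies, and with both factors in $S^{\comp}_{L_s}$ neither is thin along $V^+$. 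The paper instead re-centers at time $N_1$, writing $A_{\mathbf w}=U(-N_1)A^-_{\mathbf w_-}A^+_{\mathbf w_+}U(N_1)$ where $A^-_{\mathbf w_-}$ is a forward product over times $0,\dots,N_1-1$ (hence $\Op_h^{L_s}$) and $A^+_{\mathbf w_+}$ is a \emph{backward} product over times $-1,\dots,-N_1$ (hence $\Op_h^{L_u}$). This removes the intervening propagator and yields directly $\Op_h^{L_s}(a^-_{\mathbf w_-})\Op_h^{L_u}(a^+_{\mathbf w_+})$, one factor thin along $V^-$ and the other along $V^+$. Your expression can be converted to this by commuting $U(N_1)$ through one factor, but you do not do so.

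The second gap is the localization step. Choosing a single $q^0$ and then ``partitioning in the remaining $4n-2$ variables'' is not enough, and the slow directions $E^\pm$ are not benign: they expand by $e^{N_1}\sim h^{-\rho/2}$. The paper inserts a partition $\sum_k\psi_k^2=1$ with $\supp\psi_k$ in $h^{\rho/2}$-balls in \emph{all} $4n$ phase-space variables and uses Cotlar--Stein (Lemma~\ref{l:almost-orth}) to reduce to a single piece; this scale is forced both so that $a^\pm_{\mathbf w_\pm}\psi_k\in S^{\comp}_{L_\bullet,\rho+\varepsilon,\rho/2}$ with $\tfrac32\rho<1$, and so that the errors coming from $\varkappa_k$ straightening the splitting only at the single point $q_k$ stay of size $\mathcal O(h^\rho)$ on $\supp\psi_k$ (this, not the porosity statement itself, is where Lemma~\ref{l:rectangle-propagate} enters, via Lemma~\ref{l:Omega-approx}). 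Even then, showing that the conjugated pieces act as genuine $(y_1,\eta_1)$ position/frequency localizers is a substantial further step (the paper's Lemmas~\ref{l:L-H-straighten}, \ref{l:fio-lag-loc}, and~\ref{l:tilde-A-localized}), because $\Op_h^{L_s}$ and $\Op_h^{L_u}$ are constructed from foliation-adapted charts $\varkappa_k^\pm$ that are \emph{different} from the straightening map $\varkappa_k$; the reference to~\cite{highcat} does not cover this, since there the stable/unstable foliations are linear and a single global chart straightens them all simultaneously.
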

\Remark The value of $\beta$ depends only on the manifold $(M,g)$, the sets $\mathcal V_\ell$
in~\eqref{e:a-ell-supp}, and $\varepsilon_0$ (as mentioned
above we can put $\varepsilon_0:=\frac18$ in the argument). 

\subsubsection{Controlled and uncontrolled words}

Similarly to~\cite[Lemma~3.1 and~(3.8)]{meassupp},
using the properties of the operator $A_0$ in Lemma~\ref{l:our-partition}
we have for any $u\in H^2(M)$, uniformly in $n\in\mathbb N_0$
\begin{equation}
  \label{e:A0-rid}
\|u-A_{\mathcal W(n)}u\|_{L^2}\leq C\|(-h^2\Delta_g-I)u\|_{L^2}.  
\end{equation}
Here $A_{\mathcal W(n)}=\sum_{\mathbf w\in\mathcal W(n)}A_{\mathbf w}$
is defined in~\eqref{e:A-E-def}.
In fact, since $A_1+A_2=I-A_0$ and $A_0$ commutes with $U(t)$ by Lemma~\ref{l:our-partition},
we have
\begin{equation}
  \label{e:A-W-n}
A_{\mathcal W(n)}=(A_1+A_2)^n.
\end{equation}
We use~\eqref{e:A0-rid} in particular with $n=2N_1$ where $N_1$ is fixed in~\eqref{e:propagation-times}.

We now follow~\cite[\S3.2]{meassupp} and write $A_{\mathcal W(2N_1)}$ as the sum of two
operators, $A_{\mathcal X}$ and~$A_{\mathcal Y}$, where
\begin{equation}
  \label{e:X-Y}
\mathcal W(2N_1)=\mathcal X\sqcup \mathcal Y.
\end{equation}
We call $\mathcal X$ the set of \emph{uncontrolled words} and~$\mathcal Y$ the set of \emph{controlled} words. Roughly speaking, $\mathcal X$ consists of words $\mathbf w\in\mathcal W(2N_1)$ which have a small proportion of the digits equal to~1,
and $\mathcal Y$ consists of words where a positive proportion of the digits is equal to~1.
Later in the argument we estimate $\|A_{\mathcal X}u\|_{L^2}$ using Proposition~\ref{l:key-estimate}
and estimate $\|\mathcal A_{\mathcal Y}u\|_{L^2}$ using the property~\eqref{e:a-1-supp}.

To define the sets $\mathcal X,\mathcal Y$,
we recall that $N_1=2N_0$ and write words $\mathbf w\in\mathcal W(2N_1) = \mathcal W (4N_0)$ as concatenations $\mathbf w^{(1)}\mathbf w^{(2)}\mathbf w^{(3)}\mathbf w^{(4)}$ where $\mathbf w^{(\ell)}\in \mathcal W(N_0)$.
Define the \emph{density function}
\begin{equation}
  \label{e:density-F-def}
F:\mathcal W(N_0)\to [0,1],\quad
F(w_0\dots w_{N_0-1})={1\over N_0}\#\{j\mid w_j=1\}.
\end{equation}
Let $\alpha>0$ be a small enough constant depending
on the value of~$\beta$ in Proposition~\ref{l:key-estimate}, fixed in Lemma~\ref{l:X-count} below,
and define the set of \emph{controlled short words}
$$
\mathcal Z:=\{\mathbf w\in\mathcal W(N_0)\mid F(\mathbf w)\geq \alpha\}.
$$
We now define the sets $\mathcal X,\mathcal Y$ in~\eqref{e:X-Y} as follows:
\begin{equation}
  \label{e:X-Y-def}
\begin{aligned}
\mathcal X&\,:=\{\mathbf w^{(1)}\dots\mathbf w^{(4)}\in \mathcal W(2N_1)\mid \mathbf w^{(\ell)}\notin\mathcal Z\quad\text{for all }\ell\},\\
\mathcal Y&\,:=\{\mathbf w^{(1)}\dots\mathbf w^{(4)}\in\mathcal W(2N_1)\mid \mathbf w^{(\ell)}\in\mathcal Z\quad\text{for some }\ell\}.
\end{aligned}
\end{equation}

Using~\eqref{e:A0-rid} with $n=2N_1$ we have
\begin{equation}
  \label{e:A-XY-collected}
\|u\|_{L^2}\leq \|\mathcal A_{\mathcal X}u\|_{L^2}+\|\mathcal A_{\mathcal Y}u\|_{L^2}+C\|(-h^2\Delta_g-I)u\|_{L^2}
\end{equation}
and we will estimate the terms $\|A_{\mathcal X}u\|_{L^2}$, $\|A_{\mathcal Y}u\|_{L^2}$ separately.

\subsubsection{Estimating uncontrolled words}

We first estimate $\|\mathcal A_{\mathcal X}u\|_{L^2}$. In fact, we will bound the
operator norm of~$\mathcal A_{\mathcal X}$; in particular, this part of the argument
does not use the fact that $u$ is close to a Laplacian eigenfunction. We use that
the number of words in the set $\mathcal X$ grows like a small negative power of $h$
for small~$\alpha$, proved in the same way as~\cite[Lemma~3.3]{meassupp}
(which is a simple counting argument combined with Stirling's Formula): 
\begin{lemm}
\label{l:X-count}
Fix $\beta>0$. Then for $\alpha>0$ small enough depending on~$\beta$, there exists a constant $C$ such that
\begin{equation}
  \label{e:X-count}
\#(\mathcal X)\leq Ch^{-\beta/2}.  
\end{equation}
\end{lemm}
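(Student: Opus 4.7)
The plan is to reduce the bound on $\#(\mathcal X)$ to a standard entropy estimate for binomial sums and then simply choose $\alpha$ small depending on~$\beta$.

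First I would observe that the definition~\eqref{e:X-Y-def} of $\mathcal X$ is a product structure: a word $\mathbf w = \mathbf w^{(1)}\mathbf w^{(2)}\mathbf w^{(3)}\mathbf w^{(4)} \in \mathcal W(2N_1)$ lies in $\mathcal X$ if and only if each of its four blocks $\mathbf w^{(\ell)}\in\mathcal W(N_0)$ lies in $\mathcal W(N_0)\setminus\mathcal Z$. Therefore
\begin{equation*}
\#(\mathcal X) = \big(\#(\mathcal W(N_0)\setminus\mathcal Z)\big)^4,
\end{equation*}
and it suffices to bound $\#(\mathcal W(N_0)\setminus\mathcal Z)$ by $Ch^{-\beta/8}$.

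Next, by the definition~\eqref{e:density-F-def} of $F$, the complement $\mathcal W(N_0)\setminus\mathcal Z$ consists exactly of the words of length $N_0$ in the alphabet $\{1,2\}$ containing strictly fewer than $\alpha N_0$ occurrences of the letter~$1$. Thus
\begin{equation*}
\#(\mathcal W(N_0)\setminus\mathcal Z) \;=\; \sum_{0\leq k < \alpha N_0} \binom{N_0}{k}.
\end{equation*}
Invoking the standard entropy estimate for partial binomial sums, namely $\sum_{k\leq \alpha N_0}\binom{N_0}{k}\leq 2^{H(\alpha) N_0}$ valid for $0<\alpha<\tfrac 12$ (where $H(\alpha):= -\alpha\log_2\alpha - (1-\alpha)\log_2(1-\alpha)$ is the binary entropy function), we obtain
\begin{equation*}
\#(\mathcal X) \;\leq\; 2^{4H(\alpha) N_0}.
\end{equation*}

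Finally, I would use the concrete size of~$N_0$ fixed in~\eqref{e:propagation-times}: $N_0 \leq \tfrac{1-\varepsilon_0}{6}\log(1/h) + 1$. Substituting this gives
\begin{equation*}
\#(\mathcal X) \;\leq\; C\, h^{-\,\frac{2(1-\varepsilon_0)\log 2}{3}\,H(\alpha)}.
\end{equation*}
Since $H(\alpha)\to 0$ as $\alpha\to 0^+$, one can choose $\alpha>0$ small enough (depending only on $\beta$ and $\varepsilon_0$) so that $\tfrac{2(1-\varepsilon_0)\log 2}{3}H(\alpha) \leq \beta/2$, which yields the desired bound $\#(\mathcal X)\leq Ch^{-\beta/2}$. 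There is no real obstacle here: the step is essentially combinatorial, and the choice of four blocks of length~$N_0$ in the definition of~$\mathcal X$ is precisely what allows the exponent $\beta/2$ in~\eqref{e:X-count} (as opposed to some fixed multiple of $H(\alpha)$) once $\alpha$ is taken small.
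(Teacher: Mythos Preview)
Your proof is correct and is exactly the standard combinatorial/entropy argument the paper intends (it simply defers to \cite[Lemma~3.3]{meassupp}, which is the same binomial-tail bound). One small quibble: your closing remark is misleading---the number of blocks plays no special role in reaching the exponent~$\beta/2$, since for any fixed number of blocks one just takes $\alpha$ small enough that $H(\alpha)$ falls below the required threshold.
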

Combining the key estimate, Proposition~\ref{l:key-estimate}, with Lemma~\ref{l:X-count}, we get the bound
\begin{equation}
  \label{e:A-X-bound}
\|A_{\mathcal X}u\|_{L^2}\leq Ch^{\beta/2}\|u\|_{L^2}.
\end{equation}

\subsubsection{Estimating controlled words and end of the proof}

It remains to estimate~$\|A_{\mathcal Y}u\|_{L^2}$, which is done in the same way as the proof of~\cite[Proposition~3.4]{meassupp}.
We review the argument briefly, referring the reader to~\cite[\S4.3]{meassupp} for details.

We first give two basic estimates. The first one~\cite[Lemma~4.1]{meassupp},
uses a semiclassical elliptic estimate together with the property~\eqref{e:a-1-supp}
that $\supp a_1\cap S^*M\subset\{a\neq 0\}$ to conclude that
\begin{equation}
  \label{e:controller-1}
\|A_1u\|_{L^2}\leq C\|\Op_h(a)u\|_{L^2}+C\|(-h^2\Delta_g-I)u\|_{L^2}+Ch\|u\|_{L^2}.
\end{equation}
The second one has to do with propagation by the group $U(t)$
introduced in~\eqref{e:U-t-def}. If $u$ is an eigenfunction of $\Delta_g$,
then it is also an eigenfunction of $U(t)$; since the latter is unitary, 
for any operator $A$ on $L^2(M)$ we have for all $t\in\mathbb R$
$$
\|A(t)u\|_{L^2}=\|U(-t)AU(t)u\|_{L^2}=\|Au\|_{L^2}
$$
where $A(t) = U(-t) A U(t)$ is as defined in~\eqref{e:A-t-def}. More generally, for any $u\in H^2(M)$ we have~\cite[Lemma~4.2]{meassupp}
\begin{equation}
  \label{e:controller-2}
\|A(t)u\|_{L^2}\leq \|Au\|_{L^2}+{C|t|\over h}\|(-h^2\Delta_g-I)u\|_{L^2}
\end{equation}
for any $h$-dependent family of operators $A:L^2(M)\to L^2(M)$ bounded in norm uniformly in~$h$.

Coming back to estimating $\|A_{\mathcal Y}u\|_{L^2}$, we let
$\mathcal Z^\complement:=\mathcal W(N_0)\setminus\mathcal Z$ be the complement of~$\mathcal Z$
and decompose
$$
A_{\mathcal Y}=\sum_{\ell=1}^4 A_{\mathcal Z^\complement}(3N_0)\cdots A_{\mathcal Z^\complement}(\ell N_0)A_{\mathcal Z}((\ell-1)N_0)A_{\mathcal W((\ell-1)N_0)}.
$$
By Lemma~\ref{l:qc-Ac} with $\rho:=\frac 13(1-\varepsilon_0)$ the norms $\|A_{\mathcal Z}\|_{L^2\to L^2}$,
$\|A_{\mathcal Z^\complement}\|_{L^2\to L^2}$ are bounded uniformly in~$h$.
Together with~\eqref{e:A0-rid} and~\eqref{e:controller-2} this shows that
$\|A_{\mathcal Y}u\|_{L^2}$ is estimated in terms of~$\|A_{\mathcal Z}u\|_{L^2}$
(this is similar to the submultiplicativity argument in~\cite[\S2.2]{Anantharaman-Entropy}):
\begin{equation}
  \label{e:A-YZ-estimate}
\|A_{\mathcal Y}u\|_{L^2}\leq C\|A_{\mathcal Z}u\|_{L^2}+{C\log(1/h)\over h}\|(-h^2\Delta_g-I)u\|_{L^2}.
\end{equation}
Next, let $A_F$ be the operator defined in~\eqref{e:A-c-def}, corresponding
to the density function~$F$ defined in~\eqref{e:density-F-def}.
By the definition of the set $\mathcal Z$, we have
$$
0\leq \alpha \indic_{\mathcal Z}(\mathbf w)\leq F(\mathbf w)\leq 1\quad\text{for all }
\mathbf w\in\mathcal W(N_0).
$$
Applying Lemma~\ref{l:special-Garding} with $\rho:=\frac 13(1-\varepsilon_0)$, we then get (with the constants $C$ below depending on~$\alpha$)
\begin{equation}
  \label{e:A-Z-estimate}
\|A_{\mathcal Z}u\|_{L^2}\leq \alpha^{-1}\|A_F u\|_{L^2}+Ch^{\varepsilon_0\over 2} \|u\|_{L^2}.
\end{equation}
Finally, we write (in a way reminiscent of~\cite[\S2.5]{Anantharaman-Entropy})
$$
A_F={1\over N_0}\sum_{j=0}^{N_0-1}A_{\mathcal W(N_0-1-j)}A_1(j)A_{\mathcal W(j)}.
$$
Since $A_1+A_2=I-A_0$, we have $\|A_{\mathcal W(N_0-1-j)}\|_{L^2\to L^2}\leq 1$ by Lemma~\ref{l:our-partition} and~\eqref{e:A-W-n}. Using~\eqref{e:A0-rid} and~\eqref{e:controller-2} again, we see that
$$
\|A_Fu\|_{L^2}\leq \|A_1u\|_{L^2}+{C\log(1/h)\over h}\|(-h^2\Delta_g-I)u\|_{L^2}.
$$
Together with~\eqref{e:controller-1} this gives
\begin{equation}
  \label{e:A-F-estimate}
\|A_Fu\|_{L^2}\leq C\|\Op_h(a)u\|_{L^2}+{C\log(1/h)\over h}\|(-h^2\Delta_g-I)u\|_{L^2}+Ch\|u\|_{L^2}.
\end{equation}
Combining~\eqref{e:A-YZ-estimate}--\eqref{e:A-F-estimate}, we finally get the bound
on~$\|A_{\mathcal Y}u\|_{L^2}$:
\begin{equation}
  \label{e:A-Y-bound}
\|A_{\mathcal Y}u\|_{L^2}\leq C\|\Op_h(a)u\|_{L^2}+{C\log(1/h)\over h}\|(-h^2\Delta_g-I)u\|_{L^2}
+Ch^{\varepsilon_0\over 2}\|u\|_{L^2}.
\end{equation}
Together with~\eqref{e:A-XY-collected} and~\eqref{e:A-X-bound}, this gives
\begin{equation}
  \label{e:A-final-bound}
\|u\|_{L^2}\leq C\|\Op_h(a)u\|_{L^2}+{C\log(1/h)\over h}\|(-h^2\Delta_g-I)u\|_{L^2}
+Ch^{\min(\beta,\varepsilon_0)\over 2}\|u\|_{L^2}.
\end{equation}
Since $\beta$ and $\varepsilon_0$ are positive, for $h$ small enough
we can remove the last term on the right-hand side. This implies~\eqref{e:esti} and finishes the proof of Theorem~\ref{t:esti}.

\section{Decay for long words}
\label{s:key-proof}

In this section we prove Proposition~\ref{l:key-estimate}. Here is an outline of the proof:
\begin{itemize}
\item
The estimate~\eqref{e:key-estimate} is reduced to a norm bound on the product
of two operators, $\Op_h^{L_s}(a^-_{\mathbf w_-})$ and $\Op_h^{L_u}(a^+_{\mathbf w_+})$,
where $\Op_h^\bullet$ denotes the quantization reviewed in~\S\ref{s:calculus-lagrangian},
$L_s,L_u$ are the weak stable/unstable bundles, and the symbols $a^\pm_{\mathbf w_\pm}$
are constructed from the fixed symbols $a_1,a_2$ by the time evolution in forward ($-$)
or backward ($+$) time direction for time $N_1\approx {\rho\over 2}\log(1/h)$
defined in~\eqref{e:propagation-times}; this is half of the propagation time~$2N_1$
in Proposition~\ref{l:key-estimate} because we are propagating in both time directions.
Here we fix $\rho:=\tfrac23(1-\varepsilon_0)$.
\item We decompose the product above into a sum of pieces
$\Op_h^{L_s}(a^-_{\mathbf w_-}\psi_k)\Op_h^{L_u}(a^+_{\mathbf w_+}\psi_k)$,
where the $\psi_k^2$ form a partition of unity and each $\psi_k$ is supported in
the ball $B(q_k,2h^{\rho\over 2})$ centered at some point $q_k\in T^*M\setminus 0$.
The symbols $a^-_{\mathbf w_-}\psi_k$
belong to the $S^{\comp}_{L_s,\rho+\varepsilon,\rho/2}$ calculus,
and they can be quantized because $\tfrac 32\rho<1$; the
same is true for the symbols $a^+_{\mathbf w_+}\psi_k$ with $L_s$ replaced by $L_u$.
Then the decomposition above is almost orthogonal owing to the limited
overlap in the supports of $\psi_k$,
and thus by the Cotlar--Stein Theorem~\cite[Theorem~C.5]{Zworski-Book} it suffices to prove an estimate
on the norm of each piece, stated in~\eqref{e:key-estimate-4} below. 
\item For each individual piece, we conjugate the operators
$\Op_h^{L_s}(a^-_{\mathbf w_-}\psi_k)$ and $\Op_h^{L_u}(a^+_{\mathbf w_+}\psi_k)$
by some Fourier integral operators $\mathcal B,\mathcal B'$
quantizing a local symplectomorphism $\varkappa_k:T^*M\to T^*\mathbb R^{2n}$. This symplectomorphism
is chosen to straighten out the stable/unstable spaces,
and the decomposition of these into slow and fast parts, at the point $q_k$.
\item We study the images of the supports of the symbols
$a^\pm_{\mathbf w_\pm}\psi_k$ under the symplectomorphism $\varkappa_k$.
We show that they have projections onto the $y_1$ and $\eta_1$ variables
which are porous up to scale $\sim h^{\rho}$~-- see Lemma~\ref{l:porosity}.
This part of the proof uses that the the symbols $a^\pm_{\mathbf w_\pm}$
were defined using propagation for time $N_1\approx {\rho\over 2}\log(1/h)$
in two ways:
\begin{itemize}
\item In the slow stable/unstable directions, the symbols $a^\pm_{\mathbf w_\pm}$
vary on scales $e^{-N_1}\sim h^{\rho\over 2}$. Since we are intersecting with
$\supp\psi_k\subset B(q_k,2h^{\rho\over 2})$, we can essentially assume that
the symbols of interest are constant in the slow directions.
See in particular Lemmas~\ref{l:rectangle-propagate} and~\ref{l:Omega-approx}.
\item In the fast stable (for $a^+_{\mathbf w_+}$) and fast unstable (for $a^-_{\mathbf w_-}$)
directions, the symbols $a^\pm_{\mathbf w_\pm}$ vary on scales $e^{-2N_1}\sim h^\rho$.
This and the $V^\pm$\!-density of the complements of the supports of the symbols $a_1,a_2$
(see~\eqref{e:a-ell-supp}) imply the porosity property by a change of scale argument.
\end{itemize}
One also has to take care in the proof since $\varkappa_k$
straightens out the stable/unstable spaces only at one point $q_k$.
\item We next show that after conjugation by $\mathcal B,\mathcal B'$, the operators
$\Op_h^{L_s}(a^-_{\mathbf w_-}\psi_k)$ and $\Op_h^{L_u}(a^+_{\mathbf w_+}\psi_k)$
localize to porous sets in position ($y_1$) and in frequency ($\eta_1$), see Lemma~\ref{l:tilde-A-localized}.
This uses the information about the supports of the symbols described in the previous item
and some fairly technical analysis of the oscillatory integral forms of the operators in question.
\item The above arguments reduce Proposition~\ref{l:key-estimate} to an operator norm estimate on the product
of operators localizing in position and frequency,
$\indic_{\Omega_-}(hD_{y_1})\indic_{\Omega_+}(y_1)$,
where the sets $\Omega_\pm\subset\mathbb R$ are porous up to scale $\sim h^{\rho}$.
Since $\rho>\frac12$, the fractal uncertainty principle of~\cite{fullgap}
(or rather its extension from~\cite{varfup}) can be applied
to yield the desired estimate. Note that the above arguments
used that $\frac12<\rho<\frac23$, where the constant $\rho$
is related to the propagation time $N_1$. \end{itemize}

\subsection{Reduction to a localized estimate}

We first reduce to a localized estimate arguing similarly to~\cite[\S\S3.5,4.3.1--4.3.2]{highcat}.

\subsubsection{Writing $A_{\mathbf w}$ as a product of two operators}

Take arbitrary $\mathbf w\in \mathcal W(2N_1)$. We write $\mathbf w=\mathbf w_+\mathbf w_-$ as the concatenation of two words $\mathbf w_\pm\in \mathcal W(N_1)$, and denote
$$
\mathbf w_+=w^+_{N_1}\ldots w^+_1,\quad
\mathbf w_-=w^-_0\ldots w^-_{N_1-1}.
$$
Recalling the definition~\eqref{e:A-w-def} of~$A_{\mathbf w}$, we then write
$$
A_{\mathbf w}=U(-N_1)A^-_{\mathbf w_-}A^+_{\mathbf w_+}U(N_1)
$$
where
$$
\begin{aligned}
A^-_{\mathbf w_-}&\,:=A_{w^-_{N_1-1}}(N_1-1)\cdots A_{w^-_0}(0),\\
A^+_{\mathbf w_+}&\,:=A_{w^+_1}(-1)\cdots A_{w^+_{N_1}}(-N_1).
\end{aligned}
$$
Define the corresponding symbols
\begin{equation}
  \label{e:a-pm-s-def}
a^-_{\mathbf w_-}:= \prod_{j=0}^{N_1-1}(a_{w^-_j}\circ\varphi^j),\quad
a^+_{\mathbf w_+}:= \prod_{j=1}^{N_1}(a_{w^+_j}\circ\varphi^{-j}).
\end{equation}
Denote (where $\varepsilon_0$ is the constant in~\eqref{e:propagation-times})
\begin{equation}
  \label{e:our-rho}
\rho:=\tfrac23(1-\varepsilon_0).
\end{equation}
Then we have for all $\varepsilon>0$, with the implied constants independent of~$\mathbf w,h$,
\begin{equation}
  \label{e:a-pm-symb}
\begin{aligned}
a^-_{\mathbf w_-}\in S^{\comp}_{L_s,\rho+\varepsilon,\varepsilon}(T^*M\setminus 0),&\quad
A^-_{\mathbf w_-}=\Op_h^{L_s}(a^-_{\mathbf w_-})+\mathcal O(h^{\frac13})_{L^2\to L^2};\\
a^+_{\mathbf w_+}\in S^{\comp}_{L_u,\rho+\varepsilon,\varepsilon}(T^*M\setminus 0),&\quad
A^+_{\mathbf w_+}=\Op_h^{L_u}(a^+_{\mathbf w_+})+\mathcal O(h^{\frac13})_{L^2\to L^2}.
\end{aligned}
\end{equation}
Here the first line follows from Lemma~\ref{l:qc-Aw} and the second line is proved
in the same way, reversing the direction of propagation.

Since both $A^\pm_{\mathbf w_\pm}$ are bounded on $L^2$ uniformly in~$h$,
we see that Proposition~\ref{l:key-estimate} follows from the bound
\begin{equation}
  \label{e:key-estimate-2}
\|\Op_h^{L_s}(a^-_{\mathbf w_-})\Op_h^{L_u}(a^+_{\mathbf w_+})\|_{L^2\to L^2}\leq Ch^\beta.
\end{equation}

\subsubsection{Decomposing the operator}
\label{s:decomposer}

We next decompose the product of operators in~\eqref{e:key-estimate-2} as a sum of pieces.
Each piece corresponds to a ball of size $h^{\rho\over 2}> h^{\frac13}$ in the phase space $T^*M$.
The fact that the symbols $a^\pm_{\mathbf w_\pm}$ lie in Lagrangian calculi with parameters
$\rho+\varepsilon,\varepsilon$ where $\rho<\frac23$ make it possible to show that
the pieces are almost orthogonal and reduce~\eqref{e:key-estimate-2} to
a norm bound on each individual piece.

Let $q_1,\dots,q_L\in \{\tfrac15\leq |\xi|_g\leq 5\}\subset T^*M$
be a maximal $h^{\rho\over 2}$-separated set
(here $h^{\rho\over 2}$-separation means
that $d(q_k,q_{k'})\geq h^{\rho\over 2}$ for all $k\neq k'$). Since $T^*M$ is $4n$-dimensional,
we have for some $h$-independent constant $C$
\begin{equation}
  \label{e:L-limit}
L\leq Ch^{-2n\rho}.
\end{equation}
The balls $B(q_k,h^{\rho\over 2})$ cover $\{\tfrac 15\leq |\xi|_g\leq 5\}$.
Therefore we can construct an $h$-dependent partition of unity
\begin{equation}
  \label{e:psi-k}
\psi_k\in \CIc(T^*M),\quad
\supp\psi_k\subset B(q_k,2h^{\rho\over 2}),\quad
\sum_{k=1}^L\psi_k^2=1\quad\text{on }\{\tfrac 14\leq |\xi|_g\leq 4\}
\end{equation}
and the functions $\psi_k$ satisfy the derivative bounds for all multiindices $\alpha$
\begin{equation}
  \label{e:der-bounds}
\sup |\partial^\alpha \psi_k|\leq C_\alpha h^{-{\rho|\alpha|\over 2}}.
\end{equation}
For any fixed $k$, the balls $\{B(q_{k'},\frac 13h^{\rho\over 2})\mid \supp\psi_k\cap\supp\psi_{k'}\neq\emptyset\}$ are disjoint
and lie inside the ball $B(q_k,5h^{\rho\over 2})$. Comparing the volumes of these balls, we see
that there exists a constant $C$ independent of~$h$ such that
\begin{equation}
  \label{e:bounded-overlap}
\max_k\#\{k'\mid \supp \psi_k\cap \supp\psi_{k'}\neq\emptyset\}\leq C
\end{equation}
which implies that the sum $\sum_{k=1}^L\psi_k^2$ satisfies the derivative bounds~\eqref{e:der-bounds} as well.
Therefore, each $\psi_k$ and the sum $\sum_{k=1}^L\psi_k^2$ are bounded in the symbol
class $S^{\comp}_{\rho/2}(T^*M)$ introduced in~\eqref{e:S-rho},
and thus in the calculi $S^{\comp}_{L_s,\rho,\rho/2}$
and $S^{\comp}_{L_u,\rho,\rho/2}$.

By~\eqref{e:a-ell-supp}, we have $\supp a^-_{\mathbf w_-}\subset \{\tfrac14<|\xi|_g< 4\}$,
which shows that $a^-_{\mathbf w_-}=a^-_{\mathbf w_-}\sum_{k=1}^L\psi_k^2$. Then the Product Rule~\eqref{e:extra-product} for the $S^{\comp}_{L_s,\rho+\varepsilon,\rho/2}$ calculus together with~\eqref{e:quant-convert-S-rho} imply that
(here $\mathcal O(h^{\varepsilon_0-})$ denotes a function which is
$\mathcal O(h^{\varepsilon_0-\delta})$ for all $\delta>0$)
\begin{equation}
  \label{e:decomposer-1}
\Op_h^{L_s}(a^-_{\mathbf w_-})\Op_h^{L_u}(a^+_{\mathbf w_+})=
\bigg(\sum_{k=1}^L \Op_h^{L_s}(a^-_{\mathbf w_-})\Op_h(\psi_k^2)\Op_h^{L_u}(a^+_{\mathbf w_+})\bigg)+\mathcal O(h^{\varepsilon_0-})_{L^2\to L^2}.
\end{equation}
We now show that the summands in~\eqref{e:decomposer-1} form an almost orthogonal family:
\begin{lemm}
\label{l:almost-orth}
Denote $A^{(k)}:=\Op_h^{L_s}(a^-_{\mathbf w_-})\Op_h(\psi_k^2)\Op_h^{L_u}(a^+_{\mathbf w_+})$.
Then we have for some $h$-independent constant $C$
\begin{align}
  \label{e:almost-orth-1}
\max_k\sum_{k'=1}^L \|(A^{(k)})^* A^{(k')}\|_{L^2\to L^2}^{\frac12}\leq C\max_k \|A^{(k)}\|_{L^2\to L^2}+\mathcal O(h^\infty),\\
  \label{e:almost-orth-2}
\max_k\sum_{k'=1}^L \|A^{(k)}(A^{(k')})^*\|_{L^2\to L^2}^{\frac12}\leq C\max_k \|A^{(k)}\|_{L^2\to L^2}+\mathcal O(h^\infty).
\end{align}
\end{lemm}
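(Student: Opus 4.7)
The proof splits the sum over $k'$ according to whether $\supp\psi_k\cap\supp\psi_{k'}$ is nonempty. By the finite overlap property \eqref{e:bounded-overlap}, at most $C$ indices $k'$ fall in the overlapping case for each fixed $k$, and for such $k'$ the trivial bound
$$
\|(A^{(k)})^*A^{(k')}\|_{L^2\to L^2}^{\frac12}\leq (\|A^{(k)}\|\cdot\|A^{(k')}\|)^{\frac12}\leq \max_j\|A^{(j)}\|_{L^2\to L^2}
$$
contributes at most $C\max_j\|A^{(j)}\|_{L^2\to L^2}$ to the sum on the left of \eqref{e:almost-orth-1}, uniformly in $k$.

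The heart of the plan is to show that in the disjoint-support case,
$$
(A^{(k)})^*A^{(k')}=\mathcal O(h^\infty)_{L^2\to L^2}\quad\text{uniformly in }k,k'.
$$
Since by \eqref{e:L-limit} the total number of indices is $L\leq Ch^{-2n\rho}$, even summing all $L$ such contributions yields only an $\mathcal O(h^\infty)$ term, which gives the $+\mathcal O(h^\infty)$ in \eqref{e:almost-orth-1}. Write
$$
(A^{(k)})^*A^{(k')}=\Op_h^{L_u}(a^+_{\mathbf w_+})^*\,\Op_h(\psi_k^2)\,M\,\Op_h(\psi_{k'}^2)\,\Op_h^{L_u}(a^+_{\mathbf w_+}),\qquad M:=\Op_h^{L_s}(a^-_{\mathbf w_-})^*\Op_h^{L_s}(a^-_{\mathbf w_-}).
$$
The outer $\Op_h^{L_u}$ factors are bounded on $L^2$ uniformly in $\mathbf w$, so it suffices to show that the inner three-fold product $\Op_h(\psi_k^2)\,M\,\Op_h(\psi_{k'}^2)$ is $\mathcal O(h^\infty)_{L^2\to L^2}$. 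By the Adjoint and Product Rules \eqref{e:extra-adjoint}, \eqref{e:extra-product} in the $S^{\comp}_{L_s,\rho+\varepsilon,\varepsilon}$ calculus, $M$ is a Lagrangian quantization whose semiclassical wavefront set lies in $\supp a^-_{\mathbf w_-}$. The factors $\Op_h(\psi_k^2)$ and $\Op_h(\psi_{k'}^2)$ are classical pseudodifferential operators with $\WFh$ contained in $\supp\psi_k^2$ and $\supp\psi_{k'}^2$ respectively. The wavefront set calculus (pseudolocality of all three quantizations) then forces
$$
\WFh\bigl(\Op_h(\psi_k^2)\,M\,\Op_h(\psi_{k'}^2)\bigr)\ \subset\ \supp\psi_k\cap\supp\psi_{k'}=\emptyset,
$$
giving the required $\mathcal O(h^\infty)$ bound. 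The proof of \eqref{e:almost-orth-2} is entirely symmetric: the roles of $L_s$ and $L_u$ interchange, with $M':=\Op_h^{L_u}(a^+_{\mathbf w_+})\Op_h^{L_u}(a^+_{\mathbf w_+})^*$ playing the role of $M$.

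The main technical obstacle is establishing the uniform $\mathcal O(h^\infty)$ bound in the disjoint-support case. Concretely, one must verify that the asymptotic composition formula $\psi_k^2\,\#\,(\overline{a^-_{\mathbf w_-}}\,\#\,a^-_{\mathbf w_-})\,\#\,\psi_{k'}^2$ in the $S^{\comp}_{L_s,\rho+\varepsilon,\varepsilon}$ calculus has all terms vanishing identically on $\supp\psi_k\cap\supp\psi_{k'}=\emptyset$ (each such term being a pointwise product of derivatives of $\psi_k^2$, $\psi_{k'}^2$, and $a^-_{\mathbf w_-}$), with remainders in the asymptotic expansion that decay uniformly in both the long word $\mathbf w$ (of length $2N_1\sim\log(1/h)$) and the index $k$. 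This is essentially the Nonintersecting Support Property \eqref{e:extra-nonint-supp} applied to a three-fold composition, combined with the fact that $\psi_k^2\in S^{\comp}_{\rho/2}\subset S^{\comp}_{L_s,\rho,\rho/2}$ via \eqref{e:quant-convert-S-rho}, so the inner symbol $M$'s expansion can be absorbed into the Lagrangian calculus framework.
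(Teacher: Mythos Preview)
Your approach is essentially identical to the paper's: split the sum by whether $\supp\psi_k\cap\supp\psi_{k'}$ is empty, use bounded overlap~\eqref{e:bounded-overlap} for the intersecting case, and show the four-operator product $\Op_h(\psi_k^2)^*\Op_h^{L_s}(a^-_{\mathbf w_-})^*\Op_h^{L_s}(a^-_{\mathbf w_-})\Op_h(\psi_{k'}^2)$ is $\mathcal O(h^\infty)$ via the full asymptotic expansion in the Lagrangian calculus when the supports are disjoint, then use~\eqref{e:L-limit} to sum. One small slip: the $\psi_k$ are \emph{not} classical symbols (their derivatives grow like $h^{-\rho/2}$), so the $\WFh$ argument you sketch first is only heuristic; the correct mechanism is exactly the one you give in your final paragraph, working in the $S^{\comp}_{L_s,\rho+\varepsilon,\rho/2}$ calculus (not $S^{\comp}_{L_s,\rho+\varepsilon,\varepsilon}$) so that both $a^-_{\mathbf w_-}$ and $\psi_k^2$ are admissible, and observing that every term in the expansion of the full symbol is supported in $\supp\psi_k\cap\supp\psi_{k'}=\emptyset$.
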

\begin{proof}
We show~\eqref{e:almost-orth-1}, with~\eqref{e:almost-orth-2} proved similarly.
Assume first that $\supp \psi_k\cap \supp\psi_{k'}=\emptyset$. Then
$$
\|(A^{(k)})^* A^{(k')}\|_{L^2\to L^2}\leq C\|\Op_h(\psi_k^2)^*\Op_h^{L_s}(a^-_{\mathbf w_-})^*\Op_h^{L_s}(a^-_{\mathbf w_-})\Op_h(\psi_{k'}^2)\|_{L^2\to L^2}=\mathcal O(h^\infty).
$$
Here the last bound is similar to the Nonintersecting Support Property~\eqref{e:extra-nonint-supp}, following from the asymptotic expansions in the Product Rule~\eqref{e:extra-product} and the Adjoint Rule~\eqref{e:extra-adjoint} for the $S^{\comp}_{L_s,\rho+\varepsilon,\rho/2}$ calculus (see~\cite[(A.23)--(A.24)]{meassupp}) together with the asymptotic expansion for the change of quantization formula~\eqref{e:quant-convert-S-rho}. The fact that $\supp\psi_k\cap\supp\psi_{k'}=\emptyset$ implies that all the terms in the asymptotic
expansion for the full symbol of the product of four operators above are equal to~0.

Since the number of terms $L$ is bounded polynomially in $h$ by~\eqref{e:L-limit}, we see that the left-hand side
of~\eqref{e:almost-orth-1} is bounded above by
$$
\max_k\sum_{1\leq k'\leq L\atop \supp\psi_k\cap \supp\psi_{k'}\neq\emptyset}
\|(A^{(k)})^* A^{(k')}\|_{L^2\to L^2}^{\frac12}+\mathcal O(h^\infty)\leq
C\max_k\|A^{(k)}\|_{L^2\to L^2}+\mathcal O(h^\infty)
$$
where the last inequality follows from~\eqref{e:bounded-overlap}.
This gives~\eqref{e:almost-orth-1}.
\end{proof}
Using~\eqref{e:decomposer-1}, \eqref{e:almost-orth-1}--\eqref{e:almost-orth-2},
and the Cotlar--Stein Theorem~\cite[Theorem~C.5]{Zworski-Book}, we see that~\eqref{e:key-estimate-2} reduces to the following bound on the norm of each $A^{(k)}$:
\begin{equation}
  \label{e:key-estimate-3}
\max_k\|\Op_h^{L_s}(a^-_{\mathbf w_-})\Op_h(\psi_k^2)\Op_h^{L_u}(a^+_{\mathbf w_+})\|_{L^2\to L^2}\leq Ch^\beta.
\end{equation}
By~\eqref{e:quant-convert-S-rho} and the Product Rule~\eqref{e:extra-product}
for the $S^{\comp}_{L_s,\rho+\varepsilon,\rho/2}$ and $S^{\comp}_{L_u,\rho+\varepsilon,\rho/2}$
calculi, we have
$$
\begin{aligned}
\Op_h^{L_s}(a^-_{\mathbf w_-})\Op_h(\psi_k)&=\Op_h^{L_s}(a^-_{\mathbf w_-}\psi_k)+\mathcal O(h^{\varepsilon_0-})_{L^2\to L^2},\\
\Op_h(\psi_k)\Op_h^{L_u}(a^+_{\mathbf w_+})&=\Op_h^{L_u}(a^+_{\mathbf w_+}\psi_k)+\mathcal O(h^{\varepsilon_0-})_{L^2\to L^2}.
\end{aligned}
$$
We also have $\Op_h(\psi_k^2)=\Op_h(\psi_k)^2+\mathcal O(h^{\frac 13})$
by the properties of the $S^{\comp}_{\rho/2}$ calculus. Therefore~\eqref{e:key-estimate-3}
follows from the bound
\begin{equation}
\label{e:key-estimate-4}
\max_k\|\Op_h^{L_s}(a^-_{\mathbf w_-}\psi_k)\Op_h^{L_u}(a^+_{\mathbf w_+}\psi_k)\|_{L^2\to L^2}\leq Ch^\beta.
\end{equation}

\subsection{Fractal Uncertainty Principle}

We next review the Fractal Uncertainty Principle (FUP) of~\cite{fullgap}.
We use the slightly more general version from~\cite{varfup}.

To state FUP, we need the following
\begin{defi}
\label{d:porous}
Let $\nu\in (0,1)$ and $0<\alpha_0\leq\alpha_1$. We say that a subset
$\Omega\subset\mathbb R$ is $\nu$-porous on scales $\alpha_0$ to~$\alpha_1$
if for each interval $I\subset\mathbb R$ of length $|I|\in[\alpha_0,\alpha_1]$
there exists a subinterval $J\subset I$ of length $|J|=\nu|I|$ such that
$J\cap\Omega=\emptyset$.
\end{defi}
We also recall the semiclassical unitary Fourier transform $\mathcal F_h$
on $L^2(\mathbb R)$, defined by
\begin{equation}
  \label{e:F-h-def}
\mathcal F_hf(\xi)=(2\pi h)^{-\frac12}\int_{\mathbb R}e^{-{ix\xi\over h}}f(x)\,dx.
\end{equation}
We can now state a special case of the FUP from~\cite[Proposition~2.10]{varfup}:
\begin{prop}
\label{l:fup-1}
Fix numbers $\gamma_0,\gamma_1$ such that
$$
0\leq\gamma_1<\tfrac12<\gamma_0\leq 1.
$$
Then for each $\nu\in (0,1)$ there exist $\beta=\beta(\nu,\gamma_0,\gamma_1)>0$ and $C=C(\nu,\gamma_0,\gamma_1)$
such that the estimate
\begin{equation}
  \label{e:fup-1}
\|\indic_{\Omega_-}\mathcal F_h\indic_{\Omega_+}\|_{L^2(\mathbb R)\to L^2(\mathbb R)}\leq Ch^\beta
\end{equation}
holds for all $0<h<1$ and all sets $\Omega_\pm\subset\mathbb R$
which are $\nu$-porous on scales $h^{\gamma_0}$ to $h^{\gamma_1}$.
Here $\indic_{\Omega}$ denotes the multiplication operator
by the indicator function of~$\Omega$.
\end{prop}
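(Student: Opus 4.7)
The plan is to derive this estimate from the fractal uncertainty principle of Bourgain--Dyatlov~\cite{fullgap} for $\delta$-regular sets, following the adaptation to the porous setting already carried out in~\cite{varfup}. The proof splits naturally into a rescaling reduction and an invocation of the regular-set FUP after a porous-to-regular majorization.

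The first step is a rescaling reduction that absorbs the parameter $\gamma_1$. Let $D_\lambda f(x) = \lambda^{1/2}f(\lambda x)$ be the unitary dilation. A direct computation gives $D_\lambda \mathcal F_h D_\lambda^{-1} = \mathcal F_{h/\lambda^2}$, so choosing $\lambda = h^{\gamma_1}$ converts $\mathcal F_h$ into $\mathcal F_{h'}$ with effective semiclassical parameter $h' := h^{1-2\gamma_1}$, and replaces $\Omega_\pm$ by $\widetilde\Omega_\pm := h^{-\gamma_1}\Omega_\pm$, which are $\nu$-porous on scales $(h')^{\widetilde\gamma_0}$ to $1$ with $\widetilde\gamma_0 := (\gamma_0-\gamma_1)/(1-2\gamma_1)$. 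Using $\gamma_0>1/2>\gamma_1$ one checks $\widetilde\gamma_0>1/2$, and $h'\to 0$ as $h\to 0$. After intersecting $\widetilde\Omega_\pm$ with a fixed compact interval (the tails contributing $\mathcal O(h^\infty)$ by a non-stationary phase argument in $\mathcal F_{h'}$), the problem reduces to proving the estimate when $\gamma_1=0$, $\gamma_0>1/2$, and $\Omega_\pm\subset[0,1]$.

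The second step is the porous-to-regular majorization: any $\nu$-porous set $\Omega\subset[0,1]$ on scales $h^{\gamma_0}$ to $1$ is contained in the $h^{\gamma_0}$-neighborhood of a Cantor-like subset of $[0,1]$ that is $\delta$-regular in the sense of~\cite{fullgap} for some $\delta=\delta(\nu)<1$. One constructs this by iterated dyadic subdivision: at each scale $2^{-k}$, each surviving interval $I$ contains a sub-interval of relative length $\nu$ disjoint from $\Omega$ by porosity, so a fixed positive fraction of the dyadic children of $I$ intersecting $\Omega$ can be pruned while retaining a covering of $\Omega\cap I$. Running this procedure for $\sim\log(h^{-\gamma_0})$ generations produces a self-similar tree with branching strictly below $2$, which is the required $\delta$-regular majorant. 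Applying Bourgain--Dyatlov's theorem to this majorant yields the polynomial bound $\|\indic_{\widetilde\Omega_-}\mathcal F_{h'}\indic_{\widetilde\Omega_+}\|\leq C(h')^\beta$, hence the desired bound in $h$.

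The main obstacle lies not in the reductions above but in the Bourgain--Dyatlov estimate that they appeal to, whose proof rests on a submultiplicative iteration of norms of transfer operators associated to the $\delta$-regular tree together with a Beurling--Malliavin-type multiplier construction; the exponent $\beta$ produced is quantitative in $\nu$ and the spectral gap $\gamma_0-1/2$ but not explicit. Since both this theorem and the porous-set repackaging are already available in~\cite{fullgap,varfup}, our task reduces to verifying the porosity parameters survive the affine rescaling, as sketched above.
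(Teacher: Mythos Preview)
The paper does not prove this proposition; it simply records it as a special case of \cite[Proposition~2.10]{varfup}. Your proposal ultimately takes the same route, deferring the substance to \cite{fullgap,varfup}, and the sketch you add of the rescaling reduction (absorbing $\gamma_1$ into the semiclassical parameter via $h'=h^{1-2\gamma_1}$) and of the porous-to-$\delta$-regular majorization correctly captures the strategy used in \cite{varfup}.

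There is, however, one incorrect step in your sketch. The claim that after rescaling one may intersect $\widetilde\Omega_\pm$ with a fixed compact interval, with the tails contributing $\mathcal O(h^\infty)$ ``by a non-stationary phase argument in $\mathcal F_{h'}$'', does not hold. There is no oscillatory integral with non-stationary phase here: the operator $\indic_{\widetilde\Omega_-\setminus[-R,R]}\,\mathcal F_{h'}\,\indic_{\widetilde\Omega_+}$ need not be small in norm. For instance, translating $\widetilde\Omega_+$ by $N$ corresponds to multiplying $\mathcal F_{h'}f$ by a unimodular phase, which leaves operator norms unchanged; thus moving the support of $\widetilde\Omega_+$ arbitrarily far from the origin does not produce any decay. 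The passage to bounded sets (with a bound uniform in the diameter) is a genuine part of what \cite[Proposition~2.10]{varfup} establishes and is not reducible to a tail estimate of the kind you describe. Since you ultimately cite that result, the gap is harmless for your conclusion, but it would be a real error in a self-contained proof.
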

In~\S\ref{s:putting-together} below, we will use the following corollary
of Proposition~\ref{l:fup-1} featuring operators on $L^2(\mathbb R^{2n})$. Here we recall that $D_{y_j}=-i\partial_{y_j}$ and for any bounded measurable function $\chi$ on~$\mathbb R$ the operator $\chi(D_{y_j})$ is a Fourier multiplier (here $\mathcal F$ denotes the Fourier transform, with $\mathcal Ff=\widehat f$\,):
\begin{equation}
  \label{e:fourier-mul}
\mathcal F(\chi(D_{y_j}) f)(\eta)=\chi(\eta_j)\widehat f(\eta)\quad\text{for all }
f\in L^2(\mathbb R^{2n}),\
\eta\in\mathbb R^{2n}.
\end{equation}
\begin{prop}
\label{l:fup-specific}
Assume that $0<\varepsilon_0<{1\over 4}$, $\rho={2\over 3}(1-\varepsilon_0)$ as in~\eqref{e:our-rho},
$\nu>0,C_0$ are constants, and $\Omega_-,\Omega_+\subset\mathbb R$
are $\nu$-porous on scales $C_0h^{\rho}$ to~1. Then for all $h\in (0,1)$
\begin{equation}
  \label{e:fup-specific}
\|\indic_{\Omega_-}(hD_{y_1})\indic_{\Omega_+}(y_1)\|_{L^2(\mathbb R^{2n})\to L^2(\mathbb R^{2n})}\leq Ch^\beta
\end{equation}
where $\beta>0$ depends only on $\nu,\varepsilon_0$ and $C$ depends only on $\nu,\varepsilon_0,C_0$.
\end{prop}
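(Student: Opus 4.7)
The plan is to reduce Proposition~\ref{l:fup-specific} directly to the one-dimensional statement of Proposition~\ref{l:fup-1} by a Fubini/Plancherel argument, since both operators appearing in the product act only on the $y_1$ variable.

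First, I would note that $\indic_{\Omega_+}(y_1)$ is multiplication by a function of $y_1$ alone, and $\indic_{\Omega_-}(hD_{y_1})$ is a Fourier multiplier acting in the $y_1$ direction only (with $(y_2,\dots,y_{2n})$ treated as spectator parameters). Writing $L^2(\mathbb R^{2n}) \simeq L^2(\mathbb R_{y_1};L^2(\mathbb R^{2n-1}_{y'}))$ and integrating out $y'$, the operator norm on $L^2(\mathbb R^{2n})$ equals the operator norm of the one-dimensional product $\indic_{\Omega_-}(hD_y)\indic_{\Omega_+}(y)$ on $L^2(\mathbb R)$. Thus it suffices to prove
\begin{equation*}
\|\indic_{\Omega_-}(hD_y)\indic_{\Omega_+}(y)\|_{L^2(\mathbb R)\to L^2(\mathbb R)}\leq Ch^\beta.
\end{equation*}

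Next, using the definition of $\mathcal F_h$ in~\eqref{e:F-h-def}, one has $\indic_{\Omega_-}(hD_y)=\mathcal F_h^{-1}\indic_{\Omega_-}\mathcal F_h$, and since $\mathcal F_h$ is unitary on $L^2(\mathbb R)$, the norm above equals
\begin{equation*}
\|\indic_{\Omega_-}\mathcal F_h\indic_{\Omega_+}\|_{L^2(\mathbb R)\to L^2(\mathbb R)},
\end{equation*}
which is exactly the quantity bounded by Proposition~\ref{l:fup-1}.

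It remains to verify the hypotheses of Proposition~\ref{l:fup-1} with appropriate exponents $\gamma_0,\gamma_1$. Since $\varepsilon_0<\tfrac14$, we have $\rho=\tfrac23(1-\varepsilon_0)>\tfrac12$; set $\gamma_1:=0$ and choose $\gamma_0$ with $\tfrac12<\gamma_0<\rho$ (for concreteness $\gamma_0:=\tfrac12+\tfrac12(\rho-\tfrac12)=\tfrac12+\tfrac13(\tfrac12-\tfrac{2\varepsilon_0}{3}\cdot\tfrac32)$, depending only on $\varepsilon_0$). For $h$ sufficiently small in terms of $C_0$ we have $h^{\gamma_0}\geq C_0 h^\rho$, so any interval of length in $[h^{\gamma_0},1]$ has length in $[C_0h^\rho,1]$ and hence admits an empty subinterval of proportion $\nu$; that is, $\Omega_\pm$ are $\nu$-porous on scales $h^{\gamma_0}$ to $h^{\gamma_1}=1$. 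Proposition~\ref{l:fup-1} then yields the bound with some $\beta=\beta(\nu,\gamma_0,0)>0$ depending only on $\nu$ and $\varepsilon_0$, while the constant $C$ absorbs the dependence on $C_0$ (through the finitely many small values of $h$ left out above, handled by enlarging $C$).

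There is no real obstacle here beyond bookkeeping the constants: the content lies entirely in the one-dimensional FUP, and the reduction is a direct application of Plancherel in $y_1$ together with the observation that the strict inequality $\varepsilon_0<\tfrac14$ ensures the porosity exponent $\rho$ exceeds the critical value $\tfrac12$ by a margin independent of $C_0$.
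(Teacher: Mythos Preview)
Your proof is correct and follows essentially the same route as the paper: reduce to the one-dimensional FUP by slicing/Plancherel in the $y_1$ variable, then choose $\gamma_1=0$ and $\gamma_0=\tfrac12+\tfrac12(\rho-\tfrac12)=\tfrac{1+2\rho}{4}\in(\tfrac12,\rho)$ (exactly the paper's choice) so that $C_0h^\rho\leq h^{\gamma_0}$ for small $h$, and absorb the remaining range of $h$ into the constant via the trivial bound. The only blemish is the garbled second expression for $\gamma_0$ in your parenthetical, which you should delete; the first expression is already the right one.
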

\begin{proof}
Fix $\gamma_1:=0$ and $\gamma_0:={1+2\rho\over 4}\in (\frac 12,\rho)$. If $h\leq c_1$
where $c_1>0$ is a small constant
depending only on~$C_0,\varepsilon_0$, then $C_0h^\rho\leq h^{\gamma_0}$
and thus $\Omega_\pm$ are $\nu$-porous on scales $h^{\gamma_0}$ to~$h^{\gamma_1}=1$. Take
$f\in L^2(\mathbb R^{2n})$. For almost every $y'\in \mathbb R^{2n-1}$, define the
function $f_{y'}\in L^2(\mathbb R)$ by $f_{y'}(y_1)=f(y_1,y')$. Then
$$
(\indic_{\Omega_-}(hD_{y_1})\indic_{\Omega_+}(y_1)f)(y_1,y')=g_{y'}(y_1)\quad\text{where }
g_{y'}:=\mathcal F_h^{-1}\indic_{\Omega_-}\mathcal F_h\indic_{\Omega_+}f_{y'}.
$$
Since $\mathcal F_h$ is unitary, Proposition~\ref{l:fup-1} implies that for almost every $y'$
$$
\|g_{y'}\|_{L^2(\mathbb R)}\leq Ch^\beta \|f_{y'}\|_{L^2(\mathbb R)}.
$$
Taking the squares of both sides and integrating in~$y'\in\mathbb R^{2n-1}$,
we get
$$
\|\indic_{\Omega_-}(hD_{y_1})\indic_{\Omega_+}(y_1)f\|_{L^2(\mathbb R^{2n})}\leq Ch^\beta \|f\|_{L^2(\mathbb R^{2n})}
$$
which gives~\eqref{e:fup-specific}.

On the other hand, if $c_1<h<1$ then~\eqref{e:fup-specific} follows from
the trivial bound $\|\indic_{\Omega_-}(hD_{y_1})\indic_{\Omega_+}(y_1)\|_{L^2(\mathbb R^{2n})\to L^2(\mathbb R^{2n})}\leq 1$.
\end{proof}

\subsection{Local normal coordinates and proof of porosity}
\label{s:proof-of-porosity}

We now start the proof of~\eqref{e:key-estimate-4}. Fix $k$ and let $q_k\in \{\frac15\leq |\xi|_g\leq 5\}$ be the corresponding point chosen at the beginning of~\S\ref{s:decomposer}.

Let $\varkappa_k:U_k\to T^*\mathbb R^{2n}$ be the symplectomorphism
constructed in Lemma~\ref{l:straighten-out} with $q^0:=q_k$. Recall that it satisfies the properties~\eqref{e:straighten-out-1} and~\eqref{e:straighten-out-2}--\eqref{e:straighten-out-3}:
$$
\varkappa_k(q_k)=0,\quad
d\varkappa_k(q_k)V^+_\perp(q_k)=\ker dy_1,\quad
d\varkappa_k(q_k)V^-_\perp(q_k)=\ker d\eta_1
$$
where the `slow' hyperplanes $V^\pm_\perp(q)\subset T_q(T^*M\setminus 0)$
were defined in~\eqref{e:V-perp-def}. It follows from the construction in Lemma~\ref{l:straighten-out} that we can make each derivative of $\varkappa_k$
bounded uniformly in~$k$.

The goal of this section is to show that the images
under the symplectomorphism~$\varkappa_k$
of the supports of the symbols $a^\pm_{\mathbf w_\pm}\psi_k$
featured in~\eqref{e:key-estimate-4} 
project to porous sets in $y_1$ and $\eta_1$ variables.
As in~\eqref{e:our-rho} we put $\rho:={2\over 3}(1-\varepsilon_0)$.
\begin{lemm}
\label{l:porosity}
There exist sets $\Omega_\pm\subset \mathbb R$ such that
\begin{align}
  \label{e:porosity-cont-1}
\varkappa_k(\supp(a^+_{\mathbf w_+}\psi_k))\ &\subset\ \{(y,\eta)\mid y_1\in\Omega_+\},\\
  \label{e:porosity-cont-2}
\varkappa_k(\supp(a^-_{\mathbf w_-}\psi_k))\ &\subset\ \{(y,\eta)\mid \eta_1\in\Omega_-\}
\end{align}
and the sets $\Omega_\pm$ are $\nu$-porous on scales $C_0h^{\rho}$ to~1,
for some constants $\nu>0$ and~$C_0$ which only depend on the manifold $(M,g)$, the (uniform in $k$) bounds on derivatives of the maps $\varkappa_k$, and the sets~$\mathcal V_\ell$ in~\eqref{e:a-ell-supp}, and in particular do not depend
on $h$ or~$k$.
\end{lemm}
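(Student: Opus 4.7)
The plan is to prove porosity for $\Omega_+$, the $y_1$-projection of $\varkappa_k(\supp(a^+_{\mathbf w_+}\psi_k))$; the argument for $\Omega_-$, the $\eta_1$-projection of $\varkappa_k(\supp(a^-_{\mathbf w_-}\psi_k))$, is entirely symmetric and uses forward propagation, the $V^-$-density of $S^*M\setminus\mathcal V_\ell$, and~\eqref{e:rect-prop-1} in place of~\eqref{e:rect-prop-2}. Since $\supp\psi_k\subset B(q_k,2h^{\rho/2})$ and the $\varkappa_k$ have uniformly bounded derivatives, $\Omega_+\subset[-C_1h^{\rho/2},C_1h^{\rho/2}]$ for some $C_1$ independent of $k,h$, which yields trivial $\tfrac12$-porosity on scales $[4C_1h^{\rho/2},1]$. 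The remaining range $\alpha\in[C_0h^{\rho},4C_1h^{\rho/2}]$ I treat dynamically.

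First, using that $S^*M\setminus\mathcal V_\ell$ is open and $V^+$-dense for $\ell=1,2$, I combine Lemma~\ref{l:dense-basic}(2) with compactness of $S^*M$ and continuity of $q\mapsto d(q,\mathcal V_\ell)$ to extract constants $T_0,\nu_1,\delta_1>0$ such that for every $q\in S^*M$ and $\ell\in\{1,2\}$ the $V^+$-segment $\{e^{sV^+}q:s\in[0,T_0]\}$ contains a sub-segment of length $\nu_1 T_0$ whose $\delta_1$-neighborhood in $S^*M$ is disjoint from $\mathcal V_\ell$. Given $\alpha$ and $I\subset\mathbb R$ of length $\alpha$, I choose $j\in\{1,\dots,N_1\}$ with $\alpha e^{2j}\in[T_0,e^2T_0]$; the bound $j\leq N_1$ uses the matching of $\rho=\tfrac23(1-\varepsilon_0)$ in~\eqref{e:our-rho} with $N_1\approx\tfrac{1-\varepsilon_0}{3}\log(1/h)$ in~\eqref{e:propagation-times}. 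Since every $q\in\supp a^+_{\mathbf w_+}$ satisfies $\varphi^{-j}(q)\in\supp a_{w^+_j}\subset\mathcal V_{w^+_j}$, producing a sub-interval $J\subset I$ with $J\cap\Omega_+=\emptyset$ reduces to placing $\varphi^{-j}\bigl(\varkappa_k^{-1}(\{y_1\in J\})\cap B(q_k,2h^{\rho/2})\bigr)$ inside $S^*M\setminus\mathcal V_{w^+_j}$.

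I would control this image via Lemma~\ref{l:rectangle-propagate}: for $J\subset I$ of length $\nu_1\alpha$ centered at some $y_1^0$, the slab lies inside the slow stable rectangle $\mathcal R^+_{q_k,y_1^0,\alpha_0}$ with $\alpha_0:=\max(h^{\rho/2},\sqrt{|J|})$, since $|J|\leq\alpha_0^2$ by construction. Then~\eqref{e:rect-prop-2} yields $\diam\varphi^{-j}(\mathcal R^+_{q_k,y_1^0,\alpha_0})\leq C\alpha_0 e^j\leq C\sqrt{T_0/C_0}$, which can be made below $\delta_1/2$ by choosing $C_0$ large enough. A direct linearization using~\eqref{e:stun-exp} and Lemma~\ref{l:straighten-out} further shows that this image is stretched along $V^+(\varphi^{-j}(q_k))$ by the factor $e^{2j}$, covering a $V^+$-segment of length $\sim\nu_1 T_0$ to within the $\delta_1/2$-tolerance. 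Varying $y_1^0$ within $I$ translates the image along $V^+$ over a range of length $\sim\alpha e^{2j}\sim T_0$, so I can align the image with a $\delta_1$-good $V^+$-sub-segment of length $\nu_1 T_0$ provided by the fixed-scale porosity; this places the image in $S^*M\setminus\mathcal V_{w^+_j}$, at the cost of replacing $\nu_1$ by a slightly smaller constant.

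The main obstacle is the image-control step. A naive linearization of $\varphi^{-j}\circ\varkappa_k^{-1}$ around $q_k$ introduces errors of order $h^{\rho/2}e^{2j}\sim h^{-\rho/2}$ in the $V^+$ direction, which is too large to keep the image inside the $\delta_1$-neighborhood of a $V^+$-segment. Lemma~\ref{l:rectangle-propagate} sidesteps this by exploiting the asymmetric scaling $\alpha_0^2$ versus $\alpha_0$ in the slow-rectangle definition~\eqref{e:rect-def}, forced by the failure of Frobenius-integrability of $V^+_\perp$, and producing the sharp diameter bound $\alpha_0 e^j$ rather than the naive $\alpha_0 e^{2j}$.
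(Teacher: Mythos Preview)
Your strategy uses the same ingredients as the paper (the $V^\pm$-density of $S^*M\setminus\mathcal V_\ell$ together with Lemma~\ref{l:rectangle-propagate}), but the paper organizes them differently so as to bypass the ``image-control'' step you flag as the main obstacle. Rather than taking $\Omega_-$ to be the full $\eta_1$-projection and proving porosity by rectangle propagation, the paper defines $\Omega_-$ through the single $V^-$-trajectory at $q_k$: set
\[
\widetilde\Omega_-:=\Big\{s\in[-C_1^{-1},C_1^{-1}]:e^{sV^-}(q_k)\in\bigcap_{j=0}^{N_1-C_2}\varphi^{-j}(\mathcal V^\sharp_{w_j^-})\Big\}
\]
(with slightly enlarged sets $\mathcal V^\sharp_\ell\supset\mathcal V_\ell$ and the propagation shortened by a constant $C_2$), and let $\Omega_-$ be its image under $s\mapsto\eta_1(\varkappa_k(e^{sV^-}(q_k)))$. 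Porosity of $\widetilde\Omega_-$ is then immediate from the \emph{exact} identity $\varphi^j\circ e^{sV^-}=e^{se^{2j}V^-}\circ\varphi^j$: an $s$-interval $I$ propagates to a $V^-$-segment of length $e^{2j}|I|\geq T$, density yields a $\delta$-long gap, and this pulls back to an $e^{-2j}\delta$-gap in $I$. No linearization errors appear. Lemma~\ref{l:rectangle-propagate} is invoked only to prove the inclusion~\eqref{e:porosity-cont-2}, by comparing an arbitrary $\tilde q\in\supp(a^-_{\mathbf w_-}\psi_k)$ with the trajectory point sharing its $\eta_1$-coordinate; the cutoff $C_2$ absorbs the constants there. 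This separation of the two roles of Lemma~\ref{l:rectangle-propagate} keeps each half short.

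Your direct route can be made to work, but there is a computational slip in the diameter estimate. When $\alpha_0=\sqrt{|J|}=\sqrt{\nu_1\alpha}$ (the dominant case once $\alpha\gg h^\rho/\nu_1$, in particular near the top of your range $\alpha\sim h^{\rho/2}$) and $e^j\sim\sqrt{T_0/\alpha}$, you get $C\alpha_0 e^j\sim C\sqrt{\nu_1 T_0}$, which is independent of $C_0$; enlarging $C_0$ does not push this below $\delta_1/2$. The fix is to take the hole length $|J|=\nu\alpha$ with $\nu\lesssim\delta_1^2/T_0$ rather than $\nu=\nu_1$, which is harmless since the output porosity constant may be as small as we like. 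Once the diameter is genuinely $\leq\delta_1/2$, the clause about the image ``covering a $V^+$-segment of length $\sim\nu_1 T_0$'' becomes both unnecessary and inconsistent with that bound; what your argument actually uses is only that $\varphi^{-j}(\mathcal R^+)$ lies in a $\delta_1/2$-ball around one point of the $V^+$-orbit of $\varphi^{-j}(q_k)$, obtained by noting that the trajectory point $e^{sV^+}(q_k)$ with $y_1(\varkappa_k(e^{sV^+}(q_k)))=y_1^0$ lies in $\mathcal R^+$ and propagates exactly onto that orbit.
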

We will only show~\eqref{e:porosity-cont-2}, with~\eqref{e:porosity-cont-1}
proved in the same way, reversing the direction of propagation.
From the definition~\eqref{e:a-pm-s-def} of $a^-_{\mathbf w_-}$ and the support
property~\eqref{e:a-ell-supp} of the symbols $a_1,a_2$ of the original partition,
we see that
\begin{equation}
\label{e:a-pm-supp}
\supp a^-_{\mathbf w_-}\ \subset\ \bigg(\bigcap_{j=0}^{N_1-1}\varphi^{-j}(\mathcal V_{w_j^-})\bigg)\cap \big\{\tfrac 14\leq |\xi|_g\leq 4\big\}
\end{equation}
where $\mathcal V_1,\mathcal V_2\subset T^*M\setminus 0$ are the closed conic sets featured in~\eqref{e:a-ell-supp}.
Recall that the complements $S^*M\setminus \mathcal V_1,S^*M\setminus\mathcal V_2$
are both $V^+$\!-dense and $V^-$\!-dense in the sense of~\S\ref{s:orbits}. Therefore
by Lemma~\ref{l:dense-basic}(1) there exist closed conic sets
$$
K_1,K_2\subset T^*M\setminus 0,\quad
\mathcal V_\ell\cap K_\ell=\emptyset
$$
such that $S^*M\cap K_\ell$ are both $V^+$\!-dense and $V^-$\!-dense.
Fix open conic sets
\begin{equation}
  \label{e:V-sharp}
\mathcal V_1^\sharp,\mathcal V_2^\sharp\subset T^*M\setminus 0,\quad\mathcal V_\ell\subset \mathcal V_\ell^\sharp,\quad
\overline{\mathcal V_\ell^\sharp}\cap K_\ell=\emptyset.
\end{equation}
To avoid wasting indices, we next choose a large constant $C_1$ depending only
on the manifold $(M,g)$, the (uniform in $k$) bounds on derivatives of the maps $\varkappa_k$, and the sets $\mathcal V_\ell,\mathcal V_\ell^\sharp$ such that:
\begin{enumerate}
\item we have
\begin{equation}
\label{e:psi-k-supp}
\supp \psi_k\ \subset\ \varkappa_k^{-1}\big(\{(y,\eta)\colon |y|+|\eta|\leq C_1h^{\rho\over 2}\}\big).
\end{equation}
This is possible by~\eqref{e:psi-k};
\item 
we have the upper bound on the derivatives of the trajectory $s\mapsto \varkappa_k(e^{sV^-}(q_k))$
\begin{equation}
  \label{e:V-regular}
\begin{aligned}
|\partial_s y(\varkappa_k(e^{sV^-}(q_k)))|+
|\partial_s \eta(\varkappa_k(e^{sV^-}(q_k)))|&\leq C_1\quad\text{for all }
s\in [-C_1^{-1},C_1^{-1}],\\
  |\partial_s^2 \eta_1(\varkappa_k(e^{sV^-}(q_k)))|&\leq C_1\quad\text{for all }
s\in [-C_1^{-1},C_1^{-1}];
\end{aligned}
\end{equation}
\item
we have the lower bound on the derivative of the $\eta_1$-component of the above trajectory:
\begin{equation}
  \label{e:V-lower}
|\partial_s \eta_1(\varkappa_k(e^{sV^-}(q_k)))|\geq C_1^{-1}\quad\text{for all }
s\in [-C_1^{-1},C_1^{-1}].
\end{equation}
This is possible since $V^-(\eta_1\circ\varkappa_k)(q_k)\neq 0$
by~\eqref{e:straighten-out-3} (as $V^-$ is transverse to $V^-_\perp$
by~\eqref{e:V-perp-def});
\item the distance between the set $\mathcal V_\ell\cap \{\tfrac1{4}\leq |\xi|_g\leq 4\}$
and the complement of the set $\mathcal V_\ell^\sharp$ is at least $C_1^{-1}$: 
\begin{equation}
\label{e:V-sharp-distance}
q\in \mathcal V_\ell\cap \{\tfrac 14\leq |\xi|_g\leq 4\},\quad
d(q,q')\leq C_1^{-1}\quad\Longrightarrow\quad
q'\in \mathcal V_\ell^\sharp.
\end{equation}
\end{enumerate}
We now define the set $\Omega_-$, which corresponds to the intersection of the $V^-$-trajectory 
$\{e^{sV^-}(q_k)\mid s\in\mathbb R\}$ and the set on the right-hand side of~\eqref{e:a-pm-supp}, with $\mathcal V_\ell$ replaced by the larger sets $\mathcal V_\ell^\sharp$
and the time of propagation reduced by an $h$-independent constant $C_2$
to be chosen later in~\eqref{e:C-2-choice}. We first define
the set $\widetilde\Omega_-$ which uses the parametrization of the trajectory
by~$s$:
\begin{equation}
  \label{e:Omega-tilde-def}
\widetilde\Omega_-:=\bigg\{s\in[-C_1^{-1},C_1^{-1}]\colon e^{sV^-}(q_k)\in\bigcap_{j=0}^{N_1-C_2}\varphi^{-j}(\mathcal V^\sharp_{w_j^-})\bigg\}.
\end{equation}
To obtain $\Omega_-$ from here, we instead parametrize by the variable $\eta_1\circ\varkappa_k$
and intersect with the set featured in~\eqref{e:psi-k-supp}:
\begin{equation}
  \label{e:Omega-def}
\Omega_-:=\eta_1(\varkappa_k(\{e^{sV^-}(q_k)\mid s\in\widetilde\Omega_-\}))\cap [-C_1h^{\rho\over 2},C_1h^{\rho\over 2}].
\end{equation}
Now Lemma~\ref{l:porosity} follows from the two lemmas below:
\begin{lemm}
  \label{l:Omega-approx}
For $C_2$ large enough depending only on the manifold $(M,g)$, the derivative bounds
on the maps $\varkappa_k$, and the constant $C_1$, the inclusion~\eqref{e:porosity-cont-2} holds.  
\end{lemm}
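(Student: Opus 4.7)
The plan is to show that for every $q\in\supp(a^-_{\mathbf w_-}\psi_k)$, we can produce a parameter $s$ in the set $\widetilde\Omega_-$ from~\eqref{e:Omega-tilde-def} whose associated $\eta_1$-coordinate on the $V^-$-trajectory equals $\eta_1(\varkappa_k(q))$, and then read off from~\eqref{e:Omega-def} that $\eta_1(\varkappa_k(q))\in\Omega_-$. The only real work is controlling how far $\varphi^j(e^{sV^-}(q_k))$ drifts from $\varphi^j(q)$ for $j$ up to $N_1-C_2$, which is exactly what the propagation bound for slow unstable rectangles (Lemma~\ref{l:rectangle-propagate}) was designed to handle.

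Concretely, I fix $q\in\supp(a^-_{\mathbf w_-}\psi_k)$ and set $\eta_1^0:=\eta_1(\varkappa_k(q))$. By~\eqref{e:psi-k-supp} we have $|\varkappa_k(q)|\leq C_1 h^{\rho/2}$, so in particular $|\eta_1^0|\leq C_1 h^{\rho/2}$. The map $\phi(s):=\eta_1(\varkappa_k(e^{sV^-}(q_k)))$ satisfies $\phi(0)=0$, $|\phi'(0)|\geq C_1^{-1}$ by~\eqref{e:V-lower}, and $|\phi''|\leq C_1$ on $[-C_1^{-1},C_1^{-1}]$ by~\eqref{e:V-regular}. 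For $h$ small enough, the inverse function theorem produces a unique small solution $s$ to $\phi(s)=\eta_1^0$ with $|s|\leq 2C_1^2\,h^{\rho/2}$; in particular $s\in[-C_1^{-1},C_1^{-1}]$.

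Next, I observe that with $\alpha:=C'h^{\rho/2}$ for a sufficiently large $C'$ (depending on $C_1$ and the uniform derivative bounds on $\varkappa_k$), both points $q$ and $e^{sV^-}(q_k)$ lie in the rectangle $\mathcal R^-_{q_k,\eta_1^0,\alpha}$ defined in~\eqref{e:rect-def}. For $q$ this is because $|y|+|\eta|\leq C_1 h^{\rho/2}\leq\alpha$ and $\eta_1=\eta_1^0$ exactly; for $e^{sV^-}(q_k)$ it follows from $|s|\leq 2C_1^2\,h^{\rho/2}$ combined with~\eqref{e:V-regular}, which bounds $|\varkappa_k(e^{sV^-}(q_k))|$ by a constant times $|s|$, again together with $\eta_1=\eta_1^0$. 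Applying~\eqref{e:rect-prop-1} of Lemma~\ref{l:rectangle-propagate} then gives
\begin{equation*}
d(\varphi^j(q),\varphi^j(e^{sV^-}(q_k)))\ \leq\ \diam\varphi^j(\mathcal R^-_{q_k,\eta_1^0,\alpha})\ \leq\ C\alpha e^j
\end{equation*}
for all $j\geq 0$. Using $N_1=2N_0\leq \tfrac{1-\varepsilon_0}{3}\log(1/h)+O(1)$ and the crucial identity $\tfrac{\rho}{2}=\tfrac{1-\varepsilon_0}{3}$ coming from~\eqref{e:our-rho} and~\eqref{e:propagation-times}, we get for every $0\leq j\leq N_1-C_2$
\begin{equation*}
C\alpha e^j\ \leq\ C''h^{\rho/2}\cdot h^{-\rho/2}\cdot e^{-C_2}\ =\ C''e^{-C_2}.
\end{equation*}

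Finally, I fix $C_2$ large enough that $C''e^{-C_2}<C_1^{-1}$. Since $q\in\supp a^-_{\mathbf w_-}$, by~\eqref{e:a-pm-supp} we have $\varphi^j(q)\in\mathcal V_{w_j^-}\cap\{\tfrac14\leq|\xi|_g\leq 4\}$ for each $0\leq j\leq N_1-1$, so condition~\eqref{e:V-sharp-distance} forces $\varphi^j(e^{sV^-}(q_k))\in\mathcal V^\sharp_{w_j^-}$ for $0\leq j\leq N_1-C_2$. Combined with $|s|\leq C_1^{-1}$, this places $s$ in $\widetilde\Omega_-$, and therefore
\begin{equation*}
\eta_1^0=\phi(s)=\eta_1(\varkappa_k(e^{sV^-}(q_k)))\in\eta_1\bigl(\varkappa_k(\{e^{sV^-}(q_k):s\in\widetilde\Omega_-\})\bigr).
\end{equation*}
Since also $|\eta_1^0|\leq C_1 h^{\rho/2}$, the definition~\eqref{e:Omega-def} gives $\eta_1^0\in\Omega_-$, which is exactly the inclusion~\eqref{e:porosity-cont-2}.

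The only genuinely delicate point is step where the diameter bound $C\alpha e^j$ is applied: everything hinges on the exact matching $\tfrac{\rho}{2}=\tfrac{1-\varepsilon_0}{3}$ so that $\alpha e^{N_1}$ is of order unity, after which the $e^{-C_2}$ slack from shortening the propagation time by the fixed amount $C_2$ gives the closeness we need. The rest is bookkeeping with the constant $C_1$ fixed before the lemma.
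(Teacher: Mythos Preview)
Your proof is correct and follows essentially the same approach as the paper: you find the parameter $s$ on the $V^-$-trajectory matching the $\eta_1$-coordinate, place both $q$ and $e^{sV^-}(q_k)$ in the same slow-unstable rectangle of size $\sim h^{\rho/2}$, invoke Lemma~\ref{l:rectangle-propagate}, and use the exact numerology $\rho/2=(1-\varepsilon_0)/3$ together with the $C_2$-shortening to force the drift below $C_1^{-1}$ so that~\eqref{e:V-sharp-distance} applies. The paper's version differs only cosmetically (it cites~\eqref{e:V-lower} directly to produce $s$ rather than the inverse function theorem, and writes the rectangle with $\eta_1=\tilde\eta_1$ exactly rather than $|\eta_1-\eta_1^0|\leq\alpha^2$), but the argument is the same.
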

\begin{figure}
\includegraphics{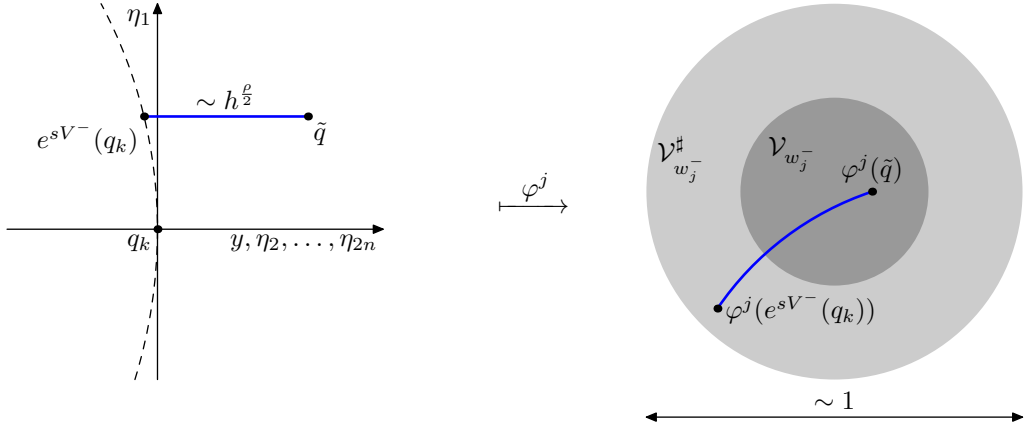}
\caption{An illustration of the proof of Lemma~\ref{l:Omega-approx}.
On the left is an $h^{\rho\over 2}$-sized neighborhood of the point $q_k$,
viewed in the coordinates $(y,\eta)$ given by the symplectomorphism~$\varkappa_k$.
The dashed curve is the flow line of $V^-$ passing through~$q_k$.
The blue line lies in the disk $\mathcal R^-$, which has diameter $\sim h^{\rho\over 2}$. On the right is the image of the left side by $\varphi^j$, with
the blue line contained in the image $\varphi^j(\mathcal R^-)$.
Even though $j$ can be as large as ${\rho\over 2}\log(1/h)$ and
the flow $\varphi^j$ can expand by $e^{2j}$, the diameter of $\varphi^j(\mathcal R^-)$
is still smaller than~1. This is proved in Lemma~\ref{l:rectangle-propagate}
and uses that the `slow unstable' space $V_\perp^-(q_k)$
is horizontal on the left side of the picture.
The shaded sets are $\mathcal V_{w_j^-}$ and $\mathcal V_{w_j^-}^\sharp$.}
\label{f:porosity-1}
\end{figure}
\begin{proof}
1. Take arbitrary $\tilde q\in \supp(a^-_{\mathbf w_-}\psi_k)$. We need to show that
$$
\tilde\eta_1\in\Omega_-\quad\text{where }\tilde\eta_1:=\eta_1(\varkappa_k(\tilde q)).
$$
Note that $|\tilde\eta_1|\leq C_1h^{\rho\over 2}$ by~\eqref{e:psi-k-supp}, so in particular $|\tilde\eta_1|\leq C_1^{-2}$
for $h$ small enough depending on~$C_1$.
Then it follows from~\eqref{e:V-lower} (and the fact that $\varkappa_k(q_k)=0$)
that there exists $s\in \mathbb R$ such that $|s|\leq C_1^2h^{\rho\over 2}\leq C_1^{-1}$ and
$$
\eta_1(\varkappa_k(e^{sV^-}(q_k)))=\tilde\eta_1.
$$
It suffices to show that $s\in \widetilde\Omega_-$. See Figure~\ref{f:porosity-1}.

\noindent 2. By~\eqref{e:psi-k-supp} and~\eqref{e:V-regular}, both $\tilde q$ and $e^{sV^-}(q_k)$ lie in the codimension 1 disk
$$
\mathcal R^-:=\varkappa_k^{-1}\big(\{(y,\eta)\colon |y|+|\eta|\leq C_1^3h^{\rho\over 2},\
\eta_1=\tilde\eta_1\}\big).
$$
By Lemma~\ref{l:rectangle-propagate} with $\alpha:=C_1^3h^{\rho\over 2}$ there exists a constant $C_3$ depending only
on the manifold~$(M,g)$ and the derivative bounds on the maps $\varkappa_k$ such that for all $j\geq 0$
$$
d\big(\varphi^j(\tilde q),\varphi^j(e^{sV^-}(q_k))\big)\leq C_3C_1^3 h^{\rho\over 2}e^j.
$$
We now choose $C_2$ large enough so that
\begin{equation}
  \label{e:C-2-choice}
e^{C_2}\geq 10C_3C_1^4.
\end{equation}
Take arbitrary $j\in \{0,1,\dots,N_1-C_2\}$.
Recalling the definition~\eqref{e:propagation-times} of~$N_1$ and the fact that $\rho={2\over 3}(1-\varepsilon_0)$, we see that 
\begin{equation}
  \label{e:Omega-approx-int-1}
d\big(\varphi^j(\tilde q),\varphi^j(e^{sV^-}(q_k))\big)\leq 10C_3C_1^3e^{-C_2}\leq C_1^{-1}.
\end{equation}
We have $\varphi^j(\tilde q)\in \mathcal V_{w^-_j}\cap \{\tfrac 14\leq |\xi|_g\leq 4\}$
by~\eqref{e:a-pm-supp}. Then by~\eqref{e:Omega-approx-int-1}
and~\eqref{e:V-sharp-distance}
we get $\varphi^j(e^{sV^-}(q_k))\in\mathcal V_{w^-_j}^\sharp$.
It follows that $s\in \widetilde\Omega_-$, finishing the proof.
\end{proof}
%
\begin{lemm}
  \label{l:Omega-porous}
The set $\Omega_-$ defined in~\eqref{e:Omega-def} is $\nu$-porous on scales $C_0h^\rho$ to~1,
for some constants $\nu>0$ and $C_0$ which only depend on the sets $\mathcal V_\ell^\sharp,K_\ell$ and the constants $C_1,C_2$.
\end{lemm}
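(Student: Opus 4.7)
The plan is to establish porosity of $\widetilde\Omega_-$ in the $s$-variable and then transfer it to porosity of $\Omega_-$ in the $\eta_1$-variable via the map
$$
\Phi\colon s\mapsto\eta_1(\varkappa_k(e^{sV^-}(q_k))).
$$
By \eqref{e:V-regular}--\eqref{e:V-lower} together with $\varkappa_k(q_k)=0$, the map $\Phi$ is a bi-Lipschitz diffeomorphism on $[-C_1^{-1},C_1^{-1}]$ with constants depending only on $C_1$, so this transfer rescales the porosity constants by factors of $C_1$; intervals too long to fit inside $[-C_1^{-1},C_1^{-1}]$ contribute only trivially to porosity of $\Omega_-$ because $\Omega_-$ is contained in the shrinking window $[-C_1h^{\rho/2},C_1h^{\rho/2}]$.

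To obtain porosity of $\widetilde\Omega_-$, I would first produce an ``open gap set'' from the $V^-$-density of $S^*M\cap K_\ell$. By \eqref{e:V-sharp}, fix open conic sets $\mathcal W_\ell\subset T^*M\setminus 0$ with $K_\ell\subset\mathcal W_\ell$ and $\overline{\mathcal W_\ell}\cap\overline{\mathcal V_\ell^\sharp}=\emptyset$. Then $\mathcal W_\ell\cap S^*M$ is open and $V^-$-dense in $S^*M$, so Lemma~\ref{l:dense-basic}(2), combined with a standard compactness argument using openness of $\mathcal W_\ell$, yields constants $T,\delta>0$ depending only on $K_\ell$ and $\mathcal V_\ell^\sharp$ such that every $V^-$-segment of length $T$ in $S^*M$ contains a subsegment of length $\delta$ lying entirely in $\mathcal W_\ell$. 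Because $V^-$ is positively homogeneous of degree zero, the same holds for $V^-$-segments in any fixed level set $\{|\xi|_g=c\}$ with the same $T,\delta$.

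The main argument is then a change of scale. Let $\tilde I\subset[-C_1^{-1},C_1^{-1}]$ have length $L\in[C_0h^\rho,1]$, where I set $C_0:=e^2Te^{2C_2}$. Choose the smallest integer $j\geq 0$ with $e^{2j}L\geq T$; then $e^{2j}L\in[T,e^2T]$, and the upper bound $j\leq N_1-C_2$ holds because $e^{2N_1}\sim h^{-\rho}$ and $L\geq C_0h^\rho$. Using $[X,V^-]=-2V^-$ from \eqref{e:comm-rel} together with the expansion property \eqref{e:stun-exp}, one gets the commutation
$$
\varphi^j\circ e^{sV^-}=e^{se^{2j}V^-}\circ\varphi^j,
$$
so $\{\varphi^j(e^{sV^-}(q_k))\colon s\in\tilde I\}$ is a $V^-$-segment of length $e^{2j}L\geq T$ based at $\varphi^j(q_k)$. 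By the previous paragraph, it contains a subsegment of length $\delta$ lying in $\mathcal W_{w_j^-}$, which pulls back under the scaling $s\mapsto e^{2j}s$ to a subinterval $\tilde J\subset\tilde I$ of length at least $\delta e^{-2j}\geq(\delta/(e^2T))L=:\nu L$. For every $s\in\tilde J$,
$$
\varphi^j(e^{sV^-}(q_k))\in\mathcal W_{w_j^-}\subset(\overline{\mathcal V_{w_j^-}^\sharp})^c,
$$
so $e^{sV^-}(q_k)\notin\varphi^{-j}(\mathcal V_{w_j^-}^\sharp)$, which by \eqref{e:Omega-tilde-def} gives $s\notin\widetilde\Omega_-$.

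The main technical obstacle is the compactness argument promoting Lemma~\ref{l:dense-basic}(2) from ``every length-$T$ segment intersects $\mathcal W_\ell$'' to ``every length-$T$ segment contains a subsegment of length $\delta$ in $\mathcal W_\ell$''; this is routine, using openness of $\mathcal W_\ell$ and compactness of $S^*M$, but requires some care. A secondary point is the bookkeeping of constants to ensure that the chosen $j$ lies in the admissible range $[0,N_1-C_2]$ uniformly over $L\in[C_0h^\rho,1]$, which is what forces the particular choice $C_0=e^2Te^{2C_2}$.
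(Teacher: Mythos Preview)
Your proposal is correct and follows essentially the same approach as the paper's proof: first establish porosity of $\widetilde\Omega_-$ by a change-of-scale argument using the commutation $\varphi^j\circ e^{sV^-}=e^{e^{2j}sV^-}\circ\varphi^j$ and the $V^-$-density of the sets $K_\ell$, then transfer to $\Omega_-$ via the bi-Lipschitz map $s\mapsto\eta_1(\varkappa_k(e^{sV^-}(q_k)))$, handling large scales trivially via the containment $\Omega_-\subset[-C_1h^{\rho/2},C_1h^{\rho/2}]$. The only cosmetic difference is that the paper phrases the gap as ``subsegment not intersecting $\mathcal V_\ell^\sharp$'' (using the positive distance between the disjoint closed sets $\overline{\mathcal U_\ell}$ and $\overline{\mathcal V_\ell^\sharp}$) rather than your ``subsegment lying in $\mathcal W_\ell$'', which makes the compactness step slightly more direct; your constants $C_0=e^2Te^{2C_2}$ and $\nu=\delta/(e^2T)$ match the paper's $C_0'=e^{2(C_2+1)}T$ and $\nu'=e^{-2}T^{-1}\delta$ exactly.
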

\begin{proof}
1. We first make some preparatory arguments. By~\eqref{e:V-sharp}, we may fix open conic sets
for $\ell\in\{1,2\}$
$$
\mathcal U_\ell\subset T^*M\setminus 0,\quad
\overline{\mathcal U_\ell}\cap\overline{\mathcal V_\ell^\sharp}=\emptyset,\quad
K_\ell\subset \mathcal U_\ell.
$$
We use the notation of~\S\ref{s:orbits}. Since $S^*M\cap K_\ell$
is $V^-$\!-dense, $S^*M\cap \mathcal U_\ell$ is $V^-$\!-dense as well.
By Lemma~\ref{l:dense-basic}(2), there exists $T\geq 1$ such that
each $V^-$\!-segment of length $T$ in~$S^*M$ intersects $\mathcal U_\ell$.
Since $\overline{\mathcal U_\ell}\cap\overline{\mathcal V_\ell^\sharp}=\emptyset$,
there exists $\delta>0$ such that each $V^-$\!-segment of length~$T$
in~$S^*M$ has a subsegment of length $\delta$ which does not intersect
$\mathcal V_\ell^\sharp$.
 Since the vector field $V^-$ is extended homogeneously from $S^*M$
to $T^*M\setminus 0$ and $\mathcal V_\ell^\sharp$ is a conic set,
we see that the previous statement extends to all $V^-$\!-segments of length~$T$
in~$T^*M\setminus 0$.

We define constants 
\begin{equation}
  \label{e:nu-chosen-ish}
\nu':=e^{-2}T^{-1}\delta,\quad
C_0':=e^{2(C_2+1)}T.
\end{equation}

\begin{figure}
\includegraphics{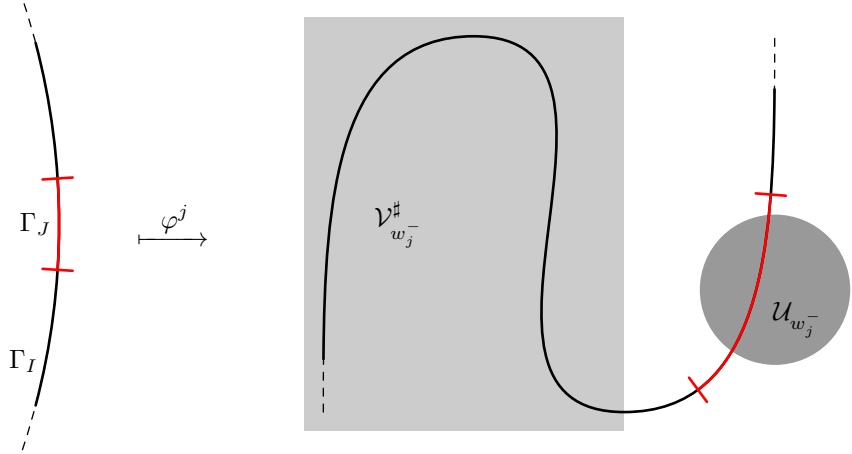}
\caption{An illustration of the proof of porosity of $\widetilde\Omega_-$
in Lemma~\ref{l:Omega-porous}. On the left, the dashed curve is the flow line of $V^-$
passing through $q_k$. The solid black curve is the segment $\Gamma_I$
and the red curve inside of it is the segment $\Gamma_J$. This segment
is obtained as follows: we propagate $\Gamma_I$ by $\varphi^j$ to yield
the picture on the right, where $j$ is chosen in~\eqref{e:j-choice}.
Then $\varphi^j(\Gamma_I)$ is a long enough $V^-$\!-segment
that it intersects the set $\mathcal U_{w_j^-}$
and thus contains a length $\delta$ subsegment which does
not intersect $\mathcal V_{w_j^-}^\sharp$. Now $\Gamma_J$
is the image of the latter subsegment by $\varphi^{-j}$.}
\label{f:porosity-2}
\end{figure}

\noindent 2. We now show that the set $\widetilde\Omega_-$ defined in~\eqref{e:Omega-tilde-def}
is $\nu'$-porous on scales $C'_0h^\rho$ to~1. We use the following corollary of~\eqref{e:stun-exp}:
for each $t\in\mathbb R$, the image under $\varphi^t$ of
a $V^-$-segment of length $\alpha$ is a $V^-$\!-segment of length $e^{2t}\alpha$.

Let $I\subset \mathbb R$ be an interval of length $|I|\in [C'_0h^\rho,1]$.
Choose $j\in \mathbb Z$ such that
\begin{equation}
  \label{e:j-choice}
T\leq e^{2j}|I|\leq e^2T.
\end{equation}
Since $|I|\leq 1\leq T$, we have $j\geq 0$. Moreover, we have
$C'_0h^\rho\leq |I|\leq e^{2-2j}T$. Recalling that $\rho=\tfrac 23(1-\varepsilon_0)$
and the definition~\eqref{e:propagation-times} of~$N_1$, we see that
$$
j\leq \tfrac12\rho\log(1/h)-C_2\leq N_1-C_2.
$$
Define $\Gamma_I:=\{e^{sV^-}(q_k)\mid s\in I\}$ which is a $V^-$\!-segment
in $T^*M\setminus 0$ of length $|I|$. Then $\varphi^j(\Gamma_I)$
is a $V^-$\!-segment of length $e^{2j}|I|\geq T$.
From Step~1 of this proof
we know that there exists a subsegment of $\varphi^j(\Gamma_I)$ of length $\delta$
which does not intersect $\mathcal V_{w_j^-}^\sharp$.
We can write this subsegment
as $\varphi^j(\Gamma_J)$ where $\Gamma_J=\{e^{sV^-}(q_k)\mid s\in J\}$
and $J\subset I$ is a subinterval of length
$$
|J|=e^{-2j}\delta\geq \nu'|I|.
$$
For each $s\in J$, we have $\varphi^j(e^{sV^-}(q_k))\notin \mathcal V_{w_j^-}^\sharp$.
Recalling~\eqref{e:Omega-tilde-def}, this shows that $J\cap\widetilde\Omega_-=\emptyset$.
This finishes the proof of porosity of~$\widetilde\Omega_-$. See Figure~\ref{f:porosity-2}.

\noindent 3. We finally show the porosity of the set $\Omega_-$.
Let $\psi(s)=\eta_1(\varkappa_k(e^{sV^-}(q_k)))$ for $|s|\leq C_1^{-1}$.
By~\eqref{e:V-regular} and~\eqref{e:V-lower}, we can extend $\psi$
to a diffeomorphism of~$\mathbb R$ (still denoted $\psi$) which satisfies the bounds
$$
\max(\sup|\psi'|,\sup|\psi'|^{-1},\sup|\psi''|)\leq 2C_1.
$$
By~\eqref{e:Omega-def} we have $\Omega_-\subset\psi(\widetilde\Omega_-)$.
Now the porosity property of $\widetilde\Omega_-$ established in Step~2 of this proof together with~\cite[Lemma~2.12]{varfup} show that $\Omega_-$ is $\nu$-porous on scales $C_0h^{\rho}$ to~$\alpha_1$, with
\begin{equation}
  \label{e:nu-chosen}
\nu:=\tfrac12 \nu',\quad
C_0:=2C_1C'_0,\quad
\alpha_1:=\tfrac12 C_1^{-3}.
\end{equation}
Since $\Omega_-\subset [-C_1h^{\rho\over 2},C_1h^{\rho\over 2}]$,
we see from the definition of porosity that $\Omega_-$ is also $\nu$-porous (in fact, $\tfrac13$-porous) on scales $\alpha_1$ to~1,
if $h$ is small enough depending on~$C_1$.
\end{proof}

\subsection{Fourier integral operators}

The proof of~\eqref{e:key-estimate-4} uses conjugation by Fourier integral
operators quantizing the symplectomorphism $\varkappa_k$.
This makes it possible to replace the operators
$\Op_h^{L_s}(a^-_{\mathbf w_-}\psi_k)$ and $\Op_h^{L_u}(a^+_{\mathbf w_+}\psi_k)$ 
by localization operators in $\eta_1$ and~$y_1$ to the porous sets $\Omega_\pm$
appearing in Lemma~\ref{l:porosity} and then apply the Fractal Uncertainty Principle
of Proposition~\ref{l:fup-specific}.
In this section we introduce parts
of the theory of semiclassical Fourier integral operators that will be needed in~\S\ref{s:end-proof}
below.

\subsubsection{Review of general theory}
\label{s:fios}

We first briefly review the general theory of Fourier integral operators,
following~\cite[\S2.2]{hgap}, \cite[\S A.3]{meassupp}, and~\cite[\S2.3]{varfup}.
We refer the reader to~\cite{Alexandrova-FIO},
\cite[Chapter~5]{Guillemin-Sternberg-old}, and \cite[Chapter~8]{Guillemin-Sternberg-new} for a more detailed treatment
and to~\cite[Chapter~25]{Hormander4} and~\cite[Chapters~10--11]{Grigis-Sjostrand} for the related nonsemiclassical case.

Let $M$ be a $d$-dimensional manifold and $\Lambda\subset T^*M$ be a Lagrangian submanifold,
that is $\dim\Lambda=d$ and the symplectic form $\omega$ vanishes when restricted
to the tangent spaces of $\Lambda$. Denote by $I^{\comp}_h(\Lambda)$ the space
of compactly microlocalized semiclassical Lagrangian distributions associated to~$\Lambda$.
Each element of $I^{\comp}_h(\Lambda)$ is an $h$-dependent family of compactly supported
functions in $\CIc(M)$.

An important special case is when $\Lambda$ projects diffeomorphically onto the $x$ variables,
which (given that $\Lambda$ is Lagrangian, and assuming that $\Lambda$ is simply connected) means it is the graph of a gradient:
\begin{equation}
  \label{e:Lambda-graph}
\Lambda=\{(x,\xi)\mid x\in U,\ \xi=\partial_x\Phi(x)\}
\end{equation}
where $U\subset M$ is an open set and $\Phi\in C^\infty(U;\mathbb R)$.
Then elements of $I^{\comp}_h(\Lambda)$ have the following form:
\begin{equation}
  \label{e:Lambda-graph-lag}
u(x;h)=e^{{i\over h}\Phi(x)}a(x;h)+\mathcal O(h^\infty)_{\CIc(M)}.
\end{equation}
Here the amplitude $a\in \CIc(U)$ is supported in an $h$-independent compact subset of~$U$
and has $x$-derivatives of all orders bounded uniformly in~$h$, and the residual class $\mathcal O(h^\infty)_{\CIc(M)}$ consists
of smooth functions supported in an $h$-independent compact subset of $M$
and with derivatives of all orders bounded by $\mathcal O(h^N)$ for each $N$.

In~\cite{hgap,meassupp,varfup} one made the additional assumption that $\Lambda$
is an \emph{exact} Lagrangian submanifold and fixed an antiderivative on $\Lambda$.
For the Lagrangian submanifold~\eqref{e:Lambda-graph} this has the effect of removing
the freedom of adding a constant to $\Phi$. We will be working with
the cases when $U$ is a simply connected set (typically a small ball centered at some point)
so all the Lagrangian submanifolds and symplectomorphisms used will
be exact, and we do not need to fix an antiderivative. 

Next, assume that $M_1,M_2$ are two manifolds of the same dimension~$d$
and $\varkappa:U_2\to U_1$ is a symplectomorphism, where $U_1\subset T^*M_1$,
$U_2\subset T^*M_2$ are open subsets of the cotangent bundles. The flipped graph of $\varkappa$
is the Lagrangian submanifold of the product of the cotangent bundle (or the cotangent bundle of the product)~$T^*M_1\times T^*M_2=T^*(M_1\times M_2)$ defined by
$$
\Graph(\varkappa):=\{(x,\xi,y,-\eta)\mid (y,\eta)\in U_2,\ \varkappa(y,\eta)=(x,\xi)\}.
$$
Denote by $I^{\comp}_h(\varkappa)$ the class of compactly microlocalized semiclassical Fourier integral operators
associated to $\varkappa$. Each element of $I^{\comp}_h(\varkappa)$ is an $h$-dependent
family of compactly supported smoothing operators $B=B(h):\mathcal D'(M_2)\to \CIc(M_1)$
such that the corresponding Schwartz kernels are Lagrangian distributions in $h^{-{d\over 2}}I^{\comp}_h(\Graph(\varkappa))$.

An important special case is when $M_2=\mathbb R^d$ and the graph of $\varkappa$ projects diffeomorphically onto the $(x,\eta)$ variables, which (given that $\varkappa$ is a symplectomorphism
and assuming that its domain is simply connected) means that $\varkappa$ is given by a generating function:
\begin{equation}
  \label{e:kappa-generating}
\varkappa(y,\eta)=(x,\xi)\ \iff\ (x,\eta)\in U,\ \xi=\partial_x S(x,\eta),\ y=\partial_\eta S(x,\eta),
\end{equation}
where $U\subset M_1\times\mathbb R^d$ is an open set and $S\in C^\infty(U;\mathbb R)$.
Then elements of $I^{\comp}_h(\varkappa)$ have the following form,
modulo the class $\mathcal O(h^\infty)_{\Psi^{-\infty}}$ introduced in~\S\ref{s:review-semi}:
\begin{equation}
  \label{e:FIO-generating}
B(h)f(x)=(2\pi h)^{-d}\int_{\mathbb R^{2d}}e^{{i\over h}(S(x,\eta)-\langle y,\eta\rangle)}
b(x,\eta,y;h)f(y)\,dyd\eta
\end{equation}
where the amplitude $b\in \CIc(U\times\mathbb R^d)$ is supported in an $h$-independent compact
set and has all the derivatives bounded uniformly in~$h$. Here $\langle y,\eta\rangle=\sum_{j=1}^d y_j\eta_j$ denotes the Euclidean inner product.

Here are some standard properties of Lagrangian distributions and Fourier integral operators:
\begin{enumerate}
\item every element of $I^{\comp}_h(\varkappa)$ is bounded in $L^2(M_2)\to L^2(M_1)$ norm
uniformly in~$h$;
\item if $B\in I^{\comp}_h(\varkappa)$, then the adjoint operator $B^*$ lies in $I^{\comp}_h(\varkappa^{-1})$;
\item if $\varkappa:T^*M\to T^*M$ is the identity map, then $I^{\comp}_h(\varkappa)$ equals
the pseudodifferential class $\Psi^{\comp}_h(M)$ introduced in~\S\ref{s:review-semi};
\item if $\Lambda\subset T^*M_2$ is a Lagrangian submanifold, $\varkappa:U_2\to U_1$ is a symplectomorphism with $U_j\subset T^*M_j$, and $u\in I^{\comp}_h(\Lambda)$, $B\in I^{\comp}_h(\varkappa)$, then $Bu\in I^{\comp}_h(\varkappa(\Lambda))$, where $\varkappa(\Lambda)\subset T^*M_1$
is a Lagrangian submanifold;
\item if $\varkappa_1:U_2\to U_1,\varkappa_2:U_3\to U_2$ are symplectomorphisms with $U_j\subset T^*M_j$,
and $B_1\in I^{\comp}_h(\varkappa_1)$, $B_2\in I^{\comp}_h(\varkappa_2)$, then the composition
$B_1B_2$ is a Fourier integral operator in $I^{\comp}_h(\varkappa_1\circ\varkappa_2)$.
\end{enumerate}

We finally discuss microlocal conjugation by Fourier integral operators.
Let $\varkappa:U_2\to U_1$ be a symplectomorphism and $K_1\subset U_1$, $K_2\subset U_2$
be two compact sets with $\varkappa(K_2)=K_1$. We say a pair of Fourier integral operators
$B\in I^{\comp}_h(\varkappa)$, $B'\in I^{\comp}_h(\varkappa^{-1})$ \emph{quantizes
$\varkappa$ near $K_1\times K_2$}, if the pseudodifferential operators $BB'\in\Psi^{\comp}_h(M_1)$ and $B'B\in\Psi^{\comp}_h(M_2)$ satisfy
(where $\WFh(\bullet)$ was defined in~\eqref{e:WF-A})
\begin{equation}
  \label{e:FIO-quantize}
\WFh(I-BB')\cap K_1=\emptyset,\quad
\WFh(I-B'B)\cap K_2=\emptyset.
\end{equation}
Such operators always exist locally: if $\varkappa(y_0,\eta_0)=(x_0,\xi_0)$, then
there exist $B,B'$ quantizing $\varkappa$ near $\{(x_0,\xi_0)\}\times \{(y_0,\eta_0)\}$.

\subsubsection{More on the calculus associated to a Lagrangian foliation}

We now revisit the calculus associated to a Lagrangian foliation introduced in~\S\ref{s:calculus-lagrangian}, showing some of its technical properties used later in the proof.
Recall from that section and~\cite[Appendix~A]{meassupp} that if $M$ is a manifold,
$L$ is a Lagrangian foliation on an open subset $U\subset T^*M$, and the constants $\rho,\rho'$ satisfy~\eqref{e:rho-properties},
then for each $a\in S^{\comp}_{L,\rho,\rho'}(U)$ we can define
the quantization $\Op_h^L(a):L^2(M)\to L^2(M)$.

We first consider the model cases when $M=\mathbb R^d$, $U=T^*\mathbb R^d$, and 
$L\in\{L_V,L_H\}$ where $L_V$ is the vertical and $L_H$ the horizontal foliation:
\begin{align}
  \label{e:L-V-def}
  L_V&=\Span(\partial_{\eta_1},\dots,\partial_{\eta_d})=\ker(dy),\\
    \label{e:L-H-def}
L_H&=\Span(\partial_{y_1},\dots,\partial_{y_d})=\ker(d\eta).
\end{align}
Symbols $a\in S^{\comp}_{L_V,\rho,\rho'}(T^*\mathbb R^d)$ satisfy the derivative bounds
\begin{equation}
  \label{e:derb-LV}
\sup_{y,\eta}|\partial^\alpha_y\partial^\beta_\eta a(y,\eta;h)|\leq C_{\alpha\beta}h^{-\rho|\alpha|-\rho'|\beta|}
\end{equation}
and symbols $a\in S^{\comp}_{L_H,\rho,\rho'}(T^*\mathbb R^d)$ satisfy the bounds
\begin{equation}
  \label{e:derb-LH}
\sup_{y,\eta}|\partial^\alpha_y\partial^\beta_\eta a(y,\eta;h)|\leq C_{\alpha\beta}h^{-\rho|\beta|-\rho'|\alpha|}.
\end{equation}
For $0\leq s\leq 1$, define the following quantization procedure on~$\mathbb R^d$
(see~\cite[\S4.1.1]{Zworski-Book}):
\begin{equation}
  \label{e:Op-h-def}
\Op_h^{(s)}(a)f(y)=(2\pi h)^{-d}\int_{\mathbb R^{2d}}e^{{i\over h}\langle y-y',\eta\rangle}a\big(sy+(1-s)y',\eta\big)f(y')\,dy'd\eta.
\end{equation}
The case $s=1$ is called the \emph{standard}, or \emph{left} quantization; the case $s=0$ is the \emph{right} quantization and the case $s=\frac12$ is the \emph{Weyl} quantization.

In~\cite{hgap,meassupp} one used symbols of the class $S^{\comp}_{L_V,\rho,\rho'}(T^*\mathbb R^d)$ and
the standard quantization $\Op_h^{(1)}$, because it was easier to prove invariance
of this quantization under Fourier integral operators preserving the foliation;
see~\cite[Lemmas~3.9--3.10]{hgap}. The next few lemmas will show that in fact
one could use either $L_H$ or $L_V$ and any of the quantizations $\Op_h^{(s)}$.
For our purposes it is enough to consider the principal part of the operators,
allowing an $\mathcal O(h^{1-\rho-\rho'})_{L^2(\mathbb R^d)\to L^2(\mathbb R^d)}$ remainder.

We start with a change of quantization statement:
\begin{lemm}
  \label{l:change-of-quantization}
Let $L\in \{L_V,L_H\}$.
Assume that $a\in S^{\comp}_{L,\rho,\rho'}(T^*\mathbb R^d)$ and fix $s,s'\in [0,1]$.
Then we have
\begin{equation}
  \label{e:coq}
\Op_h^{(s')}(a)=\Op_h^{(s)}(a)+\mathcal O(h^{1-\rho-\rho'})_{L^2(\mathbb R^d)\to L^2(\mathbb R^d)}.
\end{equation}
\end{lemm}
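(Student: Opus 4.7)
\medskip

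\noindent\textbf{Proof proposal.} The plan is to interpolate between the two quantizations through the one-parameter family $\Op_h^{(t)}$, $t\in[s,s']$, and apply an anisotropic Calder\'on--Vaillancourt estimate to the derivative in $t$. Concretely, differentiating the defining oscillatory integral
$$
\Op_h^{(t)}(a)f(y) = (2\pi h)^{-d}\int\!\!\int e^{\frac{i}{h}\langle y-y',\eta\rangle} a(ty+(1-t)y',\eta)\,f(y')\,dy'\,d\eta
$$
in $t$ produces a factor of $(y-y')\cdot \partial_y a$ in the integrand. Since $(y_j-y'_j)e^{i\langle y-y',\eta\rangle/h}=-ih\,\partial_{\eta_j}e^{i\langle y-y',\eta\rangle/h}$, integration by parts in $\eta$ yields
\begin{equation}\label{e:coq-deriv-plan}
\frac{d}{dt}\Op_h^{(t)}(a) \;=\; ih\,\Op_h^{(t)}\!\Big(\sum_{j=1}^d \partial_{y_j}\partial_{\eta_j}a\Big),
\end{equation}
so integrating over $[s,s']$ expresses $\Op_h^{(s')}(a)-\Op_h^{(s)}(a)$ as $ih$ times an integral of quantizations of $\sum_j\partial_{y_j}\partial_{\eta_j}a$.

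For the second step, I would observe that for either $L=L_V$ or $L=L_H$, the derivative bounds~\eqref{e:derb-LV} or~\eqref{e:derb-LH} give
$$
\sup_{y,\eta}\big|\partial_y^\alpha\partial_\eta^\beta(\partial_{y_j}\partial_{\eta_j}a)\big|\;\leq\; C_{\alpha\beta}\,h^{-\rho-\rho'}\cdot h^{-\rho|\alpha|-\rho'|\beta|},
$$
i.e., $h^{\rho+\rho'}\partial_{y_j}\partial_{\eta_j}a$ satisfies the same class of symbol bounds as $a$, with seminorms controlled by those of $a$. Because $\rho,\rho'\in[0,1)$, the semiclassical Calder\'on--Vaillancourt theorem for anisotropic classes (which on $\mathbb R^d$ for the $\Op_h^{(t)}$ quantization is standard; see e.g.~\cite[Theorem~4.23]{Zworski-Book}, and applies uniformly in $t\in[0,1]$) gives
$$
\|\Op_h^{(t)}(\partial_{y_j}\partial_{\eta_j}a)\|_{L^2\to L^2} \;\leq\; C\,h^{-\rho-\rho'}.
$$
Combining this with the integral form of \eqref{e:coq-deriv-plan} produces the desired bound
$$
\|\Op_h^{(s')}(a)-\Op_h^{(s)}(a)\|_{L^2\to L^2}\;\leq\; C|s'-s|\cdot h\cdot h^{-\rho-\rho'}\;=\;\mathcal O(h^{1-\rho-\rho'}),
$$
which is nontrivial exactly because $\rho+\rho'<1$ by~\eqref{e:rho-properties}.

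The main obstacle I expect is the $L^2$-boundedness statement for $\Op_h^{(t)}$ applied to symbols in the anisotropic classes $S^{\comp}_{L_V,\rho,\rho'}$ and $S^{\comp}_{L_H,\rho,\rho'}$, uniformly in $t$. For $L=L_V$ this is the usual Calder\'on--Vaillancourt bound for symbols of type $(\rho',\rho)$ (derivatives in $y$ costing $h^{-\rho}$ and in $\eta$ costing $h^{-\rho'}$), while for $L=L_H$ the roles of $y$ and $\eta$ are reversed; in either case both exponents are strictly less than one. Since the lemma only involves the model foliations $L_V,L_H$ on $T^*\mathbb R^d$ (no curvature or change of coordinates), this is a standard fact and can be quoted from~\cite[Theorem~4.23]{Zworski-Book} or from the Calder\'on--Vaillancourt estimate internal to~\cite[Appendix A]{meassupp}; no new technology is needed.
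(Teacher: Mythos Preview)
Your proposal is correct and takes a slightly different route from the paper's proof. The paper uses the closed-form change-of-quantization identity $\Op_h^{(s')}(a)=\Op_h^{(s)}(\check a)$ with $\check a=e^{i(s'-s)h\langle\partial_y,\partial_\eta\rangle}a$, then controls $\check a-a$ in the symbol class via its full asymptotic expansion; crucially, it justifies this expansion (and the resulting $L^2$ bound) by the explicit anisotropic rescaling $\Lambda(y,\eta)=(h^{(\rho-\rho')/2}y,h^{(\rho'-\rho)/2}\eta)$, which converts $S^{\comp}_{L_V,\rho,\rho'}$ (and, with $\Lambda^{-1}$, $S^{\comp}_{L_H,\rho,\rho'}$) into the isotropic class $S_\delta$ with $\delta=(\rho+\rho')/2<\tfrac12$. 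You instead take the infinitesimal version, differentiating in $t$ to get $\tfrac{d}{dt}\Op_h^{(t)}(a)=ih\,\Op_h^{(t)}(\langle\partial_y,\partial_\eta\rangle a)$ and integrating; this is cleaner for the purpose at hand since it only needs the $N=1$ term rather than the full expansion.

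The one place where you should be more explicit is the ``anisotropic Calder\'on--Vaillancourt'' step: \cite[Theorem~4.23]{Zworski-Book} is stated for Weyl quantization and isotropic $S_\delta$ symbols, not directly for $\Op_h^{(t)}$ on the anisotropic classes. The clean way to fill this in is exactly the rescaling the paper uses: since the phase $\langle y-y',\eta\rangle$ and the argument $ty+(1-t)y'$ are both invariant under $(y,y',\eta)\mapsto(\lambda y,\lambda y',\lambda^{-1}\eta)$, the map $\Lambda$ conjugates $\Op_h^{(t)}(a)$ to $\Op_h^{(t)}(\Lambda^*a)$ for every $t$, and $\Lambda^*a\in S_\delta$ with $\delta<\tfrac12$; the $L^2$ bound is then uniform in $t$ by the standard $S_\delta$ theory. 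With that remark added, your argument is complete and arguably more direct than the paper's, while the paper's version has the advantage of yielding the full asymptotic expansion \eqref{e:ch-semi-2} as a byproduct.
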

\begin{proof}
We first consider the case when $L=L_V$.
By the change of quantization formula~\cite[Theorem~4.13]{Zworski-Book} we have
$$
\Op_h^{(s')}(a)=\Op_h^{(s)}(\check a)\quad\text{where }
\check a:=e^{i(s'-s)h\langle \partial_y,\partial_\eta\rangle}a.
$$
The symbol $\check a$ has a semiclassical expansion (in a sense made
precise in a moment):
\begin{equation}
  \label{e:ch-semi-1}
\check a\sim \sum_{k=0}^\infty {h^k\over k!}i^k(s'-s)^k\langle\partial_y,\partial_\eta\rangle^ka
\end{equation}
where $\langle\partial_y,\partial_\eta\rangle:=\sum_{j=1}^d\partial_{y_j}\partial_{\eta_j}$
is a second order differential operator.

By~\eqref{e:derb-LV}
the $k$-th term in~\eqref{e:ch-semi-1} is $\mathcal O(h^{(1-\rho-\rho')k})_{S_{L_V,\rho,\rho'}(T^*\mathbb R^{d})}$.
Here $S_{L_V,\rho,\rho'}$ denotes symbols satisfying the estimates~\eqref{e:derb-LV}
which are not necessarily compactly supported.
The expansion~\eqref{e:ch-semi-1} holds in the following sense: for each $N$
\begin{equation}
  \label{e:ch-semi-2}
\check a-\sum_{k=0}^{N-1}{h^k\over k!}i^k(s'-s)^k\langle\partial_y,\partial_\eta\rangle^ka
=\mathcal O(h^{(1-\rho-\rho')N})_{S_{L_V,\rho,\rho'}(T^*\mathbb R^{d})}.
\end{equation}
To show~\eqref{e:ch-semi-2}, we follow~\cite[\S A.2]{meassupp} and consider the rescaling map
$$
\Lambda:T^*\mathbb R^d\to T^*\mathbb R^d,\quad
\Lambda(y,\eta)=(h^{\rho-\rho'\over 2}y,h^{\rho'-\rho\over 2}\eta).
$$
Then $a\in S_{L_V,\rho,\rho'}(T^*\mathbb R^d)$ if and only the pullback $b:=\Lambda^*a$ lies
in the class $S_\delta(T^*\mathbb R^d)$ of symbols satisfying
$$
\sup_{y,\eta}|\partial^\alpha_y\partial^\beta_\eta b(y,\eta;h)|\leq C_{\alpha\beta}h^{-\delta(|\alpha|+|\beta|)},
$$
with $\delta:=\frac12(\rho+\rho')\in [0,\frac12)$. We have $e^{i(s'-s)h\langle \partial_y,\partial_\eta\rangle}a=(\Lambda^*)^{-1}e^{i(s'-s)h\langle \partial_y,\partial_\eta\rangle}\Lambda^* a$,
so~\eqref{e:ch-semi-2} follows from the same expansion in the class $S_\delta$ given
in~\cite[Theorem~4.17]{Zworski-Book}.

Now, putting $N=1$ in~\eqref{e:ch-semi-2} we get $\check a=a+h^{1-\rho-\rho'}b$
where $b=\mathcal O(1)_{S_{L_V,\rho,\rho'}(T^*\mathbb R^d)}$;
then $\Op_h^{(s')}(a)=\Op_h^{(s)}(a)+h^{1-\rho-\rho'}\Op_h^{(s)}(b)$.
We have $\|\Op_h^{(s)}(b)\|_{L^2(\mathbb R^d)\to L^2(\mathbb R^d)}=\mathcal O(1)$
as follows from a rescaling argument and the $L^2$ boundedness
for symbols in $S_\delta$ similarly to~\cite[\S A.2]{meassupp}. This finishes the proof in the case $L=L_V$.

The case $L=L_H$ is handled exactly the same, except the rescaling map $\Lambda$
needs to be replaced by~$\Lambda^{-1}$.
\end{proof}
Now, consider the general calculus associated to a Lagrangian foliation
$L$ on $U\subset T^*M$. We show the following lemma regarding
operators of the form $\Op_h^L(a)$ conjugated by semiclassical Fourier integral
operators sending $L$ to $L_H$; it is used in Lemma~\ref{l:tilde-A-localized} below.
The proof relies on the version of this lemma with $L_H$ replaced by $L_V$ shown in~\cite{hgap,meassupp}, as well as on equivariance of the Weyl quantization
under the Fourier transform and on the previous lemma to change to the Weyl quantization.
\begin{lemm}
  \label{l:L-H-straighten}
Assume that $a\in S^{\comp}_{L,\rho,\rho'}(U)$ is supported inside some $h$-independent compact set $K\subset U$, $\varkappa:U\to T^*\mathbb R^d$ is a symplectomorphism satisfying $\varkappa_*L=L_H$, and $B\in I^{\comp}_h(\varkappa)$, $B'\in I^{\comp}_h(\varkappa^{-1})$ quantize $\varkappa$
near $\varkappa(K)\times K$ in the sense of~\eqref{e:FIO-quantize}. Fix $s\in [0,1]$. Then
\begin{equation}
  \label{e:LHS-0}
\Op_h^L(a)=B'\Op_h^{(s)}(a\circ \varkappa^{-1})B+\mathcal O(h^{1-\rho-\rho'})_{L^2(M)\to L^2(M)}.
\end{equation}
\end{lemm}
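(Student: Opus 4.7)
The plan is to reduce the statement to the analogous result for the vertical foliation $L_V$ (with standard quantization on $\mathbb R^d$), which is proved in~\cite{hgap,meassupp}, by conjugating with a semiclassical Fourier transform.

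First I would use Lemma~\ref{l:change-of-quantization}, applied to the symbol $a\circ\varkappa^{-1}\in S^{\comp}_{L_H,\rho,\rho'}(T^*\mathbb R^d)$, to observe that the operators $\Op_h^{(s)}(a\circ\varkappa^{-1})$ for different values of $s\in[0,1]$ differ by $\mathcal O(h^{1-\rho-\rho'})_{L^2\to L^2}$. Since this error is absorbed into the remainder in~\eqref{e:LHS-0}, it suffices to prove~\eqref{e:LHS-0} for one conveniently chosen~$s$; I would take $s=\tfrac12$, the Weyl quantization, since it enjoys exact metaplectic equivariance.

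Next I would introduce the linear symplectomorphism $\sigma:T^*\mathbb R^d\to T^*\mathbb R^d$ given by $\sigma(y,\eta)=(\eta,-y)$, which satisfies $\sigma_*L_H=L_V$, and set $\widetilde\varkappa:=\sigma\circ\varkappa$, so that $\widetilde\varkappa_*L=L_V$. The semiclassical Fourier transform $\mathcal F_h$ from~\eqref{e:F-h-def} belongs microlocally to $I^{\comp}_h(\sigma)$, and $\mathcal F_h^{-1}$ to $I^{\comp}_h(\sigma^{-1})$. Defining $\widetilde B:=\mathcal F_h B\in I^{\comp}_h(\widetilde\varkappa)$ and $\widetilde B':=B'\mathcal F_h^{-1}\in I^{\comp}_h(\widetilde\varkappa^{-1})$, one checks that $(\widetilde B,\widetilde B')$ quantizes $\widetilde\varkappa$ near $\widetilde\varkappa(K)\times K$ using the wavefront conditions~\eqref{e:FIO-quantize} for $(B,B')$ and unitarity of $\mathcal F_h$. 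Applying the $L_V$ version of the lemma (extracted from~\cite[Lemmas~3.9--3.10]{hgap} and~\cite[Appendix~A]{meassupp}, and converted to Weyl quantization via Lemma~\ref{l:change-of-quantization}) gives
\[
\Op_h^L(a)=\widetilde B'\,\Op_h^{(1/2)}(a\circ\widetilde\varkappa^{-1})\,\widetilde B+\mathcal O(h^{1-\rho-\rho'})_{L^2\to L^2}.
\]

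Finally I would unpack the conjugation using the standard metaplectic identity $\mathcal F_h^{-1}\Op_h^{(1/2)}(b)\mathcal F_h=\Op_h^{(1/2)}(b\circ\sigma)$ with $b:=a\circ\widetilde\varkappa^{-1}=a\circ\varkappa^{-1}\circ\sigma^{-1}$. Since $b\circ\sigma=a\circ\varkappa^{-1}$, this yields
\[
\widetilde B'\,\Op_h^{(1/2)}(a\circ\widetilde\varkappa^{-1})\,\widetilde B
=B'\,\Op_h^{(1/2)}(a\circ\varkappa^{-1})\,B,
\]
which combined with the previous display gives~\eqref{e:LHS-0} for $s=\tfrac12$, hence for all $s\in[0,1]$.

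The main technical subtlety I anticipate is that the global Fourier transform $\mathcal F_h$ is not itself compactly microlocalized, so in a careful write-up one replaces $\mathcal F_h$ by $\mathcal F_h$ composed with pseudodifferential cutoffs supported near $\varkappa(K)$ and $\widetilde\varkappa(K)$; because $B$ and $B'$ are compactly microlocalized and satisfy~\eqref{e:FIO-quantize}, these cutoffs contribute only $\mathcal O(h^\infty)$ errors. A secondary bookkeeping point is to confirm that the $L_V$-version of the identity, originally stated in~\cite{hgap,meassupp} with the standard quantization $\Op_h^{(1)}$, continues to hold for the Weyl quantization at the same $\mathcal O(h^{1-\rho-\rho'})$ precision, which is exactly what Lemma~\ref{l:change-of-quantization} supplies.
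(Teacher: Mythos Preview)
Your proposal is correct and follows essentially the same route as the paper: reduce to the $L_V$ case from \cite{hgap,meassupp} by composing $\varkappa$ with the symplectic rotation $\sigma=\varkappa_F$ quantized by $\mathcal F_h$, use the exact metaplectic equivariance of the Weyl quantization to unwind the Fourier conjugation, and invoke Lemma~\ref{l:change-of-quantization} to pass between quantizations $\Op_h^{(s)}$. The paper organizes the computation slightly differently (it first sandwiches $\Op_h^L(a)$ by $B'B$ on both sides and then identifies $\widetilde A=\mathcal F_h B\,\Op_h^L(a)\,B'\mathcal F_h^{-1}$ via \cite[(A.20)]{meassupp}), but the content is the same, and you have correctly anticipated the only real subtlety, namely that $\mathcal F_h$ must be composed with compact microlocal cutoffs to land in $I^{\comp}_h$.
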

\begin{proof}
1. Denote by~$\mathcal F_h:L^2(\mathbb R^d)\to L^2(\mathbb R^d)$
the unitary semiclassical Fourier transform, defined similarly to~\eqref{e:F-h-def}:
\begin{equation}
  \label{e:F-h-def-2}
\mathcal F_hf(\eta)=(2\pi h)^{-\frac d2}\int_{\mathbb R^d} e^{-\frac ih\langle y,\eta\rangle}f(y)\,dy.
\end{equation}
By~\eqref{e:FIO-quantize}, the fact that $\supp a\subset K$, and the nonintersecting
support property~\eqref{e:extra-nonint-supp}, we have
\begin{equation}
  \label{e:LHSS-X}
\begin{aligned}
\Op_h^L(a)&=B'B\Op_h^L(a)B'B+\mathcal O(h^\infty)_{L^2(M)\to L^2(M)}\\
&=B'\mathcal F_h^{-1}\widetilde A\mathcal F_hB+\mathcal O(h^\infty)_{L^2(M)\to L^2(M)}\\
\quad\text{where }\widetilde A&=\mathcal F_hB\Op_h^L(a)B'\mathcal F_h^{-1}:L^2(\mathbb R^d)\to L^2(\mathbb R^d).
\end{aligned}
\end{equation}

\noindent 2. For any $Z\in\Psi^{\comp}_h(\mathbb R^d)$, the composition $\mathcal F_hZ$
lies in $I^{\comp}_h(\varkappa_F)+\mathcal O(h^\infty)_{L^2(\mathbb R^d)\to L^2(\mathbb R^d)}$, where
$$
\varkappa_F:T^*\mathbb R^d\to T^*\mathbb R^d,\quad
\varkappa_F(y,\eta)=(\eta,-y);
$$
a similar statement is true for $Z\mathcal F_h^{-1}$
and the map $\varkappa_F^{-1}$.
Therefore by the composition property~(5) in~\S\ref{s:fios}
$$
\begin{aligned}
\mathcal F_hB&\in I^{\comp}_h(\varkappa_F\circ\varkappa)+\mathcal O(h^\infty)_{L^2(M)\to L^2(\mathbb R^d)},\\
B'\mathcal F_h^{-1}&\in I^{\comp}_h(\varkappa^{-1}\circ\varkappa_F^{-1})+\mathcal O(h^\infty)_{L^2(\mathbb R^d)\to L^2(M)}.
\end{aligned}
$$
Since $\varkappa_F$ interchanges the foliations $L_H$ and $L_V$,
we have
$$
(\varkappa_F\circ\varkappa)_*L=L_V.
$$
Note that $a\circ\varkappa^{-1}\in S^{\comp}_{L_H,\rho,\rho'}(T^*\mathbb R^d)$ and
$a\circ\varkappa^{-1}\circ\varkappa_F^{-1}\in S^{\comp}_{L_V,\rho,\rho'}(T^*\mathbb R^d)$.

We now apply~\cite[(A.20)]{meassupp} with the symplectomorphism $\varkappa_F\circ\varkappa:U\to T^*\mathbb R^d$ and the operators $\mathcal F_hB$, $B'\mathcal F_h^{-1}$
to write the operator $\widetilde A$ in terms of the standard quantization
(here we use that the operator $\mathcal F_hBB'\mathcal F_h^{-1}\in\Psi^{\comp}_h(\mathbb R^d)$
has principal symbol equal to~1 near $\varkappa_F(\varkappa(\supp a))$):
\begin{equation}
  \label{e:LHSS-0}
\widetilde A=\Op_h^{(1)}(a\circ\varkappa^{-1}\circ\varkappa_F^{-1})+\mathcal O(h^{1-\rho-\rho'})_{L^2(\mathbb R^d)\to L^2(\mathbb R^d)}.
\end{equation}
By Lemma~\ref{l:change-of-quantization}, we can replace the standard
quantization by the Weyl quantization:
\begin{equation}
  \label{e:LHSS-1}
\widetilde A=\Op_h^{(1/2)}(a\circ\varkappa^{-1}\circ\varkappa_F^{-1})+\mathcal O(h^{1-\rho-\rho'})_{L^2(\mathbb R^d)\to L^2(\mathbb R^d)}.
\end{equation}

\noindent 3. By~\cite[Theorem~4.9]{Zworski-Book}, we next have
\begin{equation}
  \label{e:LHSS-2}
\mathcal F_h^{-1}\Op_h^{(1/2)}(a\circ\varkappa^{-1}\circ\varkappa_F^{-1})\mathcal F_h=\Op_h^{(1/2)}(a\circ\varkappa^{-1}).
\end{equation}
Applying Lemma~\ref{l:change-of-quantization} again, we also have
\begin{equation}
  \label{e:LHSS-3}
\Op_h^{(1/2)}(a\circ\varkappa^{-1})=\Op_h^{(s)}(a\circ\varkappa^{-1})+\mathcal O(h^{1-\rho-\rho'})_{L^2(\mathbb R^d)\to L^2(\mathbb R^d)}.
\end{equation}
Combining~\eqref{e:LHSS-X} and \eqref{e:LHSS-1}--\eqref{e:LHSS-3}, we get~\eqref{e:LHS-0},
which finishes the proof.
\end{proof}

\subsubsection{Localization of Lagrangian states}
\label{s:lag-loc}

We next show two technical lemmas. As in~\eqref{e:our-rho} before
we fix $\rho={2\over 3}(1-\varepsilon_0)$. Similarly to~\eqref{e:fourier-mul}
for any measurable set $X\subset\mathbb R^d$ we define the Fourier multiplier
$\indic_{X}(hD_x)$ on $L^2(\mathbb R^d)$ by the formula (where $\mathcal F$ denotes
the Fourier transform)
\begin{equation}
  \label{e:fourier-mul-2}
\mathcal F(\indic_X(hD_x)f)(\xi)=\indic_X(h\xi)\widehat f(\xi)\quad\text{for all }
f\in L^2(\mathbb R^d),\
\xi\in\mathbb R^d.
\end{equation}
\begin{lemm}
\label{l:lag-loc}
Consider the function depending on the parameter $h\in (0,1]$
$$
w(x):=e^{{i\over h}\Phi(x)}b(x),\quad
x\in\mathbb R^d
$$
where the phase function $\Phi\in C^\infty(B(0,1);\mathbb R)$
and the amplitude $b\in \CIc(B(0,1))$ satisfy
for some constants $\widetilde C_0,\widetilde C_1,\widetilde C_2,\dots$ and all multiindices $\alpha$ and points $x$
\begin{align}
  \label{e:lag-loc-1}
\|\partial_x^2\Phi(0)\|&\leq \widetilde C_2h^{\rho\over 2},\\
  \label{e:lag-loc-2}
|\partial^\alpha_x\Phi(x)|&\leq \widetilde C_{|\alpha|},\\
  \label{e:lag-loc-3}
\supp b&\subset B(0,\widetilde C_0h^{\rho\over 2}),\\
  \label{e:lag-loc-4}
|\partial^\alpha_x b(x)|&\leq \widetilde C_{|\alpha|}h^{-{\rho\over 2}|\alpha|}.
\end{align}
Then we have for each $N>{d\over 2}$
\begin{equation}
  \label{e:lag-loc-conclusion}
\|\indic_{\mathbb R^d\setminus B(\partial_x\Phi(0),C_3h^{\rho})}(hD_x)w\|_{L^2(\mathbb R^d)}
\leq C_{N+1} h^{-{d\over 2}+\varepsilon_0 N}
\end{equation}
for some constants $C_{L}$ depending only on $d,\varepsilon_0$, and the constants $\widetilde C_0,\widetilde C_1,\dots,\widetilde C_L$.
\end{lemm}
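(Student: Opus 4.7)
The plan is a semiclassical non-stationary phase estimate on the Fourier transform of $w$. Since $\indic_X(hD_x) = \mathcal F_h^{-1}\indic_X \mathcal F_h$ and $\mathcal F_h$ is unitary on $L^2(\mathbb R^d)$, the left-hand side of~\eqref{e:lag-loc-conclusion} equals the $L^2$-norm of the oscillatory integral
\[
\mathcal F_h w(\xi) \;=\; (2\pi h)^{-d/2}\int e^{i\phi(x;\xi)/h}\, b(x)\,dx,\qquad \phi(x;\xi) := \Phi(x) - \langle x,\xi\rangle,
\]
restricted to the exterior region $\{|\xi-\xi_0|\geq C_3 h^\rho\}$, where $\xi_0 := \partial_x\Phi(0)$.

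I would first check non-stationarity of $\phi$ on $\supp b$. By Taylor expansion, $|\partial_x^2\Phi(x)| \leq \widetilde C_2 h^{\rho/2} + \widetilde C_3 |x|$, and combining this with $|x| \leq \widetilde C_0 h^{\rho/2}$ from~\eqref{e:lag-loc-3} yields $|\partial_x^2\Phi(x)|\leq Ch^{\rho/2}$ throughout $\supp b$. Integrating once more, $|\partial_x\Phi(x) - \xi_0| \leq Ch^\rho$ on $\supp b$. Fixing $C_3$ large enough depending on $\widetilde C_0,\widetilde C_2,\widetilde C_3$ thus ensures
\[
|\nabla_x\phi(x;\xi)| \;\geq\; \tfrac12|\xi-\xi_0|\quad\text{on }\supp b,\quad \text{for all }|\xi-\xi_0|\geq C_3 h^\rho.
\]

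Next I would integrate by parts $N$ times using the operator $L := -ih|\nabla_x\phi|^{-2}(\nabla_x\phi)\cdot\nabla_x$, which satisfies $L e^{i\phi/h} = e^{i\phi/h}$, giving $\mathcal F_h w(\xi) = (2\pi h)^{-d/2}\int e^{i\phi/h}(L^t)^N b\,dx$. The key technical step is to prove by induction on $N$ the pointwise estimate
\[
|\partial_x^\alpha(L^t)^N b(x)| \;\leq\; C_{N,\alpha}\, h^{N(1-\rho/2) - |\alpha|\rho/2}\,|\xi-\xi_0|^{-N}\quad\text{on }\supp b.
\]
Each application of $L^t$ brings in a factor of $h$ together with either a derivative falling on $b$ (costing $h^{-\rho/2}$ by~\eqref{e:lag-loc-4}) while multiplying by $\nabla_x\phi/|\nabla_x\phi|^2$ (contributing $|\xi-\xi_0|^{-1}$), or a derivative falling on the latter coefficient (contributing $|\partial_x^2\Phi|/|\nabla_x\phi|^2 \leq Ch^{\rho/2}/|\xi-\xi_0|^2$); on the region $|\xi-\xi_0|\geq C_3 h^\rho$ both schematic contributions produce the same scale $Ch^{1-\rho/2}/|\xi-\xi_0|$.

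Combining with the volume bound $|\supp b| \leq Ch^{d\rho/2}$ yields the pointwise estimate $|\mathcal F_h w(\xi)| \leq C_N h^{-d(1-\rho)/2 + N(1-\rho/2)}|\xi-\xi_0|^{-N}$, and integrating its square over $\{|\xi-\xi_0|\geq C_3 h^\rho\}$ (which converges for $2N > d$) and simplifying via the identity $1 - 3\rho/2 = \varepsilon_0$ yields
\[
\|\indic_{\mathbb R^d\setminus B(\xi_0, C_3 h^\rho)}(hD_x)\, w\|_{L^2(\mathbb R^d)} \;\leq\; C'_N\, h^{-d/2 + d\rho + N\varepsilon_0},
\]
which implies~\eqref{e:lag-loc-conclusion} because $d\rho > 0$ and $h \leq 1$. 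The main obstacle is the inductive control of $\partial_x^\alpha(L^t)^N b$: writing $L^t$ out in coordinates and applying Leibniz as it acts on successive derivatives produces a long sum of terms, and one must verify that each of them satisfies the claimed scaling. The underlying geometric observation is clean, though: the $h^{\rho/2}$ smallness of $\partial_x^2\Phi(0)$ built into~\eqref{e:lag-loc-1} exactly compensates the $h^{-\rho}$ blow-up of $|\xi-\xi_0|^{-1}$ on the exterior region, so that each integration by parts gains a net $h^{\varepsilon_0}$.
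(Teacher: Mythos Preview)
Your proposal is correct and follows essentially the same non-stationary phase argument as the paper: reduce to a pointwise bound on $\mathcal F_h w$, use the Taylor expansion of $\partial_x^2\Phi$ together with~\eqref{e:lag-loc-1}--\eqref{e:lag-loc-3} to get $|\nabla_x\phi|\gtrsim h^\rho$ on $\supp b$, and integrate by parts $N$ times with the same operator $L$. The only cosmetic differences are that you track the factor $|\xi-\xi_0|^{-N}$ throughout (which also handles large $|\xi|$ automatically, whereas the paper splits off that region) and you invoke the volume bound $|\supp b|\lesssim h^{d\rho/2}$, yielding a slightly sharper power $h^{-d/2+d\rho+N\varepsilon_0}$; the paper instead bounds $|\widehat w(\xi/h)|\leq C_{N+1}h^{\varepsilon_0 N}\langle\xi\rangle^{-N}$ directly and integrates. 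One small caveat: your schematic ``a derivative falling on the coefficient contributes $|\partial_x^2\Phi|/|\nabla_x\phi|^2$'' only covers the leading term; when several derivatives hit the coefficients $c_j$, third and higher derivatives of $\Phi$ appear, which are merely $O(1)$ rather than $O(h^{\rho/2})$, but on the region $|\nabla_x\phi|\geq h^\rho$ these still obey $|\partial_x^{\alpha}\widetilde\Phi|\leq Ch^{-\rho(|\alpha|-1)/2}|\nabla_x\widetilde\Phi|$ and hence $|\partial_x^\alpha c_j|\leq Ch^{-\rho|\alpha|/2}|\nabla_x\widetilde\Phi|^{-1}$, which is exactly what your induction needs.
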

\Remarks 1. Since $\varepsilon_0>0$ is fixed and $N$ can be arbitrarily large, 
the left-hand side of~\eqref{e:lag-loc-conclusion} is $\mathcal O(h^\infty)$
as long as we control all the constants $\widetilde C_0,\widetilde C_1,\widetilde C_2,\dots$.

\noindent 2.
The function $w$ is a semiclassical Lagrangian distribution associated
to the graph
\begin{equation}
  \label{e:lag-graph}
\{(x,\xi)\mid x\in B(0,\widetilde C_0 h^{\rho\over 2}),\ \xi=\partial_x\Phi(x)\}.
\end{equation}
Under the conditions~\eqref{e:lag-loc-1}--\eqref{e:lag-loc-3} the projection
of the graph~\eqref{e:lag-graph} onto the frequency variables $\xi$ is contained
in the ball $B(\partial_x\Phi(0),Ch^{\rho})$ for sufficiently large $C$
(this graph is `almost horizontal'; see~\eqref{e:lagl-4.5} below).
The statement~\eqref{e:lag-loc-conclusion} says that $w$ is localized in frequency
to such a ball. This is natural because one expects $w$ to be microlocalized
near the graph~\eqref{e:lag-graph}. However, because we study
fine localization on the scale $\sim h^\rho\ll h^{\frac 12}$, one needs to exercise care. 

\noindent 3. A different version of localization
of Lagrangian distributions in frequency was proved in~\cite[Proposition~2.7]{varfup}.
We cannot use this version in the present paper because the symbol~$b$
has derivatives growing as $h\to 0$.
\begin{proof}
Throughout the proof we use the notation $C_L$ for a constant depending
only on~$d,\widetilde C_0,\dots,\widetilde C_L$, whose precise value might change from place to place.

1. We show the following stronger estimate, from which~\eqref{e:lag-loc-conclusion}
follows using unitarity of the Fourier transform:
\begin{equation}
\label{e:lagl-1}
|\widehat w(\xi/h)|\leq C_{N+1}h^{\varepsilon_0 N}\langle\xi\rangle^{-N}\quad\text{for all }
\xi\in\mathbb R^d\setminus B(\partial_x\Phi(0),C_3h^\rho).
\end{equation}
Take arbitrary $\xi\in\mathbb R^d\setminus B(\partial_x\Phi(0),C_3h^\rho)$ and write
\begin{equation}
\label{e:lagl-2}
\widehat w(\xi/h)=\int_{\mathbb R^d}e^{{i\over h}\widetilde\Phi(x)}b(x)\,dx\quad\text{where }\widetilde\Phi(x):=\Phi(x)-\langle x,\xi\rangle.
\end{equation}
We integrate by parts in~\eqref{e:lagl-2} using the first order partial differential operator
$L$ defined by
$$
Lf(x):=\sum_{j=1}^dc_j(x)\partial_{x_j}f(x),\quad
c_j(x):=-i{\partial_{x_j}\widetilde\Phi(x)\over |\partial_x\widetilde\Phi(x)|^2}.
$$
We have $e^{{i\over h}\widetilde\Phi(x)}=hL(e^{{i\over h}\widetilde\Phi(x)})$, thus integrating by parts~$N$ times and using~\eqref{e:lag-loc-3} gives
\begin{equation}
\label{e:lagl-3}
\begin{aligned}
|\widehat w(\xi/h)|&=h^{N}\bigg|\int_{\mathbb R^d}e^{{i\over h}\widetilde\Phi(x)}(L^t)^{N}b(x)\,dx\bigg|\\
&\leq C_0h^{N}\sup_{x\in B\left(0,\widetilde C_0h^{\rho\over 2}\right)}|(L^t)^{N}b(x)|
\end{aligned}
\end{equation}
where the transpose operator $L^t$ is given by 
$$
L^tf(x)=-\sum_{j=1}^d\partial_{x_j}(c_j(x)f(x)).
$$

\noindent 2. We now estimate the derivatives of the coefficients $c_j(x)$
on the ball $B\big(0,\widetilde C_0h^{\rho\over 2}\big)$.
We start with a lower bound on the length of $\partial_x\widetilde\Phi(x)=\partial_x\Phi(x)-\xi$.
By~\eqref{e:lag-loc-1}--\eqref{e:lag-loc-2} we have
\begin{equation}
  \label{e:lagl-4}
\sup_{x\in B\left(0,\widetilde C_0h^{\rho\over 2}\right)}\|\partial_x^2\Phi(x)\|\leq C_3h^{\rho\over 2}.
\end{equation}
This implies
\begin{equation}
  \label{e:lagl-4.5}
\sup_{x\in B\left(0,\widetilde C_0h^{\rho\over 2}\right)}|\partial_x\Phi(x)-\partial_x\Phi(0)|\leq \tfrac12 C_3 h^\rho.
\end{equation}
Fix $C_3\geq 2$ so that~\eqref{e:lagl-4.5} holds.
Since $\xi\notin B(\partial_x\Phi(0),C_3 h^\rho)$, we get
\begin{equation}
  \label{e:lagl-5}
\inf_{x\in B\left(0,\widetilde C_0h^{\rho\over 2}\right)}|\partial_x\widetilde\Phi(x)|\geq h^\rho.
\end{equation}
Next, arguing by induction we see that for each multiindex $\alpha$, the derivative $\partial^\alpha_x c_j(x)$ is a linear combination
with constant coefficients of terms of the form
$$
{\partial^{\alpha_1}_x\widetilde\Phi(x)\cdots\partial_x^{\alpha_{2m-1}}\widetilde \Phi(x)\over |\partial_x\widetilde\Phi(x)|^{2m}}
$$
where $1\leq m\leq |\alpha|+1$, $|\alpha_1|,\dots,|\alpha_{2m-1}|\geq 1$, and
$|\alpha_1|+\cdots+|\alpha_{2m-1}|=|\alpha|+2m-1$.

We have for each $k=1,\dots,2m-1$
$$
|\partial_x^{\alpha_k}\widetilde\Phi(x)|\leq C_{\max(|\alpha_k|,3)}h^{-\frac\rho 2(|\alpha_k|-1)} |\partial_x\widetilde\Phi(x)|\quad\text{for all }x\in B\big(0,\widetilde C_0h^{\rho\over 2}\big).
$$
Indeed, for $|\alpha_k|=1$ this is immediate, for $|\alpha_k|=2$ it follows from~\eqref{e:lagl-4} and~\eqref{e:lagl-5}, and for $|\alpha_k|\geq 3$ it follows from~\eqref{e:lag-loc-2} and~\eqref{e:lagl-5}.

It now follows that for all $\alpha$
\begin{equation}
\label{e:lagl-6}
|\partial_x^\alpha c_j(x)|\leq C_{\max(|\alpha|,2)+1}h^{-{\rho\over 2}|\alpha|}|\partial_x\widetilde\Phi(x)|^{-1}\quad\text{for all }x\in B\big(0,\widetilde C_0h^{\rho\over 2}\big).
\end{equation}

\noindent 3. The function $(L^t)^Nb(x)$ is a linear combination with constant coefficients
of expressions of the form
$$
\partial_x^{\alpha_1}c_{j_1}(x)\cdots \partial_x^{\alpha_N}c_{j_N}(x)\partial_x^{\alpha_0}b(x)
$$
where $|\alpha_0|+|\alpha_1|+\cdots+|\alpha_N|=N$. By~\eqref{e:lag-loc-4} and~\eqref{e:lagl-6}
we have
$$
|(L^t)^Nb(x)|\leq C_{N+1}h^{-{\rho\over 2}N}|\partial_x\widetilde\Phi(x)|^{-N}\quad\text{for all }x\in B\big(0,\widetilde C_0h^{\rho\over 2}\big).
$$
Then~\eqref{e:lagl-3} and~\eqref{e:lagl-5} imply that
$$
|\widehat w(\xi/h)|\leq C_{N+1}h^{(1-{3\rho\over 2})N}=C_{N+1}h^{\varepsilon_0 N}.
$$
This shows~\eqref{e:lagl-1} when $|\xi|$ is bounded. On the other hand, if $|\xi|$ is large enough,
then the bound~\eqref{e:lagl-5} can be improved to
$$
|\partial_x\widetilde\Phi(x)|\geq {|\xi|\over 2}\quad\text{for all }x\in B(0,1)
$$
and we get
$$
|\widehat w(\xi/h)|\leq C_{N+1} h^{(1-\frac\rho 2)N}|\xi|^{-N}\leq C_{N+1}h^{{2\over 3}N}|\xi|^{-N}
$$
which again gives~\eqref{e:lagl-1}.
\end{proof}
A consequence of Lemma~\ref{l:lag-loc} and the general calculus of Fourier
integral operators is the following statement used in the proof of Lemma~\ref{l:tilde-A-localized} below.
Recall the horizontal Lagrangian foliation $L_H$ on $T^*\mathbb R^d$
defined in~\eqref{e:L-H-def}.
\begin{lemm}
  \label{l:fio-lag-loc}
Assume that $\varkappa:U_2\to U_1$ is a symplectomorphism,
where $U_1,U_2\subset T^*\mathbb R^d$ are open subsets containing the origin,
and
\begin{equation}
\label{e:fio-lag-loc-ass-1}
\varkappa(0)=0,\quad
d\varkappa(0)L_H=L_H. 
\end{equation}
Assume moreover that the frequency $\eta^0\in\mathbb R^d$ and the amplitude $b\in\CIc(\mathbb R^d)$
satisfy for some constants $\widetilde C_0,\widetilde C_1,\dots$
and all multiindices $\alpha$ and points $y$
\begin{align}
\label{e:fio-lag-loc-ass-2}
|\eta^0|&\leq \widetilde C_0h^{\rho\over 2},\\
\label{e:fio-lag-loc-ass-3}
\supp b&\subset B\left(0,\widetilde C_0h^{\rho\over 2}\right),\\
\label{e:fio-lag-loc-ass-4}
|\partial_y^\alpha b(y)|&\leq \widetilde C_{|\alpha|}h^{-{\rho\over 2}|\alpha|}.
\end{align}
Let $B\in I^{\comp}_h(\varkappa)$ and define
$$
v(y):=e^{{i\over h}\langle y,\eta^0\rangle}b(y),\quad
w:=Bv.
$$
Take arbitrary $y^0\in B\left(0,\widetilde C_0h^{\rho\over 2}\right)$ and denote $(x^0,\xi^0):=\varkappa(y^0,\eta^0)$.
Then we have for each~$N$
\begin{equation}
  \label{e:fio-lag-loc}
\|\indic_{\mathbb R^d\setminus B\left(\xi^0,C_0h^\rho\right)}(hD_x)w\|_{L^2(\mathbb R^d)}
\leq C_N h^N.
\end{equation}
Here the constant $C_N$ depends only on the constants
$\widetilde C_0,\widetilde C_1,\dots,\widetilde C_L$ for some $L$ depending only on~$N,d,\varepsilon_0$
and also on some $(N,d,\varepsilon_0)$-dependent $C^\infty$-seminorms of $\varkappa,\varkappa^{-1}$ and $I^{\comp}_h(\varkappa)$-seminorm of~$B$.
\end{lemm}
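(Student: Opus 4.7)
The plan is to reduce to Lemma~\ref{l:lag-loc} by showing that $w = Bv$ is, modulo $\mathcal{O}(h^\infty)_{L^2}$, a Lagrangian distribution of the form $e^{i\Phi(x)/h}\tilde{b}(x)$ satisfying the hypotheses of that lemma after translating the center from $x^0$ to the origin. First I would unpack the assumption $d\varkappa(0) L_H = L_H$: writing $d\varkappa(0)$ in block form relative to the $y,\eta$ splitting, this condition forces the lower-left $\eta$-from-$y$ block to vanish, and the symplectic relations then give that the upper-left block $\partial_y F(0,0)$ is invertible. Hence $(y,\eta) \mapsto (x,\eta)$ is a local diffeomorphism near $0$ and $\varkappa$ admits a generating function $S(x,\eta)$ as in~\eqref{e:kappa-generating}. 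Moreover, the tangent space to $\varkappa(\{\eta = 0\})$ at $(0,0)$ is $L_H$, which is equivalent to $\partial_x^2 S(0,0) = 0$; this last fact will be crucial in the final step.

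Second, using the parametric form~\eqref{e:FIO-generating} for $B$ modulo $\mathcal{O}(h^\infty)_{\Psi^{-\infty}}$, I would write
\begin{equation*}
Bv(x) = (2\pi h)^{-d}\iint e^{i(S(x,\eta) - \langle y, \eta - \eta^0\rangle)/h} \beta(x,\eta,y;h) b(y)\,dy\,d\eta + \mathcal{O}(h^\infty)_{L^2}.
\end{equation*}
The phase $\Psi(x,y,\eta) := S(x,\eta) - \langle y, \eta - \eta^0\rangle$ has a unique non-degenerate critical point in $(y,\eta)$ at $(y^*,\eta^*) = (\partial_\eta S(x,\eta^0), \eta^0)$, with Hessian of determinant $(-1)^d$ and critical value $S(x,\eta^0)$. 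Stationary phase in $(y,\eta)$ then yields
\begin{equation*}
Bv(x) = e^{i S(x,\eta^0)/h}\, \tilde b(x) + \mathcal{O}(h^\infty)_{L^2},
\end{equation*}
where $\tilde b$ is supported in $\{x : \partial_\eta S(x,\eta^0) \in \supp b\} \subset B(x^0, C h^{\rho/2})$ (using $\partial_\eta S(x^0,\eta^0) = y^0$ and the local diffeomorphism property of $\partial_\eta S(\cdot,\eta^0)$) and satisfies $|\partial_x^\alpha \tilde b(x)| \leq C_\alpha h^{-\rho|\alpha|/2}$. The derivative bound requires care because $b$ is $h$-dependent: each $y$-derivative of $b$ costs $h^{-\rho/2}$, but the $k$-th stationary phase term carries $h^k$ against at most $2k$ derivatives of the amplitude, giving a net gain of $h^{k(1-\rho)}$ which is positive since $\rho < 1$, so the expansion converges in the expected symbol class.

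Third, I would apply Lemma~\ref{l:lag-loc} after translating $x \mapsto x^0 + x$. The phase $\Phi(x) := S(x,\eta^0)$ satisfies $\partial_x \Phi(x^0) = \xi^0$, $|\partial_x^\alpha \Phi| \leq C_\alpha$ for $|\alpha|\geq 1$, and most importantly
\begin{equation*}
\|\partial_x^2 \Phi(x^0)\| = \|\partial_x^2 S(x^0,\eta^0)\| \leq \|\partial_x^2 S(0,0)\| + C(|x^0| + |\eta^0|) = \mathcal{O}(h^{\rho/2}),
\end{equation*}
using $\partial_x^2 S(0,0) = 0$ from Step~1 together with $|x^0|, |\eta^0| \leq C h^{\rho/2}$. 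Combined with the support and amplitude bounds on $\tilde b$ from Step~2, this matches the hypotheses~\eqref{e:lag-loc-1}--\eqref{e:lag-loc-4} of Lemma~\ref{l:lag-loc}, and the conclusion~\eqref{e:lag-loc-conclusion} delivers~\eqref{e:fio-lag-loc}. The main technical obstacle is making the stationary phase expansion rigorous with an $h$-dependent amplitude whose derivatives blow up like $h^{-\rho|\alpha|/2}$; this can be handled by the rescaling argument of~\cite[\S A.2]{meassupp}, introducing $\tilde y = h^{-\rho/2}(y - y^0)$ to convert $b$ into a uniformly bounded symbol at the cost of a controlled rescaling of the phase.
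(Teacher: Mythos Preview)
Your proposal is correct and follows essentially the same route as the paper: reduce to the generating-function form~\eqref{e:FIO-generating} using $d\varkappa(0)L_H=L_H$ to get $\partial_x^2S(0,0)=0$, apply stationary phase in $(y,\eta)$ to write $Bv$ as $e^{iS(x,\eta^0)/h}\tilde b(x)$ with the required support and $S^{\comp}_{\rho/2}$-type bounds on~$\tilde b$, and then invoke Lemma~\ref{l:lag-loc}. The only cosmetic difference is that the paper centers Lemma~\ref{l:lag-loc} at the origin (checking $\partial_x^2S(0,\eta^0)=\mathcal O(h^{\rho/2})$ and then observing $|\partial_xS(0,\eta^0)-\xi^0|=\mathcal O(h^\rho)$), whereas you translate to center at~$x^0$; both are equivalent, and your handling of the $h$-dependent amplitude in stationary phase (net gain $h^{k(1-\rho)}$ per term) matches the paper's justification.
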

\Remark The function $v$ is a semiclassical Lagrangian distribution associated to
the horizontal leaf
\begin{equation}
  \label{e:hor-leaf}
\Lambda_{\eta^0}:=\left\{(y,\eta^0)\mid y\in B\left(0,\widetilde C_0h^{\rho\over 2}\right)\right\}.
\end{equation}
By property~(4) in~\S\ref{s:fios}, we expect that $w$
is a semiclassical Lagrangian distribution associated to $\varkappa(\Lambda_{\eta^0})$.
By~\eqref{e:fio-lag-loc-ass-1}--\eqref{e:fio-lag-loc-ass-2},
the projection of $\varkappa(\Lambda_{\eta^0})$ onto the frequency variables $\xi$
lies in an $\sim h^\rho$-sized ball centered at $\xi^0$, giving an informal justification for~\eqref{e:fio-lag-loc}; see Figure~\ref{f:curved-projection}.
However, just like in Lemma~\ref{l:lag-loc} the symbol $b$ has derivatives growing
with~$h$ and we need localization on the fine scale $h^\rho$, so one has
to work out the details carefully.
\begin{figure}
\includegraphics{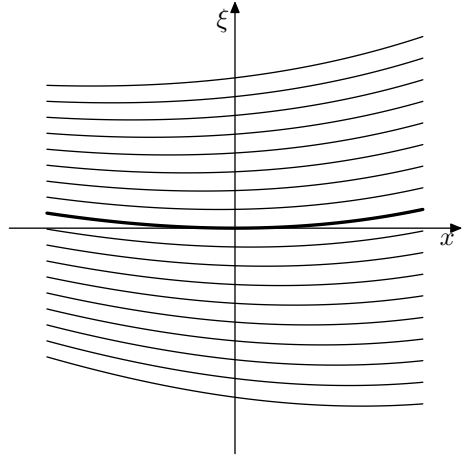}
\caption{The Lagrangians $\varkappa(\Lambda_{\eta^0})$ for different
values of $\eta^0\in B(0,h^{\rho\over 2})$ where $\Lambda_{\eta^0}$ is the horizontal Lagrangian defined in~\eqref{e:hor-leaf} and $\varkappa$ satisfies~\eqref{e:fio-lag-loc-ass-1}, drawn at scale $\sim h^{\rho\over 2}$.
The thicker curve is $\varkappa(\Lambda_{0})$, which has horizontal tangent space at the origin.
The projection of each of the Lagrangians onto the $\xi$ direction
lies in a ball of radius $\sim h^{\rho}$.}
\label{f:curved-projection}
\end{figure}
\begin{proof}
Throughout the proof we denote by $C_N$ some constant depending only
on the constants $\widetilde C_0,\widetilde C_1,\dots,\widetilde C_L$ for some $L$ depending only on~$N,d,\varepsilon_0$ and also on some $(N,d,\varepsilon_0)$-dependent $C^\infty$-seminorms of $\varkappa,\varkappa^{-1}$ and $I^{\comp}_h(\varkappa)$-seminorm of~$B$; the precise
value of $C_N$ might change from place to place.

\noindent 1. By~\eqref{e:fio-lag-loc-ass-2}--\eqref{e:fio-lag-loc-ass-4},
$v$ is microlocalized at the origin $(0,0)\in T^*\mathbb R^d$ in the sense
that $Av=\mathcal O(h^\infty)_{C^\infty}$ for all $A\in \Psi_h^{\comp}(\mathbb R^d)$
such that $\WFh(A)\cap \{(0,0)\}=\emptyset$. Therefore, we may shrink
$U_1,U_2$ to be contained in an arbitrarily small $h$-independent ball centered
at the origin.

By~\eqref{e:fio-lag-loc-ass-1}
the graph of $\varkappa$ passes through $(0,0,0,0)$ and its tangent
space at this point projects isomorphically onto the $(x,\eta)$ variables.
Thus after shrinking $U_1,U_2$ we may assume that the graph of $\varkappa$
projects diffeomorphically onto the $(x,\eta)$ variables and thus has
the form~\eqref{e:kappa-generating} for some generating function $S(x,\eta)$.
Then $B$ has the form~\eqref{e:FIO-generating}:
$$
Bf(x)=(2\pi h)^{-d}\int_{\mathbb R^{2d}}e^{{i\over h}\left(S(x,\eta)-\left\langle y,\eta\right\rangle\right)}q(x,\eta,y;h)f(y)\,dyd\eta
$$
where the symbol $q$ has each derivative bounded uniformly in~$h$.
Our constants $C_N$ are allowed to depend on the $C^\infty$-seminorms of $S$
and~$q$.
Moreover, \eqref{e:fio-lag-loc-ass-1} implies that
\begin{equation}
  \label{e:filla-1}
\partial_xS(0,0)=\partial_\eta S(0,0)=0,\quad
\partial_x^2S(0,0)=0.
\end{equation}

\noindent 2. We now write
$$
w(x)=(2\pi h)^{-d}\int_{\mathbb R^{2d}}e^{{i\over h}(S(x,\eta)+\langle y,\eta^0-\eta\rangle)}
q(x,\eta,y;h)b(y)\,dyd\eta.
$$
Applying the method of stationary phase (similarly to the standard proof of property~(4)
from~\S\ref{s:fios}; for the statement of the method of stationary phase see for example~\cite[Theorem~7.7.5]{Hormander1} and~\cite[Theorem~3.16]{Zworski-Book}), we get
$$
w(x)=e^{{i\over h}S(x,\eta^0)}\tilde b(x)+\mathcal O(h^\infty)_{\CIc(\mathbb R^d)}.
$$
Here the amplitude $\tilde b\in\CIc(\mathbb R^d)$ has an asymptotic expansion
in powers of~$h$: the $k$-th term in the expansion for $\tilde b(x)$
is equal to $h^k$ times some order $2k$ differential operator applied to $q(x,\eta,y;h)b(y)$
at the stationary point $y=\partial_\eta S(x,\eta^0),\eta=\eta^0$.
Note that by~\eqref{e:fio-lag-loc-ass-4} this term
is $\mathcal O(h^{(1-\rho)k})$ and the stationary phase expansion
still applies with $h$-dependent symbols since $\rho<1$.
We moreover get the derivative bounds
\begin{equation}
|\partial^\alpha_x \tilde b(x)|\leq C_{|\alpha|} h^{-{\rho\over 2}|\alpha|}
\end{equation}
and (by~\eqref{e:fio-lag-loc-ass-2}--\eqref{e:fio-lag-loc-ass-3} and~\eqref{e:filla-1}) the support property
$$
\supp\tilde b\subset B\left(0,C_0h^{\rho\over 2}\right).
$$

\noindent 3. We have $\xi^0=\partial_xS(x^0,\eta^0)$ and thus by~\eqref{e:filla-1}
and a Taylor expansion for $\partial_xS$ at~$(0,0)$
$$
|\partial_x S(0,\eta^0)-\xi^0|\leq C_0h^{\rho}.
$$
Now~\eqref{e:fio-lag-loc} follows from Lemma~\ref{l:lag-loc} with $\Phi(x):=S(x,\eta^0)$,
where the property~\eqref{e:lag-loc-1} follows from~\eqref{e:fio-lag-loc-ass-2}
and~\eqref{e:filla-1}.
\end{proof}

\subsection{End of the proof}
\label{s:end-proof}

In this section we give the proof of~\eqref{e:key-estimate-4}. Fix $k$ and let the point $q_k\in T^*M$ and the symplectomorphism $\varkappa_k$ be
as in~\S\ref{s:proof-of-porosity}.

\subsubsection{Microlocal conjugation}

Let $\mathcal B\in I^{\comp}_h(\varkappa_k)$, $\mathcal B'\in I^{\comp}_h(\varkappa_k^{-1})$ be semiclassical Fourier integral operators quantizing $\varkappa_k$ near $\{0\}\times \{q_k\}$
in the sense of~\eqref{e:FIO-quantize}.
Recall that $\mathcal B:L^2(M)\to L^2(\mathbb R^{2n})$, $\mathcal B':L^2(\mathbb R^{2n})\to L^2(M)$
are bounded in norm uniformly in~$h$. Define the conjugated operators on~$L^2(\mathbb R^{2n})$
\begin{equation}
  \label{e:tilde-A}
\begin{aligned}
\widetilde A^-&\,:=\mathcal B\Op_h^{L_s}\left(a^-_{\mathbf w_-}\psi_k\right)\mathcal B',\\
\widetilde A^+&\,:=\mathcal B\Op_h^{L_u}\left(a^+_{\mathbf w_+}\psi_k\right)\mathcal B'.
\end{aligned}
\end{equation}
Recall that $\supp\psi_k\subset B\left(q_k,2h^{\rho\over 2}\right)$ by~\eqref{e:psi-k}.
Since $q_k\notin\WF_h(I-\mathcal B'\mathcal B)$, the nonintersecting support
property~\eqref{e:extra-nonint-supp} implies that 
$$
\begin{aligned}
\Op_h^{L_s}(a^-_{\mathbf w_-}\psi_k)\Op_h^{L_u}(a^+_{\mathbf w_+}\psi_k)
=\mathcal B'\widetilde A^-\widetilde A^+\mathcal B+\mathcal O(h^{\infty})_{L^2(M)\to L^2(M)}.
\end{aligned}
$$
Thus the left-hand side of~\eqref{e:key-estimate-4} is bounded as follows:
\begin{equation}
  \label{e:key-end-1}
\begin{aligned}
\left\|\Op_h^{L_s}(a^-_{\mathbf w_-}\psi_k)\Op_h^{L_u}(a^+_{\mathbf w_+}\psi_k)\right\|_{L^2(M)\to L^2(M)}
\leq &\, C\left\|\widetilde A^-\widetilde A^+\right\|_{L^2(\mathbb R^{2n})\to L^2(\mathbb R^{2n})}\\&+\mathcal O(h^\infty).
\end{aligned}
\end{equation}

\subsubsection{Localization of the conjugated operators}
\label{s:localization-conjugated}

Let $\Omega_\pm\subset\mathbb R$ be the sets in Lemma~\ref{l:porosity}.
For $\alpha>0$, define the neighborhoods
\begin{equation}
\Omega_\pm(\alpha):=\Omega_\pm+B(0,\alpha).
\end{equation}
We show the following microlocalization statements for the operators $\widetilde A^\pm$.
While these seem at first to follow naturally from the properties of the supports
of the symbols $a^\pm_{\mathbf w_\pm}\psi_k$ proved in Lemma~\ref{l:porosity},
the proofs of these statements are technically complicated and rely on Lemmas~\ref{l:change-of-quantization}--\ref{l:fio-lag-loc}. If our symbols were more regular,
one could express the conjugated operators $\widetilde A^\pm$ as standard
quantizations of the conjugated symbols by writing them down
as oscillatory integrals and using the method of stationary phase, which
requires us to differentiate the amplitude 2 times per each power of~$h$ gained.
However, the symbols $a^\pm_{\mathbf w_\pm}$ may grow by $h^{-\rho}$ with each differentiation,
and $\rho>\frac 12$, so we cannot blindly apply stationary phase here.
Instead, our argument has to exploit the anisotropic derivative bounds~\eqref{e:symb-def}
and the precise structure of the oscillatory integral expressions
involved.
\begin{lemm}
\label{l:tilde-A-localized}
We have uniformly in $k$, for some constant $C'$ independent of $h$ and $k$
\begin{align}
\label{e:A-localized}
\|\widetilde A^-\indic_{\mathbb R\setminus \Omega_-(C'h^{\rho})}(hD_{y_1})\|_{L^2(\mathbb R^{2n})\to L^2(\mathbb R^{2n})}&=\mathcal O(h^{\varepsilon_0\over 2}),\\
\label{e:A+localized}
\|\indic_{\mathbb R\setminus \Omega_+(C'h^{\rho})}(y_1)\widetilde A^+\|_{L^2(\mathbb R^{2n})\to L^2(\mathbb R^{2n})}&=\mathcal O(h^{\varepsilon_0\over 2}).
\end{align}
\end{lemm}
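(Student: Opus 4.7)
I will outline the proof of \eqref{e:A-localized}; the bound \eqref{e:A+localized} follows by a parallel argument after applying a partial semiclassical Fourier transform in the $y_{2n}$-variable to straighten the image of $X(q_k)$ (sent by $d\varkappa_k(q_k)$ to the horizontal $\partial_{y_{2n}}$ via \eqref{e:straight-out-6}) into a vertical direction, so that $d\varkappa_k(q_k)(L_u(q_k))$ becomes the vertical Lagrangian $L_V$. The key geometric fact for the $L_s$ case is that $d\varkappa_k(q_k)$ sends $L_s(q_k)=\mathbb RV^+(q_k)\oplus E^+(q_k)\oplus\mathbb RX(q_k)$ to the horizontal Lagrangian $L_H=\ker d\eta$, which is immediate from \eqref{e:straight-out-2}, \eqref{e:straight-out-4}, and \eqref{e:straight-out-6}. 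Combined with Lemma~\ref{l:porosity}, which says $a^-_{\mathbf w_-}\psi_k$ is supported in $\varkappa_k^{-1}(\{\eta_1\in\Omega_-\})$, this puts us in exactly the setting of Lemma~\ref{l:fio-lag-loc}.

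The plan is to test $\widetilde A^-$ against Lagrangian wavepackets frequency-localized away from $\Omega_-$ in $\eta_1$. Choose a local chart on $M$ near the preimage of $q_k$ adapted so that $L_s(q_k)$ coincides with $L_H$ in this chart; then $d\varkappa_k^{-1}(0)$ preserves $L_H$. Write any $f$ in the range of $\indic_{\mathbb R\setminus\Omega_-(C'h^\rho)}(hD_{y_1})$ as an almost-orthogonal superposition
$$
f=\sum_\alpha v_\alpha,\qquad v_\alpha(y)=e^{i\langle y,\eta^{(\alpha)}\rangle/h}b_\alpha(y),
$$
where each $b_\alpha\in\CIc(\mathbb R^{2n})$ is supported in an $h^{\rho/2}$-ball around some $y^{(\alpha)}$, has derivative bounds $|\partial^\mu b_\alpha|\le C_\mu h^{-\rho|\mu|/2}$, and the frequencies $\eta^{(\alpha)}$ are bounded with $\eta_1^{(\alpha)}\notin\Omega_-(C'h^\rho)$. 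This decomposition can be obtained by combining a partial Fourier transform in $y_1$ with an FBI-type wavepacket expansion in the remaining variables on scale $h^{\rho/2}$, and it satisfies $\sum_\alpha\|v_\alpha\|^2\lesssim\|f\|^2$. Apply Lemma~\ref{l:fio-lag-loc} to each $v_\alpha$ with $\varkappa=\varkappa_k^{-1}$ (after a translation placing $(y^{(\alpha)},\eta^{(\alpha)})$ at the origin and restricting to our chart on $M$) to conclude that $\mathcal B' v_\alpha$ is localized in frequency to an $h^\rho$-ball around the image point, up to $\mathcal O(h^N)$ errors.

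Once this is in hand, invoke the nonintersecting-support property \eqref{e:extra-nonint-supp} of the $L_s$-calculus. Since $\eta_1^{(\alpha)}\notin\Omega_-(C'h^\rho)$ with $C'$ chosen larger than the implicit constant in Lemma~\ref{l:fio-lag-loc}, the $h^\rho$-microlocal neighborhood containing $\mathcal B' v_\alpha$ has its $\eta_1\circ\varkappa_k$-coordinate disjoint from $\Omega_-$, hence from $\supp(a^-_{\mathbf w_-}\psi_k)$. This gives $\|\Op_h^{L_s}(a^-_{\mathbf w_-}\psi_k)\mathcal B' v_\alpha\|=\mathcal O(h^\infty)\|v_\alpha\|$ plus the calculus remainder of order $h^{1-\rho-\rho/2}=h^{\varepsilon_0/2}$. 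Summing via the almost-orthogonal decomposition yields \eqref{e:A-localized}.

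The main technical obstacle is that $\varkappa_k$ straightens $L_s$ only at the single point $q_k$, so on the $h^{\rho/2}$-support of $\psi_k$ the pushforward foliation $\varkappa_{k*}L_s$ differs from $L_H$ by $O(h^{\rho/2})$; consequently the coordinate functions $\eta_1\circ\varkappa_k$ and $\xi_1$ (the first frequency in our chart on $M$) agree only to first order at $q_k$, with higher-order discrepancy of size $h^{\rho/2}\cdot h^{\rho/2}=h^\rho$, precisely the scale of porosity we exploit. Lemma~\ref{l:fio-lag-loc} is designed for this borderline situation: its hypothesis is a single first-order condition at the origin and its conclusion delivers exactly the $h^\rho$-scale frequency localization required. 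Ensuring that the constants $\widetilde C_j$ entering Lemma~\ref{l:fio-lag-loc} are uniform in $k$ (and therefore in the word $\mathbf w$) relies on the uniform-in-$k$ derivative bounds on $\varkappa_k$ provided by Lemma~\ref{l:straighten-out}.
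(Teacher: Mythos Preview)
Your outline has a real gap: you try to run Lemma~\ref{l:fio-lag-loc} on wavepackets built from the \emph{input} $f$ and then invoke the nonintersecting support property~\eqref{e:extra-nonint-supp}, but neither step works as stated. The hypotheses of Lemma~\ref{l:fio-lag-loc} require the packet center $(y^0,\eta^0)$ to lie in $B(0,\widetilde C_0h^{\rho/2})$, because the conclusion relies on the single-point condition $d\varkappa(0)L_H=L_H$ together with the $h^{\rho/2}$-smallness of the support to control the Hessian of the phase (see~\eqref{e:lag-loc-1}). Your wavepacket centers $(y^{(\alpha)},\eta^{(\alpha)})$ come from an arbitrary $f$ in the range of a one-dimensional frequency cutoff, so they are only $O(1)$-bounded, not $O(h^{\rho/2})$-small; translating them to the origin destroys the hypothesis $d\varkappa(0)L_H=L_H$ since $\varkappa_k$ straightens $L_s$ only at $q_k$. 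Second, the nonintersecting support property~\eqref{e:extra-nonint-supp} applies to products of two operators in the $L_s$-calculus; knowing that $\mathcal B'v_\alpha$ is frequency-localized does not put it in the form $\Op_h^{L_s}(b)g$ with $\supp b$ disjoint from $\supp(a^-_{\mathbf w_-}\psi_k)$, so you cannot invoke it.

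The paper's proof inserts the missing ingredient: an auxiliary symplectomorphism $\varkappa_k^-$ that straightens $L_s$ to $L_H$ \emph{in a full neighborhood} of $q_k$ (not just at the point), together with Lemma~\ref{l:L-H-straighten}, which converts $\Op_h^{L_s}(a^-_{\mathbf w_-}\psi_k)$ into $\mathcal B'_-\Op_h^{(1)}(\tilde a_-)^*\mathcal B_-$ with a standard quantization and an $\mathcal O(h^{\varepsilon_0/2})$ error. The Schwartz kernel of $\Op_h^{(1)}(\tilde a_-)$ then yields the correct decomposition automatically: for each fixed frequency $\eta$ one gets a WKB state $v^-_\eta(y)=e^{i\langle y,\eta\rangle/h}\tilde a_-(y,\eta)$ whose amplitude is already supported in $B(0,Ch^{\rho/2})$ (inherited from $\psi_k$) and whose $\eta$ ranges over $B(0,Ch^{\rho/2})$ for the same reason. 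Lemma~\ref{l:fio-lag-loc} is then applied to the composed map $\widetilde\varkappa_-=\varkappa_k\circ(\varkappa_k^-)^{-1}$, which does satisfy $d\widetilde\varkappa_-(0)L_H=L_H$. No nonintersecting-support argument is needed; the localization in $\eta_1$ comes directly from~\eqref{e:porosity-cont-2}. For \eqref{e:A+localized} the paper uses a partial Fourier transform in $y'=(y_1,\dots,y_{2n-1})$ (not in $y_{2n}$ as you wrote) to bring $d\varkappa_k(q_k)L_u$ to $L_H$ and repeats the same scheme.
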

\begin{proof}
1. We first relate the quantizations $\Op_h^{L_s}$, $\Op_h^{L_u}$ to the
standard quantization $\Op_h^{(1)}$ on~$\mathbb R^{2n}$ given by~\eqref{e:Op-h-def}.
Recall the horizontal foliation $L_H$ defined in~\eqref{e:L-H-def}.
Similarly to~\cite[Lemma~3.6]{hgap} we construct symplectomorphisms $\varkappa_k^\pm$ from neighborhoods
of~$q_k$ in~$T^*M$ to neighborhoods of $0$ in $T^*\mathbb R^{2n}$ such that
$$
\varkappa_k^\pm(q_k)=0,\quad
(\varkappa_k^-)_*L_s=L_H,\quad
(\varkappa_k^+)_*L_u=L_H.
$$
Note the difference between $\varkappa_k^\pm$ and the symplectomorphism $\varkappa_k$
used above: each of $\varkappa_k^\pm$ straightens out \emph{one of} the foliations $L_s,L_u$ in a neighborhood
of $q_k$ and $\varkappa_k$ straightens out \emph{both} foliations $L_s,L_u$
but only at one point $q_k$. There is no symplectomorphism which straightens out both $L_s,L_u$
in a neighborhood of $q_k$.

Let $\mathcal B_\pm\in I^{\comp}_h(\varkappa_k^\pm),\mathcal B_\pm'\in I^{\comp}_h((\varkappa_k^\pm)^{-1})$ be semiclassical Fourier integral operators quantizing $\varkappa_k^\pm$ near $\{0\}\times \{q_k\}$
in the sense of~\eqref{e:FIO-quantize}. From~\eqref{e:a-pm-symb} and~\eqref{e:der-bounds} we have for each~$\varepsilon>0$
$$
a^-_{\mathbf w_-}\psi_k\in S^{\comp}_{L_s,\rho+\varepsilon,\rho/2}(T^*M\setminus 0),\quad
a^+_{\mathbf w_+}\psi_k\in S^{\comp}_{L_u,\rho+\varepsilon,\rho/2}(T^*M\setminus 0).
$$
Moreover, by~\eqref{e:psi-k} we have $\supp\psi_k\subset B(q_k,2h^{\rho\over 2})$.
Recall from~\eqref{e:our-rho} that $\rho={2\over 3}(1-\varepsilon_0)$.
Then by Lemma~\ref{l:L-H-straighten}
(here $K$ is a small closed neighborhood of~$q_k$ and we use that
$\Op_h^{(1)}(a)^*=\Op_h^{(0)}(\bar a)$ from~\eqref{e:Op-h-def})
$$
\begin{aligned}
\Op_h^{L_s}(a^-_{\mathbf w_-}\psi_k)&=\mathcal B_-'\Op_h^{(1)}(\tilde a_-)^*\mathcal B_-+\mathcal O(h^{\varepsilon_0\over 2})_{L^2(M)\to L^2(M)},\\
\Op_h^{L_u}(a^+_{\mathbf w_+}\psi_k)&=\mathcal B_+'\Op_h^{(1)}(\tilde a_+)\mathcal B_++\mathcal O(h^{\varepsilon_0\over 2 })_{L^2(M)\to L^2(M)},
\end{aligned}
$$
where the symbols $\tilde a_\pm\in S^{\comp}_{L_H,\rho+\varepsilon,\rho/2}(T^*\mathbb R^{2n})$
are defined by
$$
\tilde a_-:=(a^-_{\mathbf w_-}\psi_k)\circ(\varkappa^-_k)^{-1},\quad
\tilde a_+:=(a^+_{\mathbf w_+}\psi_k)\circ(\varkappa^+_k)^{-1}.
$$
Recalling the definitions~\eqref{e:tilde-A} of~$\widetilde A^\pm$, we see that
$$
\begin{aligned}
\widetilde A^-&=\mathcal B\mathcal B'_-\Op_h^{(1)}(\tilde a_-)^*\mathcal B_-\mathcal B'+\mathcal O(h^{\varepsilon_0\over 2})_{L^2(\mathbb R^{2n})\to L^2(\mathbb R^{2n})},\\
\widetilde A^+&=\mathcal B\mathcal B'_+\Op_h^{(1)}(\tilde a_+)\mathcal B_+\mathcal B'+\mathcal O(h^{\varepsilon_0\over 2})_{L^2(\mathbb R^{2n})\to L^2(\mathbb R^{2n})}.
\end{aligned}
$$
Therefore, \eqref{e:A-localized} and~\eqref{e:A+localized} follow from the stronger estimates
(where to pass from~\eqref{e:A-localized-2} to~\eqref{e:A-localized}
we use the fact that the operator norm is preserved when taking the adjoint)
\begin{align}
\label{e:A-localized-2}
\|\indic_{\mathbb R\setminus \Omega_-(C'h^\rho)}(hD_{y_1})(\mathcal B_-\mathcal B')^*\Op_h^{(1)}(\tilde a_-)\|_{L^2(\mathbb R^{2n})\to L^2(\mathbb R^{2n})}&=\mathcal O(h^\infty),\\
\label{e:A+localized-2}
\|\indic_{\mathbb R\setminus \Omega_+(C'h^{\rho})}(y_1)\mathcal B\mathcal B'_+\Op_h^{(1)}(\tilde a_+)\|_{L^2(\mathbb R^{2n})\to L^2(\mathbb R^{2n})}&=\mathcal O(h^\infty).
\end{align}

\noindent 2. Recalling the definitions~\eqref{e:Op-h-def} of the standard
quantization $\Op_h^{(1)}$ and~\eqref{e:F-h-def-2} of the semiclassical
Fourier transform $\mathcal F_h$, we have for any $f\in L^2(\mathbb R^{2n})$ and $y\in\mathbb R^{2n}$
$$
\begin{aligned}
\Op_h^{(1)}(\tilde a_\pm)f(y)&=(2\pi h)^{-n}\int_{\mathbb R^{2n}} \mathcal F_hf(\eta)v^\pm_\eta(y)\,d\eta\\\text{where}\quad v^\pm_\eta(y)&:=e^{{i\over h}\langle y,\eta\rangle}\tilde a_\pm(y,\eta).
\end{aligned}
$$
By~\eqref{e:psi-k} and since $\varkappa_k^\pm(q_k)=0$ we have
\begin{equation}
  \label{e:tilde-a-pm-supp}
\supp\tilde a_\pm\subset B(0,C_0h^{\rho\over 2})
\end{equation}
for some constant $C_0$. In particular, $v^\pm_\eta=0$ when $|\eta|>C_0h^{\rho\over 2}$.
Since $\|\mathcal F_hf\|_{L^2(\mathbb R^{2n})}=\|f\|_{L^2(\mathbb R^{2n})}$,
we see by Cauchy--Schwarz that~\eqref{e:A-localized-2}, \eqref{e:A+localized-2} follow from
uniform estimates in~$\eta$:
\begin{align}
\label{e:A-localized-3}
\sup_{\eta\in B(0,C_0h^{\rho\over 2})}\|\indic_{\mathbb R\setminus \Omega_-(C'h^\rho)}(hD_{y_1})(\mathcal B_-\mathcal B')^*v^-_\eta\|_{L^2(\mathbb R^{2n})}&=\mathcal O(h^\infty),\\
\label{e:A+localized-3}
\sup_{\eta\in B(0,C_0h^{\rho\over 2})}\|\indic_{\mathbb R\setminus \Omega_+(C'h^{\rho})}(y_1)\mathcal B\mathcal B'_+v^+_\eta\|_{L^2(\mathbb R^{2n})}&=\mathcal O(h^\infty).
\end{align}

\noindent 3. We first show~\eqref{e:A-localized-3}. 
By the composition property~(5) and the adjoint property~(2) in~\S\ref{s:fios} the operator $(\mathcal B_-\mathcal B')^*$ lies in the class $I^{\comp}_h(\widetilde\varkappa_-)$
where
$$
\widetilde\varkappa_-:=\varkappa_k\circ(\varkappa_k^-)^{-1}.
$$
Since $\varkappa_k(q_k)=\varkappa_k^-(q_k)=0$, we have $\widetilde\varkappa_-(0)=0$.
We have $d\varkappa^-_k(q_k)L_s(q_k)=L_H$;
by Lemma~\ref{l:straighten-out}
and the definition~\eqref{e:L-def} of~$L_s$ we also have $d\varkappa_k(q_k)L_s(q_k)=L_H$. Therefore
$$
d\widetilde\varkappa_-(0)L_H=L_H.
$$
Fix $\eta\in B(0,C_0h^{\rho\over 2})$ and denote
$$
b(y):=\tilde a_-(y,\eta).
$$
By~\eqref{e:tilde-a-pm-supp} we have $\supp b\subset B\left(0,C_0h^{\rho\over 2}\right)$.
Since $\tilde a_-\in S^{\comp}_{L_H,\rho+\varepsilon,\rho/2}(T^*\mathbb R^{2n})$, we see from~\eqref{e:derb-LH} that $b$ satisfies the derivative bounds
$$
\sup_y |\partial^\alpha_yb(y)|\leq C_\alpha h^{-{\rho\over 2}|\alpha|}.
$$
We may assume that $v^-_\eta\neq 0$, that is there exists $y^0\in\mathbb R^{2n}$ such that
$(y^0,\eta)\in\supp \tilde a_-$. Define $\xi\in\mathbb R^{2n}$ by
$$
\widetilde\varkappa_-(y^0,\eta)=(x^0,\xi).
$$
We now apply Lemma~\ref{l:fio-lag-loc} to get for some constant $C'$
\begin{equation}
  \label{e:locator-1}
\|\indic_{\mathbb R^{2n}\setminus B(\xi,C'h^\rho)}(hD_y)(\mathcal B_-\mathcal B')^* v^-_\eta\|_{L^2(\mathbb R^{2n})}=\mathcal O(h^\infty).
\end{equation}
Finally, $(x^0,\xi)\in \widetilde\varkappa_-(\supp\tilde a_-)=\widetilde\varkappa_-\big(\varkappa_k^-(\supp(a_{\mathbf w_-}^-\psi_k))\big)=\varkappa_k(\supp(a_{\mathbf w_-}^-\psi_k))$.
By Lemma~\ref{l:porosity} the first coordinate $\xi_1$ satisfies $\xi_1\in \Omega_-$.
Therefore
$$
\indic_{\mathbb R\setminus \Omega_-(C'h^\rho)}(hD_{y_1})
=\indic_{\mathbb R\setminus \Omega_-(C'h^\rho)}(hD_{y_1})\indic_{\mathbb R^{2n}\setminus B(\xi,C'h^\rho)}(hD_y)
$$
and \eqref{e:A-localized-3} follows from~\eqref{e:locator-1}.

\noindent 4. It remains to show~\eqref{e:A+localized-3}. We write elements
of $\mathbb R^{2n}$ as $(y',y_{2n})$ where $y'\in\mathbb R^{2n-1}$ and
use the unitary semiclassical partial Fourier transform in the $y'$ variables,
$$
\widetilde{\mathcal F}_hf(y',y_{2n})=(2\pi h)^{\frac12-n}\int_{\mathbb R^{2n-1}}
e^{-{i\over h}\langle y',z'\rangle}f(z',y_{2n})\,dz'.
$$
We have
$$
\indic_{\mathbb R\setminus \Omega_+(C'h^{\rho})}(y_1)=\widetilde{\mathcal F}_h\indic_{\mathbb R\setminus \Omega_+(C'h^{\rho})}(hD_{y_1})\widetilde{\mathcal F}_h^{-1}.
$$
Thus~\eqref{e:A+localized-3} is equivalent to
\begin{equation}
  \label{e:A+localized-4}
\sup_{\eta\in B\left(0,C_0h^{\rho\over 2}\right)}\|\indic_{\mathbb R\setminus \Omega_+(C'h^{\rho})}(hD_{y_1})\widetilde{\mathcal F}_h^{-1}\mathcal B\mathcal B'_+v^+_\eta\|_{L^2(\mathbb R^{2n})}=\mathcal O(h^\infty).
\end{equation}
For any $Z\in\Psi^{\comp}_h(\mathbb R^{2n})$, the operator $\widetilde{\mathcal F}_h^{-1}Z$
lies in $I^{\comp}_h(\widetilde\varkappa_F^{-1})+\mathcal O(h^\infty)_{L^2(\mathbb R^{2n})\to L^2(\mathbb R^{2n})}$ where
$$
\widetilde\varkappa_F:T^*\mathbb R^{2n}\to T^*\mathbb R^{2n},\quad
\widetilde\varkappa_{F}(z',z_{2n},\zeta',\zeta_{2n})=(\zeta',z_{2n},-z',\zeta_{2n}).
$$
Therefore, by the composition property~(5) in~\S\ref{s:fios} we have
$\widetilde{\mathcal F}_h^{-1}\mathcal B\mathcal B'_+\in I^{\comp}_h(\widetilde\varkappa_+)+\mathcal O(h^\infty)_{L^2(\mathbb R^{2n})\to L^2(\mathbb R^{2n})}$
where
$$
\widetilde\varkappa_+=\widetilde\varkappa_F^{-1}\circ \varkappa_k\circ (\varkappa_k^+)^{-1}.
$$
Since $\varkappa_k(q_k)=\varkappa^+_k(q_k)=0$, we have $\widetilde\varkappa_+(0)=0$.
We have $d\varkappa_k^+(q_k)L_u(q_k)=L_H$; by
Lemma~\ref{l:straighten-out} and the definition~\eqref{e:L-def} of~$L_u$, we also have $$d\varkappa_k(q_k)L_u=\Span(\partial_{\eta_1},\dots,\partial_{\eta_{2n-1}},\partial_{y_{2n}})$$
and thus $d(\widetilde\varkappa_F^{-1}\circ\varkappa_k)(q_k)L_u(q_k)=L_H$. Therefore
$$
d\widetilde\varkappa_+(0)L_H=L_H.
$$
Now~\eqref{e:A+localized-4} is shown in the same way as~\eqref{e:A-localized-3},
following Step~3 above. Here we use that if $(y^0,\eta)\in\supp\tilde a_+$, then
the point $(x^0,\xi):=\widetilde\varkappa_+(y^0,\eta)$ lies in
$\widetilde\varkappa_F^{-1}\big(\varkappa_k(\supp(a^+_{\mathbf w_+}\psi_k))\big)$
and thus by Lemma~\ref{l:porosity} the first coordinate $\xi_1$
satisfies $\xi_1\in\Omega_+$.
\end{proof}
\subsubsection{Putting things together}
\label{s:putting-together}

We now finish the proof of~\eqref{e:key-estimate-4}. We have
$$
\begin{aligned}
\left\|\Op_h^{L_s}(a^-_{\mathbf w_-}\psi_k)\Op_h^{L_u}(a^+_{\mathbf w_+}\psi_k)\right\|_{L^2(M)\to L^2(M)}\leq
C\left\|\widetilde A^-\widetilde A^+\right\|_{L^2(\mathbb R^{2n})\to L^2(\mathbb R^{2n})}+\mathcal O(h^\infty)\\
\leq C\left\|\widetilde A^-\indic_{\Omega_-(C'h^{\rho})}(hD_{y_1})\indic_{\Omega_+(C'h^{\rho})}(y_1)\widetilde A^+\right\|_{L^2(\mathbb R^{2n})\to L^2(\mathbb R^{2n})}+\mathcal O(h^{\varepsilon_0\over 2})
\end{aligned}
$$
where the first inequality follows from~\eqref{e:key-end-1} and the second one, from Lemma~\ref{l:tilde-A-localized}.

Since $\widetilde A^\pm$ are bounded in $L^2$ norm uniformly in~$h$, it suffices to show
the bound
\begin{equation}
  \label{e:key-estimate-5}
\left\|\indic_{\Omega_-(C'h^{\rho})}(hD_{y_1})\indic_{\Omega_+(C'h^{\rho})}(y_1)\right\|_{L^2(\mathbb R^{2n})\to L^2(\mathbb R^{2n})}\leq Ch^\beta.
\end{equation}
By Lemma~\ref{l:porosity}, the sets $\Omega_\pm$ are $\nu$-porous on scales $C_0h^\rho$ to~1.
By~\cite[Lemma~2.11]{varfup}, the sets $\Omega_\pm(C'h^{\rho})$ are $\nu\over 3$-porous on scales $\max(C_0,{3\over\nu}C')h^{\rho}$ to~1.
Then the Fractal Uncertainty Principle of Proposition~\ref{l:fup-specific} implies~\eqref{e:key-estimate-5} and finishes the proof of~\eqref{e:key-estimate-4} and thus of Proposition~\ref{l:key-estimate}.

\bibliographystyle{alpha}
\bibliography{General,Dyatlov,chyp,QC}

\end{document}